\DeclareSymbolFontAlphabet{\mathbb}{AMSb}
\DeclareSymbolFontAlphabet{\mathbbl}{bbold}
\def\Sp{\mathsf{Sp}}
\def\O{\mathsf{O}}
\def\K{\mathsf{K}}
\definecolor{darkred}{rgb}{0.7,0,0} 
\newcommand{\defn}[1]{{\color{darkred}\emph{#1}}} 
\renewcommand{\mid}{:}
\numberwithin{equation}{section}
\theoremstyle{definition}
\newtheorem* {theorem*}{Theorem}
\newtheorem* {corollary*}{Corollary}
\newtheorem* {conjecture*}{Conjecture}
\newtheorem{theorem}{Theorem}[section]
\newtheorem{problem}[theorem]{Problem}
\theoremstyle{definition}
\newtheorem* {example*}{Example}
\newtheorem{lemma}[theorem]{Lemma}
\theoremstyle{definition}
\newtheorem{definition}[theorem]{Definition}
\theoremstyle{definition}
\newtheorem{conjecture}[theorem]{Conjecture}
\newtheorem{proposition}[theorem]{Proposition}
\newtheorem{corollary}[theorem]{Corollary}
\newtheorem{remark}[theorem]{Remark}
\newtheorem*{remark*}{Remark}
\theoremstyle{definition}
\newtheorem {example}[theorem]{Example}
\theoremstyle{definition}
\theoremstyle{definition}
\theoremstyle{definition}
\def\({\left(}
\def\){\right)}
\newcommand{\CC}{\mathbb{C}}
\newcommand{\QQ}{\mathbb{Q}}
\newcommand{\cZ}{\mathcal{Z}}
\def\cX{\mathcal{X}}
\def\cY{\mathcal{Y}}
\def\CC{\mathbb{C}}
\def\ZZ{\mathbb{Z}}
\def\GL{\textsf{GL}}
\def\spanning{\textnormal{-span}}
\def\diag{\mathrm{diag}}
\newcommand{\cL}{\mathcal{L}}
\def\fk{\mathfrak}
\def\barr{\begin{array}}
\def\earr{\end{array}}
\def\ba{\begin{aligned}}
\def\ea{\end{aligned}}
\def\be{\begin{equation}}
\def\ee{\end{equation}}
\def\qquand{\qquad\text{and}\qquad}
\def\quand{\quad\text{and}\quad}
\def\I{\mathcal{I}}
\def\cH{\mathcal H}
\def\DesR{\mathrm{Des}_R}
\def\fkS{\fk S}
\def\ben{\begin{enumerate}}
\def\een{\end{enumerate}}
\def\bei{\begin{itemize}}
\def\eei{\end{itemize}}
\def\fpf{{\textsf{fpf}}}
\def\D{\hat D}
\def\DO{ D^{\O}}
\def\Des{\mathrm{Des}}
\def\Ifpf{I^{\fpf}}
\def\e{\textbf{e}}
\newcommand{\Fl}{\textsf{Fl}}
\newcommand{\cB}{\mathcal{B}}
\def\arcstart{\ \xy<0cm,-.15cm>\xymatrix@R=.1cm@C=.3cm }
\newcommand{\arcstartc}[1]{\ \xy<0cm,-.15cm>\xymatrix@R=.1cm@C=#1cm}
\def\iS{\hat \fkS}
\newcommand{\row}{\operatorname{row}}
\def\simFKK{\mathbin{\overset{\textsf{Sp}}\approx}}
\newcommand{\Sym}{\textsf{Sym}}
\newcommand{\weight}{\operatorname{wt}}
\def\row{\textsf{row}}
\def\revrow{\textsf{revrow}}
\def\path{\mathsf{path}_{\textsf{OEG}}}
\def\whSym
\def\whQSym
\def\D{\textsf{D}}
\def\SD{\textsf{SD}}
\def\DSym{\textsf{D}^{\textsf{Sym}}}
\def\GQ{G\hspace{-0.2mm}Q}
\def\GP{G\hspace{-0.2mm}P}
\def\PP{\mathbb{P}}
\def\NN{\mathbb{N}}
\def\diag{\mathfrak{d}}
\newcommand{\ytabsmall}[1]{
\ytableausetup{boxsize = .35cm,aligntableaux=center}
{\small\begin{ytableau}  #1  \end{ytableau}}
}
\def\fkSO{\fkS^{\O}}
\def\fkSS{\fkS^{\Sp}}
\def\fkGS{\fkG^{\Sp}}
\def\fkGO{\fkG^{\O}}
\def\T{\mathsf{T}}
\def\diag{\mathsf{diag}}
\def\cyc{\mathsf{cyc}}
\newcommand{\abs}[1]{\lvert #1 \rvert}
\numberwithin{equation}{section}
\def\Ifpf{I^{\mathsf{fpf}}}
\def\half{\mathsf{half}_\leq}
\def\shalf{\mathsf{half}_<}
\def\lambdafpf{\lambda^{\mathsf{ss}}}
\def\I{I}
\def\DSym{\D^{\mathsf{sym}}}
\def\fkG{\mathfrak{G}}
\def\hf{\mathrm{HF}}
\def\bhf{\mathrm{BHF}}
\def\codim{\mathrm{codim}}
\renewcommand{\mid}{:}
\def\bpi{\pi^{(\beta)}}
\def\bopi{{\overline\pi}^{(\beta)}}
\def\bpartial{\partial^{(\beta)}}
\def\oL{\overline L}
\def\ILambda{I\hspace{-0.2mm}\Lambda}
\def\CK{C\hspace{-0.2mm}K}
\newcommand{\flagvar}{\Fl}  
\newcommand{\Schub}{\mathfrak{S}}
\newcommand{\key}{\kappa}
\newcommand{\iso}{\cong}
\def\rowCounts{\rho}
\def\strictRowCounts{\widetilde\rho}
\def\columnCounts{\gamma}
\def\strictColumnCounts{\widetilde\gamma}
\def\otL{\overline{\widetilde{L}}}
\def\tqlascoux{\widetilde{L}^\O}
\def\tqlascouxatom{\otL{}^\O}
\def\cCO{\cY}
\def\cCSp{\cX}
\def\cKSp{\cZ}
\def\DO{D^\O}
\def\DSp{D^\Sp}
\def\cHSp{\cH^\Sp}
\def\cBSp{\cB^\Sp}
\def\hfSp{\hf^\Sp}
\def\bhfSp{\bhf^\Sp}
\def\cSp{c^\Sp}
\def\lambdaSp{\lambda^\Sp}
\def\lambdaO{\lambda^\O}
\def\qsymbol{\mathsf{Q}}
\def\qkey{\kappa^\qsymbol}
\def\qatom{\overline\kappa^\qsymbol}
\def\qlascoux{L^\qsymbol}
\def\qlascouxatom{\overline L{}^\qsymbol}
\def\psymbol{\mathsf{P}}
\def\pkey{\kappa^\psymbol}
\def\patom{\overline\kappa^\psymbol}
\def\plascoux{L^\psymbol}
\def\plascouxatom{\overline L{}^\psymbol}
\lstdefinelanguage{Sage}[]{Python}
{morekeywords={False,sage,True},sensitive=true}
\definecolor{dblackcolor}{rgb}{0.0,0.0,0.0}
\definecolor{dbluecolor}{rgb}{0.01,0.02,0.7}
\definecolor{dgreencolor}{rgb}{0.2,0.4,0.0}
\definecolor{dgraycolor}{rgb}{0.30,0.3,0.30}
\begin{document}
\title{Key and Lascoux polynomials for symmetric orbit closures}
\author{
Eric Marberg \\ Department of Mathematics \\  Hong Kong University of Science and Technology \\ {\tt eric.marberg@gmail.com}
\and 
Travis Scrimshaw \\ Department of Mathematics \\ Hokkaido University \\ {\tt tcscrims@gmail.com}
}

\date{}

\maketitle

\begin{abstract}
We introduce shifted analogues of key polynomials related to symplectic and orthogonal orbit closures in the complete flag variety. Our definitions are given by applying  isobaric divided difference operators to the analogues of Schubert polynomials for orbit closures that correspond to dominant involutions. We show that our shifted key polynomials are linear combinations of key polynomials with nonnegative integer coefficients. We also prove that they are partial versions of the classical Schur $P$- and $Q$-polynomials. Finally, we examine $K$-theoretic generalizations of these functions, which give shifted forms of Lascoux polynomials. In the symplectic case, these generalizations are partial versions of the $GP$-polynomials introduced by Ikeda and Naruse. Besides developing basic properties, we identify a number of conjectures and open problems.
\end{abstract}

\setcounter{tocdepth}{2}
\tableofcontents

\section{Introduction}

The purpose of this article is to introduce ``shifted'' analogues of \defn{key polynomials} (and \defn{Lascoux polynomials}, their $K$-theoretic generalizations). Key polynomials are the characters of certain modules of the Borel subgroup of upper triangular matrices in the complex general linear group. Our constructions are motivated by the decomposition of \defn{Schubert polynomials} into key polynomials, which we review in the next section before summarizing our main results.
 
 \subsection{Schubert polynomials and key polynomials}

For each positive integer $n$, 
the \defn{complete flag variety} $\flagvar_n$ is the set of strictly increasing sequences of complex vector spaces $\{0 = V_0 \subsetneq V_1 \subsetneq \cdots \subsetneq V_n = \CC^n\}$.
After identifying $\flagvar_n$ as the quotient of the complex general linear group $\GL_n $ by the stabilizer of a fixed standard flag,
we have $\flagvar_n = B_n^- \backslash \GL_n$ where $B_n^-$ is the Borel subgroup of lower triangular matrices in $ \GL_n$.
The opposite Borel subgroup $B_n\subseteq \GL_n$ of invertible upper triangular matrices
acts on $\flagvar_n$ with finitely many orbits, which are naturally indexed by the symmetric group $S_n$.

The $B_n$-orbit associated to $w \in S_n$ is an open affine cell in $\Fl_n$
and its
closure is the \defn{Schubert variety} $X_w$.
We index these varieties so that the codimension of $X_w$ is the length $\ell(w)$ of the permutation $w \in S_n$.
A classical result of Borel~\cite{Borel53} gives an isomorphism
$
H^*(\flagvar_n; \ZZ) \iso \ZZ[x_1,x_2, \dotsc, x_n] / \ILambda_n
$
where $\ILambda_n$ is the ideal generated by
 the symmetric polynomials in $\ZZ[x_1,x_2,\dotsc,x_n] $ without constant term.
 The cohomology classes $[X_w]$ for $w \in S_n$ are a basis for this ring, and the \defn{Schubert polynomials}  $\Schub_w$~\cite{LS82,LS82II}
 are certain homogeneous elements of $\ZZ[x_1,x_2,\dotsc,x_n] $ satisfying $\Schub_w  + \ILambda_n = [X_w] \in H^*(\flagvar_n; \ZZ)$ under Borel's isomorphism.

Let $w_0:=n\cdots 321$ denote the reverse permutation in $S_n$.
For each integer partition $\lambda$ with at most $n$ parts, there is an irreducible highest weight $\GL_n$-representation $V(\lambda)$ whose character is the \defn{Schur function} $s_{\lambda}(x_1, \dotsc, x_n)$.
The Borel--Weil theorem provides a geometric definition of this object: $V(\lambda)$ is isomorphic to $\Gamma(\flagvar_n, L_{-\lambda})$, the module of global sections of a $\GL_n$-equivariant principal line bundle on $\flagvar_n$ whose fiber is the one-dimensional $B_n$-representation corresponding to the weight $-\lambda$. 
Replacing the complete flag variety $\Fl_n$ in this construction by the single Schubert variety $X_{w_0w}$ for $w \in S_n$
gives the definition of the \defn{Demazure module} $V_w(\lambda) := \Gamma(X_{w_0 w}, L_{-\lambda})$~\cite{Demazure74}.
The character of $V_w(\lambda)$ is the \defn{key polynomial} $\key_{w,\lambda} \in \NN[x_1,x_2,\dotsc,x_n] $~\cite{Andersen85,Demazure74II,Joseph85}.

Schubert polynomials and key polynomials are both easily computable via certain recurrence relations involving \defn{divided difference operators}, as will be reviewed in Section~\ref{coh-sect}.
Key polynomials have been extensively studied from a combinatorial perspective and have many interesting formulas; see, e.g.,~\cite{BBBG21,BSW,Kohnert91,LS1,LS4,Mason09,ReinerShimozono}.
We mention just a few notable properties here:
\begin{itemize}
\item 
Each key polynomial $\key_{w,\lambda}$ is a sum of certain \defn{atom polynomials} $\overline{\key}_{u,\lambda} \in \NN[x_1, x_2, \ldots]$, where the sum is over all $u \leq w$ in Bruhat order; see Section~\ref{coh-subsect1}.
The monomial-positivity of $\key_{w,\lambda}$ and $\overline\key_{w,\lambda}$ follows from tableau generating function formulas derived in~\cite{LS1}; see also~\cite[Thm.~1]{ReinerShimozono}.

\item The families of key and atom polynomials are both uniquely indexed by \defn{weak compositions} and give two different $\ZZ$-bases for the polynomial ring $\ZZ[x_1,x_2,\ldots]$~\cite{ReinerShimozono}; see \eqref{leading-eq}. 
Key and atom polynomials also both arise as specializations of \defn{non-symmetric Macdonald polynomials}~\cite{Mason09}.

\item One has $\key_{w_0,\lambda} = s_{\lambda}(x_1, \dotsc, x_n)$
by \cite[Remark, \S2]{ReinerShimozono}, and in this sense key polynomials are partial versions of Schur functions.
This formula reflects the fact that the character of $V(\lambda)$ is $s_\lambda(x_1,\dotsc,x_n)$ and $X_{w_0w}= \Fl_n$ when $w=w_0$, since the $B_n$-orbit of $1$ is dense.

\item Each  $\fkS_w$ is a linear combination of key polynomials with nonnegative integer coefficients~\cite{LS2},
and an algorithm is known to compute the terms in this decomposition~\cite[Thm.~4]{ReinerShimozono}.

\item A Schubert polynomial $\fkS_w$ is equal to a key polynomial if and only if $w \in S_n$ is \defn{vexillary} in the sense of being $2143$-avoiding~\cite[Thm.~24]{ReinerShimozono}.

\end{itemize}

\subsection{Key polynomials for orbit closures}

There are other groups $\K\subseteq\GL_n$ that act on the flag variety $\flagvar_n$ with finitely many orbits.
Such \defn{spherical subgroups} have been completely classified~\cite{Matsuki79}. We will focus on the case when $\K$ is either the orthogonal group $\O_n$ (for any $n$) or the symplectic group $\Sp_n$  (when $n$ is even).
These are instances of \defn{symmetric subgroups} in the sense of~\cite{RichSpring}.
The $\O_n$-orbits in $\Fl_n$ are uniquely indexed by all involutions $z=z^{-1}\in S_n$.
When $n$ is even, the $\Sp_n$-orbits in $\Fl_n$ are 
uniquely indexed by the  subset of involutions $z=z^{-1}\in S_n$ that are \defn{fixed-point-free} in the sense that $z(i)\neq i$ 
for all $i \in [n]$.

We denote the closures of the $\K$-orbits corresponding to $z=z^{-1} \in S_n$ by $X_z^\O$ and $X_z^\Sp$, respectively.
Wyser and Yong~\cite{WyserYong} identified a natural family of ``$\K$-Schubert polynomials'' that represent the cohomology classes
of these orbit closures, which have been further studied in~\cite{HMP6,HMP1,Pawlowski2019}.
We write these polynomials as $\fkSO_z$ and $\fkSS_z$ and refer to them as \defn{involution Schubert polynomials} of orthogonal and symplectic types; see Section~\ref{ischub-sect} for a self-contained algebraic definition.

In this paper, we begin the study of two new families of`shifted key polynomials, which we refer to as \defn{$P$- and $Q$-key polynomials}.
The ordinary key polynomial $\kappa_{w,\lambda}$ is obtained by applying the \defn{isobaric divided difference operator} $\pi_w$ to
the monomial $x^\lambda$, which is the Schubert polynomial of a \defn{dominant} permutation of shape $\lambda$ (as defined in Section~\ref{ischub-sect}). 
We construct our shifted key polynomials $\pkey_{w,\lambda}$ and $\qkey_{w,\lambda}$ by the same divided difference equation, but replacing $x^\lambda$
with certain \defn{dominant} involution Schubert polynomials (of symplectic type in the $P$-case and orthogonal type in the $Q$-case). 

This leads to an explicit algebraic definition of polynomials  $\pkey_{w,\lambda}$ and $\qkey_{w,\lambda}$ that will be presented in Section~\ref{coh-subsect2}.
We are motivated to study these functions by a number of interesting properties in parallel with key polynomials and conjectural connections to involution Schubert polynomials.
Most significantly:
\begin{itemize}
\item The $P$- and $Q$-key polynomials $\pkey_{w,\lambda}$ and $\qkey_{w,\lambda}$ are sums of \defn{$P$- and $Q$-atom polynomials} analogous to the usual atoms; see Section~\ref{coh-subsect2}. 
When the partition $\lambda$ is \defn{strict} in the sense of having all distinct parts (which we assume for the rest of this introduction),
the $P$- and $Q$-key polynomials, as well as the $P$- and $Q$-atom polynomials, all have nonnegative coefficients. In fact, shifted key polynomials are $\NN$-linear combinations of key polynomials,
while shifted atoms are $\NN$-linear combinations of atom polynomials by Theorem~\ref{thm:key_osp_key_positive}.

\item The families of $Q$-key and $Q$-atom polynomials are naturally indexed by weak compositions
that sort to \defn{symmetric partitions} (partitions that are invariant under transpose), which we refer to as \defn{symmetric weak compositions}.
The families of $P$-key and $P$-atom polynomials are similarly indexed by
certain \defn{skew-symmetric weak compositions} defined in Section~\ref{coh-subsect2}.

\item Just as key polynomials are partial versions of Schur functions,
$P$- and $Q$-key polynomials are partial versions of the \defn{Schur $P$- and $Q$-functions} $P_\lambda$ and $Q_\lambda$.
Specifically, if $n\gg 0$ then 
$\pkey_{w_0,\lambda} = P_{\lambda}(x_1, \dotsc, x_n)$
and $\qkey_{w_0,\lambda} = Q_{\lambda}(x_1, \dotsc, x_n)$
for $w_0=n\cdots 321 \in S_n$ by Theorem~\ref{stab-thm}.

\item One has $Q_\lambda = 2^{\ell(\lambda)} P_\lambda$ for all strict partitions $\lambda$.
A typical $Q$-key polynomial is not a ra\-tion\-al linear combination of $P$-key polynomials or vice versa. 
However, all $Q$-key polynomials indexed by symmetric weak compositions $\alpha = (\alpha_1,\alpha_2,\ldots)$ with first part $\alpha_1=0$ are equal to $P$-key polynomials multiplied by a power of two; see Theorem~\ref{alpha1-thm}. 
These scalar relationships do not extend to the $K$-theoretic generalizations of key polynomials that we discuss later.

\item Based on extensive calculations, we expect that the involution Schubert polynomials of symplectic and orthogonal types are sums of $P$- and $Q$-key polynomials, respectively.
These sums appear to be multiplicity-free in the symplectic case and have predictable powers of two as coefficients in the orthogonal case.
See Conjectures~\ref{fkSS-conj} and \ref{fkSO-conj}.

\item Modulo these conjectures, we can prove that 
 $\fkSO_z$ is equal to a $Q$-key polynomial if and only if $z$ is vexillary, while $\fkSS_z$ is equal to a $P$-key polynomial if and only if $z$ has an analogous
 \defn{fpf-vexillary} property. We can also characterize precisely when $P$- and $Q$-key polynomials are scalar multiples of ordinary key polynomials.
 This is discussed in Section~\ref{coincid-subsect}.
\end{itemize}
Developing these algebraic properties is the main focus of this article.

Despite the positive results listed above, proving shifted versions of many other basic facts about key and atom polynomials seems to be difficult.
For instance, we have not identified \emph{unique} indexing sets for the shifted keys and atoms, except possibly in the $Q$-key case (see Conjecture~\ref{sym-conj}).
Unlike ordinary keys and atoms, 
our families of shifted polynomials are all linearly dependent over $\ZZ$.
Whereas the leading term of a key polynomial has a very simple description~\eqref{leading-eq}, we only have conjectural formulas for the leading terms of 
$P$- and $Q$-key polynomials; see Section~\ref{leading-subsect}.
Furthermore, it is an open problem to find a geometric or representation-theoretic interpretation of our shifted key constructions.
There is at least a natural crystal-theoretic interpretation of $\pkey_{w,\lambda}$ and $\qkey_{w,\lambda}$, which
we will discuss in a subsequent paper.

\subsection{\texorpdfstring{$K$}{K}-theoretic generalizations}

A more modern approach to Schubert calculus has been to replace the cohomology ring with the (connective) $K$-theory ring.
Now, the Schubert polynomials $\Schub_w \in \NN[x_1,x_2,\dots] $ become the \defn{\mbox{($\beta$-)}Grothendieck polynomials} $\fkG_w \in \NN[\beta][x_1,x_2,\dots]$ introduced in~\cite{FominKirillov,LS82,LS82II}, which are defined in terms of $\beta$-deformed divided difference operators.
The polynomial $\fkG_w $ reduces to $\Schub_w$ when one sets $\beta=0$. There are similar \defn{symplectic and orthogonal (involution) Grothendieck polynomials} $\fkGS_z$ and $\fkGO_z$  
that generalize $\fkSS_z$ and $\fkSO_z$~\cite{MP2019a,WyserYong}, as will be reviewed in Section~\ref{k-sect}.

The Grothendieck polynomial of a dominant permutation is the same as the corresponding Schubert polynomial.
By applying
$\beta$-deformed divided difference operators to such polynomials, one obtains interesting 
$K$-theoretic analogues of key and atom polynomials,
called \defn{Lascoux polynomials} 
and \defn{Lascoux atoms}~\cite{Lascoux01,Mon16}.
These functions are partial versions of the \defn{symmetric Grothendieck polynomials} $G_\lambda$ introduced in~\cite{Buch2002} in the sense of Corollary~\ref{partial-lascoux-cor}.

One can define $K$-theoretic generalizations of the $P$-key and $P$-atom polynomials in
a similar way, namely, by applying $\beta$-deformed  divided difference operators to  dominant involution Grothendieck polynomials (which now are distinct from dominant involution Schubert polynomials).
In Section~\ref{p-lasc-sect}, we prove that these \defn{$P$-Lascoux polynomials} and \defn{$P$-Lascoux atoms}
are $\NN[\beta]$-linear combinations of Lascoux polynomials and Lascoux atoms, respectively. 
We also show that $P$-Lascoux polynomials are partial versions of 
Ikeda and Naruse's \defn{$K$-theoretic Schur $P$-functions} $\GP_\lambda$~\cite{IkedaNaruse} (Corollary~\ref{gp-dom-cor}).
Finally, we propose a $K$-theoretic generalization of Conjecture~\ref{fkSS-conj}: each symplectic Grothendieck polynomial $\fkGS_z$ appears to be a  multiplicity-free sum of $P$-Lascoux polynomials when $\beta=1$ (Conjecture~\ref{fkGS-conj}).
 
We discuss $K$-theoretic generalizations of $Q$-key polynomials in Section~\ref{q-lasc-sect}.
The details of this construction are more subtle compared to the classical and symplectic cases.
Although orthogonal Grothendieck polynomials $\fkGO_z$ are defined geometrically for all involutions $z =z^{-1} \in S_n$,
they only have an algebraic formula in terms of divided difference operators when $z$ is vexillary.
We expect that the correct notion of \defn{$Q$-Lascoux polynomials} should coincide with $\fkGO_z$ in the vexillary case.
Extending this definition to a family of polynomials $\qlascoux_\alpha$ indexed by all symmetric weak compositions is an open problem (Problem~\ref{prob:nonvex}).
Such an extension should at least have the following properties:
(1) each $\qlascoux_\alpha$ should reduce to $\qkey_\alpha$ when 
$\beta=0$, (2) the $Q$-Lascoux polynomials should give partial versions of Ikeda and Naruse's \defn{$K$-theoretic Schur $Q$-functions} $\GQ_\lambda$~\cite{IkedaNaruse} in the sense of Corollary~\ref{gp-dom-cor},
and (3) each $\fkGO_z$ should be a $\NN[\beta]$-linear combination of $\qlascoux_\alpha$'s.

Besides our treatment of $P$- and $Q$-Lascoux polynomials, Section~\ref{k-sect} also proves 
a number of new properties of involution Grothendieck polynomials, their stable limits, and the $\GP$- and $\GQ$-functions.  For example, we show that $\GP_\lambda$ and $\GQ_\lambda$ both have positive expansions 
into stable Grothendieck polynomials $G_\mu$'s and give precise bounds on the number of parts of $\mu$ for terms that appear (Theorems~\ref{eff-thm} and~\ref{GQ-thm}).
In the $\GQ$-case, this resolves  an open question from~\cite{MP2019a}.

\subsection{Open problems}

We conclude this introduction by highlighting the open problems that arise from this work in connection with Conjectures~\ref{fkSS-conj}, \ref{fkSO-conj}, and ~\ref{fkGS-conj}.

All of the polynomials we consider are defined algebraically, and so we would like these to be generating functions for combinatorial objects; see Problems~\ref{prob:monomial_expansion}, \ref{probl:key_expansion}, and~\ref{prob:K_monomial_expansion}.
The $P$- and $Q$-keys span different subspaces (Example~\ref{ex:contain_the_span}; Corollary~\ref{cor:span_noncontainment}), and so we want to determine the $\QQ$-subspaces they span; see Problem~\ref{space-prob}.
This would likely allow us to describe the $\QQ$-span and answer our leading term conjectures (Conjectures~\ref{leading-conj1} and~\ref{leading-conj2}).
We expect that being able to solve these open problems will be necessary steps to establishing our motivating conjectures.

\subsection{Outline}

This paper is organized as follows.
Section~\ref{coh-sect} starts with a quick review of the basic properties of key and atom polynomials, and then introduces our shifted analogues of these functions.  This section contains our main results and conjectures. Section~\ref{k-sect} then discusses $K$-theoretic generalizations 
of Schubert polynomials, key polynomials, and their shifted versions.

%

\section{Shifted key and atom polynomials}\label{coh-sect}

Throughout, $n$ is a positive integer, $[n] = \{1,2,\dotsc,n\}$, $\NN = \{0,1,2,\ldots\}$, and $\PP = \{1,2,3,\ldots\}$.
Let $\e_1,\e_2,\dots,\e_n$ be the standard basis for $\ZZ^n$ and 
let $x_1,x_2,x_3,\ldots$ be  commuting variables.
Given a formal power series $f \in \ZZ\llbracket x_1,x_2,\ldots\rrbracket$, we write $f(x_1,x_2,\dotsc,x_n)$ 
to denote the polynomial obtained from $f$ by setting $x_{n+1}=x_{n+2}=\cdots=0$.
When we say that a set $\cB$ is a basis for a submodule $M \subseteq \ZZ[x_1, x_2, \ldots]$, we mean every element of $M$ can be uniquely written as a finite $\ZZ$-linear combination of the elements in $\cB$.

Let $s_i = (i \; i+1)$ for $i \in \PP$ be the permutation of $\PP$ that interchanges $i$ and $i+1$ while fixing all other integers.
Define $S_\infty := \langle s_i \mid i \in \PP\rangle$ and $S_n := \langle s_i \mid i \in [n-1]\rangle$.
Let $\leq$ denote the \defn{(strong) Bruhat order} on $S_{\infty}$.
A \defn{reduced word} for $w \in S_\infty$ a minimal length sequence of integers $i_1i_2\cdots i_l$ with
$w=s_{i_1}s_{i_2}\cdots s_{i_l}$. We write $\ell(w) = l$ for the common length of every reduced word for $w$.

\subsection{Key and atom polynomials}\label{coh-subsect1}
 
 This subsection briefly reviews the definitions of key polynomials from~\cite{LS1,ReinerShimozono}.
 The group $S_\infty$ acts on  $\ZZ[x_1,x_2,\dots]$ by permuting variables.
For each $i \in \PP$, let $\partial_i $ be the \defn{divided difference operator} on $\ZZ[x_1,x_2,\ldots]$ defined in terms of this action by
$ 
\partial_i f := (f-s_if)/(x_i-x_{i+1})
$.  
The \defn{isobaric divided difference operators} $\pi_i$ for $i \in \PP$ are  
then given by
\be \pi_i f := \partial_i (x_i f) = f + x_{i+1} \partial_i f = (x_i f - x_{i+1} s_i f)/(x_i-x_{i+1}).\ee  
We also let $\overline\pi_i := \pi_i - 1$. Then
 $\partial_i ^2=0$, $\pi_i ^2=\pi_i$, and $\overline\pi_i ^2=-\overline\pi_i$, and all three families of operators satisfy the 
 Coxeter braid relations for $S_\infty$.
Thus for each $w \in S_\infty$ we can define
$ \partial_w := \partial_{i_1}\partial_{i_2}\cdots \partial_{i_l}$,
$\pi_w := \pi_{i_1}\pi_{i_2}\cdots \pi_{i_l}$,
and $\overline\pi_w := \overline\pi_{i_1}\overline\pi_{i_2}\cdots \overline\pi_{i_l}$,
where 
$i_1i_2\cdots i_l$ is any reduced word for $w$.

A \defn{weak composition} is a sequence $\alpha = (\alpha_1,\alpha_2,\ldots)$ of nonnegative integers with finite sum.
Given such a sequence, we set $ x^\alpha := \prod_{i} x_i^{\alpha_i}$ and $\abs{\alpha} := \sum_i \alpha_i$ and say that $\alpha$ is a weak composition of $\abs{\alpha}$.
When convenient, we identify the finite word $\alpha_1\alpha_2\cdots \alpha_l$ with the weak composition $(\alpha_1,\alpha_2,\dotsc,\alpha_l,0,0,\ldots)$.
Define $\ell(\alpha)$ for a weak composition $\alpha$ to be the largest index $i$ with $\alpha_i \neq 0$, or set $\ell(\alpha) = 0$ if $\alpha=\emptyset := (0,0,\ldots)$.
We also identify $\NN^n$ with the set of weak compositions $\alpha$ that have $\ell(\alpha) \leq n$.
A \defn{partition} is a weak composition that is weakly decreasing.

\begin{definition}\label{key-def}
Given $w \in S_\infty$ and a partition $\lambda$, let 
$ \kappa_{w,\lambda} := \pi_w x^\lambda
$
and
$
\overline\kappa_{w,\lambda} := \overline\pi_w x^\lambda.
$
\end{definition}

We refer to $\kappa_{w,\lambda}$ as a \defn{key polynomial}
and to $\overline\kappa_{w,\lambda}$ as an \defn{atom polynomial}.

\begin{example}\label{key-ex-1}
If $w = 3142 = s_2s_1s_3$ and $\lambda=(2,1,1,0)$ then
\[\ba \kappa_{w,\lambda} &= \pi_2\pi_1\pi_3 (x^{2110})
 = x^{2110} + x^{1210} + x^{1120} + x^{2101} + x^{2011} + x^{1201} + x^{1111} + x^{1021}
\ea\]
while
$\overline\kappa_{w,\lambda} = (\pi_2-1)(\pi_1-1)(\pi_3-1) (x^{2110})
 = x^{1021} + x^{1111}.$
\end{example}

Every key polynomial is a sum of atom polynomials,
since if $w \in S_\infty$ then $\pi_{w} = \sum_{v\leq w} \overline\pi_{v}$~\cite[\S3]{LS1}, and therefore
$
\kappa_{w,\lambda} = \sum_{v\leq w} \overline\kappa_{v,\lambda}
$ for any partition $\lambda$.

%

The indexing conventions in Definition~\ref{key-def} are redundant: If $\kappa_{v,\mu} = \kappa_{w,\lambda}$
then $\mu = \lambda$ but $v,w \in S_\infty$ may be different.
One can avoid this issue by using weak compositions to index key and atom polynomials. 
%
%
Given a weak composition $\alpha$, let $\lambda(\alpha)$ be the unique partition formed by rearranging its parts.
There is a left action of $S_{\infty}$ on weak compositions which permutes entries by
$
w \colon \alpha = (\alpha_1,\alpha_2,\alpha_3,\ldots) \mapsto (\alpha_{w^{-1}(1)}, \alpha_{w^{-1}(2)},  \alpha_{w^{-1}(3)},\ldots).
$
For any weak composition $\alpha$
there exists a unique 
minimal-length permutation $u(\alpha)\in S_\infty$ with $\alpha   = u(\alpha)\lambda(\alpha)$.

\begin{definition}\label{key-def1}
 For a weak composition $\alpha$,
let
$
 \kappa_{\alpha} :=\kappa_{u(\alpha),\lambda(\alpha)}$ 
and
$ \overline\kappa_{\alpha} := \overline\kappa_{u(\alpha),\lambda(\alpha)}$. 
 \end{definition}

In this notation, the polynomials in Example~\ref{key-ex-1} are $\kappa_{1021}$ and $\overline\kappa_{1021}$
since if  $\alpha = (1,0,2,1)$ then $\lambda(\alpha) = (2,1,1,0)$ and $u(\alpha) = 3142 = s_2s_1s_3$.

\begin{remark}\label{u-rem}
Suppose $\alpha$ is a weak composition and $i \in \PP$.
If $\alpha_i > \alpha_{i+1}$ then it is a routine exercise to check that $u(s_i\alpha) = s_i u(\alpha)$.
Since $\pi_i^2 = \pi_i$ and $\overline\pi_i^2 = -\overline\pi_i$,
it follows that
\be
 \pi_i  \kappa_{\alpha} =  \begin{cases}
 \kappa_{s_i\alpha} &\text{if }\alpha_i>\alpha_{i+1}, \\
 \kappa_{\alpha} &\text{if }\alpha_i<\alpha_{i+1},
 \end{cases} 
 \quand
  \overline\pi_i \overline\kappa_\alpha 
  =  \begin{cases}
 \overline\kappa_{s_i\alpha} &\text{if }\alpha_i>\alpha_{i+1}, \\
 -\overline\kappa_{\alpha} &\text{if }\alpha_i<\alpha_{i+1}.
 \end{cases} 
 \ee
If $\lambda := \lambda(\alpha)$ and $J := \{ j \in \PP \mid \lambda_j=\lambda_{j+1}\}$,
then the stabilizer subgroup of $\lambda$ in $S_\infty$
is the standard parabolic subgroup $\langle s_j : j \in J\rangle$,
and if $\alpha_i = \alpha_{i+1}$ then $s_i u(\alpha) = u(\alpha) s_j$ for some $j \in J$ (e.g., by~\cite[Prop.~2.4.4]{CCG}). As $\pi_j x^\lambda = x^\lambda$ for all $j \in J$,
this means that 
$
\pi_i \kappa_\alpha = \kappa_{\alpha}$ and $  \overline\pi_i\overline\kappa_\alpha =0$ if $\alpha_i=\alpha_{i+1}.
$
\end{remark}

The properties in the preceding remark make it clear that each $\kappa_{w,\lambda}$  
is equal to $\kappa_\alpha$  for some $\alpha$. Similarly, each $\overline\kappa_{w,\lambda}$
is either zero or equal to $\overline\kappa_\alpha$  for some $\alpha$.
Write $<_{\mathrm{lex}}$ for \defn{lexicographic order} on weak compositions that has $\alpha <_{\mathrm{lex}} \gamma$
if there exists a positive index $j$ such that $\alpha_j < \gamma_j$ and $\alpha_i=\gamma_i$ for all $1\leq i < j$.
It turns out~\cite[Cor.~7]{ReinerShimozono} that
\be\label{leading-eq} \kappa_\alpha \in x^\alpha +  \NN\spanning\{ x^\delta: \alpha <_{\mathrm{lex}} \delta\}
\quand
\overline\kappa_\alpha \in x^\alpha + \NN\spanning\{ x^\delta: \alpha <_{\mathrm{lex}} \delta\},
\ee
so weak compositions uniquely index  key and atom polynomials.
More strongly, the identities~\eqref{leading-eq} imply that
 $\{\kappa_\alpha\}$ and $\{\overline\kappa_\alpha\}$ are both $\ZZ$-bases for $\ZZ[x_1,x_2,\dots]$.


\subsection{Shifted analogues}\label{coh-subsect2}

We now investigate two shifted analogues of $\kappa_\alpha$.
Key polynomials are closely related to Schubert polynomials and are generalizations of Schur polynomials.
Our shifted analogues are motivated by similar connections to
certain \defn{involution Schubert polynomials} from~\cite{WyserYong} 
and will turn out to be generalizations of \defn{Schur $P$- and $Q$-polynomials}.  

Schur $P$- and $Q$-polynomials are generating functions for semistandard shifted tableaux~\cite[\S{III}.8]{Macdonald}, whereas Schur polynomials
are generating functions for semistandard tableaux. This accounts for our use of the term ``shifted''.

We will explain these motivations more fully in Section~\ref{ischub-sect}.
Here, we focus on the definitions and basic properties. 
The \defn{Young diagram} of a partition $\lambda$ is the set of integer pairs 
$\D_\lambda := \{ (i,j) \in \PP\times \PP : 1\leq j \leq \lambda_i\}$ (which we will draw in English convention, with positions oriented as in a matrix).
A partition $\lambda$ is \defn{strict} if its nonzero parts are all distinct.

 \begin{definition}\label{spo-key-def}
Given a permutation $w \in S_\infty$ and a strict partition $\mu$, define 
\[ 
\ba
\pkey_{w,\mu} := \pi_w \(\prod_{(i,j) \in \D_\mu } (x_i + x_{i+j})\)
\quand
\qkey_{w,\mu} := \pi_w \(\prod_{(i,j) \in \D_\mu } (x_i + x_{i+j-1})\).
\ea
\]
We refer to these functions as \defn{(shifted) $P$- and $Q$-key polynomials}.
We also define
\[
\ba
\patom_{w,\mu} := \overline\pi_w \(\prod_{(i,j) \in \D_\mu} (x_i + x_{i+j})\)
\quand
\qatom_{w,\mu} := \overline\pi_w \(\prod_{(i,j) \in \D_\mu} (x_i + x_{i+j-1})\).
\ea
\] 
We refer to these functions as \defn{(shifted) $P$- and $Q$-atom polynomials}.
\end{definition}

The coefficients of $\qkey_{w,\mu}$ and $\qatom_{w,\mu}$ are all divisible by $2^{\ell(\mu)}$ since this is evidently true of the product $\prod_{(i,j) \in \D_\mu } (x_i + x_{i+j-1})$, which is divisible by $\prod_{i=1}^{\ell(\mu)} 2 x_i$ (by taking $j = 1$).
 
\begin{example}\label{sp-o-ex1}
If $w = 3142 = s_2s_1s_3$ and $\mu=(3,1,0,0)$ then
\[\ba \pkey_{w,\mu} &= \pi_2\pi_1\pi_3 \bigl( (x_1+x_2)(x_1+x_3)(x_1+x_4)(x_2+x_3) \bigr)
 = \kappa_{0022} + \kappa_{0031} + \kappa_{0112}
\\
 \qkey_{w,\mu} &= \pi_2\pi_1\pi_3 (4x_1x_2(x_1+x_2)(x_1+x_3))
 = 4 \kappa_{103} + 4 \kappa_{202} + 4 \kappa_{1021}
\ea\]
while
\[
\ba
\patom_{w,\mu} &= (\pi_2-1)(\pi_1-1)(\pi_3-1) ((x_1+x_2)(x_1+x_3)(x_1+x_4)(x_2+x_3))= \overline\kappa_{0022} + \overline\kappa_{0031}
\\
\qatom_{w,\mu} &= (\pi_2-1)(\pi_1-1)(\pi_3-1) (4x_1x_2(x_1+x_2)(x_1+x_3))
 =0.
 \ea
 \]
\end{example}

Shifted key polynomials are sums of shifted atom polynomials.

\begin{proposition}
We have
$
\pkey_{w,\mu}
=
\sum_{v\leq w} \patom_{v,\mu}
$
and
$
\qkey_{w,\mu}
=
\sum_{v\leq w} \qatom_{v,\mu}
$.
\end{proposition}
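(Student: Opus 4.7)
The proof is essentially a direct application of an identity already cited in the paper. Recall from the discussion just after Example~\ref{key-ex-1} (referencing~\cite[\S3]{LS1}) that for any $w \in S_\infty$, the isobaric divided difference operator decomposes as
\[
\pi_w = \sum_{v \leq w} \overline\pi_v
\]
as an identity of operators on $\ZZ[x_1,x_2,\ldots]$, where the sum runs over the Bruhat order. This is precisely the identity that was used to prove the analogous statement $\kappa_{w,\lambda} = \sum_{v \leq w} \overline\kappa_{v,\lambda}$ in the classical case.

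The plan is to apply this operator identity to the two specific polynomials appearing in Definition~\ref{spo-key-def}. Setting $f_\mu^{\mathsf P} := \prod_{(i,j)\in\D_\mu}(x_i+x_{i+j})$ and $f_\mu^{\mathsf Q} := \prod_{(i,j)\in\D_\mu}(x_i+x_{i+j-1})$, we immediately get
\[
\pkey_{w,\mu} = \pi_w f_\mu^{\mathsf P} = \sum_{v \leq w} \overline\pi_v f_\mu^{\mathsf P} = \sum_{v \leq w} \patom_{v,\mu},
\]
and in exactly the same way
\[
\qkey_{w,\mu} = \pi_w f_\mu^{\mathsf Q} = \sum_{v \leq w} \overline\pi_v f_\mu^{\mathsf Q} = \sum_{v \leq w} \qatom_{v,\mu}.
\]

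There is no real obstacle: the entire content of the proposition is that Definition~\ref{spo-key-def} depends only on $\pi_w$ (resp.\ $\overline\pi_v$) acting on a polynomial that is independent of $w$, so the operator identity above transfers verbatim from key/atom polynomials to their shifted analogues. The only thing worth emphasizing in the write-up is that the identity $\pi_w = \sum_{v \leq w} \overline\pi_v$ is independent of the input polynomial, so the same one-line argument handles both the $P$- and $Q$-cases uniformly.
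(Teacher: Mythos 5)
Your proposal is correct and is essentially identical to the paper's own proof, which likewise invokes the operator identity $\pi_w = \sum_{v\leq w}\overline\pi_v$ from~\cite[\S3]{LS1} and applies it to the defining products. Nothing is missing.
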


\begin{proof}
As for key polynomials, this follows from the identity
$\pi_{w}
=
\sum_{v\leq w} \overline\pi_{v}
$~\cite[\S3]{LS1}. 
\end{proof}

\begin{example}
The definitions of $\pkey_{w,\mu}$ and $\qkey_{w,\mu}$ make sense if $\mu$ is any partition, 
but if $\mu$ is not strict
then these polynomials may have  negative coefficients or be zero. For example:
\[
\ba
\pkey_{4213,2210}&=&\pi_1\pi_3\pi_2\pi_1\pkey_{2210} &= -x_1^2 x_2 x_3 x_4 - x_1 x_2^2 x_3 x_4 + x_1 x_3^2 x_4^2 + x_2 x_3^2 x_4^2, \\
\pkey_{3421,2210}&=&\pi_1\pi_2\pi_1\pi_3\pi_2\pkey_{2210} &= -x_1 x_2 x_3 x_4^2 - x_1 x_2 x_3^2 x_4 - x_1 x_2^2 x_3 x_4 - x_1^2 x_2 x_3 x_4, \\
\qkey_{4213,3321} &=&\pi_1\pi_3\pi_2\pi_1\qkey_{3321}&= -16 x_1^3 x_2^2 x_3^2 x_4^2 - 16 x_1^2 x_2^3 x_3^2 x_4^2 + 16 x_1^2 x_2 x_3^3 x_4^3 + 16 x_1 x_2^2 x_3^3 x_4^3, \\
\qkey_{3421,3321}&=&\pi_1\pi_2\pi_1\pi_3\pi_2\qkey_{3321} &= -16x_1^2 x_2^2 x_3^2 x_4^3 - 16 x_1^2 x_2^2 x_3^3 x_4^2 - 16 x_1^2 x_2^3 x_3^2 x_4^2 - 16 x_1^3 x_2^2 x_3^2 x_4^2.
\ea
\]
\end{example}

When $\mu$ is strict, each $\pkey_{w,\mu}$ and  $ \qkey_{w,\mu}$ has the following positivity property.

\begin{theorem}
\label{thm:key_osp_key_positive}
Suppose $w \in S_\infty$ and $\mu$ is a strict partition.
Then $\pkey_{w,\mu}$ and  $ \qkey_{w,\mu}$ are nonzero linear combinations of 
key polynomials $\kappa_\alpha$ with nonnegative integer coefficients. 
Similarly,
$\patom_{w,\mu}$ and  $\qatom_{w,\mu}$ are linear combinations of 
atom polynomials $\overline\kappa_\alpha$ with nonnegative integer coefficients. 
\end{theorem}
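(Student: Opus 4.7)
The plan is to handle the key statement first and then to use it to deduce the atom statement, mirroring how $\pkey_{w,\mu} = \sum_{v \leq w} \patom_{v,\mu}$. For the key case, the central fact is Remark~\ref{u-rem}: for any weak composition $\alpha$ and any $i \in \PP$, the operator $\pi_i$ sends $\kappa_\alpha$ to either $\kappa_\alpha$ or $\kappa_{s_i \alpha}$. Consequently $\pi_w$ preserves the set of nonnegative integer combinations of key polynomials for every $w \in S_\infty$. This reduces the problem to the base case $w = 1$: one must show that $\prod_{(i,j) \in \D_\mu}(x_i + x_{i+j})$ and $\prod_{(i,j) \in \D_\mu}(x_i + x_{i+j-1})$ admit $\NN$-positive key expansions whenever $\mu$ is strict.

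For this base case I would induct on $|\mu|$, peeling off a corner cell at a time. The goal is to rewrite the base polynomial associated to $\mu$ as $\pi_v f_{\mu'}$ for some smaller strict partition $\mu'$ and some $v \in S_\infty$, so that the inductive hypothesis together with the key-preservation property of $\pi_v$ finishes the step. A parallel approach is to interpret the base polynomial as a flagged shifted-tableau generating function whose terms group into Demazure characters, yielding a manifestly positive key expansion. The $Q$-case can be handled similarly after accounting for the scalar factor $2^{\ell(\mu)}$ coming from the diagonal boxes $(i,1)$ where $x_i + x_{i+j-1} = 2x_i$.

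The atom statement is more delicate. Remark~\ref{u-rem} gives $\overline\pi_i \overline\kappa_\alpha = -\overline\kappa_\alpha$ when $\alpha_i < \alpha_{i+1}$, so $\overline\pi_w$ does not obviously preserve atom-positivity term-by-term. My plan is to argue by induction on $\ell(w)$ using the identity $\patom_{w,\mu} = \pkey_{w,\mu} - \sum_{v < w} \patom_{v,\mu}$, in which the inductive hypothesis controls the atom expansions of $\patom_{v,\mu}$ for $v < w$ while the key-positivity proved in the first half controls $\pkey_{w,\mu}$. One then needs a precise compatibility between these two expansions to ensure the difference expands nonnegatively in the atom basis.

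The main obstacle will be establishing this compatibility, since the identity $\kappa_\alpha = \sum_{v \leq u(\alpha)} \overline\kappa_{v,\lambda(\alpha)}$ rewrites each key as a sum of atoms over an entire Bruhat interval, whereas the inductive subtraction peels off only atoms tied to specific $v < w$. The most likely route is to construct an explicit combinatorial model for $\patom_{w,\mu}$ — such as a shifted-Kohnert diagram or shifted pipe-dream rule — that realizes this polynomial term-by-term as a manifestly positive $\NN$-combination of atoms and is compatible with the divided-difference recursion on $w$. Building and verifying such a model is where I expect the bulk of the technical work to concentrate.
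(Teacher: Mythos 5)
There are two genuine gaps here, and they are exactly the two places where you defer to an unexecuted strategy.

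First, the base case $w=1$. Your reduction of the key statement to this case via Remark~\ref{u-rem} is correct, but the base case is where essentially all of the content lives, and neither of your proposed routes is carried out. The corner-peeling induction cannot work as stated: removing a removable cell of $\mu$ deletes one linear factor $(x_i+x_{i+j})$ from the product, and multiplication by such a factor is not realized by any $\pi_v$, so there is no reason the base polynomial for $\mu$ should equal $\pi_v f_{\mu'}$ for a smaller $\mu'$. The flagged shifted-tableau interpretation is likewise only a hope. The paper's actual route is to recognize the dominant products in Definition~\ref{spo-key-def} as the dominant involution Schubert polynomials $\fkSS_y$ and $\fkSO_z$ via \eqref{schub-dom-eq}, which are known to be $\NN$-linear combinations of ordinary Schubert polynomials $\fkS_v$ (\cite[\S1.5]{HMP1}, or at the $K$-theoretic level \cite[Thm.~3.12]{MP2019a}), each of which is key positive by Lascoux--Sch\"utzenberger; the paper runs this argument for the $\beta$-deformations in Propositions~\ref{sp-lascoux-positive-prop} and \ref{o-lascoux-positive-prop} and sets $\beta=0$. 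Without importing some such external positivity input, your base case is unproved.

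Second, the atom statement. You correctly diagnose that $\overline\pi_w$ does not visibly preserve atom positivity and that your inductive subtraction $\patom_{w,\mu}=\pkey_{w,\mu}-\sum_{v<w}\patom_{v,\mu}$ requires a compatibility you do not establish; you then propose to build a combinatorial model, which is not a proof. The missing ingredient is an operator identity, namely Lemma~\ref{var-bar-combine-lem} (in its $\beta=0$ form): for all $u,v\in S_\infty$ one has $\overline\pi_u\pi_v\in\NN\spanning\{\overline\pi_{w}\mid w\leq u\circ v\}$, proved by a short induction on $\ell(v)$ using $\overline\pi_w\pi_{s_i}=\overline\pi_w+\overline\pi_{ws_i}$ when $w(i)<w(i+1)$ (and $=0$ otherwise) together with the Bruhat lifting property. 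Once the base polynomial is written as $\sum_\alpha c_\alpha\,\kappa_\alpha=\sum_\alpha c_\alpha\,\pi_{u(\alpha)}x^{\lambda(\alpha)}$ with $c_\alpha\in\NN$, this lemma shows directly that $\overline\pi_w$ applied to it is an $\NN$-combination of atoms, with no combinatorial model and no delicate cancellation analysis needed. I recommend you replace your third and fourth paragraphs with a proof of this lemma.
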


We omit the proof of
Theorem~\ref{thm:key_osp_key_positive} since it will follow by setting $\beta=0$ in 
 Propositions~\ref{sp-lascoux-positive-prop} and \ref{o-lascoux-positive-prop}. We prove these more general results in Section~\ref{k-sect}.

\begin{corollary}
\label{cor:monomial_positive}
If $\mu$ is strict then $\pkey_{w,\mu}$,  $\qkey_{w,\mu}$, $\patom_{w,\mu}$, and $\qatom_{w,\mu}$ all belong to
$\NN[x_1,x_2,\dots]$.
\end{corollary}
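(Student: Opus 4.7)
The plan is to observe that this corollary is essentially a direct consequence of Theorem~\ref{thm:key_osp_key_positive} combined with the classical monomial-positivity of key and atom polynomials. So the ``proof'' is really just the composition of two nonnegativity statements and there is no substantive obstacle.

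In more detail, first I would invoke Theorem~\ref{thm:key_osp_key_positive}: since $\mu$ is strict, each of $\pkey_{w,\mu}$ and $\qkey_{w,\mu}$ can be written as a finite sum $\sum_\alpha c_\alpha \kappa_\alpha$ with all $c_\alpha \in \NN$, and each of $\patom_{w,\mu}$ and $\qatom_{w,\mu}$ can be written as a finite sum $\sum_\alpha d_\alpha \overline\kappa_\alpha$ with all $d_\alpha \in \NN$. Second, I would cite the classical fact, already recalled in the introduction of the excerpt, that $\kappa_\alpha, \overline\kappa_\alpha \in \NN[x_1, x_2, \ldots]$ for every weak composition $\alpha$; this monomial-positivity is proved via the tableau generating function formulas of Lascoux and Sch\"utzenberger~\cite{LS1} (see also~\cite{ReinerShimozono}). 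Combining these two facts, a nonnegative integer combination of polynomials with nonnegative integer coefficients is itself a polynomial with nonnegative integer coefficients, so all four polynomials lie in $\NN[x_1, x_2, \ldots]$.

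Since the argument is purely formal, there is no genuine difficulty; the entire content of the corollary has already been absorbed into the statement and proof of Theorem~\ref{thm:key_osp_key_positive}. The only thing to flag is that the theorem is stated with reference to the $K$-theoretic Propositions~\ref{sp-lascoux-positive-prop} and \ref{o-lascoux-positive-prop} (via setting $\beta=0$), so one should make sure the cohomological specialization is unambiguous before citing it here; this is clear since the key/atom polynomials are recovered as the $\beta=0$ specializations of Lascoux polynomials and Lascoux atoms, and a nonnegative integer combination of Lascoux polynomials with coefficients in $\NN[\beta]$ specializes to a nonnegative integer combination of key polynomials.
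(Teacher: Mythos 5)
Your argument is correct and is exactly the intended one: the corollary follows immediately from Theorem~\ref{thm:key_osp_key_positive} together with the monomial-positivity of key and atom polynomials recalled from~\cite{LS1,ReinerShimozono}. The paper states the corollary without further proof precisely because this two-step composition is the whole content.
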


While one can compute the monomial expansions of the $P$- and $Q$-atoms and keys using the divided difference operators, a natural open problem suggested by Corollary~\ref{cor:monomial_positive} is the following.

\begin{problem}
\label{prob:monomial_expansion}
Determine combinatorial objects and weight functions whose corresponding generating functions are equal to $\pkey_{w,\mu}$,  $\qkey_{w,\mu}$, $\patom_{w,\mu}$, and $\qatom_{w,\mu}$.
\end{problem}
 
 The following gives a case where we can explicitly compute the relevant key expansions.
 
\begin{proposition}
\label{prop:hook_shapes}
For all integers $n > 0$, we have
\[
\pkey_{1,(n)} = \key_{01^n} + \sum_{m=2}^n \key_{m0^m1^{n-m}}
\quand
\qkey_{1,(n)}= 2 \sum_{m=1}^n \key_{m0^{m-1}1^{n-m}}.
\]
\end{proposition}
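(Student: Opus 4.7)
The plan is to expand both sides of each identity as explicit polynomials in $x_1, x_2, \dots$ and match them term by term. Since $\pi_1$ is the identity operator, the definitions immediately give
\[
\pkey_{1,(n)} = \prod_{j=2}^{n+1}(x_1 + x_j) \quand \qkey_{1,(n)} = 2x_1 \prod_{j=2}^n (x_1 + x_j).
\]
Distributing these products and collecting by the power of $x_1$ yields
\[
\pkey_{1,(n)} = \sum_{m=0}^n x_1^m \, e_{n-m}(x_2, \dots, x_{n+1}) \quand \qkey_{1,(n)} = 2\sum_{m=1}^n x_1^m \, e_{n-m}(x_2, \dots, x_n),
\]
where $e_k$ denotes the $k$th elementary symmetric polynomial. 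The task thus reduces to identifying each summand $x_1^m \, e_{n-m}(\cdots)$ with the corresponding key polynomial on the right-hand side.

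The technical core of the argument is the explicit formula
\[
\key_{(a,\,0^k,\,1^l)} = x_1^a \cdot e_l(x_2, x_3, \dots, x_{k+l+1})
\]
for all integers $a \geq 1$, $k \geq 0$, and $l \geq 0$, which I plan to establish by combining two ingredients. The first is the classical identity $\key_{(0^k, 1^l)} = e_l(x_1, \dots, x_{k+l})$, which expresses a key polynomial associated with the anti-dominant arrangement of the column shape $(1^l)$ as a Schur (here elementary symmetric) polynomial; this can be proved by an easy induction on $k$ using the recursion $\key_{s_i \alpha} = \pi_i \key_\alpha$ whenever $\alpha_i > \alpha_{i+1}$. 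The second ingredient is a direct inspection of the minimal permutation $u = u((a, 0^k, 1^l))$: since $\alpha_1 = a = \lambda(\alpha)_1$, one has $u^{-1}(1) = 1$, so $u$ admits a reduced word in $\{s_i : i \geq 2\}$, and a quick check shows that the shifted permutation defined by $\tilde u(j) := u(j+1) - 1$ is exactly $u((0^k, 1^l))$. Because each $\pi_i$ with $i \geq 2$ commutes with multiplication by $x_1^a$, we obtain
\[
\key_{(a, 0^k, 1^l)} = \pi_u\bigl(x_1^a \cdot x_2 \cdots x_{l+1}\bigr) = x_1^a \cdot \pi_u\bigl(x_2 \cdots x_{l+1}\bigr),
\]
and the substitution $y_i := x_{i+1}$ identifies the remaining factor with $\key_{(0^k, 1^l)}(y_1, y_2, \dots) = e_l(y_1, \dots, y_{k+l})$, proving the formula.

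With this formula in hand, the $Q$-case is immediate: taking $(a, k, l) = (m, m-1, n-m)$ for each $m = 1, \dots, n$ matches $\key_{m\,0^{m-1}\,1^{n-m}}$ with the $m$th summand of the expansion. For the $P$-case, taking $(a, k, l) = (m, m, n-m)$ for $m = 2, \dots, n$ matches each $\key_{m\,0^m\,1^{n-m}}$, while the leftover $m = 0$ and $m = 1$ contributions $e_n(x_2, \dots, x_{n+1}) + x_1 e_{n-1}(x_2, \dots, x_{n+1})$ combine into $\key_{01^n} = e_n(x_1, \dots, x_{n+1})$ via the standard recurrence $e_n(x_1, \dots, x_{n+1}) = e_n(x_2, \dots, x_{n+1}) + x_1 e_{n-1}(x_2, \dots, x_{n+1})$. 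I expect the only delicate point of the plan to be the verification that $u((a, 0^k, 1^l))$ has the claimed structure (fixing $1$, with the described shift agreeing with $u((0^k, 1^l))$), but both facts are essentially routine consequences of the characterization of $u(\alpha)$ as the unique minimal-length permutation sending $\lambda(\alpha)$ to $\alpha$.
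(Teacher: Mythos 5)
Your proof is correct and follows essentially the same route as the paper's: expand $\prod_j(x_1+x_j)$ by powers of $x_1$ and identify each term $x_1^m e_{n-m}$ with a hook-type key polynomial via the explicit formula $\key_{(a,0^k,1^l)} = x_1^a\, e_l(x_2,\dotsc,x_{k+l+1})$, which the paper records as \eqref{eq:hook_type} and leaves as an exercise. Your derivation of that formula (via $u(\alpha)(1)=1$ and commuting $\pi_i$ for $i\ge 2$ past $x_1^a$) is sound, and your upper index $k+l+1$ is the correct one --- the upper limit $j+k$ displayed in \eqref{eq:hook_type} is off by one, as one checks on $\key_{101}=x_1(x_2+x_3)$.
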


\begin{proof}
We want to express
$
\pkey_{1,(n)} = \prod_{j=1}^n (x_1 + x_{1+j})
$
and
$
\qkey_{1,(n)} = \prod_{j=1}^n (x_1 + x_j) = 2 x_1 \prod_{j=2}^n (x_1 + x_j)
$
in terms of key polynomials. It is a straightforward exercise in algebra 
 to check directly
from the definition in terms of divided difference operators that for all integers $m,k>0$ and $j \geq 0$
one has 
\be
\label{eq:hook_type}
\key_{m0^j1^k} = (\pi_{j+1} \dotsm \pi_{j+k}) \dotsm (\pi_2 \dotsm \pi_{k+1}) \bigl( x_1^m x_2 \dotsm x_{k+1} \bigr) = x_1^m \sum_{1 < i_1 < \dotsc < i_k \leq j+k} x_{i_1} \dotsm x_{i_k}
\ee
as well as $\key_{0^j1^k}  =  \sum_{1 \leq i_1 < \dotsc < i_k \leq j+k} x_{i_1} \dotsm x_{i_k}$.
The desired formulas follow from the binomial theorem and these identities
by comparing terms with $x_1^m$ on both sides.
\end{proof}



The sum in \eqref{eq:hook_type} is the $k$th elementary symmetric function $e_k(x_2, x_3,\dotsc, x_{j+k})$.
Via this equation, replacing the variables $x_1,x_2,x_3,\dots,x_{n+1}$ by $t,x_1,x_2,\dots,x_n$ turns the formula for $\pkey_{1,(n)}$
in Proposition~\ref{prop:hook_shapes} into the well-known identity $\prod_{i=1}^n (t + x_i) = \sum_{k=0}^n t^{n-k} e_k(x_1,x_2,\dots,x_n)$.
Making these substitutions in the formula for $\qkey_{1,(n+1)}$ results in the same identity multiplied by $2t$.

\begin{remark}\label{tabular-rmk}
For more key expansions of $P$- and $Q$-key polynomials, see Tables~\ref{table:pkey_expansions} and \ref{table:qkey_expansions} below.
In many small examples like the ones in these tables,
the key expansions of $\pkey_{w,\mu}$ and  $2^{-\ell(\mu)} \qkey_{w,\mu}$ 
are multiplicity-free.
This does not hold in general: 
There are key terms with coefficients $1$, $2$, and $3$ in 
the expansions of both
$\pkey_{w,\mu}$ and  $2^{-\ell(\mu)} \qkey_{w,\mu}$ 
for $w=1$ and $\mu = (8, 5, 4, 3)$. 
The numbers 1, 2, 3, and 4 are all coefficients in the key expansion of $\pkey_{w,\lambda}$
for $w=54321$ and $\lambda=(7,5,3,1)$.
\end{remark}

As with $\kappa_{w,\lambda}$ and $\overline\kappa_{w,\lambda}$, our initial notation 
 in 
Definition~\ref{spo-key-def} allows us to specify the same polynomial in many different ways.
We would like to identify more natural indexing sets for these polynomials.
However, we cannot simply index $\pkey_{w,\mu}$,  $\qkey_{w,\mu}$, $\patom_{w,\mu}$, and $\qatom_{w,\mu}$
by the compositions formed by permuting  $\mu$,
since the products in Definition~\ref{spo-key-def} are not always fixed by $\pi_i$
when $\mu_i=\mu_{i+1}$. Instead, we propose a different idea.

A partition $\lambda$ is \defn{symmetric} if $\lambda=\lambda^\T$, or equivalently if $\D_\lambda = \{ (j,i) : (i,j) \in \D_\lambda\}$.
We define a weak composition $\alpha$ to be \defn{symmetric} if  $\lambda(\alpha)=\lambda(\alpha)^\T$.
For any symmetric partition $\lambda$, 
define $\shalf(\lambda)$ and $\half(\lambda)$ to be the strict partitions
whose nonzero parts are $\{ \lambda_i - i \mid i \in \PP\} \cap \PP$
and $\{ \lambda_i - (i-1) \mid i \in \PP\} \cap \PP$, respectively. For example,
if 
\[ \lambda = (4,3,3,1)= \ytabsmall{ \ & \ & \ & \ \\ \ & \ & \ \\ \ & \ & \ \\ \ }
\quad\text{then}\quad \shalf(\lambda) = (3,1)\quand \half(\lambda)=(4,2,1).\]
The weak composition $\alpha = (3,0,1,4,0,0,3)$ is symmetric since $\lambda(\alpha) = (4,3,3,1)$ is a symmetric partition.

\begin{definition}
For each symmetric weak composition $\alpha$ with $u=u(\alpha)$ and $\lambda=\lambda(\alpha)$,
 define 
\[
  \pkey_\alpha  := \pkey_{u, \shalf(\lambda)}\text{\ \ and\ \ } \patom_\alpha  := \patom_{u, \shalf(\lambda)}
  \quad\text{along with}\quad
 \qkey_\alpha  := \qkey_{u, \half(\lambda)}
 \text{\ \ and\ \ } \qatom_\alpha  := \qatom_{u, \half(\lambda)}
.\]
\end{definition}

This notation is automatically well-defined, but it is not yet clear that every shifted key/atom polynomial
can be written in this form.
The polynomials from Example~\ref{sp-o-ex1} are given in this notation as
$ \pkey_{3143}$ and  $\patom_{3143}$
along with
$ \qkey_{2031}$ and  $\qatom_{2031}$.

\begin{lemma}\label{lam-lam-lem}
If $\lambda=\lambda^\T$ is a symmetric partition then
\[
  \Bigl\{ i \in \PP \mid \lambda_i=\lambda_{i+1}\Bigr\} \subseteq \Bigl\{ i \in \PP \mid \pi_i \pkey_\lambda= \pkey_\lambda\Bigr\}
  \text{ and }
 \Bigl\{ i \in \PP \mid \lambda_i=\lambda_{i+1}\Bigr\} =\Bigl\{ i \in \PP \mid \pi_i \qkey_\lambda= \qkey_\lambda\Bigr\}
 .
  \]
\end{lemma}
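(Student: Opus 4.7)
The plan is to first recast both polynomials via cleaner product formulas over the Young diagram $\D_\lambda$ itself. Substituting $b = i + j$ in the product defining $\pkey_{1, \shalf(\lambda)}$ and using that $\shalf(\lambda)_a = \lambda_a - a$ whenever $\lambda_a > a$, one checks that for the symmetric partition $\lambda$,
\[
\pkey_\lambda \;=\; \prod_{\substack{(a,b) \in \D_\lambda \\ a<b}} (x_a + x_b).
\]
A parallel substitution $b = i+j-1$ in $\qkey_{1,\half(\lambda)}$ together with $\half(\lambda)_a = \lambda_a - a + 1$ for $a \leq d$ yields
\[
\qkey_\lambda \;=\; \prod_{\substack{(a,b)\in \D_\lambda \\ a\leq b}}(x_a + x_b) \;=\; 2^{d}\, x_1 x_2 \cdots x_{d}\cdot \pkey_\lambda,
\]
where $d := \max\{i : \lambda_i \geq i\}$ is the Durfee square size. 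From $\lambda = \lambda^\T$ I would also record the identity $\lambda_i - \lambda_{i+1} = \lambda^\T_i - \lambda^\T_{i+1} = \#\{k : \lambda_k = i\}$, showing that $\lambda_i = \lambda_{i+1}$ if and only if $\lambda$ has no row of length exactly $i$.

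Next, to handle the $P$-inclusion, I would use that $\pi_i f = f$ iff $s_i f = f$ and verify that $s_i$ permutes the factors of $\pkey_\lambda$. On $2$-element subsets of $\PP$, the involution $s_i$ pairs $\{i,c\}$ with $\{i+1,c\}$ for $c \notin \{i,i+1\}$ and fixes every other subset. Thus invariance of the factor set reduces to showing that for every $c \neq i, i+1$, the pair $\{i,c\}$ sits inside $\D_\lambda$ if and only if $\{i+1,c\}$ does. Splitting into $c > i+1$ (which needs $\lambda_i = \lambda_{i+1}$) and $c < i$ (which needs $\lambda_c \neq i$), both conditions hold by hypothesis and by the combinatorial identity above.

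For the forward direction of the $Q$-part, the factorization $\qkey_\lambda = 2^d x_1\cdots x_d\cdot \pkey_\lambda$ and the previous paragraph give $s_i \pkey_\lambda = \pkey_\lambda$, so it remains only to check $s_i(x_1\cdots x_d) = x_1\cdots x_d$, i.e., $i \neq d$. Otherwise $i = d$ combined with $\lambda_d = \lambda_{d+1}$, $\lambda_d \geq d$, and $\lambda_{d+1} < d+1$ would force $\lambda_d = \lambda_{d+1} = d$; but by symmetry $\lambda_d = \lambda^\T_d = \#\{k : \lambda_k \geq d\}\geq d+1$, a contradiction.

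For the reverse direction of the $Q$-part, I plan a degree count. The maximum $x_k$-degree in a product of linear factors equals the number of factors containing $x_k$, since choices in distinct factors are independent when expanding. Using the product formula for $\qkey_\lambda$, a short case split on $k \leq d$ versus $k > d$ together with $\lambda = \lambda^\T$ shows this count equals $\lambda_k$. Hence if $\lambda_i > \lambda_{i+1}$, the polynomials $\qkey_\lambda$ and $s_i\qkey_\lambda$ have different $x_i$-degrees and are unequal. The main bookkeeping obstacle is cleanly establishing the product formulas of the first paragraph; once these are in hand, everything else is a short combinatorial argument.
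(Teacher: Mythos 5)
Your proof is correct and follows essentially the same route as the paper's: both reduce to the product formulas $\pkey_\lambda = \prod_{(a,b)\in\D_\lambda,\, a<b}(x_a+x_b)$ and $\qkey_\lambda = \prod_{(a,b)\in\D_\lambda,\, a\leq b}(x_a+x_b) = 2^d x_1\cdots x_d\,\pkey_\lambda$ and then verify $s_i$-invariance directly, the paper via an explicit case-by-case factorization into a symmetric piece times a piece not involving $x_i,x_{i+1}$, and you via the observation that $s_i$ permutes the set of linear factors (using $\lambda_i-\lambda_{i+1}=\#\{k \mid \lambda_k=i\}$ for symmetric $\lambda$). Your degree count $\deg_{x_i}\qkey_\lambda=\lambda_i$ for the converse direction is a clean way to make rigorous the paper's terse final case, but it is a reorganization of the same argument rather than a different method.
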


\begin{proof}
Suppose $\lambda=\lambda^\T$ is a symmetric partition.
If $j:= \lambda_i =\lambda_{i+1} \geq i+1 $ then $\qkey_\lambda$ is equal to 
\[
 \prod_{1\leq a < i} (x_a + x_i)(x_a+ x_{i+1}) \cdot 4x_ix_{i+1}(x_i + x_{i+1}) \cdot \prod_{i +1 < b \leq j} (x_i+x_b)(x_{i+1}+x_b)
\]
(which is symmetric in $x_i$ and $x_{i+1}$)
times a polynomial not involving $x_i$ or $x_{i+1}$, so $s_i \qkey_\lambda= \qkey_\lambda$ and $\pi_i \qkey_\lambda =\qkey_\lambda$.
In this case $\pkey_\lambda$ is equal to 
the polynomial in the displayed equation divided by $ 4x_ix_{i+1}$ times a polynomial not involving $x_i$ or $x_{i+1}$,
so   $s_i \pkey_\lambda= \pi_i \pkey_\lambda =\pkey_\lambda$.
We cannot have $\lambda_i =\lambda_{i+1} = i$ since $\lambda = \lambda^\T$.
If $j := \lambda_i =\lambda_{i+1} < i$, then $\qkey_\lambda$ and $\pkey_\lambda$ are both equal to $ \prod_{1\leq a \leq j} (x_i+ x_a)(x_{i+1}+ x_a)$ times polynomials not involving $x_i$ or $x_{i+1}$, so both are again fixed by $s_i$ and $\pi_i$.
If $ \lambda_{i} >\lambda_{i+1}$ then it holds similarly that $\qkey_\lambda$ is equal to a polynomial that is not symmetric in $x_i$ and $x_{i+1}$ times a polynomial not involving $x_i$ or $x_{i+1}$, so we have $s_i\qkey_\lambda\neq \qkey_\lambda$ and  $\pi_i\qkey_\lambda\neq \qkey_\lambda$.
\end{proof}

\begin{proposition}\label{formulas-prop}
Suppose $\alpha$ is a symmetric weak composition and $i \in \PP$.
\ben
\item[(a)] If $\alpha_i > \alpha_{i+1}$ then 
$ \pi_i \pkey_\alpha =  \pkey_{s_i\alpha}
$ and
$\pi_i \qkey_\alpha =\qkey_{s_i\alpha}$ and
$\overline\pi_i \patom_\alpha =\patom_{s_i\alpha}$
and
$  \overline\pi_i \qatom_\alpha = \qatom_{s_i\alpha}$.

\item[(b)] If $\alpha_i < \alpha_{i+1}$ then 
$ \pi_i \pkey_\alpha =  \pkey_{\alpha}
$ and
$\pi_i \qkey_\alpha =\qkey_{\alpha}$ and
$\overline\pi_i \patom_\alpha =-\patom_{\alpha}$
and
$  \overline\pi_i \qatom_\alpha = -\qatom_{\alpha}$.

\item[(c)] If $\alpha_i = \alpha_{i+1}$ then 
$ \pi_i \pkey_\alpha =  \pkey_{\alpha}
$ and
$\pi_i \qkey_\alpha =\qkey_{\alpha}$ and
$\overline\pi_i \patom_\alpha =   \overline\pi_i \qatom_\alpha = 0$.
\een
 \end{proposition}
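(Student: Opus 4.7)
The plan is to reduce everything to the factorization $\pkey_\alpha = \pi_u P$ (and analogously for $\qkey_\alpha$, $\patom_\alpha$, $\qatom_\alpha$), where $u := u(\alpha)$, $\lambda := \lambda(\alpha)$, and $P$ denotes the corresponding ``seed'' product from Definition~\ref{spo-key-def}, namely $\pkey_\lambda$ in the $P$-cases and $\qkey_\lambda$ in the $Q$-cases (each is $\pkey_\lambda$ or $\qkey_\lambda$ applied with trivial permutation). Once we have this, the proof in each case becomes a computation in the $0$-Hecke algebra combined with Lemma~\ref{lam-lam-lem} and Remark~\ref{u-rem}. The key translation is that since $\alpha_k = \lambda_{u^{-1}(k)}$ and $\lambda$ is weakly decreasing, the comparison between $\alpha_i$ and $\alpha_{i+1}$ is equivalent to the comparison between $u^{-1}(i)$ and $u^{-1}(i+1)$, which controls whether $\ell(s_i u) = \ell(u) \pm 1$.

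For case (a), $\alpha_i > \alpha_{i+1}$ forces $u^{-1}(i) < u^{-1}(i+1)$, so $\ell(s_i u) = \ell(u)+1$ and $\pi_i \pi_u = \pi_{s_i u}$, $\overline\pi_i \overline\pi_u = \overline\pi_{s_i u}$. By Remark~\ref{u-rem}, $s_i \alpha$ is a symmetric weak composition with $\lambda(s_i\alpha) = \lambda$ and $u(s_i\alpha) = s_i u$, so applying $\pi_i$ (resp.\ $\overline\pi_i$) to the seed-factored form of $\pkey_\alpha, \qkey_\alpha, \patom_\alpha, \qatom_\alpha$ produces exactly the corresponding object indexed by $s_i\alpha$.

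For case (b), $\alpha_i < \alpha_{i+1}$ gives $u^{-1}(i) > u^{-1}(i+1)$, so we can write $u = s_i v$ with $\ell(v) = \ell(u)-1$, yielding $\pi_u = \pi_i \pi_v$ and $\overline\pi_u = \overline\pi_i \overline\pi_v$. Then $\pi_i \pi_u = \pi_i^2 \pi_v = \pi_i \pi_v = \pi_u$ (since $\pi_i^2 = \pi_i$), while $\overline\pi_i \overline\pi_u = \overline\pi_i^2 \overline\pi_v = -\overline\pi_i \overline\pi_v = -\overline\pi_u$ (since $\overline\pi_i^2 = -\overline\pi_i$). Applying this to the seed $P$ gives all four identities.

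For case (c), minimality of $u$ forces $u^{-1}(i) < u^{-1}(i+1)$, hence $\ell(s_i u) = \ell(u)+1$. By Remark~\ref{u-rem}, $s_i u = u s_j$ for some $j \in J = \{j : \lambda_j = \lambda_{j+1}\}$, so $\pi_i \pi_u = \pi_{s_i u} = \pi_{u s_j} = \pi_u \pi_j$ and similarly $\overline\pi_i \overline\pi_u = \overline\pi_u \overline\pi_j$. Lemma~\ref{lam-lam-lem} then supplies the crucial identities $\pi_j \pkey_\lambda = \pkey_\lambda$ and $\pi_j \qkey_\lambda = \qkey_\lambda$, which yield $\pi_i \pkey_\alpha = \pkey_\alpha$ and $\pi_i \qkey_\alpha = \qkey_\alpha$. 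The atom statements then follow because $\overline\pi_j = \pi_j - 1$ annihilates both $\pkey_\lambda$ and $\qkey_\lambda$. No step looks delicate; the only care required is bookkeeping between properties of $\alpha$ and of $u(\alpha)$ when $\lambda$ has repeated parts, which is exactly what Remark~\ref{u-rem} and Lemma~\ref{lam-lam-lem} have already isolated.
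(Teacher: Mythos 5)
Your proof is correct and is exactly the argument the paper intends: its proof of Proposition~\ref{formulas-prop} is the one-line citation of Remark~\ref{u-rem} and Lemma~\ref{lam-lam-lem}, and your write-up simply spells out that reasoning (the $0$-Hecke relations for $\pi_i$ and $\overline\pi_i$, the identities $u(s_i\alpha)=s_iu(\alpha)$ and $s_iu(\alpha)=u(\alpha)s_j$ with $\lambda_j=\lambda_{j+1}$, and the fixed-point statement of Lemma~\ref{lam-lam-lem} for case (c)). No gaps.
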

 
 \begin{proof}
This follows by the reasoning in Remark~\ref{u-rem} using Lemma~\ref{lam-lam-lem}.
 \end{proof}

\def\vgap{\\[-12pt]}
\begin{table}
{\small\begin{alignat*}{3}
\pkey_{\emptyset} &= \pkey_{1,\emptyset} &&= \key_{\emptyset} 
\vgap\\
\pkey_{22} &= \pkey_{1,1}&&=\key_{01}
\vgap\\
\pkey_{311} &= \pkey_{1,2}&& = \key_{011} + \key_{2}
\vgap\\ 
\pkey_{333} &= \pkey_{1,21}&&=  \key_{012}
\\
\pkey_{4111} &= \pkey_{1,3}&&= \key_{0111} + \key_{2001} + \key_{3}
\vgap\\ 
\pkey_{4311} &= \pkey_{1,31}&& = \key_{0121} + \key_{202} + \key_{301}
\\
\pkey_{51111} &= \pkey_{1,4}&& = \key_{01111} + \key_{20011} + \key_{30001} + \key_{4}
\vgap\\ 
\pkey_{4422} &= \pkey_{1,32}&& = \key_{0122} + \key_{0311} + \key_{23}
\\
\pkey_{53311} &= \pkey_{1,41}&&= \key_{01211} + \key_{20201} + \key_{30101} + \key_{302} + \key_{401}
\\
\pkey_{611111} &= \pkey_{1,5}&&  = \key_{011111} + \key_{200111} + \key_{300011} + \key_{400001} + \key_{5}
\vgap\\ 
\pkey_{4444} &= \pkey_{1,321}&& = \key_{0123}
\\
\pkey_{54221} &= \pkey_{1,42}&& = \key_{01221} + \key_{03111} + \key_{2022} + \key_{23001} + \key_{3012} + \key_{33} + \key_{4011} + \key_{42}
\\
\pkey_{633111} &= \pkey_{1,51}&&= \key_{012111} + \key_{202011} + \key_{301011} + \key_{302001} + \key_{401001} + \key_{402} + \key_{501}
\\
\pkey_{7111111} &= \pkey_{1,6}&& = \key_{0111111} + \key_{2001111} + \key_{3000111} + \key_{4000011} + \key_{5000001} + \key_{6}
\end{alignat*}
\caption{Key expansions of $\pkey_{\lambda}$ up to degree $6$ for skew-symmetric partitions $\lambda$.}\label{table:pkey_expansions}}
\end{table}

\begin{table}
{\small\begin{alignat*}{3}
\qkey_{\emptyset}  & = \qkey_{1,\emptyset} &&= \key_{\emptyset} 
\vgap\\ 
\qkey_{1} &= \qkey_{1,1} &&= 2\key_{1}
\vgap\\ 
 \qkey_{21} & = \qkey_{1,2} &&= 2(\key_{11} + \key_{2})
\vgap\\ 
 \qkey_{22} & = \qkey_{1,21} &&= 4\key_{12}
\\
 \qkey_{311} & = \qkey_{1,3} &&= 2(\key_{111} + \key_{201} + \key_{3})
\vgap\\ 
 \qkey_{321} & = \qkey_{1,31} &&= 4(\key_{121} + \key_{22} + \key_{31})
\\
\qkey_{4111} & = \qkey_{1,4} &&= 2(\key_{1111} + \key_{2011} + \key_{3001} + \key_{4})
\vgap\\ 
 \qkey_{332} & = \qkey_{1,32} &&= 4(\key_{122} + \key_{131} + \key_{23})
\\
 \qkey_{4211} &= \qkey_{1,41} &&= 4(\key_{1211} + \key_{2201} + \key_{3101} + \key_{32} + \key_{41})
\\
\qkey_{51111} & = \qkey_{1,5} &&= 2(\key_{11111} + \key_{20111} + \key_{30011} + \key_{40001} + \key_{5})
\vgap\\ 
 \qkey_{333} & = \qkey_{1,321} &&= 8 \key_{123}
\\
\qkey_{4321} & = \qkey_{1,42} &&= 4(\key_{1221} + \key_{1311} + \key_{222} + \key_{2301} + \key_{312} + \key_{33} + \key_{411} + \key_{42})
\\
 \qkey_{52111} &= \qkey_{1,51} &&= 4(\key_{12111} + \key_{22011} + \key_{31011} + \key_{32001} + \key_{41001} + \key_{42} + \key_{51})
\\
 \qkey_{611111} &= \qkey_{1,6} &&= 2(\key_{11111} + \key_{20111} + \key_{30011} + \key_{40001} + \key_{500001} + \key_{6})
\end{alignat*}
\caption{Key expansions of $\qkey_{\lambda}$ up to degree $6$ for symmetric partitions $\lambda$.}\label{table:qkey_expansions}}
\end{table}

Using symmetric weak compositions $\alpha$ to index $P$-key polynomials is problematic when the inclusion in Lemma~\ref{lam-lam-lem} is not an equality
for $\lambda =\lambda(\alpha)$.
To correct this, we will work with a smaller set of indices.
 
An \defn{addable corner} of a partition $\lambda$ is a cell $(i,j) \notin \D_\lambda$ such that $\D_\lambda \sqcup \{(i,j)\} = \D_\mu$
for some partition $\mu$.
A \defn{removable corner} of $\lambda$ is a cell $(i,j) \in \D_\lambda$ such that $\D_\lambda \setminus \{(i,j)\} = \D_\nu$
for some partition $\nu$. 
We define a partition $\lambda$ to be \defn{skew-symmetric} if $\lambda=\lambda^\T$ and both
\ben
\item[(a)] if $(i,i)$ is a removable corner of $\lambda$ then  $(i,i+1)$ is not an addable corner, and
\item[(b)] if $(i,i)$ is an addable corner of $\lambda$ then $(i,i-1)$ is not a removable corner.
\een
For example, the symmetric partitions 
\be\label{ss-ex1}
\ytableausetup{boxsize = .3cm,aligntableaux=center}
(3,2,1)=\begin{ytableau}
\  & \ &  \ \\ 
\ & \   & \none[\cdot]  \\ 
\ & \none[\cdot] & \none[\cdot]    
 \end{ytableau}
 \qquand
 (3,3,2)=\begin{ytableau}
\  & \ &  \ \\
 \ &  \  & \  \\ 
 \ &\  & \none[\cdot] 
 \end{ytableau}
\ee
are  not skew-symmetric, but these symmetric partitions are skew-symmetric:
\be\label{ss-ex2}
(3,1,1)=\begin{ytableau}
\  & \ &  \ \\ 
\ & \none[\cdot]    & \none[\cdot]  \\ 
\ & \none[\cdot] & \none[\cdot]    
 \end{ytableau}
 \qquand
 (3,3,3)=\begin{ytableau}
\ &\  &\ \\
 \ &  \  & \  \\ 
\  & \ &  \
 \end{ytableau}
 \ee
 The Young diagram of any symmetric partition $\lambda$ either has a unique addable corner of the form $(i,i)$ 
 or a unique removable corner of the form $(i,i)$. If we define $\nu$ from $\lambda$ by adding $(i,i)$ to the Young diagram
 in the first case and by removing $(i,i)$ in the second, then exactly one of $\lambda$ or $\nu$ is skew-symmetric 
 and it holds that $\pkey_\lambda = \pkey_\nu$
 and $\patom_\lambda = \patom_\nu$.
The examples in~\eqref{ss-ex1} and~\eqref{ss-ex2} readily generalize to show these properties.

For each strict partition $\mu$, there is a unique symmetric (respectively, skew-symmetric) partition $\lambda$ such that $\mu = \half(\lambda)$ (respectively, $\mu = \shalf(\lambda)$).
It is often useful to identify a strict partition $\mu$ with its \defn{shifted Young diagram} 
$\SD_\mu:= \{ (i,i+j-1) : (i,j) \in \D_\mu\}.$
Notice that if $\mu = \half(\lambda)$  then $\SD_\mu$ is the set of positions $(i,j) \in \D_\lambda$ with $i\leq j$. Similarly, if $\mu = \shalf(\lambda)$  then $\SD_\mu$ is given by translating the set of positions $(i,j) \in \D_\lambda$ with $i< j$ one column to the left.
  
The sequence of numbers counting the symmetric partitions of $n$ also counts the partitions of $n$ with all odd parts, which appears as~\cite[A000700]{OEIS}.
The sequence
\[
  1, 0, 0, 0, 1, 1, 0, 1, 0, 2, 0, 2, 1, 2, 1, 2, 3, 2, 3, 2, 5, 3, 5, 3, 7, 5, 7, 6, 9, 8, 9, \ldots
\]
counting the skew-symmetric partitions of $n$ does not presently appear in~\cite{OEIS}.
It can be generated by the following \textsc{SageMath}~\cite{sage} code:
\begin{lstlisting}
def num_skew_symmetric(n):
    ret = [1] + [0] * n  # special case for the empty partition
    for k in range(1, n // 2 + 2):
        for la in Partitions(k, max_slope=-1):
            size = 2 * sum(la) + len(la) + int(la[-1] == 1)
            if size <= n:
                ret[size] += 1
    return ret
\end{lstlisting}

\begin{lemma}\label{ss-lam-lam-lem}
One has $\Bigl\{ i \in \PP \mid \lambda_i=\lambda_{i+1}\Bigr\} =\Bigl\{ i \in \PP \mid \pi_i \pkey_\lambda= \pkey_\lambda\Bigr\}$
for a symmetric partition $\lambda=\lambda^\T$
 if and only if 
 $\lambda$ is skew-symmetric. 
\end{lemma}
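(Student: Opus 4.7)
The plan is to derive an explicit product formula for $\pkey_\lambda$, identify exactly when it is $s_i$-symmetric, and match the resulting numerical conditions against the two clauses of skew-symmetry. The $\subseteq$ inclusion has already been established in Lemma~\ref{lam-lam-lem}, so it suffices to show that the reverse inclusion fails at some $i$ if and only if $\lambda$ is not skew-symmetric.

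After reparametrizing $c := a+b$ in Definition~\ref{spo-key-def}, I would rewrite
\[
\pkey_\lambda \;=\; \prod_{(a,c)\in \D_\lambda,\; a<c}(x_a+x_c).
\]
Grouping the factors by their involvement in $\{x_i, x_{i+1}\}$ and noting that $(x_i+x_{i+1})$ itself is automatically $s_i$-symmetric, I would argue that $\pi_i\pkey_\lambda = \pkey_\lambda$ amounts to $(a,i)\in \D_\lambda \Leftrightarrow (a,i+1)\in \D_\lambda$ for every $a<i$, together with $(i,c)\in \D_\lambda \Leftrightarrow (i+1,c)\in \D_\lambda$ for every $c>i+1$. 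Using $\lambda=\lambda^\T$ to convert cell-membership into inequalities on the parts, these translate into: either $\lambda_i=\lambda_{i+1}$, or simultaneously $\lambda_{i+1}\geq i-1$ and $\lambda_i\leq i+1$.

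Assuming $\lambda_i>\lambda_{i+1}$ and both conditions hold, one has $i-1\leq \lambda_{i+1}<\lambda_i\leq i+1$, and I would use $\lambda=\lambda^\T$ once more to rule out the combination $\lambda_i=i+1,\ \lambda_{i+1}=i-1$ (since $\lambda_i=i+1$ forces column $i$ of $\D_\lambda$ to have length $i+1$ and hence $\lambda_{i+1}\geq i$). The surviving cases are Case A ($\lambda_i=i,\ \lambda_{i+1}=i-1$) and Case B ($\lambda_i=i+1,\ \lambda_{i+1}=i$). In Case A, $(i,i)$ is removable and the symmetry forces $\lambda_{i-1}\geq i+1$, so $(i,i+1)$ is addable, violating clause (a). In Case B, with $k:=i+1$, the symmetry forces $\lambda_{k-1}=k$ and $\lambda_{k+1}\leq k-2$, so $(k,k)$ is addable and $(k,k-1)$ is removable, violating (b). Conversely, an entirely parallel symmetry-driven check will show that any violation of (a) or (b) yields exactly such an index $i$ in Case A or Case B for which both numerical conditions hold, completing the equivalence.

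The main technical obstacle is not the overall structure, which is a finite case check once the numerical conditions are in hand, but rather the careful bookkeeping needed to pass between cells of $\D_\lambda$ and inequalities on the parts $\lambda_k$: one has to invoke $\lambda=\lambda^\T$ in precisely the right form both to rule out the spurious combination $\lambda_i=i+1,\ \lambda_{i+1}=i-1$ and to automatically supply the side conditions (such as $\lambda_{i-1}\geq i+1$ in Case A or $\lambda_{k+1}\leq k-2$ in Case B) that certify the relevant cells are genuine addable or removable corners.
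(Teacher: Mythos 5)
Your argument is correct and follows essentially the same route as the paper: both reduce $s_i$-invariance of the product $\prod_{(a,c)\in\D_\lambda,\,a<c}(x_a+x_c)$ to a matching condition on the linear factors involving $x_i,x_{i+1}$, translate that into the numerical constraints $i-1\le\lambda_{i+1}<\lambda_i\le i+1$, and identify the two surviving cases $(\lambda_i,\lambda_{i+1})=(i,i-1)$ and $(i+1,i)$ with the two failure clauses of skew-symmetry. The paper's proof is just a terser version of the same factorization-and-case-check, deferring the bookkeeping to the proof of Lemma~\ref{lam-lam-lem}.
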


\begin{proof}
Fix a partition $\lambda$ such that $\lambda = \lambda^\T$. 
If $(i,i)$ is a removable corner of $\lambda $ and $(i,i+1)$ is an addable corner,
then $\lambda_i = i > \lambda_{i+1} = i-1$ and $s_i \pkey_\lambda=\pi_i \pkey_\lambda =\pkey_\lambda$.
Likewise, if $(i,i)$ is an addable corner of $\lambda$ and $(i,i-1)$ is a removable corner,
then $\lambda_i = i-1 <\lambda_{i-1} = i$ and $s_{i-1} \pkey_\lambda=\pi_{i-1} \pkey_\lambda =\pkey_\lambda$.
Thus if $\lambda$ is not skew-symmetric then there exists $i \in \PP$ with 
$\lambda_i \neq \lambda_{i+1}$ and
$\pi_i \pkey_\lambda=\pkey_\lambda$.

Conversely, by factoring $\pkey_\lambda$ as in the proof of Lemma~\ref{lam-lam-lem},
one checks that the only way to have $ \lambda_{i} \neq \lambda_{i+1}$ and 
$\pi_i \pkey_\lambda =\pkey_\lambda$ (which is equivalent to $s_i \pkey_\lambda =\pkey_\lambda$)
is if $\lambda_i = i > \lambda_{i+1}=i-1$ or $\lambda_i = i+1 > \lambda_{i+1} = i$,
in which case $\lambda$ is not skew-symmetric.
\end{proof}

A weak composition $\alpha$ is \defn{skew-symmetric} if it sorts to a skew-symmetric partition $\lambda(\alpha)$.
Notice that skew-symmetric compositions (respectively, partitions) make up a subset of symmetric compositions (respectively, partitions),
rather than a disjoint class.

\begin{proposition}
Each $Q$-key (respectively, nonzero $Q$-atom) polynomial occurs as $ \qkey_\alpha$  (respectively, $ \qatom_\alpha$)  for some symmetric weak composition $\alpha$.
Each $P$-key (respectively, nonzero $P$-atom) polynomial occurs as $ \pkey_\alpha$  (respectively, $ \patom_\alpha$) for some skew-symmetric weak composition $\alpha$.
\end{proposition}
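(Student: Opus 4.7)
The plan is to exploit a left-coset decomposition of $w$ with respect to the stabilizer of $\lambda$ and then iterate Proposition~\ref{formulas-prop} along a reduced word. For each strict partition $\mu$, let $\lambda$ denote the unique symmetric (respectively, skew-symmetric) partition with $\half(\lambda)=\mu$ (respectively, $\shalf(\lambda)=\mu$); viewed as a weak composition, $\lambda$ is itself (skew-)symmetric with $u(\lambda)=1$, so $\qkey_\lambda=\qkey_{1,\mu}$ and similarly for $\qatom$, $\pkey$, $\patom$. Set $J:=\{j\in\PP : \lambda_j=\lambda_{j+1}\}$ and let $W_J=\langle s_j : j\in J\rangle$ be the parabolic subgroup that stabilizes $\lambda$ under the permutation action. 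Given $w\in S_\infty$, factor $w=w'v$ with $v\in W_J$ and $w'$ the minimum-length representative of the left coset $wW_J$, so that $\ell(w)=\ell(w')+\ell(v)$ and therefore $\pi_w=\pi_{w'}\pi_v$ and $\overline\pi_w=\overline\pi_{w'}\overline\pi_v$ after concatenating reduced words.

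First I would dispatch the key polynomials. Choosing a reduced word for $v$ with letters drawn from $J$, each $\pi_j$ with $j\in J$ fixes $\qkey_\lambda$ by Lemma~\ref{lam-lam-lem}, so iterated application gives $\pi_v\qkey_\lambda=\qkey_\lambda$; for $\pkey$ we invoke Lemma~\ref{ss-lam-lam-lem} instead, which is precisely why we must take $\lambda$ skew-symmetric rather than merely symmetric. Next I would compute $\pi_{w'}\qkey_\lambda$ by iterating Proposition~\ref{formulas-prop}(a)--(c) along any reduced word of $w'$: each $\pi_i$ returns a polynomial $\qkey_\gamma$ indexed by some symmetric composition $\gamma$, with no signs or cancellation. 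Tracking the composition through the iteration shows that the end result is $\qkey_{\alpha}$ with $\alpha:=w'\lambda$, which is symmetric because $\lambda$ is, and has $u(\alpha)=w'$ precisely because $w'$ is the coset minimum. The identical argument gives $\pkey_{w,\mu}=\pkey_\alpha$ with $\alpha$ skew-symmetric.

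The atom case is subtler because Proposition~\ref{formulas-prop}(b) introduces a minus sign while (c) yields zero. If $v\ne 1$, picking a reduced word $j_1\cdots j_k$ for $v$ inside $J$ and evaluating $\overline\pi_{j_k}\qatom_\lambda$ first gives zero by part (c), which forces $\overline\pi_w\qatom_\lambda=0$ and there is nothing to prove. When $v=1$, i.e.\ $w=w'$ is itself coset-minimal, I would fix any reduced word $i_1\cdots i_\ell$ of $w'$ and track the intermediate compositions $\beta^{(k)}:=s_{i_{\ell-k+1}}\cdots s_{i_\ell}\lambda$ for $k=0,1,\dotsc,\ell$. The crucial combinatorial fact is that $\ell(u(\alpha))$ equals the coinversion number $\inv(\alpha):=\#\{(i,j):i<j,\,\alpha_i<\alpha_j\}$, and an adjacent swap $\beta\mapsto s_k\beta$ changes this count by $+1$, $0$, or $-1$ according to whether $\beta_k>\beta_{k+1}$, $\beta_k=\beta_{k+1}$, or $\beta_k<\beta_{k+1}$. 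Since $\inv(\beta^{(0)})=\inv(\lambda)=0$ while $\inv(\beta^{(\ell)})=\inv(\alpha)=\ell(w')=\ell$, each of the $\ell$ steps must contribute $+1$, so case (a) of Proposition~\ref{formulas-prop} applies throughout; no signs ever appear, yielding $\overline\pi_{w'}\qatom_\lambda=\qatom_\alpha$ with $\alpha=w'\lambda$, and the same argument with $\lambda$ skew-symmetric handles $\patom$. The main obstacle is precisely this sign control in the atom case, which the coinversion-monotonicity argument resolves.
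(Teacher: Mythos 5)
Your argument is correct and follows essentially the same route as the paper's proof: choose the (skew-)symmetric partition $\lambda$ with $\half(\lambda)=\mu$ (resp.\ $\shalf(\lambda)=\mu$), write the shifted key/atom as $\pi_w$ or $\overline\pi_w$ applied to $\qkey_\lambda$ or $\pkey_\lambda$, and iterate Proposition~\ref{formulas-prop} along a reduced word. The paper's version is only two sentences and simply asserts that the nonzero outputs have the required form; your parabolic factorization $w=w'v$ together with the coinversion-monotonicity argument is precisely the sign-control detail (ruling out a stray $-\qatom_\alpha$ in the atom case) that the paper leaves implicit, so it is a welcome elaboration rather than a different method.
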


\begin{proof}
Suppose $w \in S_\infty$ and $\mu$ is a strict partition.
If $\lambda$ is the symmetric partition with $\half(\lambda) =\mu$ and $i_1\cdots i_l$ is a reduced word for $w$, 
then $ \qkey_{w,\mu} = \pi_{i_1} \cdots \pi_{i_l} \qkey_{\lambda}$
and 
$ \qatom_{w,\mu} = \overline\pi_{i_1}\cdots \overline\pi_{i_l} \qatom_{\lambda}$.
When nonzero, these polynomials have the form $\qkey_\alpha$ and $\qatom_\alpha$
by Proposition~\ref{formulas-prop}.
%
 %
The second claim follows by 
a similar argument starting with $\lambda$ as the skew-symmetric partition having $\shalf(\lambda) =\mu$.
%
\end{proof}

These (skew-)symmetric weak composition indexing sets are still larger than optimal.
The $P$- and $Q$-atom polynomials can both be zero,
and may coincide when nonzero. For example,
\be\qatom_{012} = \patom_{022} = 0,
\qquad 
\qatom_{30023} = \qatom_{21014} \neq 0,
\qquand
\patom_{402402} = \patom_{313501}\neq 0.
\ee
There are also coincidences among $P$-key polynomials (but see Corollary~\ref{skew-sym-coin-cor}).
It appears that if $\alpha$ is a skew-symmetric weak composition of 
the form $(0,0,\dotsc,0,a,b_1,b_2,\ldots)$
where $0<a = \min(\{a,b_1,b_2,\dots\}\setminus\{0\})$ and $a=\alpha_i$,
then any way of rearranging the first $i$ terms of $\alpha $ gives a skew-symmetric weak composition indexing 
the same $P$-key polynomial.  For example, 
\be
\label{sp-red-ex1}
\pkey_{0011042} = \pkey_{0101042} = \pkey_{1001042}.\ee
We may also have things like
\be
\label{sp-red-ex2}
  \pkey_{4331} \neq
  \pkey_{4313} =  \pkey_{4133} 
\neq
\pkey_{3413} =  \pkey_{1433}
\neq
\pkey_{3143} =  \pkey_{1343}
\neq
\pkey_{3134} =  \pkey_{1334}.
\ee
On the other hand, we have not found any symmetric weak compositions 
$\alpha \neq \gamma$ with  $\qkey_\alpha = \qkey_\gamma$.
We do not have a heuristic for the following conjecture but it is supported by 
computations.

\begin{conjecture}\label{sym-conj}
If $\alpha$ and $\gamma$ are distinct symmetric weak compositions then $\qkey_\alpha \neq \qkey_\gamma$.
\end{conjecture}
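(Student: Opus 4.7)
The plan is to show that $\qkey_\alpha$ determines both the partition $\lambda(\alpha)$ and the minimal-length sorting permutation $u(\alpha)$, so that it determines the symmetric weak composition $\alpha=u(\alpha)\cdot\lambda(\alpha)$ itself. I would carry this out in two steps.

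First, I would recover $\lambda(\alpha)$ via a stable-limit argument. Choose $n$ large enough that $u(\alpha)\in S_n$ and that Theorem~\ref{stab-thm} applies to $\half(\lambda(\alpha))$, and let $w_0\in S_n$ be the longest element. Since every simple reflection $s_i\in S_n$ satisfies $\ell(w_0 s_i)<\ell(w_0)$, the reduced-expression identity $\pi_{w_0}=\pi_{w_0 s_i}\pi_i$ combined with $\pi_i^2=\pi_i$ yields $\pi_{w_0}\pi_i=\pi_{w_0}$, and by induction $\pi_{w_0}\pi_v=\pi_{w_0}$ for every $v\in S_n$. Applied with $v=u(\alpha)$ this gives
\[
\pi_{w_0}\qkey_\alpha
\;=\;\pi_{w_0}\,\pi_{u(\alpha)}\,\qkey_{1,\half(\lambda(\alpha))}
\;=\;\qkey_{w_0,\half(\lambda(\alpha))}
\;=\;Q_{\half(\lambda(\alpha))}(x_1,\dots,x_n).
\]
Since the polynomials $Q_\mu(x_1,\dots,x_n)$ indexed by distinct strict partitions $\mu$ with $\ell(\mu)\le n$ are $\ZZ$-linearly independent, and since $\lambda\mapsto\half(\lambda)$ is a bijection from symmetric partitions onto strict partitions, any equality $\qkey_\alpha=\qkey_\gamma$ forces $\lambda(\alpha)=\lambda(\gamma)$.

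Second, with $\lambda:=\lambda(\alpha)=\lambda(\gamma)$ common to both indices and $F:=\qkey_{1,\half(\lambda)}=\prod_{(i,j)\in\SD_{\half(\lambda)}}(x_i+x_j)$, it remains to prove that $v\mapsto \pi_v F$ is injective on minimal-length coset representatives of the parabolic $S_\infty/\langle s_j:\lambda_j=\lambda_{j+1}\rangle$. My proposed strategy mirrors the classical argument for ordinary keys, which uses the sharp leading-term formula~\eqref{leading-eq}. By Theorem~\ref{thm:key_osp_key_positive} each expansion $\pi_v F=\sum_\beta c^v_\beta\kappa_\beta$ has coefficients $c^v_\beta\in\NN$, so I would designate a canonical ``leading'' index $\beta^*(v)$ in this support and prove that $v\mapsto\beta^*(v)$ is injective by tracking how $\beta^*$ transforms under each $\pi_i$.

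The main obstacle is producing a workable description of $\beta^*(v)$. Lex-leading information at the level of \emph{monomials} alone is insufficient: for instance $\qkey_{(2,1)}=2\kappa_2+2\kappa_{11}$ and $\qkey_{(1,2)}=2\kappa_{02}+2\kappa_{11}$ share the same lex-leading monomial $x_1^2$ with coefficient $2$, although their key supports do differ. A more promising route uses the crystal-theoretic model for $\qkey_\alpha$ that the authors announce in the introduction: realizing $\qkey_\alpha$ as the weight generating function of a concrete set of shifted tableaux should expose a canonical highest-weight element whose descent statistics recover $v=u(\alpha)$, and a Demazure-crystal argument parallel to the ordinary key case would then close the proof. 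Circular reasoning---asserting $\qkey_{s_i\alpha}\neq\qkey_\alpha$ when $\alpha_i>\alpha_{i+1}$ in order to read off descents from the polynomial---must be avoided, since that statement is itself a consequence of the conjecture; an external combinatorial invariant of the sort just described is therefore essential.
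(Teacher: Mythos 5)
This statement is an open conjecture in the paper (Conjecture~\ref{sym-conj}); the authors explicitly write that they have no heuristic for it and that it is supported only by computation. So there is no proof in the paper to compare yours against, and what you have written is a strategy, not a proof. Your first step is correct and is essentially the same reduction the paper carries out for the $P$-case in Corollary~\ref{skew-sym-coin-cor}: the identity $\pi_{w_0}\pi_v=\pi_{w_0}$ together with Theorem~\ref{stab-thm}(b) and the linear independence of the $Q_\mu(x_1,\dotsc,x_n)$ does show that $\qkey_\alpha=\qkey_\gamma$ forces $\lambda(\alpha)=\lambda(\gamma)$.

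The genuine gap is your second step, which is where the entire content of the conjecture lives, and you acknowledge you have not carried it out. I want to flag why it is harder than your sketch suggests. The exactly analogous statement for $P$-key polynomials is \emph{false}: the paper exhibits $\pkey_{4313}=\pkey_{4133}$ in~\eqref{sp-red-ex2}, where both indices are skew-symmetric weak compositions sorting to the same skew-symmetric partition $\lambda=(4,3,3,1)$ but with distinct minimal coset representatives $u(\alpha)\neq u(\gamma)$. This happens even though Lemma~\ref{ss-lam-lam-lem} guarantees that for skew-symmetric $\lambda$ the set $\{i\mid \pi_i\pkey_\lambda=\pkey_\lambda\}$ is exactly the parabolic stabilizer of $\lambda$ --- the same structural fact that Lemma~\ref{lam-lam-lem} gives in the $Q$-case. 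Consequently, any argument of the form ``designate a canonical leading index $\beta^*(v)$ in the key support of $\pi_vF$ and show it transforms faithfully under $\pi_i$'' cannot succeed unless it uses a property of the specific product $F=\prod_{(i,j)\in\D_\mu}(x_i+x_{i+j-1})$ that fails for $\prod_{(i,j)\in\D_\mu}(x_i+x_{i+j})$; otherwise the same argument would prove the false $P$-statement. Your proposal does not identify what that distinguishing property is, and until it does, the strategy cannot be assessed as viable. (Note also that the paper's own conjectural leading-term description, Conjecture~\ref{leading-conj1}, is itself unproven, so even the ``canonical index'' you hope to track is not yet available.)
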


We have tested that there are no coincidences among the 20,288 distinct $Q$-key polynomials 
indexed by symmetric weak compositions $\alpha$ with $\ell(\alpha)\leq 8$ and $| \half(\lambda(\alpha))|\leq 8$.

Even if Conjecture~\ref{sym-conj} holds (that is, if the $Q$-key polynomials are uniquely indexed), they are still not linearly 
independent. For example, it holds that
$
\qkey_{123} + \qkey_{0321} = \qkey_{132} + \qkey_{0231}
$
although the four polynomials $\qkey_\alpha$ are all distinct.
The set of distinct $P$-key polynomials $\{\pkey_\alpha\}$ is also linearly dependent over $\ZZ$.
Finally, it is not hard to find some linear relationships between various $P$- and $Q$-key polynomials, such as the following.

\begin{corollary}
\label{cor:P_linear_comb_Q}
For all integers $n \geq 2$, we have
$
2\pkey_{(n+1)1^n} = \qkey_{n101^{n-2}} - \qkey_{1n01^{n-2}} + \qkey_{0n1^{n-1}}.
$
\end{corollary}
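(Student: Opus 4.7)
The plan is to expand both sides in the basis of key polynomials $\kappa_\alpha$ via Proposition~\ref{prop:hook_shapes} and then observe cancellations among the intermediate $\kappa$-terms.

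First I would sort out the reindexing. Since $(n+1, 1^n)$ is already a partition with $\shalf((n+1, 1^n)) = (n)$, we have $\pkey_{(n+1)1^n} = \pkey_{1,(n)}$. For the three $Q$-key polynomials on the right, each has $\lambda(\alpha) = (n, 1^{n-1})$ and hence $\half(\lambda) = (n)$. Computing the minimal permutations $u(\alpha)$ yields reduced words $s_3 s_4 \cdots s_n$, $s_1 s_3 s_4 \cdots s_n$, and $s_1 s_2 s_3 \cdots s_n$, so that
\[
\qkey_{n, 1, 0, 1^{n-2}} = \pi_3 \pi_4 \cdots \pi_n \, \qkey_{1,(n)}, \quad
\qkey_{1, n, 0, 1^{n-2}} = \pi_1 \pi_3 \pi_4 \cdots \pi_n \, \qkey_{1,(n)}, \quad
\qkey_{0, n, 1^{n-1}} = \pi_1 \pi_2 \pi_3 \cdots \pi_n \, \qkey_{1,(n)}.
\]
Setting $R := \pi_3 \pi_4 \cdots \pi_n \, \qkey_{1,(n)}$, the right-hand side becomes $T R$ where $T := 1 - \pi_1 + \pi_1 \pi_2$.

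Next I would compute $R$ by starting from $\qkey_{1,(n)} = 2 \sum_{m=1}^n \kappa_{m 0^{m-1} 1^{n-m}}$ in Proposition~\ref{prop:hook_shapes} and applying $\pi_3 \pi_4 \cdots \pi_n$ termwise using the rules of Remark~\ref{u-rem}. The $m=1$ summand $\kappa_{1^n}$ transforms into $\kappa_{1,1,0,1^{n-2}}$ via a succession of genuine swaps; for $m \geq 2$, the summand $\kappa_{m, 0^{m-1}, 1^{n-m}}$ transforms into $\kappa_{m, 0^m, 1^{n-m}}$, since each intermediate $\pi_i$ either performs a genuine swap (between a trailing $1$ and the solitary $0$) or fixes a pair of equal $0$-entries. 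This gives
\[
R = 2 \kappa_{1, 1, 0, 1^{n-2}} + 2 \sum_{m=2}^n \kappa_{m, 0^m, 1^{n-m}}.
\]

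Finally I would evaluate $TR$ termwise. On $\kappa_{1,1,0,1^{n-2}}$, the operator $\pi_1$ fixes since $\alpha_1 = \alpha_2 = 1$, so $(1-\pi_1)$ contributes zero, while $\pi_1 \pi_2$ performs two successive swaps and produces $\kappa_{0,1^n}$; hence $T \kappa_{1,1,0,1^{n-2}} = \kappa_{0,1^n}$. On $\kappa_{m, 0^m, 1^{n-m}}$ with $m \geq 2$, the operator $\pi_2$ fixes since $\alpha_2 = \alpha_3 = 0$, so $\pi_1 \pi_2 \kappa = \pi_1 \kappa$ and the contributions $-\pi_1 \kappa$ and $+\pi_1 \pi_2 \kappa$ cancel, leaving $T \kappa = \kappa$. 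Combining,
\[
TR = 2\kappa_{0,1^n} + 2 \sum_{m=2}^n \kappa_{m, 0^m, 1^{n-m}} = 2 \pkey_{1,(n)} = 2 \pkey_{(n+1)1^n}
\]
by a second appeal to Proposition~\ref{prop:hook_shapes}, which is exactly the claim. The only delicate part of the argument is identifying the three reduced words and confirming that each $\pi_i$ in the products acts as a genuine swap or as the identity at the appropriate stage of the application; once that bookkeeping is in place the proof reduces to a clean two-line cancellation.
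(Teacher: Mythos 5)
Your proposal is correct and is exactly the computation the paper has in mind: its proof of Corollary~\ref{cor:P_linear_comb_Q} is a one-line appeal to the key expansions in Proposition~\ref{prop:hook_shapes}, and you have simply carried out that "straightforward computation" in full, correctly tracking the reindexing via Proposition~\ref{formulas-prop}/Remark~\ref{u-rem} and the resulting cancellation. No gaps.
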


\begin{proof}
This follows from a straightforward computation using Proposition~\ref{prop:hook_shapes}, which identifies the key decompositions of $\pkey_{(n+1)1^n} = \pkey_{1,(n)}$ and $\qkey_{n1^{n-1}} = \qkey_{1,(n)}$.
\end{proof} 

There is also a simple condition that conjecturally determines when a $Q$-key polynomial is a scalar multiple of a $P$-key polynomial; see Theorem~\ref{alpha1-thm} and Conjecture~\ref{conj:alpha1_converse}.

Nevertheless, there are $\pkey_{\alpha}$'s that are not rational linear combinations of $\qkey_{\alpha}$'s and vice versa.

\begin{example}\label{ex:contain_the_span}
It is sufficient to examine $\pkey_{\alpha}$ (respectively, $\qkey_{\alpha}$) when $\alpha=\lambda(\alpha)$ is a skew-symmetric (respectively, symmetric) partition.
In degree $1$ the polynomials of this form  are
\[\pkey_{22} = x_1 + x_2 =\kappa_{01} \quand \qkey_{1} = 2x_1 = 2\kappa_1.\]
Thus every $P$-key polynomial with degree $1$ is given by $\kappa_{0^n1}$ for some $n>0$ and no $\QQ$-linear combination of these 
is $\qkey_{1}$.
To find a $P$-key polynomial that is not a $\QQ$-linear combination of $Q$-key polynomials, we have to go to degree $5$.
%
%
There, we have
\[
\pkey_{4422} = \key_{23} + \key_{0122} + \key_{0311} 
\]
while the $\qkey_{\alpha}$'s of degree $5$ with $\alpha$ a symmetric partition are
\begin{align*}
\qkey_{51111} & = 2(\key_{5} + \key_{11111} + \key_{20111} + \key_{30011} + \key_{40001}),
\\ \qkey_{4211} & = 4(\key_{32} + \key_{41} + \key_{1211} + \key_{2201} + \key_{3101}),
\\ \qkey_{332} & = 4(\key_{23} + \key_{122} + \key_{131}).
\end{align*}
There is no way to write $\pkey_{4422}$ as a $\QQ$-linear combination of polynomials $\pi_u\qkey_{51111}$, $\pi_v\qkey_{4211}$, and $\pi_w\qkey_{332}$ with $u,v,w\in S_\infty$.
Consider the key term $\key_{0311}$ in $\pkey_{4422}$, which can only be obtained from
\begin{align*}
\qkey_{1421} & = \pi_1 \pi_2 \qkey_{4211} = 4(\key_{032} + \key_{041} + \key_{1121} + \key_{0221} + \key_{0311}),\text{ or }
\\
\qkey_{1412} & = \pi_3 \pi_1 \pi_2 \qkey_{4211} = 4(\key_{0302} + \key_{0401} + \key_{1112} + \key_{0212} + \key_{0311}).
\end{align*}
However, these are the only $Q$-keys that contribute the key terms $\key_{041}$ and $\key_{0401}$, respectively, neither of which is in $\pkey_{4422}$.
Hence, $\pkey_{4422}$ is not a $\QQ$-linear combination of $Q$-key polynomials.
\end {example}

This example suggests the following open problem.

\begin{problem}\label{space-prob}
Characterize the distinct $\QQ$-vector spaces spanned by the $ \pkey_{\alpha}$'s and $ \qkey_{\alpha}$'s.
\end{problem}

By contrast, recall that the usual key polynomials are a basis for the entire polynomial ring.
We will show that the two vector spaces described in Problem~\ref{space-prob} are distinct; see Corollary~\ref{cor:span_noncontainment}.


 \subsection{Leading term conjectures}\label{leading-subsect}

We explain in this section some conjectural formulas for the leading terms of $\qkey_\alpha$ and $\pkey_\alpha$.
Throughout, let $\alpha$ be a symmetric weak composition, and write $\lambda =\lambda(\alpha)$ and $u=u(\alpha)$.
Define 
\[
\DSym_\alpha = \{ (u(i), u(j)) \mid (i,j) \in \D_\lambda\}.
\]
Notice that the weak composition $\alpha$ may be recovered from the symmetric diagram $\DSym_\alpha$ by counting the positions in each row (equivalently, each column).
Define 
the  \defn{sub-diagonal row and column counts} of $\alpha$ to be the sequences
$\rowCounts(\alpha) = (\rho_1,\rho_2,\ldots)$ and $\columnCounts(\alpha) = (\gamma_1,\gamma_2,\ldots)$, where 
\[
 \rho_j = | \{ (a,b) \in \DSym_\alpha \mid j = a\geq b\}|
\quand
\gamma_j = | \{ (a,b) \in \DSym_\alpha \mid a \geq b = j\}|
\]
for each $j \in \PP$. 
These sequences count the number of cells weakly below the diagonal in $\DSym_\alpha$ by rows and columns, respectively.
For example, if $\alpha =(2,3,0,1)$, so $\lambda=(3,2,1)$ and $u = 2143$, then
\[
\ytableausetup{aligntableaux=center}
 \DSym_\lambda = \D_\lambda = 
\begin{ytableau}
 \ & \ & \ & \none[\cdot] \\
 \  & \   & \none[\cdot] & \none[\cdot] \\ 
  \  & \none[\cdot] & \none[\cdot]   & \none[\cdot]  \\
\none[\cdot] & \none[\cdot] & \none[\cdot] & \none[\cdot]
 \end{ytableau}
 \qquand
  \DSym_\alpha = 
\begin{ytableau}
 \ & \ & \none[\cdot]& \none[\cdot] \\
  \ &\  & \none[\cdot]  & \  \\ 
      \none[\cdot] & \none[\cdot]& \none[\cdot]& \none[\cdot] \\
 \none[\cdot] & & \none[\cdot]& \none[\cdot] 
 \end{ytableau}\,,
\]
and thus we have $\rowCounts(\alpha) = (1,2,0,1)$ and $\columnCounts(\alpha) = (2,2,0,0)$.

\begin{proposition}\label{dsym-prop1}
 Suppose $\alpha$ is a symmetric weak composition. Then $\alpha$ is uniquely determined by  $\rowCounts(\alpha)$ and $\columnCounts(\alpha)$.
 Moreover, one has $\rowCounts(\alpha)  \leq_{\mathrm{lex}} \columnCounts(\alpha)$
 with equality if and only if $\abs{\alpha}\leq 1$.
\end{proposition}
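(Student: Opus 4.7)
The plan is to establish both claims from the identity
\[
\alpha_r = \rho_r + \gamma_r - \epsilon_r, \qquad \epsilon_r := [(r,r)\in\DSym_\alpha]\in\{0,1\},
\]
which follows by splitting row $r$ of $\DSym_\alpha$ at column $r$ and using the symmetry $\DSym_\alpha = \DSym_\alpha^\T$ to rewrite the count of cells in row $r$ with column $>r$ as $\gamma_r-\epsilon_r$.

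For the lex inequality, the key is the double-counting identity
\[
\sum_{r=1}^{k}(\gamma_r-\rho_r) \;=\; \bigl|\{(a,b)\in\DSym_\alpha : b\leq k<a\}\bigr|,
\]
which holds by expanding each side as a count of on-or-below-diagonal cells and invoking $\DSym_\alpha=\DSym_\alpha^\T$. The right-hand side is manifestly nonnegative. If it vanishes for every $k$, then $\DSym_\alpha$ has no strict lower cells, so by symmetry no off-diagonal cells at all; combined with $\lambda(\alpha)=\lambda(\alpha)^\T$ this forces $|\alpha|\leq 1$. Conversely $|\alpha|\leq 1$ immediately yields $\rho=\gamma$. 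Otherwise the first index $k$ with $\rho_k\neq\gamma_k$ satisfies $\sum_{r\leq k}(\gamma_r-\rho_r)=\gamma_k-\rho_k>0$, giving $\rho<_{\mathrm{lex}}\gamma$.

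For unique recovery of $\alpha$, the crucial structural observation is that the row sets $S_r:=\{c:(r,c)\in\DSym_\alpha\}$ form a totally ordered chain under inclusion: each $S_r=\{u(1),\ldots,u(\alpha_r)\}$ is an initial segment of a fixed injective sequence $(u(k))_{k\in\PP}$, so $S_r\subseteq S_{r'}$ whenever $\alpha_r\leq\alpha_{r'}$; by the symmetry of $\DSym_\alpha$ the same chain also describes the column sets. I then recover $\alpha$ by strong induction on $r$. The base case is $\alpha_1=\gamma_1$, which follows from $\rho_1=\epsilon_1\in\{0,1\}$. For the inductive step, knowing $\alpha_1,\ldots,\alpha_{r-1}$ amounts to knowing $S_1,\ldots,S_{r-1}$, and the dual identities
\[
\rho_r-\epsilon_r \;=\; |\{a<r : r\in S_a\}|, \qquad \gamma_r-\epsilon_r \;=\; |\{a>r : r\in S_a\}|,
\]
combined with the requirement that $\lambda(\alpha)$ be self-conjugate, should pin down the pair $(\alpha_r,\epsilon_r)$ uniquely.

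The main obstacle will be the inductive step: among the two candidate values $\alpha_r\in\{\rho_r+\gamma_r,\rho_r+\gamma_r-1\}$, one must rule out the one that is inconsistent with extending $S_1,\dots,S_{r-1}$ to a symmetric diagram whose minimal-length permutation $u$ reproduces the given $(\rho,\gamma)$. I expect this to reduce to a short case analysis using the symmetry $\DSym_\alpha=\DSym_\alpha^\T$, based on whether $\rho_r,\gamma_r$ are positive and how $r$ fits relative to the already-reconstructed chain elements.
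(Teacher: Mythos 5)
Your lexicographic comparison is correct and is actually a cleaner route than the paper's: the partial-sum identity $\sum_{r\le k}(\gamma_r-\rho_r)=\bigl|\{(a,b)\in\DSym_\alpha : b\le k<a\}\bigr|$ is valid (on-or-below-diagonal cells with $a\le k$ automatically have $b\le k$, so the two counts differ exactly by the cells with $b\le k<a$), it gives nonnegativity of every partial sum directly, and the equality case follows since vanishing of all partial sums kills every strictly sub-diagonal cell, hence by symmetry every off-diagonal cell, which for a permuted diagram of a symmetric partition forces $\lambda(\alpha)\in\{\emptyset,(1)\}$. The paper instead proves this half by induction on $\ell(\alpha)$ with a case split on $\rho_1$; your argument is shorter and more transparent.

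The uniqueness half, however, has a genuine gap. First, the inductive invariant rests on the claim that knowing $\alpha_1,\dotsc,\alpha_{r-1}$ ``amounts to knowing $S_1,\dotsc,S_{r-1}$.'' This is false: $S_a=u(\alpha)(\{1,\dotsc,\alpha_a\})$ depends on the entire composition through $u(\alpha)$. For instance $\alpha=(2,3,0,1)$ and $\alpha''=(2,0,3,1)$ are both symmetric with $\alpha_1=\alpha''_1=2$, but the first row set is $\{1,2\}$ for the former and $\{1,3\}$ for the latter. Second, even granting the invariant, the decisive step --- ruling out one of the two candidates $\alpha_r\in\{\rho_r+\gamma_r,\,\rho_r+\gamma_r-1\}$ --- is exactly where the content of the proposition lies, and you leave it at ``I expect this to reduce to a short case analysis.'' The example given in the paper immediately after the proposition exhibits two distinct finite symmetric subsets of $\PP\times\PP$ with identical sub-diagonal row and column counts, so any disambiguation must exploit the global fact that $\DSym_\alpha$ is a permuted Young diagram of a symmetric partition, not merely the symmetry of the cell set; your local, row-by-row induction has no mechanism for importing that constraint. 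The paper's proof does this with a different, global recursion: the indices $i$ maximizing $\rho_i+\gamma_i$ are shown to be exactly those with $\alpha_i=\max(\alpha)$ (using that every maximal row of a symmetric partition meets the diagonal), all maximal hooks are stripped at once, the sub-diagonal counts of the resulting smaller symmetric composition are computed from $(\rho,\gamma)$, and one recurses. To salvage your scheme you would need to formulate and prove an invariant strong enough to encode this global structure; as written the induction does not go through.
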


\begin{proof}
We have $\DSym_\alpha = \varnothing$ if and only if $\rowCounts(\alpha) = \columnCounts(\alpha) = (0,0,0,\ldots)$.
Assume $\DSym_\alpha$ is nonempty so that $\rowCounts(\alpha)=(\rho_1,\rho_2,\ldots)$ and $\columnCounts(\alpha)=(\gamma_1,\gamma_2,\ldots)$ are both nonzero.
The sum $\rho_i + \gamma_i - 1$ is equal to $\alpha_i$ if $(i,i) \in \DSym_\alpha$ and is otherwise $\alpha_i - 1$.
Let $I$ be the set of indices $i$ at which $\rho_i + \gamma_i - 1$ is maximized.
Since we have $(i,i) \in \DSym_\alpha$ for all indices at which $\alpha_i = \max(\alpha)$,
it follows that $\alpha_i = \max(\alpha)$ if and only if $i \in I$. 
Moreover, as $\DSym_\alpha$
is obtained by permuting the coordinates of the diagram of a symmetric partition,
it follows that there is a  finite subset $J\subset \PP$ with
\[\DSym_\alpha  \cap (I \times \PP) = I\times J\quand \DSym_\alpha  \cap (\PP \times I) = J\times I \quand \DSym_\alpha \subseteq J\times J.\]
Specifically, $J $ is just the image of $\{1,2,\dotsc,\max(\alpha)\}$ under the permutation $u(\alpha)$.
Define
\[
\rho_i' := \begin{cases} 0 &\text{if }i \in I\text{ or }\rho_i=0 \\
 \rho_i - \abs{I_{<i}} &\text{if } i \notin I\text{ and }\rho_i\neq 0\end{cases}
\quand
 \gamma_i' := \begin{cases} 0 &\text{if }i \in I\text{ or }\gamma_i=0 \\
 \gamma_i - \abs{I_{>i}} &\text{if } i \notin I\text{ and }\gamma_i \neq 0,\end{cases}
 \]
where $I_{<i} := \{ j \in I \mid j < i \}$ and $I_{>i} := \{ j \in I \mid j > i \}$.
Finally, let $\alpha'=(\alpha'_1,\alpha'_2,\ldots)$, where 
\[ \alpha'_i := \begin{cases} 0&\text{if }i \in I \text{ or }\alpha_i = 0, \\ 
\alpha_i - |I|&\text{if }i \notin I \text{ and }\alpha_i \neq 0.\end{cases}
\] 
Then $\alpha'$ is a symmetric weak composition, whose shape is formed from $\lambda(\alpha)$ by removing all maximal hooks, and we have $\rowCounts(\alpha') = (\rho_1',\rho_2',\ldots)$ and $\columnCounts(\alpha') = (\gamma_1',\gamma_2',\ldots)$.
By induction we can recover $\DSym_{\alpha'}$ from these sequences.
This symmetric set contains no positions in the rows indexed by $I$ or in the columns indexed by $I$,
and we recover $\DSym_\alpha$ from $\DSym_{\alpha'}$ by adding the positions $(i,j)$ and $(j,i)$
for each $i \in I$ and $j \in \PP$ with $\rho_j' < \rho_j$ or $\gamma'_j < \gamma_j$.

Finally, if $\rho_1=0$ then we split our argument into two cases.
If $\gamma_1 > 0=\rho_1$, then the claim is immediate.
Therefore, we now assume $\gamma_1 = 0=\rho_1$, and so $\alpha_1 = 0$ since $\DSym_{\alpha}$ is symmetric.
In this case we may assume by induction on $\ell(\alpha)$ that $(\rho_2,\rho_3,\ldots) \leq_{\mathrm{lex}}  (\gamma_2,\gamma_3,\ldots)$, with equality if and only $\abs{\alpha}=1$, since these sequences are the sub-diagonal row and column counts for $(\alpha_2,\alpha_3,\ldots)$.
This implies the desired ordering of $\rowCounts(\alpha)$ and $\columnCounts(\alpha)$ when $\rho_1=0$.
Alternatively, we always have $\rho_1 \leq 1$ and if $\rho_1=1$ then $\gamma_1 \geq 1$ with equality if and only if $\DSym_\alpha = \{(1,1)\}$, so if $\rho_1=1$ then $\rowCounts(\alpha) <_{\mathrm{lex}} \columnCounts(\alpha)$ or $\rowCounts(\alpha) = \columnCounts(\alpha) = (1,0,0,\ldots)$.
\end{proof}

In general, a finite symmetric subset of $\PP\times\PP$ is not uniquely determined by its sub-diagonal row and column counts.
For example,
 $\rowCounts(\alpha) = (0,0,1,1)$ and $\columnCounts(\alpha) = (1, 1, 0, 0)$
 are the sub-diagonal row and column counts for both of the following configurations:
\[
 \begin{ytableau}
  \none[\cdot] & \none[\cdot] & \none[\cdot] & \\
   \none[\cdot] & \none[\cdot] & & \none[\cdot]  \\ 
    \none[\cdot] & & \none[\cdot]& \none[\cdot] \\
 \ & \none[\cdot] & \none[\cdot]& \none[\cdot] 
 \end{ytableau}
 \qquad\qquad
 \begin{ytableau}
  \none[\cdot] & \none[\cdot] & & \none[\cdot] \\
   \none[\cdot] & \none[\cdot] & \none[\cdot]  & \\ 
    & \none[\cdot] & \none[\cdot]& \none[\cdot] \\
 \none[\cdot] & & \none[\cdot]& \none[\cdot] 
 \end{ytableau}
\]

Define $\diag(\alpha) = |\{ (i,j) \in \DSym_\alpha \mid i=j\}|$ for a symmetric weak composition $\alpha$.
The sequences $\rowCounts(\alpha)$ and $\columnCounts(\alpha)$ appear to be related to $\qkey_\alpha$ in the following way:

\begin{conjecture}\label{leading-conj1}
If $\alpha$ is a symmetric weak composition with $\rowCounts(\alpha)  \neq    \columnCounts(\alpha)$ and $n = \ell(\alpha)$ then
\[  2^{-\diag(\alpha)}\qkey_\alpha \in    x^{\rowCounts(\alpha)} +  x^{\columnCounts(\alpha)} + \sum_{
\substack{\delta \in \NN^n \\ 
\rowCounts(\alpha) <_{\mathrm{lex}} \delta }} \NN x^\delta.\]
\end{conjecture}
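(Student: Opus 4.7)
The plan is to analyze the key expansion of $2^{-d}\qkey_\alpha$, where $d := \diag(\alpha)$. By Theorem~\ref{thm:key_osp_key_positive} combined with the divisibility of $\qkey_\alpha$ by $2^d$, we may write $2^{-d}\qkey_\alpha = \sum_{\beta}c_\beta\kappa_\beta$ with $c_\beta\in\NN$. Since each $\kappa_\beta$ has lex-leading monomial $x^\beta$ with coefficient $1$ by~\eqref{leading-eq}, Conjecture~\ref{leading-conj1} is equivalent to establishing: (i) $c_{\rowCounts(\alpha)}=1$; (ii) $c_\beta=0$ whenever $\beta<_{\mathrm{lex}}\rowCounts(\alpha)$; and (iii) the total coefficient of $x^{\columnCounts(\alpha)}$ contributed by the $\kappa_\beta$ expansion is at least $1$.

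Write $\lambda := \lambda(\alpha)$ and $u := u(\alpha)$, so that $2^{-d}\qkey_\alpha = \pi_u(2^{-d}\qkey_\lambda)$. First I would treat the base case $\alpha=\lambda$ using the product formula
\[
2^{-d}\qkey_\lambda \;=\; \prod_{i=1}^d x_i \cdot \prod_{\substack{(i,j)\in\D_\lambda \\ i<j}}(x_i+x_j).
\]
Expanding the second product as a sum over subsets $S$ of strictly above-diagonal cells of $\D_\lambda$ (each $(i,j)\in S$ contributing $x_i$, each $(i,j)\notin S$ contributing $x_j$) shows that $S=\emptyset$ uniquely produces the lex-minimum monomial $x^{\rowCounts(\lambda)}$, where $\rowCounts(\lambda)_k=\min(\lambda_k,k)=\columnCounts(\lambda)_k$. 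Rewriting into the key basis then yields $c_{\rowCounts(\lambda)}=1$ with no $\kappa_\beta$ for $\beta<_{\mathrm{lex}}\rowCounts(\lambda)$ appearing.

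The general case proceeds by induction on $\ell(u)$. Choose $i$ with $\alpha_i<\alpha_{i+1}$, set $\alpha':=s_i\alpha$, and use Proposition~\ref{formulas-prop}(a) to write $\qkey_\alpha=\pi_i\qkey_{\alpha'}$. Remark~\ref{u-rem} gives the key-level identity $\pi_i\kappa_\beta=\kappa_{s_i\beta}$ if $\beta_i>\beta_{i+1}$ and $\pi_i\kappa_\beta=\kappa_\beta$ otherwise, which translates into the recurrence
\[
c^{(\alpha)}_{\beta} = \begin{cases}0, & \beta_i>\beta_{i+1},\\ c^{(\alpha')}_{\beta}, & \beta_i=\beta_{i+1},\\ c^{(\alpha')}_{\beta}+c^{(\alpha')}_{s_i\beta}, & \beta_i<\beta_{i+1}.\end{cases}
\]
On the combinatorial side, since $\DSym_{\alpha'}=\{(s_i(a),s_i(b)):(a,b)\in\DSym_\alpha\}$ is obtained by swapping rows and columns $i\leftrightarrow i{+}1$, a direct computation gives
\[
\rowCounts(\alpha')_i=\rowCounts(\alpha)_{i+1}-\delta,\qquad\rowCounts(\alpha')_{i+1}=\rowCounts(\alpha)_i+\delta,\qquad\delta:=[(i,i+1)\in\DSym_\alpha],
\]
with $\rowCounts(\alpha')_j=\rowCounts(\alpha)_j$ for $j\notin\{i,i+1\}$, and analogously for $\columnCounts$. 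The task is then a case analysis, based on the occupancies of $(i,i+1),(i+1,i),(i,i),(i+1,i+1)$ in $\DSym_\alpha$, verifying that these two recurrences are compatible with (i)--(iii) for $\alpha$ given their counterparts for $\alpha'$. The parallel claim (iii) for $\columnCounts(\alpha)$ should follow from an analogous argument tracking a second canonical contribution in the expansion of $\qkey_\lambda$, arising from $S$ equal to all above-diagonal cells.

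The principal obstacle is that the inductive hypothesis must be strengthened: even when $\rowCounts(\alpha)\neq\columnCounts(\alpha)$, one can have $\rowCounts(\alpha')=\columnCounts(\alpha')$, so the induction needs to cover both cases uniformly. More seriously, the third branch $c^{(\alpha)}_\beta=c^{(\alpha')}_\beta+c^{(\alpha')}_{s_i\beta}$ risks inflating the coefficient of $\kappa_{\rowCounts(\alpha)}$ above $1$ whenever both $\rowCounts(\alpha')$ and $s_i\rowCounts(\alpha')$ appear in the expansion for $\alpha'$. Controlling this phenomenon — together with rewriting the monomial expansion of $\qkey_\lambda$ into the key basis in the first place — will likely require a combinatorial model for the key expansion of shifted $Q$-keys, perhaps a shifted analogue of Mason's skyline fillings or of Kohnert's moves. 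The fact that key-expansion multiplicities of $\qkey_\alpha$ can exceed $1$ (Remark~\ref{tabular-rmk}) underscores the subtlety here, and suggests that a complete proof of Conjecture~\ref{leading-conj1} will proceed in tandem with the crystal-theoretic description of shifted key polynomials announced by the authors for a subsequent paper.
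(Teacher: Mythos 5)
First, a point of calibration: the statement you are proving is stated in the paper as Conjecture~\ref{leading-conj1} and is left open there (it is supported only by computations), so there is no proof of record to compare against. What you have written is a proof \emph{plan} whose hardest steps you yourself flag as unresolved, and those flagged steps are precisely where the conjecture remains open.

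Two concrete issues. First, your base case contains a factual error: you assert $\rowCounts(\lambda)_k=\min(\lambda_k,k)=\columnCounts(\lambda)_k$ for a symmetric partition $\lambda$. The first equality is correct, but the second is false except in trivial cases; by Proposition~\ref{dsym-prop1}, $\rowCounts(\alpha)=\columnCounts(\alpha)$ forces $\abs{\alpha}\leq 1$. For instance $\lambda=(2,2)$ has $\rowCounts(\lambda)=(1,2)$ but $\columnCounts(\lambda)=(2,1)$. Your separate identifications of the monomials $x^{\rowCounts(\lambda)}$ (from $S=\emptyset$) and $x^{\columnCounts(\lambda)}$ (from $S$ equal to all strictly above-diagonal cells) are both correct, and together with \eqref{leading-eq} they recover the content of Proposition~\ref{one-term-prop}; but the claimed coincidence of the two leading exponents for partitions should be deleted, and with it your stated need to "strengthen the induction to cover $\rowCounts=\columnCounts$" largely evaporates.

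Second, and more seriously, the inductive step is not a gap you can expect to close by a routine case analysis. The paper explicitly observes, immediately after Proposition~\ref{one-term-prop}, that the lexicographically minimal key term does \emph{not} remain lexicographically minimal under the operators $\pi_i$ (e.g.\ $\qkey_{21}=2(\kappa_{11}+\kappa_{2})$ versus $\qkey_{12}=\pi_1\qkey_{21}=2(\kappa_{02}+\kappa_{11})$, where the new minimal index $\kappa_{02}$ arises from $\pi_1\kappa_{2}$ rather than from $\pi_1\kappa_{11}$), which is exactly why Proposition~\ref{one-term-prop} "does not help with proving Conjectures~\ref{leading-conj1} or~\ref{leading-conj2}." Your recurrence $c^{(\alpha)}_{\beta}=c^{(\alpha')}_{\beta}+c^{(\alpha')}_{s_i\beta}$ is correct, but establishing (i)--(iii) from it requires showing that exactly one $\kappa_\beta$ in the expansion of $\qkey_{\alpha'}$ maps onto $\kappa_{\rowCounts(\alpha)}$ and that no $\kappa_\beta$ lands below $\rowCounts(\alpha)$ in lexicographic order; since the key-expansion multiplicities of $\qkey_\alpha$ genuinely exceed $1$ (Remark~\ref{tabular-rmk}), there is no cheap positivity or uniqueness argument available, and you supply no mechanism to control either phenomenon. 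Absent such a mechanism (e.g.\ the combinatorial or crystal-theoretic model you allude to), the proposal does not constitute a proof.
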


If $\rowCounts(\alpha)  =    \columnCounts(\alpha)$ then either $\alpha =\emptyset$ and $\qkey_\alpha = 1$,
or $\alpha  = \e_n$ and 
 $2^{-\diag(\alpha)}\qkey_\alpha  = x_1 + x_2 + \dots +x_n$.

\begin{remark}
Combined with~\eqref{leading-eq} and Theorem~\ref{thm:key_osp_key_positive}, Conjecture~\ref{leading-conj1} implies that 
$2^{-\diag(\alpha)}\qkey_\alpha \in \kappa_{\rowCounts(\alpha)} +  \sum_{\rowCounts(\alpha) <_{\mathrm{lex}} \delta \in \NN^n } \NN \kappa_\delta$.
There is no need for $\kappa_{\columnCounts(\alpha)}$ to appear in this expansion.
We have $2^{-\diag(\alpha)}\qkey_\alpha = \kappa_{\rowCounts(\alpha)}$ when $\alpha = (2,2)$ for example.
The coefficient of $x^{\columnCounts(\alpha)}$ in $\qkey_\alpha$ can also be greater than $2^{\diag(\alpha)}$: 
If $\alpha = (1,2)$ then $\rowCounts(\alpha) =(0,2)$ and $\columnCounts(\alpha) = (1,1)$ but 
$2^{-\diag(\alpha)}\qkey_{12} = x_2^2 + 2x_1 x_2 + x_1^2$.
\end{remark}

Continue to let $\alpha$ be a symmetric weak composition, 
and define the \defn{strict sub-diagonal row and column counts} of $\alpha$ to be the sequences
$\strictRowCounts(\alpha) = (\rho_1,\rho_2,\ldots)$ and $\strictColumnCounts(\alpha) = (\gamma_1,\gamma_2,\ldots)$ 
where 
\[
 \rho_j = | \{ (a,b) \in \DSym_\alpha \mid j = a> b\}|
\quand
\gamma_j = | \{ (a,b) \in \DSym_\alpha \mid a > b = j\}|
\]
for each $j \in \PP$. For example,
 if $\alpha =(1,3,0,1)$, so $\lambda(\alpha)=(3,1,1)$ and $u(\alpha) = 2143$, then
\[
\ytableausetup{aligntableaux=center}
 \DSym_\lambda = \D_\lambda = 
\begin{ytableau}
 \ & \ & \ & \none[\cdot] \\
 \  & \none[\cdot]   & \none[\cdot] & \none[\cdot] \\ 
  \  & \none[\cdot] & \none[\cdot]   & \none[\cdot]  \\
\none[\cdot] & \none[\cdot] & \none[\cdot] & \none[\cdot]
 \end{ytableau}
 \qquand
  \DSym_\alpha = 
\begin{ytableau}
 \none[\cdot] & \ & \none[\cdot]& \none[\cdot] \\
  \ &\  & \none[\cdot]  & \  \\ 
      \none[\cdot] & \none[\cdot]& \none[\cdot]& \none[\cdot] \\
 \none[\cdot] & & \none[\cdot]& \none[\cdot] 
 \end{ytableau}\,,
\]
and thus we have $\strictRowCounts(\alpha) = (0,1,0,1)$ and $\strictColumnCounts(\alpha) = (1,1,0,0)$.


\begin{proposition}
 Suppose $\alpha$ is a skew-symmetric weak composition. Then $\alpha$ is uniquely determined by  $\strictRowCounts(\alpha)$ and $\strictColumnCounts(\alpha)$.
 Moreover, one has $\strictRowCounts(\alpha)  \leq_{\mathrm{lex}} \strictColumnCounts(\alpha)$
 with equality if and only if $\abs{\alpha}= 0$.
\end{proposition}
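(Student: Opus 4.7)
The plan is to mirror the proof of Proposition~\ref{dsym-prop1}, with the skew-symmetry of $\lambda := \lambda(\alpha)$ now playing the role that ordinary symmetry played there. Since $\DSym_\alpha$ is a symmetric subset of $\PP \times \PP$, the strict counts $\strictRowCounts(\alpha)$ and $\strictColumnCounts(\alpha)$ jointly determine the off-diagonal part $\tilde D := \DSym_\alpha \setminus \{(i,i) : i \in \PP\}$, along with the off-diagonal row counts $\tilde R_i := \rho_i + \gamma_i$. The task then reduces to recovering $\lambda$ and the diagonal cells of $\DSym_\alpha$ from these data.

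The key lemma for the first assertion is that a skew-symmetric partition is uniquely determined by its off-diagonal shape $\D_\lambda \setminus \{(i,i) : i \in \PP\}$. If $\mu \neq \nu$ are both skew-symmetric with identical off-diagonal shapes, they can only differ at diagonal cells; at a smallest differing index $i_0$ we may arrange $\mu_{i_0} = i_0$ and $\nu_{i_0} = i_0 - 1$, with $\mu_j = \nu_j$ for $j < i_0$. Condition~(a) for $\mu$ at the removable corner $(i_0,i_0)$ yields $\mu_{i_0-1} = i_0$, after which the identity $\mu = \mu^\T$ forces $\mu_{i_0+1} \leq i_0 - 2$; matching off-diagonal shapes then also give $\nu_{i_0+1} \leq i_0 - 2$. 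But this makes $(i_0, i_0-1)$ a removable corner of $\nu$ while $(i_0,i_0)$ is an addable corner of $\nu$, violating condition~(b). Hence $\lambda$ is determined by $\tilde D$.

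Next, let $k$ denote the Durfee size of $\lambda$. The identities $m_v = \lambda_v - \lambda_{v+1}$ and $\lambda = \lambda^\T$ imply that for each value $v$ occurring in $\lambda$, the positions of value $v$ lie either entirely in $\{1,\dotsc,k\}$ or entirely in $\{k+1, k+2, \dotsc\}$; let $D_v \in \{0,1\}$ indicate the former. Then a row $i$ of $\DSym_\alpha$ with $\alpha_i = v$ contains $(i,i)$ precisely when $D_v = 1$, so $\tilde R_i = f(\alpha_i)$ where $f(v) := v - D_v$. A short case analysis shows $f$ is injective on $\{0\} \cup \{v : m_v \geq 1\}$: any collision would force adjacent values $v$ and $v+1$ with $D_v = 0$ and $D_{v+1} = 1$, hence $(\lambda_k, \lambda_{k+1}) \in \{(k+1,k),(k,k-1)\}$, and each of these possibilities violates either condition~(a) or condition~(b). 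Inverting $f$ then recovers $\alpha_i = f^{-1}(\tilde R_i)$. I expect this injectivity of $f$ to be the main technical obstacle, as it requires a careful ruling out of the delicate configurations in which the Durfee boundary would separate consecutive integer values.

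For the lexicographic inequality, equality is immediate when $\abs{\alpha} = 0$. If $\abs{\alpha} > 0$, let $j$ be the smallest index with $\alpha_j > 0$; then $\rho_i = \gamma_i = 0$ for $i < j$, and $\rho_j = 0$ because no column $b < j$ is occupied in $\DSym_\alpha$. It suffices to show $\gamma_j > 0$. If instead $\gamma_j = 0$, symmetry of $\DSym_\alpha$ forces row $j$ to contain at most the cell $(j,j)$, giving $\alpha_j = 1$ and $(j,j) \in \DSym_\alpha$. Unpacking the description $\DSym_\alpha = \{(u(i),u(j)) : (i,j) \in \D_{\lambda(\alpha)}\}$ then forces $\lambda(\alpha) = (1)$, which is not skew-symmetric---contradiction. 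Hence $\rho_j = 0 < \gamma_j$ and the lexicographic inequality is strict.
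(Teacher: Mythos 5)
Your route is genuinely different from the paper's (which peels off maximal hooks inductively, reusing the machinery of Proposition~\ref{dsym-prop1}), and several of your ingredients are sound: the lemma that a skew-symmetric partition is determined by its off-diagonal shape is correct and your corner analysis for it works; the injectivity of $f$ is a valid way to recover $\alpha_i$ from $\tilde R_i$ \emph{once $\lambda(\alpha)$ is known}; and your direct argument for the strict lexicographic inequality (first nonzero row $j$ has $\rho_j=0$, while $\gamma_j=0$ would force $\lambda(\alpha)=(1)$, which is not skew-symmetric) is correct and is a nice alternative to the paper's reduction to Proposition~\ref{dsym-prop1}.

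The gap is in your opening sentence: the claim that $\strictRowCounts(\alpha)$ and $\strictColumnCounts(\alpha)$ ``jointly determine the off-diagonal part $\tilde D$'' does not follow from the symmetry of $\DSym_\alpha$. Symmetry only tells you, for each row $i$, \emph{how many} cells lie to the left of the diagonal ($\rho_i$) and how many to the right ($\gamma_i$), i.e., the counts $\tilde R_i$ --- not which cells they are. A finite symmetric subset of $\PP\times\PP$ is emphatically not determined by these counts (the paper exhibits two symmetric configurations with identical sub-diagonal row and column counts right after Proposition~\ref{dsym-prop1}, and notes the same failure for the strict counts immediately after the present proposition). For sets of the form $\DSym_\alpha$ with $\alpha$ skew-symmetric the claim is true, but only as a consequence of the proposition you are trying to prove, so assuming it at the outset is circular. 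Since your key lemma consumes the off-diagonal shape $\D_{\lambda}\setminus\{(i,i):i\in\PP\}$ as a \emph{set}, you still owe an argument that the data $(\strictRowCounts(\alpha),\strictColumnCounts(\alpha))$ pins down $\lambda(\alpha)$; what you actually have access to is only the multiset $\{\tilde R_i\}$ of off-diagonal row lengths, and passing from that multiset to $\lambda$ requires additional work (e.g., showing that for a symmetric partition the sequence $\lambda_i-[\lambda_i\geq i]$ is weakly decreasing, and then comparing Durfee sizes of two candidate skew-symmetric partitions with the same sorted sequence). As written, the proof assumes the hard part; this is precisely the difficulty the paper sidesteps by instead identifying the set of rows of maximal $\rho_i+\gamma_i+1$ and inducting on hook removal.
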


\begin{proof}
The proof of the first claim is the same as for Proposition~\ref{dsym-prop1},
except that if $\strictRowCounts(\alpha)=(\rho_1,\rho_2,\ldots)$ and $\strictColumnCounts(\alpha)=(\gamma_1,\gamma_2,\ldots)$ are both nonzero
then  the quantity $\rho_i + \gamma_i + 1$   is equal to $\alpha_i$ if $(i,i) \in \DSym_\alpha$ and to $\alpha_i + 1$ otherwise.
If we redefine $I$ to be the set of indices $i$ at which $\rho_i + \gamma_i + 1$ is maximized
then we can proceed exactly as in our earlier argument. This works because removing all maximal hooks from a skew-symmetric weak composition
results in a composition that is still skew-symmetric.

The second claim that $\strictRowCounts(\alpha)  \leq_{\mathrm{lex}} \strictColumnCounts(\alpha)$,
 with equality if and only if $\abs{\alpha}= 0$, follows from Proposition~\ref{dsym-prop1}
 since $\rowCounts(\alpha) -  \strictRowCounts(\alpha) = \columnCounts(\alpha)-\strictColumnCounts(\alpha) = \sum_{(i,i) \in \DSym_\alpha} \e_i$
 and there are no skew-symmetric weak compositions $\alpha$ with $\abs{\alpha}=1$.
\end{proof}

The strict sub-diagonal row and column counts do not uniquely identify a finite symmetric subset of $\PP\times \PP$.
For example, these sequences are the same for the Young diagrams of $(3,2,1)$ and $(3,1,1)$ shown
in~\eqref{ss-ex1} and~\eqref{ss-ex2}.

Computations support a symplectic analogue of Conjecture~\ref{leading-conj1}:

\begin{conjecture}\label{leading-conj2}
For a skew-symmetric weak composition $\alpha$ with $\strictRowCounts(\alpha)  \neq    \strictColumnCounts(\alpha)$ and $n = \ell(\alpha)$:
\[
\pkey_\alpha \in   x^{\strictRowCounts(\alpha)} +    x^{\strictColumnCounts(\alpha)} + \sum_{\substack{\delta \in \NN^n \\ \strictRowCounts(\alpha) <_{\mathrm{lex}} \delta}} \NN x^\delta.
\]
\end{conjecture}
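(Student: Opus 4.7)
My plan is by induction on the length $\ell(u(\alpha))$. For the base case $\alpha = \lambda$ is a skew-symmetric partition, and the definition reduces to
\[
\pkey_\lambda \;=\; \prod_{(a,b) \in D_\lambda,\ a < b} (x_a + x_b),
\]
since each cell $(i,j) \in D_{\shalf(\lambda)}$ corresponds to an above-diagonal cell $(i, i+j) \in D_\lambda$. I would interpret a choice of $x_a$ or $x_b$ from each factor as an orientation of the graph $G$ on $\PP$ whose edges are the above-diagonal cells of $\lambda$; then the resulting monomial has $x_j$-exponent equal to the indegree of $j$. Using $\lambda = \lambda^T$, choosing the larger endpoint from every edge gives $x^{\strictRowCounts(\lambda)}$, while choosing the smaller endpoint gives $x^{\strictColumnCounts(\lambda)}$. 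Both of these extremal orientations are acyclic since they respect the natural order on $\PP$, and since two orientations with the same indegree sequence must differ by cycle reversals, each of these two monomials appears with coefficient exactly $1$. Moreover $x_b <_{\mathrm{lex}} x_a$ whenever $a < b$, so a greedy argument in the variables $x_1, x_2, \ldots$ shows that the all-up orientation produces the lex-minimum monomial.

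For the inductive step, let $\alpha = s_i \beta$ with $\beta_i > \beta_{i+1}$. By Proposition~\ref{formulas-prop}(a), $\pkey_\alpha = \pi_i \pkey_\beta$. Starting from the inductive expansion of $\pkey_\beta$, one can use the monomial action
\[
\pi_i\bigl(g \cdot x_i^a x_{i+1}^b\bigr) \;=\; g \cdot \sum_{k=0}^{a-b} x_i^{a-k} x_{i+1}^{b+k} \quad \text{when } a\geq b,
\]
(where $g$ is free of $x_i, x_{i+1}$) and the parallel formula for $a<b$ (which contributes with a minus sign) to compute $\pkey_\alpha$ termwise. I would then identify which monomials $x^\delta$ of $\pkey_\beta$ can contribute to the putative leading monomial $x^{\strictRowCounts(\alpha)}$ and to the target monomial $x^{\strictColumnCounts(\alpha)}$ of $\pkey_\alpha$. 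In small examples the required contributions arise essentially from a short list of non-extremal terms of $\pkey_\beta$, and the negative contributions from the $a<b$ case do not disturb these particular coefficients --- a fact that follows a posteriori from the key-positivity result, Theorem~\ref{thm:key_osp_key_positive}.

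The main obstacle is that the lex-minimum monomial of $\pkey_\alpha$ typically does \emph{not} arise by applying $\pi_i$ to the lex-minimum monomial of $\pkey_\beta$; that monomial is either sent to $0$ or to a polynomial whose surviving terms are all lex-greater than $x^{\strictRowCounts(\alpha)}$. Instead the new leading term must emerge from $\pi_i$ applied to a different monomial of $\pkey_\beta$. Moreover, $\strictRowCounts$ and $\strictColumnCounts$ are not related by a simple $s_i$-swap under $\beta \mapsto s_i\beta$ --- already for $\beta = (3,1,1) \mapsto (1,3,1)$ one has $\strictRowCounts$ unchanged at $(0,1,1)$ --- so the induction requires a careful bookkeeping of how cells in $\DSym_\alpha$ are permuted, together with the precise way the orientation/monomial picture from the base case propagates under each $\pi_i$. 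Classifying the $x^\delta$'s that can contribute to a given target monomial, and verifying that positive and negative contributions sum to the claimed coefficients, is the heart of the argument.

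An alternative route is to combine Theorem~\ref{thm:key_osp_key_positive}, which gives $\pkey_\alpha = \sum_\gamma c_\gamma \kappa_\gamma$ with $c_\gamma \in \NN$, together with the leading-term formula~\eqref{leading-eq} for ordinary key polynomials. In this setup the lex-minimum monomial of $\pkey_\alpha$ is $x^{\gamma_0}$ where $\gamma_0$ is the lex-minimum composition with $c_{\gamma_0} > 0$, and its coefficient in $\pkey_\alpha$ equals $c_{\gamma_0}$. This reduces the conjecture to showing $\gamma_0 = \strictRowCounts(\alpha)$ with $c_{\gamma_0} = 1$ and $c_{\strictColumnCounts(\alpha)} \geq 1$. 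Either route ultimately requires a manageable description of the key expansion of shifted key polynomials, which is perhaps best attacked via a crystal-theoretic framework of the kind deferred by the authors to a sequel.
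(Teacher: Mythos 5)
First, be aware that the statement you are proving is stated in the paper only as a \emph{conjecture} (Conjecture~\ref{leading-conj2}): the authors offer computational evidence but no proof, and they explicitly remark after Proposition~\ref{one-term-prop} that the lexicographically minimal key term of $\pkey_{\lambda}$ ``does not always remain lexicographically minimal when we apply isobaric divided difference operators,'' so that their own partial result ``does not help with proving Conjectures~\ref{leading-conj1} or~\ref{leading-conj2}.'' Your base case is correct and is essentially the content the paper does establish: for a skew-symmetric partition $\lambda$ the product $\prod_{(a,b)\in\D_\lambda,\,a<b}(x_a+x_b)$ expands over orientations of the graph on the strictly-above-diagonal cells, the two monotone orientations give $x^{\strictRowCounts(\lambda)}$ and $x^{\strictColumnCounts(\lambda)}$, acyclicity forces their coefficients to be exactly $1$, and the all-up orientation is lex-minimal. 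This matches the mechanism behind Proposition~\ref{one-term-prop}.

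The inductive step, however, is not a proof; it is a statement of the problem. You correctly identify that $x^{\strictRowCounts(\alpha)}$ for $\alpha=s_i\beta$ does not arise by applying $\pi_i$ to $x^{\strictRowCounts(\beta)}$, and that $\strictRowCounts$ and $\strictColumnCounts$ do not transform by a simple $s_i$-swap --- but you then defer exactly the classification of contributing monomials and the control of cancellations that would constitute the argument. Moreover, the one concrete justification you offer for the cancellation issue is not valid: Theorem~\ref{thm:key_osp_key_positive} guarantees that $\pkey_\alpha$ is monomial-positive, but positivity of the final answer does not show that the coefficient of the \emph{specific} monomial $x^{\strictColumnCounts(\alpha)}$ survives as at least $1$, nor that the coefficient of $x^{\strictRowCounts(\alpha)}$ is exactly $1$ rather than $0$ or $2$; a negative contribution could cancel the relevant term while leaving every coefficient nonnegative. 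Your alternative route via the key expansion reduces the conjecture to determining the lex-minimal $\kappa_\gamma$ appearing in $\pkey_\alpha$ with its multiplicity, which the paper lists as an open problem. So the proposal correctly locates the difficulty but does not resolve it, and the statement remains unproven.
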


We only have $\strictRowCounts(\alpha)  =    \strictColumnCounts(\alpha)$ when $\alpha=\strictRowCounts(\alpha)  =   \emptyset$ in which case $\pkey_\alpha = 1$.

\begin{remark}
Combined with~\eqref{leading-eq} and Theorem~\ref{thm:key_osp_key_positive}, Conjecture~\ref{leading-conj2} would imply that 
$\pkey_\alpha \in \kappa_{\strictRowCounts(\alpha)} +  \sum_{\strictRowCounts(\alpha) <_{\mathrm{lex}} \delta \in \NN^n } \NN \kappa_\delta$.
There is no need for $\kappa_{\strictColumnCounts(\alpha)}$ to appear in this expansion.
We have $\pkey_\alpha = \kappa_{\strictRowCounts(\alpha)}$ when $\alpha = (2,2)$ for example.
The coefficient of $x^{\strictColumnCounts(\alpha)}$ in $\pkey_\alpha$ can again be greater than one:  
If $\alpha = (1,3,1)$ then $\strictRowCounts(\alpha) =(0,1,1)$ and $\strictColumnCounts(\alpha) = (1,1,0)$ but 
$\pkey_{131} = x_2 x_3 + 2x_1 x_2+ x_2^2 + x_1 x_3  + x_1^2$.
\end{remark}

The triangularity properties in Conjectures~\ref{leading-conj1} and \ref{leading-conj2}
suggest a more efficient algorithm to search for a $\NN$-linear expansion of a given polynomial
into $P$- or $Q$-key polynomials. This informs the conjectures in the following section.

\subsection{Schubert polynomials}\label{ischub-sect}

In this section we review the definition of \defn{Schubert polynomials} and their relationship
to key polynomials. Then we will explain several conjectures that connect $\pkey_\alpha$ and $\qkey_\alpha$ in a similar way to analogues of Schubert polynomials 
 introduced in~\cite{WyserYong}.
  
The \defn{(Rothe) diagram} of  $w \in S_\infty$ is the set $D(w) := \{ (i, w(j)) \mid i,j \in \PP,\ i<j,\ w(i)>w(j)\}.$
This set is the complement in $\PP\times \PP$ of the south-and-east hooks through each nonzero position $(i,w(i))$ in 
the permutation matrix of $w$. For example, we have
\be
\label{eq:rothe_example}
D(53124) = \left\{ \square \in \left[\begin{smallmatrix} 
\square & \square & \square & \square & 1 \\
\square & \square & 1 & \cdot & \cdot \\
1 & \cdot & \cdot & \cdot & \cdot \\
\cdot & 1 & \cdot & \cdot & \cdot \\
\cdot & \cdot & \cdot & 1 & \cdot  \end{smallmatrix} \right]\right\} = \{ (1,1),(1,2),(1,3),(1,4),(2,1),(2,2)\}.
\ee
This description makes it easy to see that the \defn{(right) descent set} $\DesR(w) := \{ i  : w(i) >w(i+1)\}$ is contained in $[n]$ if and only if $D(w)\subseteq [n]\times \PP$.

A permutation $w$ is \defn{dominant} of shape $\lambda=\lambda(w)$ if its Rothe diagram $D(w)$
is equal to the Young diagram $\D_{\lambda}$. 
The example shown in~\eqref{eq:rothe_example} is dominant of shape $\lambda = (4,2)$.
There is a unique dominant permutation of each partition shape, and the family of all dominant elements of $S_\infty$ is exactly the set of $132$-avoiding permutations~\cite[Ex.~2.2.2]{Manivel}.

The \defn{Schubert polynomials} $\fkS_w$ are the unique elements of $\ZZ[x_1,x_2,\dots]$ indexed by $w \in S_\infty$ such that 
$\fkS_w = x^{\lambda(w)}=\kappa_{\lambda(w)}$ if $w$ is dominant and $\partial_i \fkS_w = \fkS_{ws_i}$ if $w(i) > w(i+1)$~\cite[\S2.3.1 and \S2.6.4]{Manivel}.
%
Each $\fkS_w$ is homogeneous of degree $|D(w)| = \ell(w)$ 
and $\{ \fkS_w : D(w) \subseteq [n]\times \PP\}$ is a $\ZZ$-basis for $\ZZ[x_1,x_2,\dotsc,x_n]$~\cite[Prop.~2.5.4]{Manivel}.

By results in~\cite{LS2}, 
each Schubert polynomial is a sum of (not necessarily distinct) key polynomials.
An explicit algorithm is known to identify the relevant summands; see~\cite[Thm.~4]{ReinerShimozono}.

 \begin{example}
 If $w=214365 \in S_6$ then $\fkS_w = \kappa_3 + \kappa_{102} + \kappa_{10101} + \kappa_{20001}$.
 \end{example}

 We now explain the definition of certain shifted analogues of Schubert polynomials introduced in~\cite{WyserYong}.
Let $\Ifpf_\infty$ be the 
$S_\infty$-conjugacy class
 of the infinite product of cycles \be1_\fpf :=  (1 \; 2)(3 \; 4)(5 \; 6)\cdots  \ee
mapping $i \mapsto i - (-1)^i$ for all $i \in \PP$.
Let $\Ifpf_n$ be the subset of elements $z \in \Ifpf_\infty$ with $z([n]) = [n]$ and $z(i) = 1_\fpf(i)$ for all $i > n$.
This set is empty if $n$ is odd, and is in bijection with the fixed-point-free involutions in $S_n$ when $n$ is even.

The Rothe diagram for $z \in \Ifpf_\infty$  is 
defined in the same way as for elements of $S_\infty$.
If $\lambda$ is a skew-symmetric partition,
then there is a unique $z \in \Ifpf_\infty$
with $\{ (i,j) \in D(z) \mid i \neq j\} = \{ (i,j) \in \D_\lambda \mid i\neq j\}$~\cite[Prop.~4.31]{HMP6},
which we call
 the \defn{dominant} element of $ \Ifpf_\infty$ with shape $\lambdafpf(z) := \lambda$.

\begin{example}
Each fixed-point-free involution $z\in S_{2n}$ extends to an element of $\Ifpf_{2n}$
mapping 
$2i \mapsto 2i-1$ for all $i\notin[2n]$.
We often identify $z$ with this extension.
Under this convention,  $z=(2n)\cdots 321$ is the dominant element of $\Ifpf_\infty$ with shape $\lambdafpf(z)$ is $(2n-1,\dotsc,3,2,1) - \e_{n}$.
\end{example}

Let $I_\infty :=\{ z \in S_\infty \mid z=z^{-1}\}$ be the set of involutions in $S_\infty$.
We have already mentioned that for each partition $\lambda$ 
there is a unique dominant element $w \in S_\infty$ with $D(w) = \D_\lambda$.
Notice that $D(w)^\T = D(w^{-1})$,
so if $w$ is dominant with shape $\lambda$ then $w^{-1}$ is dominant with shape $\lambda^\top$.
This means that if $w$ is dominant and its shape is a symmetric partition $\lambda$, then we must have $w=w^{-1}$.
Thus, for each symmetric partition $\lambda=\lambda^\T$ there is a unique
dominant $z \in I_\infty$ with $D(z) = \D_\lambda$.

\begin{example}\label{321-ex}
The permutation $n\cdots 321 \in I_\infty$ is dominant with $\lambda(n\cdots 321)= (n-1,\dotsc,2,1)$.
\end{example}

Results in~\cite{HMP5,HMP1,WyserYong} show that there are unique polynomials $\{\fkSS_y\}_{y \in \Ifpf_\infty}$  and $\{\fkSO_z\}_{z \in I_\infty}$
with
\be\label{schub-dom-eq}
\fkSS_y = \prod_{\substack{(i,j) \in D(y) \\ i>j}} (x_i+x_j) = \pkey_{\lambdafpf(y)}
\quand
\fkSO_z = \prod_{\substack{(i,j) \in D(z) \\ i\geq j}} (x_i+x_j)=\qkey_{\lambda(z)}
\ee
if $y \in \Ifpf_\infty$ and $z\in I_\infty$ are dominant (see~\cite[Thm.~1.3]{HMP1} and~\cite[Thm.~4.2]{HMP5}) and with
\be\label{ischub-eq}
  \partial_i \fkSS_{y} = \begin{cases} 0&\text{if }y(i) < y(i+1) ,
\\
0&\text{if } y(i)=i+1, \\ 
  \fkSS_{s_i y s_i} &\text{otherwise,}
  \end{cases}
\quand
  \partial_i \fkSO_{z} = \begin{cases} 0&\text{if }z(i) < z(i+1), \\ 
 2 \fkSO_{zs_i} &\text{if }z(i) =i+1,\\
  \fkSO_{s_i z s_i} &\text{otherwise,}
    \end{cases}
    \ee
    for all $i \in \PP$.
Following~\cite{HMP1}, we refer to
$\{\fkSS_y\}_{y \in \Ifpf_\infty}$  
and 
 $\{\fkSO_z\}_{z \in I_\infty}$
 as \defn{involution Schubert polynomials} of symplectic and orthogonal types, respectively.

Let  $\DSp(y) := \{(i,j) \in D(y) \mid i>j\}$ and $\DO(z) :=  \{(i,j) \in D(z) \mid i\geq j\}$ for $y \in \Ifpf_\infty$ and $z \in I_\infty$.
Then $\fkSS_y$ and $\fkSO_z$ are each homogeneous of degrees $|\DSp(y)|$ and $|\DO(z)|$, respectively~\cite[Prop.~3.6]{HMP1}.
Involution Schubert polynomials are nonzero linear combinations of disjoint sets of   Schubert polynomials~\cite[\S1.5]{HMP1},
which implies that $\{\fkSS_y \mid y \in \Ifpf_\infty\}$   and  $\{\fkSO_z \mid z \in I_\infty\}$ are both linearly independent over $\ZZ$.
The following property is implicit in results in~\cite{HMP6,HMP1}: 

\begin{proposition}\label{lexmin-prop}
Let $y \in \Ifpf_\infty$ and $z \in I_\infty$.
Then $\fkSS_y$ (respectively~$\fkSO_z$) is contained in $ \ZZ[x_1,x_2,\dotsc,x_n]$ 
if and only if the diagram $\DSp(y)$
(respectively~$\DO(z)$) is contained in $ [n]\times [n] $.
\end{proposition}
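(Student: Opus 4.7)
The plan is to proceed by induction on the length of $y$ (for the symplectic case; the orthogonal case follows by a parallel argument), using the product formula~\eqref{schub-dom-eq} as the base case and the divided difference recurrences in~\eqref{ischub-eq} for the inductive step. For the ``if'' direction, when $y \in \Ifpf_\infty$ is dominant, $\fkSS_y = \prod_{(i,j) \in \DSp(y)}(x_i+x_j)$ lies in $\ZZ[x_1,\ldots,x_n]$ as soon as $\DSp(y) \subseteq [n]\times[n]$. When $y$ is not dominant, the goal is to produce an involution $y' \in \Ifpf_\infty$ with $\DSp(y') \subseteq [n]\times[n]$ and some $i \in [n-1]$ such that $y = s_i y' s_i$, $y'(i) > y'(i+1)$, and $y'(i) \neq i+1$; then~\eqref{ischub-eq} gives $\fkSS_y = \partial_i \fkSS_{y'}$, and since $\partial_i$ preserves $\ZZ[x_1,\ldots,x_n]$ when $i < n$, the inductive hypothesis on $y'$ transfers to $y$.

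For the ``only if'' direction, I would use the observation (noted just before the proposition, citing~\cite[\S1.5]{HMP1}) that $\fkSS_y$ is a nonzero linear combination of distinct Schubert polynomials $\fkS_w$. Linear independence of these, together with the classical fact that $\{\fkS_w : D(w) \subseteq [n]\times\PP\}$ is a basis for $\ZZ[x_1,\ldots,x_n]$, would then imply that $\fkSS_y \in \ZZ[x_1,\ldots,x_n]$ forces $D(w) \subseteq [n]\times\PP$ for every Schubert polynomial $\fkS_w$ appearing in this expansion. To conclude, I would show that the union of these Rothe diagrams over $w$ appearing contains every row of $\DSp(y)$, so the maximal row of $\DSp(y)$ must lie in $[n]$; by sub-diagonality this is equivalent to $\DSp(y) \subseteq [n]\times[n]$. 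The orthogonal case proceeds in exactly the same way, with the factor of $2$ appearing in~\eqref{ischub-eq} at ``descents at a fixed point'' posing no additional difficulty since it does not affect the variables used.

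The main obstacle is the inductive step for the ``if'' direction: exhibiting the reduction $(y',i)$ with $i<n$ and certifying $\DSp(y') \subseteq [n]\times[n]$. This requires a careful analysis of how conjugation by an adjacent transposition $s_i$ modifies the sub-diagonal Rothe diagram of an involution. Together with the final combinatorial claim in the ``only if'' direction about atoms realizing the rows of $\DSp(y)$, this technical work rests on the combinatorics of the involution Bruhat orders on $\Ifpf_\infty$ and $I_\infty$ developed in~\cite{HMP6,HMP1}, so the detailed proof is largely a matter of extracting and assembling the relevant lemmas from those references.
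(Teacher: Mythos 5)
Your proposal takes a genuinely different route from the paper, and in both directions the step you defer is precisely the content of the proposition. The paper's proof is two citations and no induction: by \cite[Prop.~3.14]{HMP1}, the lexicographically minimal monomial of $\fkSS_y$ (resp.\ $\fkSO_z$) lies in $\ZZ[x_1,\dotsc,x_n]$ if and only if $\DSp(y)$ (resp.\ $\DO(z)$) is contained in $[n]\times[n]$ --- this settles the ``only if'' direction in one line --- and the ``if'' direction follows from the involution pipe dream formula of \cite[Thm.~1.1]{HMP6} combined with \cite[Thms.~5.8 and 5.14]{HMP6}, which give a positive monomial expansion all of whose terms are visibly supported on $x_1,\dotsc,x_n$. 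No descent recursion and no Schubert expansion is needed.

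The concrete gaps in your plan are these. (1) For the ``if'' direction, the crux is the existence, for every non-dominant $y$ with $\DSp(y)\subseteq[n]\times[n]$, of a longer $y'$ and an index $i\le n-1$ with $\partial_i\fkSS_{y'}=\fkSS_y$ and $\DSp(y')\subseteq[n]\times[n]$, iterated along a well-founded chain terminating at a dominant element. This is not an off-the-shelf lemma in \cite{HMP1,HMP6}; it is a ``descent-bounded transition to a dominant element'' statement that would itself require a careful argument (the restriction $i\le n-1$ cannot be relaxed, since $\partial_n$ does not preserve $\ZZ[x_1,\dotsc,x_n]$, and one must verify that the ascent used can always be chosen in $[n-1]$). (2) For the ``only if'' direction, reducing to ``$D(w)\subseteq[n]\times\PP$ for every $\fkS_w$ appearing'' is fine, but your follow-up claim that the union of these Rothe diagrams covers every nonempty row of $\DSp(y)$ is exactly the nontrivial point and is asserted without proof. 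The available fact that closes this cleanly is the one the paper cites: the lexicographically minimal monomial of $\fkSS_y$ has exponent vector recording the row sizes of $\DSp(y)$, so membership in $\ZZ[x_1,\dotsc,x_n]$ already forces $\DSp(y)\subseteq[n]\times[n]$. As written, both halves of your argument rest on unspecified ``lemmas to be extracted,'' so the proposal is a plan rather than a proof.
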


\begin{proof}
The lexicographically minimal monomials in  $\fkSS_y$ and $\fkSO_z$
are in $ \ZZ[x_1,\dotsc,x_n]$  if and only if $\DSp(y)$
and $\DO(z)$ are subsets of $ [n]\times [n] $~\cite[Prop.~3.14]{HMP1}.
When this occurs,~\cite[Thm~1.1]{HMP6} combined with~\cite[Thms.~5.8 and 5.14]{HMP6} implies that all other monomials are also in  $ \ZZ[x_1,\dotsc,x_n]$.
\end{proof}

\begin{remark}\label{visible-remark}
As $\DSp(y)$ and $\DO(z)$ are weakly below the diagonal, the sets are contained in $ [n]\times[n] $ if and only if they are contained in $[n]\times \PP$.
Whether   this occurs can be characterized in terms of certain  \defn{visible descent sets}.
For $y \in \Ifpf_\infty$ and $z \in I_\infty$ let
\be\label{visible-eq}
\Des^\fpf_V(y) :=\{ i  \in \PP : y(i) > y(i+1) < i\}
\quand
\Des_V(z) :=\{ i \in \PP : z(i) >z(i+1) \leq i\}.
\ee
Then  $ \DSp(y) \subseteq [n]\times [n]$ if and only if $\Des^\fpf_V(y)\subseteq [n] $
by~\cite[Lem.~4.14]{HMP4} 
and
$ \DO(z) \subseteq [n]\times [n]$  if and only if $\Des_V(z)\subseteq [n] $
by~\cite[Lem.~4.8]{HMP5}.
\end{remark}

Computations indicate that $\fkSS_y$ and $\fkSO_z$ expand positively into shifted
key polynomials, just as Schubert polynomials expand into key polynomials. 
The following conjectures are one of our primary motivations for studying $\pkey_\alpha$ and $\qkey_\alpha$:

\begin{conjecture}\label{fkSS-conj}
Let $y \in \Ifpf_\infty$.
Then
  $
  \fkSS_y
  $ 
is a sum of distinct $P$-key polynomials.
More specifically,  there is a finite set of skew-symmetric weak compositions $\cCSp(y)$ with $\fkSS_y = \sum_{\alpha \in \cCSp(y)} \pkey_\alpha$.
\end{conjecture}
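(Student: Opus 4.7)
The natural strategy is induction on $\ell(z)$ using the divided-difference recurrence~\eqref{ischub-eq}. For dominant $z$, equation~\eqref{schub-dom-eq} already gives $\fkSS_z = \pkey_{\lambdafpf(z)}$, so we take $\cCSp(z) = \{\lambdafpf(z)\}$. For non-dominant $z$, pick $i$ with $z(i) > z(i+1)$ and $z(i) \neq i+1$, so that $z' := s_i z s_i$ has strictly smaller length and $\fkSS_{z'} = \sum_{\alpha \in \cCSp(z')} \pkey_\alpha$ by induction. The crux is then to lift this decomposition along $\partial_i$: one must produce a finite set $\cCSp(z)$ of distinct skew-symmetric weak compositions with $\partial_i \sum_{\alpha \in \cCSp(z)} \pkey_\alpha = \sum_{\alpha \in \cCSp(z')} \pkey_\alpha$.

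Since $\partial_i$ is not invertible, this lifting has to be combinatorial rather than formal. The most promising route is to develop a shifted analogue of the Reiner--Shimozono algorithm~\cite[Thm.~4]{ReinerShimozono}. Concretely, for each $\alpha \in \cCSp(z')$ one would pair $\pkey_\alpha$ either with some $\pkey_\gamma$ satisfying $s_i \gamma = \gamma$ or with a pair $\{\pkey_\gamma, \pkey_{s_i\gamma}\}$, mirroring the $\pi_i$-behavior in Proposition~\ref{formulas-prop} via the identity $\partial_i f = (\pi_i f - f)/x_{i+1}$. The selection rule must be governed by the positions of $i,i+1$ relative to $z$, and checked to produce only distinct skew-symmetric $\gamma$'s; independence of the choice of $i$ would follow if one simultaneously establishes invariance of $\cCSp(z)$ under the descent-move symmetry inherent in the braid relations for $\partial_i$.

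The main obstacle is the absence of a combinatorial model---a shifted Kohnert-type or pipe-dream description---for $\pkey_\alpha$ that parallels the involution pipe dream formulas of~\cite{HMP1} for $\fkSS_z$. Without such a model the proposed lifting rule is merely heuristic: one cannot verify case-by-case that the produced polynomial actually equals $\fkSS_z$ and not, say, $\fkSS_z$ plus an error in the kernel of $\partial_i$. A cleaner alternative would go through the crystal-theoretic interpretation of $\pkey_\alpha$ announced by the authors as forthcoming: if each $\pkey_\alpha$ is realized as the character of a Demazure-like sub-crystal of a shifted highest-weight crystal and the union of involution pipe dreams for $\fkSS_z$ carries a compatible shifted crystal structure, then decomposing into connected components would yield the expansion directly, with multiplicity-freeness following from the distinctness of the sub-crystals' lowest-weight labels. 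As a last alternative, a proof of the triangularity in Conjecture~\ref{leading-conj2} would enable a greedy leading-term algorithm---iteratively subtract the $\pkey_\alpha$ whose leading monomial $x^{\strictRowCounts(\alpha)}$ matches the lexicographically minimal monomial in the remainder---provided one could also show each intermediate remainder remains a nonnegative combination of pipe dream weights.
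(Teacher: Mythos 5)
The statement you are addressing is stated in the paper as Conjecture~\ref{fkSS-conj}: the authors offer no proof, only computational verification for all $z \in \Ifpf_n$ with $n \leq 10$, and they list it among the open problems motivating the paper. Your submission is likewise not a proof; it is a survey of candidate strategies, and to your credit you say so explicitly (``the proposed lifting rule is merely heuristic''). So the honest verdict is that neither you nor the paper establishes the statement, and your text cannot be accepted as a proof of it.

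Beyond that, the inductive scheme you sketch has a structural problem worth flagging before you invest in it. The recurrence~\eqref{ischub-eq} uses $\partial_i$, which lowers degree by one: $\fkSS_{z'}$ for $z' = s_i z s_i$ has degree $\absval{\DSp(z)} - 1$, whereas every $P$-key polynomial is homogeneous of degree equal to $\absval{\shalf(\lambda(\alpha))}$. Consequently the compositions in $\cCSp(z)$ and in $\cCSp(z')$ must sort to \emph{different} skew-symmetric partitions, so the lifting cannot be a matter of replacing $\gamma$ by $s_i\gamma$ or recognizing $s_i$-invariance: Proposition~\ref{formulas-prop} describes the action of $\pi_i$ and $\overline\pi_i$ on $\pkey_\alpha$, both of which preserve $\lambda(\alpha)$, and says nothing about $\partial_i$. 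The identity $\partial_i f = (\pi_i f - f)/x_{i+1}$ does not transport the $\pi_i$-rules to $\partial_i$, because division by $x_{i+1}$ does not preserve the span of $P$-key polynomials. What is actually missing---and what neither you nor the paper supplies---is a formula expressing $\partial_i \pkey_\alpha$ (for a descent position of the underlying involution) as an $\NN$-combination of lower-degree $P$-key polynomials; with such a formula one could run the induction downward from the dominant case~\eqref{schub-dom-eq}, which is the direction in which $\partial_i$ naturally moves. Your two fallback routes (a shifted crystal structure on involution pipe dreams, or a greedy algorithm justified by Conjecture~\ref{leading-conj2}) are plausible research directions, but the second is conditional on another unproved conjecture of the paper and the first on a structure that has not been constructed, so neither closes the argument.
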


We have used a computer to verify Conjecture~\ref{fkSS-conj} for all $y \in \Ifpf_n$ for $n\leq 10$.
  
\begin{example}\label{fkSS-conj-ex}
For most $y \in \Ifpf_8$ it holds that
 $\fkSS_y$ is equal to a $P$-key polynomial.
For the 13 of 105 elements $y \in \Ifpf_8$ without this property, we have the following expansions:
{\small\[
\ba
\fkSS_{(1 \; 3)(2 \; 5)(4 \; 7)(6 \; 8)} &= \pkey_{3303} + \pkey_{140101}, \\
\fkSS_{(1 \; 4)(2 \; 3)(5 \; 7)(6 \; 8)} &= \pkey_{333} + \pkey_{411001}, \\
\fkSS_{(1 \; 3)(2 \; 4)(5 \; 8)(6 \; 7)} &= \pkey_{33003} + \pkey_{1400011}, \\
\fkSS_{(1 \; 3)(2 \; 5)(4 \; 8)(6 \; 7)} &= \pkey_{3304001} + \pkey_{1501011}, \\
\fkSS_{(1 \; 5)(2 \; 3)(4 \; 7)(6 \; 8)} &= \pkey_{4133} + \pkey_{511101}, \\
\fkSS_{(1 \; 4)(2 \; 3)(5 \; 8)(6 \; 7)} &= \pkey_{3340001} + \pkey_{41303} + \pkey_{5110011},  \\
\fkSS_{(1 \; 5)(2 \; 3)(4 \; 8)(6 \; 7)} &= \pkey_{4224} + \pkey_{5133001} + \pkey_{6111011}.  
\ea
\qquad
\ba
\fkSS_{(1 \; 5)(2 \; 4)(3 \; 7)(6 \; 8)} &= \pkey_{4242} + \pkey_{533101}, \\
\fkSS_{(1 \; 3)(2 \; 6)(4 \; 8)(5 \; 7)} &= \pkey_{24042} + \pkey_{1503301}, \\
\fkSS_{(1 \; 5)(2 \; 4)(3 \; 8)(6 \; 7)} &= \pkey_{4252001} + \pkey_{6331011}, \\
\fkSS_{(1 \; 6)(2 \; 3)(4 \; 8)(5 \; 7)} &= \pkey_{51242} + \pkey_{6113301}, \\
\fkSS_{(1 \; 6)(2 \; 4)(3 \; 8)(5 \; 7)} &= \pkey_{52522} + \pkey_{6241201}, \\
\fkSS_{(1 \; 6)(2 \; 5)(3 \; 8)(4 \; 7)} &= \pkey_{54542} + \pkey_{6444101}, \\ \
\ea
\]}%
\end{example}

Let $\cyc(z) := |\{ i \in \PP : i  < z(i)\}|$ denote the number of nontrivial cycles in $z \in I_\infty$.
    We have verified the following conjecture
    for all $z \in I_n$ for $n\leq 9$.
    
\begin{conjecture}\label{fkSO-conj}
Let  $z \in I_\infty$.
Then
  $
  \fkSO_z
  $ 
is a linear combination of $Q$-key polynomials with coefficients that are integral powers of two.
More specifically, 
 there is a finite set of symmetric weak compositions $\cCO(z)$ with
  $\fkSO_z = \sum_{\alpha \in \cCO(z)} 2^{\cyc(z) - \diag(\alpha)} \qkey_\alpha$
and $\diag(\alpha) \leq \cyc(z)$ for all $\alpha \in \cCO(z)$.
  \end{conjecture}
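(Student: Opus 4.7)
The plan is to proceed by strong induction on $\ell(z)$. For the base case, when $z \in I_\infty$ is dominant, the identity $\fkSO_z = \qkey_{\lambda(z)}$ from~\eqref{schub-dom-eq} gives a one-term expansion. The cells $(i,i) \in D(z) = \D_{\lambda(z)}$ correspond bijectively to indices with $i < z(i)$, since $(i,i) \in D(z)$ precisely when some $j > i$ has $z(j) = i$, equivalent to $i < z(i)$ for an involution. Thus $\diag(\lambda(z)) = \cyc(z)$, so $\cCO(z) = \{\lambda(z)\}$ gives the coefficient $2^0 = 1$, verifying the base case.

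For the inductive step, fix a non-dominant $z \in I_\infty$ and choose some index $i$ with $z(i) > z(i+1)$. The recurrence~\eqref{ischub-eq} gives
\[
\partial_i \fkSO_z = \begin{cases} 2 \fkSO_{zs_i} & \text{if } z(i) = i+1, \\ \fkSO_{s_i z s_i} & \text{otherwise,} \end{cases}
\]
and on the right we have $\fkSO_y$ for an involution $y$ of smaller length with either $\cyc(y) = \cyc(z) - 1$ (in the first case) or $\cyc(y) = \cyc(z)$ (in the second). By induction $\fkSO_y$ is expressed in the desired form. I would then seek to characterize $\fkSO_z$ uniquely among polynomials $F$ satisfying $\partial_i F = \partial_i \fkSO_z$, using the additional constraint that $F \in \ZZ[x_1,x_2,\dotsc,x_n]$ for a suitable $n$ given by Proposition~\ref{lexmin-prop} and Remark~\ref{visible-remark} applied to the visible descent set, and then verify that the right-hand sum satisfies the same constraints.

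The main obstacle is relating $\partial_i$ to $Q$-key polynomials, because the $\qkey_\alpha$ are defined via the isobaric operators $\pi_i$, not $\partial_i$. The identity $x_{i+1}\partial_i = \pi_i - 1$ together with Proposition~\ref{formulas-prop} gives $x_{i+1}\partial_i \qkey_\alpha = \qkey_{s_i\alpha} - \qkey_\alpha$ when $\alpha_i > \alpha_{i+1}$ and $0$ otherwise; the key subproblem is to show $\qkey_{s_i\alpha} - \qkey_\alpha$ is divisible by $x_{i+1}$ (so that $\partial_i\qkey_\alpha$ is well-defined as a polynomial) and to compute this quotient explicitly as a signed $\NN$-combination of $Q$-key polynomials with the correct powers of $2$ in the coefficients. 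Once this $Q$-key analogue of the divided difference action is in hand, the inductive step should amount to showing that the transformation $\cCO(y) \leadsto \cCO(z)$ inverts the action, yielding a symmetric weak composition set whose defining terms match $\fkSO_z$.

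An alternative approach would avoid $\partial_i$ altogether and argue through triangularity: by Theorem~\ref{thm:key_osp_key_positive} together with the known key-positivity of Schubert polynomials, $\fkSO_z$ is an $\NN$-combination of key polynomials, and Conjecture~\ref{leading-conj1} (if assumed) provides a triangular decomposition of each $\qkey_\alpha$ in the key basis with leading monomial $2^{\diag(\alpha)} x^{\rowCounts(\alpha)}$. One would then recursively peel off the lex-leading key term of $\fkSO_z$, match it with a $\qkey_\alpha$ carrying the appropriate $2$-power, subtract, and iterate. The delicate point here is to prove that every key coefficient of $\fkSO_z$ is divisible by $2^{\cyc(z)}$ and that the peeling procedure terminates with a symmetric-weak-composition-indexed remainder; this divisibility claim seems to be the essential new input that neither Theorem~\ref{thm:key_osp_key_positive} nor the existing Schubert-key algorithm from~\cite{ReinerShimozono} provides directly, and I expect it to be the hardest part of any full proof.
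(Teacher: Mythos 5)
The statement you are trying to prove is Conjecture~\ref{fkSO-conj}, which the paper does not prove: the authors only report computational verification for all $z \in I_n$ with $n \leq 9$, and it remains open. So there is no proof in the paper to compare against, and what you have written is, by your own framing, a plan rather than a proof. Your base case is correct (for dominant $z$ one indeed has $(i,i) \in D(z)$ iff $i < z(i)$, hence $\diag(\lambda(z)) = \cyc(z)$ and $\fkSO_z = \qkey_{\lambda(z)}$ with coefficient $2^0$), but everything after that leaves the essential difficulties unresolved.

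Concretely, the inductive step via $\partial_i$ has two gaps. First, $\partial_i$ has a large kernel (polynomials symmetric in $x_i, x_{i+1}$), so knowing $\partial_i \fkSO_z = \fkSO_y$ does not determine $\fkSO_z$; the "additional constraints" you invoke from Proposition~\ref{lexmin-prop} and Remark~\ref{visible-remark} are necessary conditions on $\fkSO_z$ but you do not show they single it out, nor that a candidate sum of $Q$-keys satisfying them must equal $\fkSO_z$. Second, your key subproblem --- writing $\partial_i \qkey_\alpha = (\qkey_{s_i\alpha} - \qkey_\alpha)/x_{i+1}$ as a signed combination of $Q$-key polynomials with controlled powers of two --- is not established and is not obviously true; nothing in Proposition~\ref{formulas-prop} gives an action of $\partial_i$ on the span of the $\qkey_\alpha$. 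Your alternative route is also not a proof, since it assumes Conjecture~\ref{leading-conj1}, which is itself open, and the peeling argument would additionally have to contend with the fact (Example~\ref{fkSO-conj-ex}) that the $Q$-key decomposition of $\fkSO_z$ is not unique, so a greedy lex-leading subtraction is not guaranteed to terminate at zero. One small correction: the divisibility of the key coefficients of $\fkSO_z$ by $2^{\cyc(z)}$ is not the hard new input you suggest --- it follows from the known expansion $\fkSO_z = 2^{\cyc(z)}\sum_{w} \fkS_w$ into distinct Schubert polynomials \cite[\S1.5]{HMP1} together with the key-positivity of each $\fkS_w$; the genuinely hard part is showing that the resulting $\NN$-combination of keys regroups into $Q$-keys indexed by symmetric weak compositions.
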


  \begin{example}\label{fkSO-conj-ex}
For most $z \in  I_5$ it holds that
 $\fkSO_z$ is equal to a $Q$-key polynomial.
For the 5 of 26 elements $z \in I_5$ without this property, we have the following expansions:
\[
\ba
\fkSO_{(1 \; 2)(3 \; 4)} &= 2 \qkey_{201}, \\
\fkSO_{(1 \; 2)(4 \; 5)} &= 2 \qkey_{2001},  \ea \qquad \ba 
\fkSO_{(2 \; 3)(4 \; 5)} &= 2 \qkey_{0201}, \\ \ \ea \qquad\ba
\fkSO_{(1 \; 2)(3 \; 5)} &= \qkey_{202} + 2 \qkey_{3011}, \\
\fkSO_{(1 \; 3)(4 \; 5)} &= \qkey_{22} + 2 \qkey_{3101}. 
\ea\]
The decomposition of $\fkSO_z$ into $Q$-key polynomials is not always unique. For example,
\[
\ba
\fkSO_{(1 \; 4)(2 \; 5)(6 \; 8)} 
&= \qkey_{343001} + \qkey_{3340001} + 2\qkey_{442002} + 2 \qkey_{3520011} \\
&= \qkey_{334001} + \qkey_{3430001} + 2\qkey_{442002} + 2 \qkey_{3520011},
\ea
\]
even though 
$\qkey_{343001}$, $\qkey_{3340001}$, $\qkey_{334001}$, and $\qkey_{3430001}$ are all distinct.
\end{example}

\subsection{Coincidences among shifted polynomials}\label{coincid-subsect}

Modulo the preceding conjectures, we can characterize which
involution Schubert polynomials
are equal to shifted key polynomials. 
First, however, we need to explain how $\pkey_\alpha$ and $\qkey_\alpha$
are related to the classical Schur $P$- and $Q$-functions.

Let $w_n := n\cdots 321\in S_n$ denote the reverse permutation.
For each $v \in S_\infty$, $y \in \Ifpf_\infty$, and $z \in I_\infty$, there are unique symmetric functions $F_v$, $P_y$, and $Q_z$
with
\be\label{FPQ-eq}
F_v(x_1,\dotsc,x_n) = \pi_{w_n} \fkS_v,
\quad
P_y(x_1,\dotsc,x_n) = \pi_{w_n} \fkSS_y,
\quand
Q_z(x_1,\dotsc,x_n) = \pi_{w_n} \fkSO_z
\ee
whenever the respective diagrams $D(v)$, $\DSp(y)$, and $\DO(z)$ are contained in $[n]\times \PP$
\cite[Thm.~3.39]{HMP1}.
The formal power series $F_v$ is the \defn{Stanley symmetric function} introduced in~\cite{Stanley},
while $P_y$ and $Q_z$ are the \defn{involution Stanley symmetric functions} studied in~\cite{HMP5} and~\cite[\S4.5]{HMP4}.

 Recall that the ring of bounded degree symmetric functions $\Sym\subset \ZZ\llbracket x_1,x_2,\dots\rrbracket $ has a basis given by the \defn{Schur functions} $s_\lambda$.
For each strict partition $\mu$, there is an associated \defn{Schur $P$-function} $P_\mu$  and \defn{Schur $Q$-function} $Q_\mu = 2^{\ell(\mu)} P_\mu$, which make up $\ZZ$-bases for two subrings of  $\Sym$~\cite[\S{III}.8]{Macdonald}. 
In fact, the power series $F_v$, $P_y$, and $Q_z$ are always $\NN$-linear combinations of Schur functions $s_\lambda$, Schur $P$-functions $P_\mu$,
and Schur $Q$-functions $Q_\nu$, respectively; see~\cite{EG,HMP5,HMP4}.

A permutation $v$ is \defn{vexillary} if $F_v = s_\lambda$ is equal to a single Schur function.
Such permutations are characterized by a simple pattern avoidance condition: a permutation is vexillary if and only if it is $2143$-avoiding~\cite{Stanley}.
It also holds that $Q_z = Q_\nu$ is  a single Schur $Q$-function if and only if $z \in I_\infty$ is vexillary~\cite[Thm.~4.67]{HMP4}.
However, for Schur $P$-functions, the situation is slightly different.
Define $y \in \Ifpf_\infty$ to be \defn{fpf-vexillary} if $P_y=P_\mu$ is equal a single Schur $P$-function.
There is a more complicated (but still explicit) pattern avoidance characterization of fpf-vexillary involutions~\cite[Cor.~7.9]{HMP5}.
In all of these cases, it is known how to compute $\lambda$, $\mu$, and $\nu$ from $v$, $y$, and $z$;
see~\cite[Thm.~4.1]{Stanley},~\cite[Thm.~1.13]{HMP4}, and~\cite[Thm.~1.4]{HMP5}.

The first part of the following theorem is well-known property of key polynomials that follows from their combinatorial description~\cite{LS1} or representation-theoretic interpretation~\cite{Demazure74}.


 \begin{theorem}\label{stab-thm}
 Let $\lambda\in \NN^n$ be a partition with at most $n$ parts.
 \ben
 \item[(a)] It holds that  $s_{\lambda}(x_1,x_2,\dotsc,x_n) = \kappa_{(\lambda_n,\dotsc,\lambda_2,\lambda_1)}$.
\item[(b)]  If $\lambda $ is symmetric with $\mu := \shalf(\lambda)$ and $\nu := \half(\lambda)$ then
 \[ P_{\mu}(x_1,x_2,\dotsc,x_n) = \pkey_{(\lambda_n,\dotsc,\lambda_2,\lambda_1)}
\quand
 Q_{\nu}(x_1,x_2,\dotsc,x_n) = \qkey_{(\lambda_n,\dotsc,\lambda_2,\lambda_1)}.\]
 \een
 \end{theorem}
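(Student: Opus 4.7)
The plan is to treat all three identities as instances of the same template: for each side of the equation, write the longest element $w_n$ as $w_n = u v$ where $v$ lies in the stabilizer $S_\lambda := \{s \in S_n : s\lambda=\lambda\}$ and $u$ is a minimal-length coset representative. Under the respective action on $x^\lambda$, $\pkey_\lambda$, or $\qkey_\lambda$, the operators $\pi_j$ for $j$ indexed by $S_\lambda$ act trivially, so applying $\pi_{w_n}$ is the same as applying $\pi_u$, and $u$ is precisely $u((\lambda_n,\ldots,\lambda_1))$ because $w_n$ reverses the first $n$ entries of any $\lambda \in \NN^n$. This reduces each identity to the statement that $\pi_{w_n}$ applied to the dominant seed produces the claimed symmetric function.

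For part (a), the Demazure character formula gives $s_\lambda(x_1,\ldots,x_n) = \pi_{w_n}(x^\lambda)$. Combined with Remark~\ref{u-rem}, which says $\pi_j x^\lambda = x^\lambda$ whenever $\lambda_j = \lambda_{j+1}$, this yields $\pi_{w_n}(x^\lambda) = \pi_u(x^\lambda) = \kappa_{u,\lambda} = \kappa_{(\lambda_n,\ldots,\lambda_1)}$ from Definition~\ref{key-def1}.

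For part (b), let $y \in \Ifpf_\infty$ be the unique dominant fpf-involution whose shape $\lambdafpf(y)$ is the skew-symmetric partition associated with $\mu = \shalf(\lambda)$ (this agrees with $\lambda$ itself when $\lambda$ is already skew-symmetric, and otherwise differs by a single diagonal cell, which leaves $\pkey_\lambda$ unchanged as noted after the definitions of skew-symmetric partitions). By \eqref{schub-dom-eq}, $\fkSS_y = \pkey_\lambda$, and since $\lambda$ has at most $n$ parts, the diagram $\DSp(y)$ lies in $[n]\times[n]$, so \eqref{FPQ-eq} gives $P_y(x_1,\ldots,x_n) = \pi_{w_n}\fkSS_y = \pi_{w_n}\pkey_\lambda$. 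The first containment in Lemma~\ref{lam-lam-lem} lets us strip off the stabilizer factor $v$, leaving $\pi_{w_n}\pkey_\lambda = \pi_u \pkey_\lambda = \pkey_{u\lambda} = \pkey_{(\lambda_n,\ldots,\lambda_1)}$ by Proposition~\ref{formulas-prop}(a). Finally, because dominant fpf-involutions are fpf-vexillary with shape $\shalf(\lambdafpf(y)) = \mu$, one has $P_y = P_\mu$, completing the $P$-case. The $Q$-case proceeds identically: take $z \in I_\infty$ dominant with $\lambda(z)=\lambda$, note that $\fkSO_z = \qkey_\lambda$ by \eqref{schub-dom-eq}, use the equality statement in Lemma~\ref{lam-lam-lem} (which is sharper for $\qkey_\lambda$ than for $\pkey_\lambda$) to reduce $\pi_{w_n}\qkey_\lambda$ to $\qkey_{(\lambda_n,\ldots,\lambda_1)}$, and invoke that dominant involutions are vexillary with shape $\half(\lambda) = \nu$ so that $Q_z = Q_\nu$.

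The main obstacle is bookkeeping rather than ideas: one must verify carefully that the dominant (fpf-)involution of a given symmetric or skew-symmetric shape really has the expected Schur $P$- or $Q$-function, $P_\mu$ or $Q_\nu$, as its involution Stanley symmetric function. This is exactly where the vexillary/fpf-vexillary classification together with the explicit formulas for the associated shape in \cite[Thm.~1.13]{HMP4} and \cite[Thm.~1.4]{HMP5} enter; a brief computation with the diagram of a dominant element confirms these agree with $\half(\lambda)$ and $\shalf(\lambda)$ respectively. Everything else is a routine application of the reductions to $\pi_u$ established in the first paragraph.
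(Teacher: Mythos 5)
Your proposal is correct, and for part (a) and the $Q$-identity in part (b) it is essentially the paper's own argument: the paper likewise takes the dominant $z \in I_\infty$ with $D(z)=\D_\lambda$, uses $\fkSO_z=\qkey_\lambda$, applies $\pi_{w_n}$ via \eqref{FPQ-eq}, and concludes $Q_z=Q_\nu$ with $\nu=\half(\lambda)$ from vexillarity of dominant involutions and the leading-term formula of \cite[Thm.~1.13]{HMP4}. Where you genuinely diverge is the $P$-identity. The paper does \emph{not} argue it directly in cohomology; it obtains it by setting $\beta=0$ in Corollary~\ref{gp-dom-cor}, whose proof runs through Lemma~\ref{bpi-lem2} and the full strength of Theorem~\ref{gp-vex-thm} (a $K$-theoretic statement whose hard direction is reduced to a computer check on $\Ifpf_{12}$). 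Your route stays entirely in cohomology and mirrors the $Q$-case: dominant $y \in \Ifpf_\infty$, $\fkSS_y=\pkey_\lambda$ by \eqref{schub-dom-eq}, strip the stabilizer with Lemma~\ref{lam-lam-lem} and Proposition~\ref{formulas-prop}, then identify $P_y=P_\mu$. This is lighter machinery, but it shifts all the weight onto the single imported fact that dominant elements of $\Ifpf_\infty$ are fpf-vexillary with $\lambdaSp(y)=\shalf(\lambdafpf(y))$. Be careful how you justify that: the shape computation $\lambdaSp(y)=\shalf(\lambdafpf(y))$ really is a "brief computation with the diagram," but fpf-vexillarity itself (that $P_y$ is a \emph{single} Schur $P$-function) is not a diagram computation --- it needs the pattern-avoidance criterion of \cite[Cor.~7.9]{HMP5} or the results around \cite[Thm.~4.2]{HMP5}, and your write-up slightly conflates the two. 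Since that fact is available in the cited literature (and the paper itself tacitly relies on it when it applies Theorem~\ref{gp-vex-thm} to a dominant element in Corollary~\ref{gp-dom-cor}), this is a citation to sharpen rather than a gap; with that reference made explicit, your proof stands, and it is arguably the more self-contained argument for the cohomological statement, whereas the paper's detour has the advantage of proving the $K$-theoretic generalization at the same time.
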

 
 \begin{proof}
 For part (a), see, e.g.,~\cite[\S2~Remark]{ReinerShimozono}.  
 The first identity in part (b) will follow by setting $\beta=0$ in Corollary~\ref{gp-dom-cor}.
 To derive the second identity in part (b),
 let $z \in I_\infty$ be dominant of shape $\lambda$. 
 Then $z$ is also vexillary by \cite[Prop.~2.2.7]{Manivel}.
 We also have $\DO(z) \subseteq D(z) =\D_\lambda \subseteq [n]\times \PP$,
and so
$
\qkey_{(\lambda_n,\dotsc,\lambda_2,\lambda_1)} = \pi_{w_n}\fkSO_{ z} = Q_z(x_1,\dotsc,x_n) = Q_{\nu}(x_1, \dotsc, x_n)
$
for a strict partition $\nu$, where the last equality is because $z$ is vexillary.
The partition $\nu$ must be $\half(\lambda)$ as this is the index of the leading term of the Schur $Q$-expansion of $Q_z$ by~\cite[Thm.~1.13]{HMP4}.
\end{proof}

\begin{corollary}\label{skew-sym-coin-cor}
If $\alpha$, $\gamma$ are skew-symmetric weak compositions with $\pkey_\alpha = \pkey_\gamma$ then $\lambda(\alpha) =\lambda(\gamma)$.
\end{corollary}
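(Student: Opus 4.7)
The plan is to take a stable limit of $\pkey_\alpha$ to recover a Schur $P$-function and then invoke linear independence of Schur $P$-functions to extract the shape. Specifically, I would show that for all sufficiently large $n$,
\[
\pi_{w_n}\pkey_\alpha = P_{\shalf(\lambda(\alpha))}(x_1,\ldots,x_n),
\]
where $w_n = n\cdots 321 \in S_n$.

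First, since $\pkey_\alpha = \pi_{u(\alpha)}\pkey_{\lambda(\alpha)}$ by the very definition of $\pkey_\alpha$, and since for $n \geq \ell(\alpha)$ the permutation $u(\alpha)$ lies in $S_n$, the standard identity $\pi_{w_n}\pi_u = \pi_{w_n}$ for every $u \in S_n$ (a consequence of $\pi_i^2 = \pi_i$ together with $\ell(w_n s_i) < \ell(w_n)$) collapses the $u(\alpha)$ factor, yielding $\pi_{w_n}\pkey_\alpha = \pi_{w_n}\pkey_{\lambda(\alpha)}$.

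The technical heart of the argument is the identity $\pi_{w_n}\pkey_\lambda = \pkey_{(\lambda_n,\dotsc,\lambda_1)}$ for the skew-symmetric partition $\lambda := \lambda(\alpha)$. To prove this, I would write $\beta := (\lambda_n,\dotsc,\lambda_1)$ and $J := \{j : \lambda_j = \lambda_{j+1}\}$, and appeal to the standard parabolic coset factorization $w_n = u(\beta)\cdot w_{0,J}$, where $w_{0,J}$ is the longest element of $W_J := \langle s_j : j \in J\rangle$ and $\ell(w_n) = \ell(u(\beta)) + \ell(w_{0,J})$. This length additivity gives $\pi_{w_n} = \pi_{u(\beta)}\pi_{w_{0,J}}$. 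Crucially, Lemma~\ref{ss-lam-lam-lem} applied to the skew-symmetric $\lambda$ guarantees that $\pi_j\pkey_\lambda = \pkey_\lambda$ for every $j \in J$, so $\pi_{w_{0,J}}\pkey_\lambda = \pkey_\lambda$, and therefore $\pi_{w_n}\pkey_\lambda = \pi_{u(\beta)}\pkey_\lambda = \pkey_\beta$. Theorem~\ref{stab-thm}(b) then identifies $\pkey_\beta = P_{\shalf(\lambda)}(x_1,\dotsc,x_n)$, completing the stability formula.

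The conclusion is now immediate: if $\pkey_\alpha = \pkey_\gamma$ and $n \geq \max(\ell(\alpha),\ell(\gamma))$, applying $\pi_{w_n}$ to both sides and using the stability formula gives
\[
P_{\shalf(\lambda(\alpha))}(x_1,\dotsc,x_n) = P_{\shalf(\lambda(\gamma))}(x_1,\dotsc,x_n).
\]
Linear independence of Schur $P$-functions indexed by distinct strict partitions (for $n$ sufficiently large) forces $\shalf(\lambda(\alpha)) = \shalf(\lambda(\gamma))$, and since $\shalf$ restricts to a bijection between skew-symmetric partitions and strict partitions, $\lambda(\alpha) = \lambda(\gamma)$. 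I expect the main obstacle to be the middle paragraph: recognizing that Lemma~\ref{ss-lam-lam-lem} is precisely the hypothesis that lets one kill the parabolic factor $w_{0,J}$ under $\pi$, and that skew-symmetry (rather than mere symmetry) is exactly what makes the inclusion in Lemma~\ref{lam-lam-lem} an equality so that this step succeeds.
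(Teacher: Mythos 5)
Your proof is correct and follows essentially the same route as the paper: apply $\pi_{w_n}$ for large $n$, identify the result as $P_{\shalf(\lambda(\alpha))}(x_1,\dotsc,x_n)$ via Theorem~\ref{stab-thm}(b), and conclude from linear independence of Schur $P$-polynomials together with the injectivity of $\shalf$ on skew-symmetric partitions; your middle paragraph simply makes explicit the parabolic-collapsing step that the paper leaves implicit in its citation of Theorem~\ref{stab-thm}. One small correction of emphasis: killing the factor $\pi_{w_{0,J}}$ only needs the inclusion $\{i : \lambda_i = \lambda_{i+1}\} \subseteq \{i : \pi_i\pkey_\lambda = \pkey_\lambda\}$ from Lemma~\ref{lam-lam-lem}, valid for any symmetric partition, whereas skew-symmetry is what you genuinely need at the very end to recover $\lambda(\alpha)$ from $\shalf(\lambda(\alpha))$ (e.g.\ $(3,2,1)$ and $(3,1,1)$ have the same $\shalf$).
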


\begin{proof} 
Let $\mu := \shalf(\lambda(\alpha))$ and $\nu := \shalf(\lambda(\gamma))$.
Applying $\pi_{w_n}$ for sufficiently large $n$ to both sides of $\pkey_\alpha = \pkey_\gamma$ gives $P_\mu(x_1,x_2,\dotsc,x_n) =P_\nu(x_1,x_2,\dotsc,x_n)\neq 0 $ by Theorem~\ref{stab-thm}.
This implies that $\mu=\nu$, and hence that $\lambda(\alpha) =\lambda(\gamma)$ since the latter partitions are skew-symmetric.
\end{proof}

The \defn{(Lehmer) code} of a permutation $w$ is the integer sequence $c(w) = (c_1,c_2,c_3,\ldots)$ with 
\[c_i = \abs{\{ j \mid (i,j) \in D(w)\}}.\]
Another classical result is that $\fkS_w = \kappa_\alpha$ if and only if $w \in S_\infty$ is vexillary and $\alpha = c(w)$; see Theorem~\ref{vex-thm} for a slight generalization.
We can prove (partial) shifted analogues of this fact.

\begin{proposition}\label{ss-vex-prop}
Let $y \in \Ifpf_\infty$. If $\fkSS_y = \pkey_\alpha$ for a skew-symmetric weak composition $\alpha$ then 
$y$ is fpf-vexillary.
Conversely, if Conjecture~\ref{fkSS-conj} holds and $y$ is fpf-vexillary then  $\fkSS_y = \pkey_\alpha$ for some  $\alpha$.
\end{proposition}

\begin{proof}
If $\fkSS_y = \pkey_\alpha$ where $\lambda =\lambda(\alpha)$ and $\mu = \shalf(\lambda)$,
 then for all sufficiently large $n$ we have 
\[
 P_y(x_1,\dotsc,x_n) = \pi_{w_n} \fkSS_z = \pi_{w_n} \pkey_\alpha =\pkey_{(\lambda_n,\dotsc,\lambda_2,\lambda_1)}= P_\mu(x_1,\dotsc,x_n)
\]
 by Theorem~\ref{stab-thm}, which implies that $P_y = P_\mu$ so $y$ is fpf-vexillary.

Conversely, if $\fkSS_y$ is a sum of $k$ $P$-key polynomials,
then $P_y(x_1,\dotsc,x_n) = \pi_{w_n} \fkSS_y$ is a sum of $k$ (not necessarily distinct) Schur $P$-polynomials for all sufficiently large $n$,
so $P_y$ is a sum of $k$  (not necessarily distinct) Schur $P$-functions.
 In this case, it can only hold that $y$ is fpf-vexillary if $k=1$, which means that $\fkSS_y = \pkey_\alpha$ for some  $\alpha$.
\end{proof}

\begin{proposition}\label{so-vex-prop}
Let $z \in I_\infty$. If $\fkSO_z = \qkey_\alpha$ for some symmetric weak composition $\alpha$ then 
$z$ is vexillary.
Conversely, if Conjecture~\ref{fkSO-conj} holds and $z$ is vexillary then  $\fkSO_z = \qkey_\alpha$ for some  $\alpha$.
\end{proposition}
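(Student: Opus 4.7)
The plan is to mirror the proof of Proposition~\ref{ss-vex-prop} just given, adapted to the orthogonal setting, but with extra care for the coefficients $2^{\cyc(z)-\diag(\alpha)}$ appearing in the $Q$-key expansion predicted by Conjecture~\ref{fkSO-conj}.

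For the forward direction, I would suppose $\fkSO_z = \qkey_\alpha$ and set $\nu := \half(\lambda(\alpha))$. Applying $\pi_{w_n}$ for $n \gg 0$, the left side becomes $Q_z(x_1,\dotsc,x_n)$ by~\eqref{FPQ-eq}; using Proposition~\ref{formulas-prop} to first move $\alpha$ to the reverse-sorted sequence $(\lambda(\alpha)_n,\dotsc,\lambda(\alpha)_1)$ and then Theorem~\ref{stab-thm}(b), the right side becomes $Q_\nu(x_1,\dotsc,x_n)$. Taking $n$ sufficiently large forces $Q_z = Q_\nu$ in the ring of symmetric functions, so $z$ is vexillary by~\cite[Thm.~4.67]{HMP4}.

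For the converse, I would assume Conjecture~\ref{fkSO-conj} holds and $z$ is vexillary, so $Q_z = Q_\nu$ for a single strict partition $\nu$. Writing $\fkSO_z = \sum_{\alpha \in \cCO(z)} 2^{\cyc(z)-\diag(\alpha)}\qkey_\alpha$ and applying $\pi_{w_n}$ for $n \gg 0$, then using $Q_\mu = 2^{\ell(\mu)}P_\mu$ together with the identity $\diag(\alpha) = \ell(\half(\lambda(\alpha)))$ (which holds because $\DSym_\alpha$ is a permutation of $\D_{\lambda(\alpha)}$ preserving the diagonal), one arrives at the symmetric function identity
\[
Q_z \;=\; 2^{\cyc(z)} \sum_{\alpha \in \cCO(z)} P_{\half(\lambda(\alpha))}.
\]
Combining with $Q_z = Q_\nu = 2^{\ell(\nu)}P_\nu$ and invoking linear independence of Schur $P$-functions forces $\half(\lambda(\alpha)) = \nu$ for every $\alpha\in\cCO(z)$ and yields the cardinality formula $|\cCO(z)| = 2^{\ell(\nu)-\cyc(z)}$.

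The main obstacle is the final step: deducing $\ell(\nu) = \cyc(z)$, which forces $|\cCO(z)| = 1$ and therefore $\fkSO_z = \qkey_\alpha$ for the unique $\alpha\in\cCO(z)$. This equality is not a formal consequence of the preceding identities but rather a genuine property of vexillary involutions, extractable from the explicit description of $\nu$ in~\cite[Thm.~1.13]{HMP4} or equivalently from the multiplicity-freeness of the Schur $P$-expansion of $2^{-\cyc(z)} Q_z$ in the vexillary case. Once this is in hand, the converse follows immediately from the cardinality formula above.
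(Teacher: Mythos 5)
Your argument is correct and follows the same route as the paper's, which disposes of this proposition in one line by declaring it to be the proof of Proposition~\ref{ss-vex-prop} \emph{mutatis mutandis}: apply $\pi_{w_n}$ for $n\gg 0$, pass to involution Stanley symmetric functions via \eqref{FPQ-eq} and Theorem~\ref{stab-thm}(b), and invoke the characterization of vexillary $z$ by $Q_z$ being a single Schur $Q$-function. Your forward direction is exactly that argument. For the converse, you do something the paper's terse proof does not: you track the normalizing coefficients $2^{\cyc(z)-\diag(\alpha)}$ of Conjecture~\ref{fkSO-conj}, which have no analogue in the symplectic case. The paper's translated argument simply counts the number $k$ of Schur $Q$-terms and concludes $k=1$; this is immediate if the coefficients are positive integers (as in all computed examples), since then $\sum_i c_i=1$ forces $k=1$ and $c_1=1$, but as you note it is not purely formal if exponents $\cyc(z)-\diag(\alpha)<0$ are permitted, in which case one must rule out $|\cCO(z)|=2^{\ell(\nu)-\cyc(z)}>1$. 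The identity $\ell(\nu)=\cyc(z)$ that you invoke to close this loophole is indeed a correct property of vexillary involutions, readable off the shape formula of \cite[Thm.~1.13]{HMP4} (equivalently, $2^{-\cyc(z)}Q_z$ is Schur-$P$-positive with leading coefficient $1$, so for vexillary $z$ it must equal $P_\nu$ with $\ell(\nu)=\cyc(z)$); note it also yields the final equality $\fkSO_z=\qkey_\alpha$ rather than merely $\fkSO_z=2^{\cyc(z)-\diag(\alpha)}\qkey_\alpha$. So your proof is complete and is essentially the paper's, with one extra and correctly identified piece of coefficient bookkeeping.
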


\begin{proof}
The proof is essentially the same as for the previous proposition, \emph{mutatis mutandis};
one just needs to change each ``$\Sp$'' to ``$\O$'', each ``$P$'' to ``$Q$'', and ``$\mu = \shalf(\lambda)$'' to ``$\mu = \half(\lambda)$''.
\end{proof}

As mentioned above, if $w \in S_\infty$ is vexillary then $\fkS_w = \kappa_{c(w)}$. 
An analogous formula appears to hold in the $Q$-shifted case; we have checked this conjecture
  for all $z \in I_n$ for $n\leq 9$:

\begin{conjecture}\label{vex-conj}
If $z \in I_\infty$ is vexillary then $\fkSO_z = \qkey_{c(z)}$.
\end{conjecture}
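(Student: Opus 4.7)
The plan is to prove this by induction on $\ell(z)$, reducing an arbitrary vexillary involution to the dominant case. For the base case, when $z \in I_\infty$ is dominant with symmetric shape $\lambda$, the Rothe diagram $D(z)$ coincides with $\D_\lambda$, so the Lehmer code $c(z)$ equals $\lambda$ and both sides equal $\qkey_\lambda$ by the dominant formula in \eqref{schub-dom-eq}.

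For the inductive step, suppose $z$ is vexillary but not dominant. The strategy is to locate an index $i \in \PP$ with $c(z)_i > c(z)_{i+1}$ such that the involution $z'$ produced by one of the cases of \eqref{ischub-eq} (namely $z' = s_i z s_i$ when $z(i) > z(i+1) \neq i$, or $z' = zs_i$ when $z(i) = i+1$) remains vexillary with $\ell(z') < \ell(z)$ and Lehmer code $c(z') = s_i \cdot c(z)$. Once such a reduction is available, Proposition~\ref{formulas-prop}(a) gives $\pi_i \qkey_{c(z')} = \qkey_{c(z)}$, so by the induction hypothesis $\fkSO_{z'} = \qkey_{c(z')}$ it suffices to establish the matching relation $\pi_i \fkSO_{z'} = \fkSO_z$. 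This last identity should follow from \eqref{ischub-eq} using $\pi_i = 1 + x_{i+1}\partial_i$, after handling separately the case $z(i)=i+1$ where a factor of $2$ appears.

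An alternative approach, conditional on the conjectures in the paper, is to combine Proposition~\ref{so-vex-prop} (which gives $\fkSO_z = \qkey_\alpha$ for some symmetric weak composition $\alpha$, assuming Conjecture~\ref{fkSO-conj}) with a leading-term identification of $\alpha$. The lex-minimal monomial of $\fkSO_z$ is $x^{c(z)}$ by \cite[Prop.~3.14]{HMP1}, and Conjecture~\ref{leading-conj1} would then pin down $\alpha = c(z)$, noting that for codes arising from vexillary involutions the Rothe diagram is already ``packed'' enough that $\rowCounts(c(z)) = c(z)$.

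The main obstacle is the combinatorial claim at the heart of the inductive step: namely, showing that every non-dominant vexillary involution admits a reduction $z \to z'$ as above with $c(z')$ obtained from $c(z)$ by a single adjacent swap and with $z'$ again vexillary. A statement of this precise form for Lehmer codes of vexillary involutions does not seem to be recorded in~\cite{HMP4, HMP5} and would need to be established using the pattern-avoidance characterizations there. A secondary difficulty is the factor of $2$ in the case $z(i) = i+1$ of \eqref{ischub-eq}, which must be tracked carefully against the normalization $2^{\cyc(z) - \diag(\alpha)}$ from Conjecture~\ref{fkSO-conj} to avoid spurious powers of two.
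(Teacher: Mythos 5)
This statement is Conjecture~\ref{vex-conj}: the paper offers no proof of it, only computational verification for all $z \in I_n$ with $n \leq 9$, so there is no argument of the authors' to compare yours against. Your proposal does not close the conjecture either, and beyond the combinatorial gap you acknowledge (finding a length-decreasing reduction $z \to z'$ that preserves vexillarity and acts on codes by an adjacent swap), the analytic engine of your induction is broken. The recursion~\eqref{ischub-eq} only expresses $\fkSO_{z'}$ as $\partial_i \fkSO_z$ (or $\tfrac12\partial_i\fkSO_z$), i.e., it goes in the length-decreasing direction; it cannot be inverted by $\pi_i$. Indeed, since $\pi_i = \partial_i x_i$ and $\partial_i^2 = 0$, one has $\pi_i \partial_i = \partial_i(x_i\partial_i(\cdot)) = \partial_i + x_{i+1}\partial_i^2 = \partial_i$, so applying $\pi_i$ to $\fkSO_{z'} = \partial_i\fkSO_z$ merely returns $\fkSO_{z'}$, never $\fkSO_z$ (they even have different degrees). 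What you would actually need is a genuinely new identity of the form $\pi_i \fkSO_{z'} = \fkSO_z$ for suitable pairs of vexillary involutions; no such $\pi_i$-recursion for $\fkSO$ is recorded in the paper or in~\cite{HMP4,HMP5}, and establishing one is essentially equivalent to the conjecture itself. This mirrors the classical situation: $\fkS_w = \kappa_{c(w)}$ for vexillary $w$ is not proved by inverting the $\partial_i$-recursion but by identifying both sides with a flagged Schur function (Lascoux--Sch\"utzenberger, Reiner--Shimozono), and an analogous independent description of $\fkSO_z$ for vexillary $z$ is exactly what is missing here.

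Your alternative route is also not a proof: it is conditional on Conjecture~\ref{fkSO-conj} and Conjecture~\ref{leading-conj1}, both open, and even granting them you would still need to verify the unproven claim that $\rowCounts(c(z)) = c(z)$ (and that the coefficient $2^{\cyc(z)-\diag(\alpha)}$ collapses to $1$) for vexillary $z$; Proposition~\ref{so-vex-prop} only yields $\fkSO_z = \qkey_\alpha$ for \emph{some} symmetric $\alpha$, not for $\alpha = c(z)$. A more promising unconditional strategy would be to adapt the flagged-Schur (or $K$-theoretic, via~\cite[Thm.~3.26]{MP2019a} at $\beta=0$) formula for $\fkSO_z$ in the vexillary case and match it against a flagged formula for $\qkey_{c(z)}$.
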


If $w \in S_\infty$ is vexillary then $F_w = s_{\lambda(c(w))^\top}$ by \cite[Thm.~4.1]{Stanley} and $F_{w^{-1}} = s_{\lambda(c(w))}$
by \cite[Lem.~5.4]{Marberg2019a} (with $\beta=0$).
Hence, if $z =z^{-1}\in \I_\infty$ is vexillary then we must have $\lambda(c(z))^\top= \lambda(c(z))$ and so $c(z)$ is a symmetric weak composition.
 
Conjecture~\ref{fkSO-conj} predicts that there is a multiplicity-free expansion of the 
renormalized involution Schubert polynomials $\iS_z := 2^{-\cyc(z)} \fkSO_z$ considered in~\cite{HMP6,HMP1,HMP4}
into renormalized $Q$-key polynomials $\hat \kappa_\alpha := 2^{-\diag(\alpha)} \qkey_\alpha$.
One reason to keep the powers of two is because we can have $\iS_z = \hat \kappa_\alpha$ without $z$ being vexillary;
for example, $\iS_{(1,2)(3,4)} = \hat\kappa_{201}$.

We have not identified a way to predict $\alpha$ such that $\fkSS_y = \pkey_\alpha$ from 
a fpf-vexillary $y \in \Ifpf_\infty$.

\begin{example}\label{ss-vex-prop-ex}
In many cases with $\fkSS_y = \pkey_\alpha$,
it holds that $c_i - \alpha_i \in \{0,1\}$ for all $i \in \PP$, where $c(y) =(c_1,c_2,\ldots)$;
e.g., if $y=(1 \; 3)(2 \; 7)(4 \; 5)(6 \; 8)$
then $\fkSS_y = \pkey_{150111}$ and $c(y) = (2, 5, 0, 2, 1, 2)$.
This does not always occur, however.
For instance, if $y=(1 \; 3)(2 \; 4)(5 \; 7)(6 \; 8) \in \Ifpf_8$
then we have $\fkSS_y = \pkey_{130001}$ and $  c(y) = (2,2,0,0,2,2),$
and if $y=(1 \; 4)(2 \; 5)(3 \; 6)(7 \; 10)(8 \; 11)(9 \; 12)$ then we have
$\fkSS_y = \pkey_{234000012}$ and $ c(y) = (3,3,3,0,0,0,3,3,3).$
\end{example}

As one other application of the results above, we can classify when $\kappa_\alpha$ is equal to a $P$-key polynomial.
We briefly note one lemma; here, let $\lambda$ and $\mu$ be partitions with $\mu$ strict. 

\begin{lemma}\label{Ps-lem}
It holds that $s_\lambda=P_\mu$ if and only if $\mu = \lambda = (n,\dotsc,2,1)$ for some $n \in \NN$.
\end{lemma}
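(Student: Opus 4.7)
The plan is to prove the two directions by separate arguments: the \emph{if} direction uses a classical identity, while the \emph{only if} direction combines a leading-term comparison with the $\omega$-invariance of Schur $P$-functions.

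For the \emph{if} direction, suppose $\lambda = \mu = (n,n-1,\dotsc,1)$. The identity $s_\lambda = P_\lambda$ for the staircase shape is classical; see, e.g., Macdonald, \emph{Symmetric Functions and Hall Polynomials}, Chapter~III, Section~8. No further argument is needed.

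For the \emph{only if} direction, suppose $s_\lambda = P_\mu$. The first step is to identify $\lambda$ with $\mu$ by comparing leading terms in the monomial basis. Both $s_\lambda$ and $P_\mu$ can be written in the form $m_\nu + \sum_{\eta <_{\mathrm{lex}} \nu} a_\eta m_\eta$ with $\nu = \lambda$ and $\nu = \mu$, respectively. For $s_\lambda$ this is the usual fact that $K_{\lambda\lambda} = 1$ and $K_{\lambda\eta} = 0$ for $\eta >_{\mathrm{lex}} \lambda$. For $P_\mu$ it follows from the observation that the diagonal entry $(i,i)$ of a semistandard marked shifted tableau is unprimed and at least $i$, so the symbol $k$ can only appear in rows $1$ through $k$; the lex-maximum content is therefore $\mu$ itself, attained uniquely by the tableau with row $i$ filled by unprimed $i$'s. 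Equating lex-leading $m$-terms forces $\lambda = \mu$.

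The second step uses the classical fact that every Schur $P$-function lies in the subring $\Gamma_\QQ := \QQ[p_1, p_3, p_5, \dotsc] \subseteq \Sym_\QQ$ generated by the odd power sums (see Macdonald~III.8). Since $\omega(p_k) = (-1)^{k-1} p_k$, the involution $\omega$ restricts to the identity on $\Gamma_\QQ$, so $\omega(P_\mu) = P_\mu$. Combining with $\omega(s_\lambda) = s_{\lambda'}$ and the hypothesis $s_\lambda = P_\mu$ yields $s_{\lambda'} = s_\lambda$, and therefore $\lambda = \lambda'$. To finish, a strict partition equal to its own transpose must satisfy $\ell(\mu) = \mu_1 =: n$, and then the strictly decreasing positive integers $\mu_1 > \mu_2 > \cdots > \mu_n \geq 1$ form $n$ distinct values bounded above by $n$, so $\mu = (n,n-1,\dotsc,1)$. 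The main ingredient I would cite rather than reprove is the containment $P_\mu \in \Gamma_\QQ$, which is a standard fact about Schur $P$-functions.
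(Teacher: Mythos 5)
Your proof is correct, but it takes a genuinely different route from the paper, which gives no argument at all: it simply combines \cite[Thm.~3.35]{HMP1} and \cite[Thm.~4.20]{HMP4} (see also \cite[Thm.~V.3]{DeWitt}, where the classification of coincidences between Schur and Schur $P$-functions is worked out). Your argument is self-contained and uses only classical facts: the staircase identity $P_{\delta_n}=s_{\delta_n}$ for the ``if'' direction, and for the ``only if'' direction the unitriangularity of both families over the monomial basis (forcing $\lambda=\mu$) together with the $\omega$-invariance of the subring $\QQ[p_1,p_3,p_5,\dotsc]$ containing all $P_\mu$ (forcing $\lambda=\lambda^\T$, whence a strict self-transpose partition must be a staircase). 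Each step checks out: the triangularity $P_\mu=m_\mu+(\text{dominance-lower terms})$ is Macdonald's (8.16), dominance refines lexicographic order as you use it, and $\omega(P_\mu)=P_\mu$ follows from $\omega(q_r)=q_r$. What your approach buys is transparency and independence from the involution-Stanley-symmetric-function machinery of \cite{HMP1,HMP4}; what the paper's citation route buys is brevity and consistency with the toolkit it already relies on elsewhere. The only cosmetic quibble is that your leading-term discussion for $P_\mu$ via marked shifted tableaux is slightly more work than needed, since the monomial unitriangularity is itself a standard cited fact.
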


\begin{proof}
This follows by combining~\cite[Thm.~3.35]{HMP1} and~\cite[Thm.~4.20]{HMP4}; see also~\cite[Thm.~V.3]{DeWitt}.
\end{proof}

Clearly $\kappa_\emptyset = \pkey_\emptyset=1$.
The other situations where $\kappa_\alpha = \pkey_\gamma$ are all as follows:

 \begin{proposition}\label{ss-key-prop}
 Let $\alpha$ and $\gamma$ be weak compositions with $\alpha$ nonempty and $\gamma$ skew-symmetric.
Then $\kappa_\alpha = \pkey_\gamma$ if and only if there are positive integers $1\leq i_0 < i_1 < i_ 2 < \cdots <i_n$ 
such that
$
\alpha = \e_{i_1} + 2 \e_{i_2}+3 \e_{i_3}+ \cdots + n \e_{i_{n}}
$
and
$
\gamma = (n+1)(\e_{i_0} + \e_{i_1} + \e_{i_2}+ \dots + \e_{i_n})
$.
\end{proposition}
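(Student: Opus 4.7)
The plan is to prove the reverse direction ($\Leftarrow$) by a direct Demazure computation starting from the identity $\pkey_{(n+1)^{n+1}} = \kappa_{(0,1,\dots,n)}$, and then to handle the forward direction ($\Rightarrow$) by stabilization combined with Lemma~\ref{Ps-lem}.

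For the base case, applying Theorem~\ref{stab-thm}(b) with $\lambda=(n+1)^{n+1}$ gives $\pkey_{(n+1)^{n+1}} = P_{(n,n-1,\dots,1)}(x_1,\dots,x_{n+1})$; Lemma~\ref{Ps-lem} identifies this with the Schur polynomial $s_{(n,n-1,\dots,1)}(x_1,\dots,x_{n+1})$; and Theorem~\ref{stab-thm}(a) rewrites the latter as $\kappa_{(0,1,2,\dots,n)}$. For general $\gamma=(n+1)(\e_{i_0}+\dots+\e_{i_n})$ with $i_0<i_1<\dots<i_n$, I would express $\pkey_\gamma = \pi_{u(\gamma)}\kappa_{(0,1,\dots,n)}$, where $u(\gamma)$ is the Grassmannian permutation sending $k\mapsto i_{k-1}$ for $k=1,\dots,n+1$. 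Using the reduced expression
\[
u(\gamma)=\bigl(s_{i_0-1}s_{i_0-2}\cdots s_1\bigr)\bigl(s_{i_1-1}\cdots s_2\bigr)\cdots\bigl(s_{i_n-1}\cdots s_{n+1}\bigr),
\]
with the $k=n$ block on the right (applied first), I would apply $\pi_i$ to the current key polynomial via Remark~\ref{u-rem} at each step. For $k=n,n-1,\dots,1$, the $k$-th block shifts the entry $k$ rightward from position $k+1$ into position $i_k$; the strict inequalities $i_0<i_1<\dots<i_n$ guarantee no collision with previously placed larger entries, so every intermediate $\pi_i$ encounters a descent (value $k$ against value $0$) and performs a genuine swap. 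The final $k=0$ block acts trivially because its $\pi_j$'s operate only on pairs of zero entries. Reading off the resulting composition yields $\alpha=\e_{i_1}+2\e_{i_2}+\dots+n\e_{i_n}$, so $\pkey_\gamma=\kappa_\alpha$.

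For the forward direction, suppose $\kappa_\alpha=\pkey_\gamma$ with $\alpha$ nonempty and $\gamma$ skew-symmetric. Applying $\pi_{w_N}$ for $N\geq\max(\ell(\alpha),\ell(\gamma))$ to both sides and invoking Theorem~\ref{stab-thm} yields $s_{\lambda(\alpha)}(x_1,\dots,x_N)=P_{\shalf(\lambda(\gamma))}(x_1,\dots,x_N)$, which stabilizes (as $N\to\infty$) to the symmetric-function identity $s_{\lambda(\alpha)}=P_{\shalf(\lambda(\gamma))}$. Lemma~\ref{Ps-lem} together with $\alpha\neq\emptyset$ forces $\lambda(\alpha)=\shalf(\lambda(\gamma))=(n,n-1,\dots,1)$ for some $n\geq 1$; skew-symmetry of $\gamma$ (which rules out the symmetric but not skew-symmetric partition $(n+1,\dots,n+1,n)$) then determines $\lambda(\gamma)=(n+1)^{n+1}$, whence $\gamma=(n+1)(\e_{i_0}+\dots+\e_{i_n})$ for some $1\leq i_0<\dots<i_n$. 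The reverse direction applied to this $\gamma$ gives $\pkey_\gamma=\kappa_\beta$ for $\beta=\e_{i_1}+2\e_{i_2}+\dots+n\e_{i_n}$; since $\kappa_\alpha=\kappa_\beta$ and key polynomials are uniquely indexed by weak compositions~\eqref{leading-eq}, we conclude $\alpha=\beta$.

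The main technical obstacle is the reverse-direction Demazure computation: one must verify that the claimed product really is a reduced expression for $u(\gamma)$ (which reduces to matching its length $\sum_{k=0}^n(i_k-k-1)$ with the standard Grassmannian length formula) and that the pipeline of shifts in the different blocks never collides, so that Remark~\ref{u-rem} yields a genuine swap rather than a trivial fixation at every intermediate step. The strict inequalities on the $i_k$ are exactly what guarantees this non-collision, and once this is in place the remaining bookkeeping is routine.
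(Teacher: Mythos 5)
Your proof is correct and follows essentially the same route as the paper's: both directions rest on the identity $\kappa_{(0,1,\dotsc,n)} = \pkey_{(n+1)^{n+1}}$ obtained from Theorem~\ref{stab-thm} and Lemma~\ref{Ps-lem}, with isobaric divided differences giving the reverse implication and stabilization via $\pi_{w_N}$ giving the forward one. The only cosmetic difference is that you absorb the index $i_0$ into an explicit reduced word for $u(\gamma)$ whose leftmost block acts trivially, whereas the paper first treats $i_0=1$ and then uses the symmetry of $\kappa_\alpha$ in $x_1,\dotsc,x_{i_1-1}$ to pass to general $i_0$.
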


\begin{proof}
It follows from Theorem~\ref{stab-thm} and Lemma~\ref{Ps-lem} that for any $n>0$ we have
\[
\kappa_{(0,1,2,\dotsc,n)} = s_{(n,\dotsc,3,2,1)}(x_1,x_2,\dotsc,x_{n+1}) = P_{(n,\dotsc,3,2,1)}(x_1,x_2,\dotsc,x_{n+1}) = \pkey_{(n+1,n+1,\dotsc,n+1)}.
\]
By applying isobaric divided difference operators to this identity,
we deduce that 
$\kappa_\alpha = \pkey_\gamma$ 
if   
$
\alpha =  \e_{i_1} + 2 \e_{i_2}+3 \e_{i_3}+ \dots + n \e_{i_{n}}$
and $\gamma = (n+1)(\e_{1} + \e_{i_1} + \e_{i_2}+ \dots + \e_{i_n})$
for some indices $1< i_1 < i_ 2 < \cdots <i_n$.
Since in this case $\kappa_\alpha$ is symmetric in $x_1,x_2,\dotsc,x_{i_1-1}$, the polynomial $\pkey_\gamma$  is fixed by $\pi_j$ for all $1\leq j <i_1-1$,
so one also has $\kappa_\alpha=\pkey_\gamma = \pkey_{\gamma + \e_{i_0} - \e_1}$ for any $1\leq i_0 <i_1$.

Conversely, suppose $\kappa_\alpha = \pkey_\gamma$ when $\alpha$ and $\gamma$ are arbitrary weak compositions with $\alpha$ nonempty and $\gamma$ skew-symmetric. In view of Theorem~\ref{stab-thm}, applying $\pi_{w_N}$ to both sides when $N$ is sufficiently large 
implies that $s_{\lambda(\alpha)} = P_{\shalf(\lambda(\gamma))}$,
so $\lambda(\alpha) = \shalf(\lambda(\gamma)) = (n,\dotsc,2,1)$ for some  $n \in \PP$  by Lemma~\ref{Ps-lem}.
However $ \shalf(\lambda(\gamma))$ can only be equal to $(n,\dotsc,2,1)$ if $\lambda(\gamma) = (n+1,n+1,\dots,n+1) \in \NN^{n+1}$,
so we must have $
\gamma = (n+1)(\e_{i_0} + \e_{i_1} + \e_{i_2}+ \dots + \e_{i_n})
$ for some $1\leq i_0 < i_1 < i_ 2 < \cdots <i_n$. By the preceding paragraph this means that $\kappa_\alpha = \pkey_\gamma =\kappa_{ \e_{i_1} + 2 \e_{i_2}+3 \e_{i_3}+ \dots + n \e_{i_{n}}}$, so we must also have $\alpha = \e_{i_1} + 2 \e_{i_2}+3 \e_{i_3}+ \dots + n \e_{i_{n}}$ as key polynomials are uniquely indexed by weak compositions.
\end{proof}

We can only have $\qkey_\alpha = \kappa_\gamma$ if $\alpha=\gamma = \emptyset$ 
since  
 all coefficients in $\qkey_\alpha$ are divisible by $2^{\diag(\alpha)}$. 
 If we  exclude this factor then there is a nontrivial
 analogue of the preceding result.

\begin{proposition}
\label{prop:key_pkey_equality}
 Let $\alpha$ and $\gamma$ be weak compositions with $\alpha$ nonempty and $\gamma$ symmetric.
Then $\kappa_\alpha = 2^{-\diag(\gamma)}\qkey_\gamma$ if and only if 
 there are positive integers $1\leq  i_1 < i_ 2 < \cdots <i_n$ 
such that
$
\alpha=  \e_{i_1} +2\e_{i_2} + 3 \e_{i_3}+ \cdots +n \e_{i_n}
$
and
$
\gamma= n(\e_{i_1} + \e_{i_2} + \cdots + \e_{i_n})
$.
\end{proposition}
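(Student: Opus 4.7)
The plan is to adapt the proof of Proposition~\ref{ss-key-prop} almost verbatim, substituting Schur $Q$-functions (divided by appropriate powers of two) for Schur $P$-functions.

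For the ``if'' direction, I would start with the base case $\alpha_0 = (1, 2, \dots, n)$ and $\gamma_0 = (n, n, \dots, n)$, both viewed as weak compositions in $\NN^n$. A brief observation shows that $\diag(\gamma)$ always equals the Durfee square size of $\lambda(\gamma)$, hence equals $\ell(\half(\lambda(\gamma)))$: since $\DSym_\gamma$ is obtained by permuting coordinates of $\D_{\lambda(\gamma)}$ via a bijection, it has the same number of diagonal cells. For $\gamma_0$, we have $\lambda(\gamma_0) = (n^n)$ with $\diag(\gamma_0) = n$, and $\half((n^n)) = (n, n-1, \dots, 1)$. Theorem~\ref{stab-thm}(a), (b), together with $Q_\mu = 2^{\ell(\mu)} P_\mu$ and Lemma~\ref{Ps-lem}, yield
\[
\kappa_{(1, 2, \dots, n)} = s_{(n, n-1, \dots, 1)}(x_1, \dots, x_n) = P_{(n, n-1, \dots, 1)}(x_1, \dots, x_n) = 2^{-n} \qkey_{(n, n, \dots, n)}.
\]

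To propagate the identity to arbitrary $1 \leq i_1 < \cdots < i_n$, I apply isobaric divided difference operators $\pi_j$. The crucial observation is that $\alpha = \e_{i_1} + 2\e_{i_2} + \cdots + n\e_{i_n}$ and $\gamma = n(\e_{i_1} + \cdots + \e_{i_n})$ share the same support $\{i_1, \dots, i_n\}$. Hence by Remark~\ref{u-rem} and Proposition~\ref{formulas-prop}, each $\pi_j$ acts identically on $\kappa_\alpha$ and $\qkey_\gamma$: both are fixed when $j, j+1$ are both in or both out of the support, and both transform to their $s_j$-analogues when $j$ is in the support but $j+1$ is not. Starting from $\{1, 2, \dots, n\}$, one can reach any target $\{i_1, \dots, i_n\}$ by a sequence of such ``push right'' moves (first push the entry at position $n$ to $i_n$, then the one at position $n-1$ to $i_{n-1}$, etc., using $i_k \geq k$ at each step), and the identity is preserved throughout.

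For the ``only if'' direction, apply $\pi_{w_N}$ for $N \gg 0$ to both sides of $\kappa_\alpha = 2^{-\diag(\gamma)} \qkey_\gamma$. Using Theorem~\ref{stab-thm} and the identity $\diag(\gamma) = \ell(\half(\lambda(\gamma)))$, the factor $2^{-\diag(\gamma)}$ converts $Q$ into $P$, and the equation becomes $s_{\lambda(\alpha)} = P_{\half(\lambda(\gamma))}$ as symmetric functions. Lemma~\ref{Ps-lem} forces $\lambda(\alpha) = \half(\lambda(\gamma)) = (n, n-1, \dots, 1)$ for some $n \geq 1$. A short calculation (the positive values $\lambda_i - (i-1)$ appear in strictly decreasing order, forcing $\lambda_1 = \cdots = \lambda_n = n$, and symmetry rules out any further nonzero parts) shows the unique symmetric partition with this half-shape is $(n^n)$. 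Thus $\gamma = n(\e_{i_1} + \cdots + \e_{i_n})$ for some $1 \leq i_1 < \cdots < i_n$. By the ``if'' direction just established, $2^{-n} \qkey_\gamma = \kappa_{\e_{i_1} + 2\e_{i_2} + \cdots + n\e_{i_n}}$, and combining with the hypothesis and the uniqueness of weak-composition indices for key polynomials (equation~\eqref{leading-eq}), we conclude $\alpha = \e_{i_1} + 2\e_{i_2} + \cdots + n\e_{i_n}$. The only real subtlety is pinning down $\lambda(\gamma) = (n^n)$ uniquely from its half-shape; everything else closely mirrors Proposition~\ref{ss-key-prop}, with the bootstrap of uniqueness of $\alpha$ coming from the just-proven forward direction together with key-polynomial uniqueness rather than a direct argument.
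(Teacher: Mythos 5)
Your proof is correct and follows essentially the same route as the paper's: both establish the base identity $\kappa_{(1,2,\dotsc,n)} = 2^{-n}\qkey_{(n,\dotsc,n)}$ via Theorem~\ref{stab-thm} and Lemma~\ref{Ps-lem}, propagate it with isobaric divided difference operators, and for the converse apply $\pi_{w_N}$ to reduce to $s_{\lambda(\alpha)} = P_{\half(\lambda(\gamma))}$, forcing $\lambda(\gamma)=(n^n)$ and then recovering $\alpha$ from the forward direction plus uniqueness of key-polynomial indexing. The only difference is that you spell out details the paper leaves implicit (the identity $\diag(\gamma)=\ell(\half(\lambda(\gamma)))$, the parallel ``push right'' action of the $\pi_j$'s, and the derivation of $\lambda(\gamma)=(n^n)$ from its half-shape), all of which check out.
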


\begin{proof}
The ``if'' direction of this result follows by applying isobaric divided difference operators to the identity
\[
\kappa_{(1,2,\dotsc,n)} = s_{(n,\dotsc,3,2,1)}(x_1,x_2,\dotsc,x_{n}) = 2^{-n}Q_{(n,\dotsc,3,2,1)}(x_1,x_2,\dotsc,x_{n}) = 2^{-n}\qkey_{(n,n,\dotsc,n)},
\]
which holds by Theorem~\ref{stab-thm} and Lemma~\ref{Ps-lem} (as $Q_\mu = 2^{\ell(\mu)}P_\mu$).
For the ``only if'' direction, observe
that 
if 
$\kappa_\alpha = 2^{-\diag(\gamma)}\qkey_\gamma$ then applying $\pi_{w_N}$ to both sides when $N$ is sufficiently large 
gives $s_{\lambda(\alpha)} = P_{\half(\lambda(\gamma))}$ by Theorem~\ref{stab-thm} since $\diag(\gamma) =\diag(\lambda(\gamma))= \ell(\half(\lambda(\gamma)))$.
In this case, Lemma~\ref{Ps-lem} implies that $\lambda(\alpha) = \half(\lambda(\gamma)) = (n,\dotsc,2,1)$ for some  $n \in \PP$,
which can only hold if $\lambda(\gamma) = (n,n,\dots,n) \in \NN^n$ and 
$\gamma= n(\e_{i_1} + \e_{i_2} + \cdots + \e_{i_n})$ for some indices $1\leq  i_1 < i_ 2 < \cdots <i_n$,
and then we have $\kappa_\alpha = 2^{-\diag(\gamma)}\qkey_\gamma = \kappa_{\e_{i_1} +2\e_{i_2} + 3 \e_{i_3}+ \cdots +n \e_{i_n}}$
so also $\alpha = \e_{i_1} +2\e_{i_2} + 3 \e_{i_3}+ \cdots +n \e_{i_n}$.
\end{proof}

\def\etaone{\eta^{(1)}}
\def\etatwo{\eta^{(2)}}

Let $\alpha$ be a symmetric weak composition with $\alpha_1=0$.
Then there is a symmetric weak composition $\etaone$ with
\[\DSym_{\etaone} = \DSym_\alpha \sqcup \{ (i,j)  : i=1\text{ and }(j,j) \in \DSym_\alpha,\text{ or }j=1\text{ and } (i,i) \in \DSym_\alpha\}\]
and  there is a symmetric weak composition $\etatwo$ with 
\[\DSym_{\etatwo}=\DSym_{\etaone} \sqcup \{(1,1)\}.\]
If $\lambda=\lambda(\alpha)$ and $h = \diag(\alpha)$
then 
\[\lambda(\etaone) = (1+\lambda_1,1+\lambda_2, \dots,1 + \lambda_h , h, \lambda_{h+1},\lambda_{h+2},\ldots)
\quand
\lambda(\etatwo) = \lambda(\etaone) + \e_{h+1}.\]
Exactly one of $\etaone$ or $\etatwo$ is skew-symmetric; 
specifically, we always have $\lambda_h \geq h > \lambda_{h+1}$, and $\etaone$ is skew-symmetric if and only if $\lambda_h > h$.
Let $ \eta(\alpha)$ be whichever of $\etaone$ or $\etatwo$ is skew-symmetric.

\begin{example}
If $\alpha =(0,3,1,2)$ then $\etaone = (2,4,1,3)$ and $\etatwo=(3,4,1,3)$,
and we have
 \[ \DSym_\alpha = 
 \ytabsmall{
 \none[\cdot] &  \none[\cdot] &  \none[\cdot] &  \none[\cdot] \\
 \none[\cdot] &  \ & \ &  \ \\
 \none[\cdot] &  \ &  \none[\cdot] &  \none[\cdot] \\
 \none[\cdot] & \ &  \none[\cdot] &  \ \\
 }
 \qquand
 \DSym_{\etaone} = 
 \ytabsmall{
 \none[\cdot] &  \ &  \none[\cdot] &  \ \\
\ &  \ & \ &  \ \\
 \none[\cdot] &  \ &  \none[\cdot] &  \none[\cdot] \\
\ & \ &  \none[\cdot] &  \ \\
 }\qquand
 \DSym_{\etatwo} = 
 \ytabsmall{
\ & \ &  \none[\cdot] &  \\\
\ &  \ & \ &  \ \\
 \none[\cdot] &  \ &  \none[\cdot] &  \none[\cdot] \\
\ & \ &  \none[\cdot] &  \ \\
 }
 \]
 (so $\DSym_{\etatwo} = \DSym_{\etaone} \sqcup \{(1,1)\}$)
along with $\eta(\alpha) = (3,4,1,3) = \etatwo$.
\end{example}

\begin{theorem}\label{alpha1-thm}
Suppose $\alpha$ is a symmetric weak composition with $\alpha_1=0$.
Then $\qkey_\alpha = 2^{\diag(\alpha)} \pkey_{\eta(\alpha)}$.
\end{theorem}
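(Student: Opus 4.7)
The plan is to prove the identity by induction on $\ell(u(\alpha))$, reducing to the base case $\alpha_0 := (0, \lambda_1, \lambda_2, \dotsc, \lambda_r)$ where $r = \lambda_1$ (which equals the number of nonzero parts of $\lambda := \lambda(\alpha)$ by symmetry). The inductive scaffolding rests on Proposition~\ref{formulas-prop} together with two structural compatibility results about $\eta$.

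The first compatibility is that $\eta(s_i \alpha) = s_i \eta(\alpha)$ for all $i \geq 2$: the set $\DSym_{s_i \alpha}$ is the image of $\DSym_\alpha$ under simultaneous transposition of rows and columns $i \leftrightarrow i+1$, which preserves diagonal cells, while the additional cells of $\DSym_{\eta(\alpha)}$ lie in row $1$ and column $1$ (and possibly at $(1,1)$) and are fixed by this transposition when $i \geq 2$; the choice between $\etaone$ and $\etatwo$ depends only on $\lambda(\alpha)$. The second is that the sign of $\alpha_i - \alpha_{i+1}$ matches the sign of $\eta(\alpha)_i - \eta(\alpha)_{i+1}$ for all $i \geq 2$. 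Writing $\eta(\alpha)_j = \alpha_j + c_j$ with $c_j := [u(\alpha)^{-1}(j) \leq h] \in \{0,1\}$ and $h := \diag(\alpha)$, a case analysis on $(a,b) := (u(\alpha)^{-1}(i), u(\alpha)^{-1}(i+1))$ rules out the only problematic configuration $c_i = 0$, $c_{i+1} = 1$ (which would require $a > h \geq b$ and hence $\lambda_a \leq \lambda_b$, contradicting $\alpha_i > \alpha_{i+1}$). The equality case exploits the symmetry identity $\lambda^T_h = \lambda_h$, which forbids pairs $a \leq h < b$ with $\lambda_a = \lambda_b$.

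Given these compatibilities, Proposition~\ref{formulas-prop} yields the inductive step: if the identity $\qkey_\alpha = 2^h \pkey_{\eta(\alpha)}$ holds and $\alpha_i > \alpha_{i+1}$ for some $i \geq 2$, then
\[
\qkey_{s_i\alpha} = \pi_i \qkey_\alpha = 2^h \pi_i \pkey_{\eta(\alpha)} = 2^h \pkey_{s_i \eta(\alpha)} = 2^h \pkey_{\eta(s_i\alpha)}.
\]
Among compositions with $\lambda(\alpha) = \lambda$ and $\alpha_1 = 0$, the base $\alpha_0$ has minimal-length $u(\alpha_0) = s_1 s_2 \cdots s_r$, and every other such $\alpha$ is reached from $\alpha_0$ by a sequence of descent swaps at positions $\geq 2$, so it suffices to establish the identity at $\alpha_0$.

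For the base case, $u(\alpha_0) = s_1 s_2 \cdots s_r$ and, in the $\etaone$ subcase (when $\lambda_h > h$), $u(\eta(\alpha_0)) = s_1 s_2 \cdots s_h$. Combining $\qkey_\lambda = 2^h (x_1 x_2 \cdots x_h)\pkey_\lambda$ with
\[
\pkey_{\lambda'} = \pkey_\lambda \cdot \prod_{i=1}^h (x_i + x_{\lambda_i + 1})
\]
(obtained by identifying the strictly upper cells of $\D_{\lambda'} \setminus \D_\lambda$ as exactly $(i, \lambda_i + 1)$ for $1 \leq i \leq h$), the identity at $\alpha_0$ reduces to
\[
\pi_1 \pi_2 \cdots \pi_r \bigl((x_1 x_2 \cdots x_h) \pkey_\lambda\bigr) = \pi_1 \pi_2 \cdots \pi_h \Bigl(\pkey_\lambda \cdot \prod_{i=1}^h (x_i + x_{\lambda_i + 1})\Bigr).
\]
This base identity is the main obstacle. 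The natural approach is to factor $\pi_1 \cdots \pi_r = (\pi_1 \cdots \pi_h)(\pi_{h+1} \cdots \pi_r)$, move $x_1 \cdots x_h$ past $\pi_j$ for $j > h$ (since $s_j$ fixes $x_1 \cdots x_h$), and then verify that $(x_1 \cdots x_h)(\pi_{h+1} \cdots \pi_r)\pkey_\lambda - \pkey_\lambda \prod_{i=1}^h (x_i + x_{\lambda_i + 1})$ lies in the kernel of $\pi_1 \pi_2 \cdots \pi_h$. The $\etatwo$ subcase (when $\lambda_h = h$) requires a modified formula for $u(\eta(\alpha_0))$ reflecting that $\lambda'_{h+1} = h+1$, but is handled by an analogous argument.
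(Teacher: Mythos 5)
Your inductive scaffolding (the $\eta$-equivariance $\eta(s_i\alpha)=s_i\eta(\alpha)$ for $i\geq 2$, the sign-compatibility argument via $c_j=[u(\alpha)^{-1}(j)\leq h]$, and the reduction to $\alpha_0=(0,\lambda_1,\dotsc,\lambda_r)$ through Proposition~\ref{formulas-prop}) is sound, and your identification of $u(\alpha_0)=s_1\cdots s_r$, $u(\eta(\alpha_0))=s_1\cdots s_h$, and the factorization $\pkey_{\lambda'}=\pkey_\lambda\prod_{i=1}^h(x_i+x_{\lambda_i+1})$ all check out. But the proof has a genuine gap: the base-case identity
\[
\pi_1\cdots\pi_r\bigl((x_1\cdots x_h)\pkey_\lambda\bigr)=\pi_1\cdots\pi_h\Bigl(\pkey_\lambda\cdot\textstyle\prod_{i=1}^h(x_i+x_{\lambda_i+1})\Bigr)
\]
is the entire analytic content of the theorem, and you explicitly leave it unproved, calling it ``the main obstacle'' and only sketching a ``natural approach.'' That sketch is not routine to complete: after commuting $x_1\cdots x_h$ past $\pi_{h+1}\cdots\pi_r$ you must show that $(x_1\cdots x_h)(\pi_{h+1}\cdots\pi_r)\pkey_\lambda-\pkey_\lambda\prod_{i=1}^h(x_i+x_{\lambda_i+1})$ is killed by $\pi_1\cdots\pi_h$, and note that $\pi_{h+1}\cdots\pi_r$ does \emph{not} fix $\pkey_\lambda$ (by Lemma~\ref{ss-lam-lam-lem}, $\pi_i\pkey_\lambda=\pkey_\lambda$ only when $\lambda_i=\lambda_{i+1}$), so this difference is a genuinely complicated polynomial with no obvious antisymmetry.

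The paper closes exactly this step by routing everything through involution Schubert polynomials rather than direct manipulation of $\pi$-operators. It factors $\pi_{u(\alpha)}=\pi_{1\times u(\widetilde\alpha)}\pi_1\pi_2\cdots\pi_n$, uses the shift identity $\pi_1\cdots\pi_n(\fkSO_y)=\fkSO_{1\times y}$ (inherited from the analogous fact for Schubert polynomials via the Schubert-positivity of $\fkSO_y$), rewrites $\pi_1\cdots\pi_h(\pkey_{\lambda(\eta)})=\partial_1\cdots\partial_h(x_1\cdots x_h\pkey_{\lambda(\eta)})=2^{-h}\partial_1\cdots\partial_h(\fkSO_z)$ for $z$ dominant of shape $\lambda(\etaone)$, and then verifies $\partial_1\cdots\partial_h(\fkSO_z)=\fkSO_y$ directly from the recursion~\eqref{ischub-eq}, using that $y=(1\;b_1)\cdots(h\;b_h)$ and $z=(1\;1{+}b_1)\cdots(h\;1{+}b_h)$. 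If you want to salvage your induction, you should replace the ``kernel of $\pi_1\cdots\pi_h$'' verification with an argument of this kind; as written, the proposal assumes the theorem's hardest step.
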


\begin{proof}
First let $\widetilde\alpha = (\alpha_2,\alpha_3,\ldots)$
and observe that $\pi_{u(\alpha)} = \pi_{1\times u(\widetilde\alpha)}  \pi_1\pi_2\cdots \pi_n$.
From~\cite[(4.22)]{MacdonaldSchubertNotes}, it follows that if $\fkS_w \in \ZZ[x_1,x_2,\dotsc,x_n]$ then $\pi_1\pi_2\cdots \pi_n(\fkS_w) = \fkS_{1\times w}$,
where $1 \times w \in S_\infty$ denotes the element fixing $1$ and mapping $i+1 \mapsto w(i)+1$ for all $i \in \PP$.
Each $\fkSO_y \in \ZZ[x_1,x_2,\dotsc,x_n]$ is a linear combination of Schubert polynomials $\fkS_w\in \ZZ[x_1,x_2,\dotsc,x_n]$,
and it follows from~\cite[\S1.5]{HMP1} that $\pi_1\pi_2\cdots \pi_n(\fkSO_y) = \fkSO_{1\times y}$.
Therefore, if $y \in I_\infty$ is dominant with shape $\lambda(\alpha)$ then
\[
\qkey_\alpha 
=
\pi_{1\times u(\widetilde\alpha)}  \pi_1\pi_2\cdots \pi_n (\fkSO_y)
=
\pi_{1\times u(\widetilde\alpha)}  (\fkSO_{1\times y}).
 \]
Next, define $\etaone$ and $\etatwo$ relative to $\alpha$ as above, and let $\eta = \eta(\alpha)$ and
$h = \diag(\alpha)$. Then
\[\pkey_{\eta} =
   \pi_{1\times u(\widetilde \alpha)} \pi_1\pi_2\cdots \pi_h( \pkey_{\lambda(\eta)} )
   = \pi_{1\times u(\widetilde \alpha)} \partial_1\partial_2\cdots \partial_h( x_1x_2\cdots x_h\pkey_{\lambda(\eta)} ).
   \]
It is clear from Definition~\ref{spo-key-def} (with $w=1$) that
   $2^h x_1x_2\cdots x_h  \pkey_{\lambda(\eta)} = \qkey_{\lambda(\etaone)}$,
   so if $z \in I_\infty$ is dominant with shape $\lambda(\etaone)$ then
\[
 2^h \pkey_{\eta} =\pi_{1\times u(\widetilde \alpha)}\partial_1 \partial_2 \cdots \partial_h (\fkSO_z).
\]
To prove the desired identity it suffices to check that $\partial_1 \partial_2 \cdots \partial_h (\fkSO_z) = \fkSO_y$.
This follows from~\eqref{ischub-eq} since we can express $y = (1 \; b_1)(2 \; b_2)\cdots (h \; b_h)$ and $z =  (1 \; 1+b_1)(2 \; 1+b_2)\cdots (h \; 1+b_h)$ for  distinct integers $b_1,b_2,\dotsc,b_h >h$.
\end{proof}

The simple relationship between $\qkey_\alpha$ and $\pkey_\alpha$ when $\alpha_1=0$ does not
extend to their $K$-theoretic analogues  $\qlascoux_\alpha$ and $\plascoux_\alpha$
defined in Section~\ref{k-sect}.
This is identical to the situation for Schur $P$- and $Q$-polynomials (see~\cite{ChiuMarberg}).
The following converse to Theorem~\ref{alpha1-thm} is supported by computations.

\begin{conjecture}
\label{conj:alpha1_converse}
If $\qkey_\alpha$ is a scalar multiple of a $P$-key polynomial then $\alpha_1=0$.
\end{conjecture}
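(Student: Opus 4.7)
The plan is to suppose $\qkey_\alpha = c \cdot \pkey_\gamma$ for a scalar $c$ and a skew-symmetric weak composition $\gamma$, and derive a contradiction whenever $\alpha_1 > 0$. I would first pin down $c$ and the underlying shapes by stabilization. Applying $\pi_{w_n}$ for $n$ large to both sides and invoking Theorem~\ref{stab-thm} turns the hypothetical identity into $Q_\mu(x_1,\dotsc,x_n) = c\, P_\nu(x_1,\dotsc,x_n)$, where $\mu = \half(\lambda(\alpha))$ and $\nu = \shalf(\lambda(\gamma))$. Since $Q_\mu = 2^{\ell(\mu)} P_\mu$ and distinct Schur $P$-functions are linearly independent, this forces $\mu = \nu$ and $c = 2^{\ell(\mu)} = 2^{\diag(\alpha)}$, and it also pins down the shapes via $\lambda(\alpha)_i = \mu_i + (i-1)$ and $\lambda(\gamma)_i = \mu_i + i$ for $i \leq \ell(\mu)$.

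I would next dispose of the dominant subcase $\alpha = \lambda(\alpha) =: \Lambda$ by a direct comparison of product formulas. By~\eqref{schub-dom-eq}, the polynomial $\qkey_\Lambda$ factors as $2^{\diag(\Lambda)} x_1 x_2 \cdots x_{\diag(\Lambda)} \cdot \prod_{(i,j) \in \D_\mu,\, j \geq 2}(x_i + x_{i+j-1})$, whereas $\pkey_{\lambda(\gamma)} = \prod_{(i,j) \in \D_\mu}(x_i + x_{i+j})$ contains no single-variable linear factor. Setting $x_k = 1$ for every $k \geq 2$ reduces the identity $\qkey_\Lambda = 2^{\diag(\Lambda)} \pkey_{\lambda(\gamma)}$ to the univariate equation $2 x_1 = 2^{\ell(\mu)}(x_1 + 1)$, obtained after canceling the common factors $2^{|\mu|-\mu_1}$ and $(x_1+1)^{\mu_1-1}$. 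This has no solution for any $\mu \neq \emptyset$, so the conjecture is secured whenever $\alpha$ is a nonempty symmetric partition.

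For the non-dominant case $\alpha \neq \lambda(\alpha)$ with $\alpha_1 > 0$, my plan is to combine the dominant-case specialization with the recursive structure of Proposition~\ref{formulas-prop}: whenever $\alpha_i > \alpha_{i+1}$, applying $\pi_i$ transforms the relation $\qkey_\alpha = 2^{\diag(\alpha)} \pkey_\gamma$ into one of the same form for $s_i\alpha$ and $s_i\gamma$, opening a descent strategy aimed at reducing to the dominant configuration where the earlier argument closes the case. A complementary tactic would leverage the conjectural leading-term formulas~\ref{leading-conj1} and~\ref{leading-conj2}: together they imply $\rowCounts(\alpha) = \strictRowCounts(\gamma)$, after which comparing the next monomials in lex order should detect the distinction between the weakly-below-diagonal counts on the $Q$-side and the strictly-below-diagonal counts on the $P$-side when $\alpha_1 > 0$.

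The principal obstacle lies in this last step. The example $\alpha = (1, 0, 2)$ is instructive: one has $\rowCounts(\alpha) = (0, 0, 2) = \strictRowCounts((1,1,3))$ and a direct calculation gives $\qkey_{(1,0,2)} = 2\,\pkey_{(1,1,3)} - 2\, x_2 x_3$, so leading-monomial agreement alone is genuinely insufficient and a finer invariant is required. Moreover, the operators $\pi_i$ do not commute with specializations such as $x_j = 0$, so the clean univariate reduction from the dominant case does not port over directly, and the descent via Proposition~\ref{formulas-prop} may terminate at configurations that are not dominant. A successful proof will probably require either a new combinatorial model for $\qkey_\alpha$ and $\pkey_\gamma$ in which the role of $\alpha_1$ is transparent, or an algebraic identity isolating the contribution of the first coordinate from the remainder of $\mu$.
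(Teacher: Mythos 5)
This statement is Conjecture~\ref{conj:alpha1_converse}; the paper does not prove it. It only records computational evidence and establishes two partial cases: the corollary to Proposition~\ref{one-term-prop} (which covers, e.g., all dominant $\alpha$ via the observation that $\qkey_\alpha$ is then divisible by $x_1$ while no $P$-key polynomial is, since the key term $\kappa_\epsilon$ guaranteed by Proposition~\ref{one-term-prop} always has $\epsilon_1=0$), and a second corollary that is itself conditional on Conjecture~\ref{pkey-key-conj}. So there is no proof in the paper to compare against, and your proposal does not supply one either: by your own admission the argument is an outline whose final step fails, as your example $\alpha=(1,0,2)$ with $\qkey_{(1,0,2)} = 2\pkey_{(1,1,3)} - 2x_2x_3$ shows.

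Beyond the acknowledged incompleteness, two steps are concretely broken. First, in the ``dominant subcase'' you compare $\qkey_{\lambda(\alpha)}$ only with $2^{\diag}\pkey_{\lambda(\gamma)}$, but the hypothesis is $\qkey_\alpha = c\,\pkey_\gamma$ for an arbitrary skew-symmetric weak composition $\gamma$, which need not be a partition even when $\alpha$ is; the specialization $x_k=1$ ($k\ge 2$) of $\pkey_\gamma$ does not have the clean product form you use. (This subcase is genuinely true, but the paper's divisibility-by-$x_1$ argument is what closes it.) Second, and more seriously, the proposed ``descent strategy aimed at reducing to the dominant configuration'' via Proposition~\ref{formulas-prop} runs in the wrong direction: when $\alpha_i>\alpha_{i+1}$, applying $\pi_i$ produces the relation for $s_i\alpha$, which is \emph{farther} from $\lambda(\alpha)$ in the composition Bruhat order, and since $\pi_i^2=\pi_i$ the operators are not invertible, so one cannot pass from a general $\alpha$ back to $\lambda(\alpha)$. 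Iterating $\pi_i$'s terminates only at the fully symmetrized (anti-dominant) configuration, which is precisely your stabilization step --- and that step yields $Q_\mu = 2^{\ell(\mu)}P_\mu$, a true identity carrying no contradiction and no trace of $\alpha_1$. What remains is exactly the hard content of the conjecture, which your proposal leaves open.
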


In all computed examples, $\qkey_\alpha$ is not even a $\ZZ$-linear combination of 
$P$-key polynomials if $\alpha_1>0$.

\subsection{Key expansions}
\label{sec:key_expansion}

Theorem~\ref{thm:key_osp_key_positive} tells us that $\pkey_\alpha$ and $\qkey_\alpha$
both expand positively into key polynomials, but identifying the relevant key terms and their coefficients
seems to be a difficult problem in general. 
In this section we discuss some partial results and conjectures related to this problem.

When $\alpha = \lambda(\alpha)$ is a (skew-)symmetric partition,
  $\pkey_\alpha$ and $\qkey_\alpha$ are instances of (dominant) involution Schubert polynomials $\fkSS_y$ and $\fkSO_z$.
 There are formulas to expand these polynomials into Schubert polynomials $\fkS_w$ \cite[\S6]{HMP2}
and to expand each $\fkS_w$ into key polynomials \cite[Thm.~4]{ReinerShimozono}. However,
it is not clear how to combine these results to address the following simpler problem:

\begin{problem}
\label{probl:key_expansion}
Describe the key expansions of $\pkey_\lambda$ and $\qkey_\lambda$ 
when $\lambda$ is a (skew-)symmetric partition.
\end{problem}

As previously mentioned, solving Problem~\ref{probl:key_expansion} might help to determine the $\QQ$-span of the $P$- and $Q$-key polynomials, addressing Problem~\ref{space-prob}.

\begin{remark}\label{keyexp-rmk}
The key expansions of $\pkey_\lambda$ and $2^{-\diag(\lambda)} \qkey_\lambda$ (with $\lambda$ a symmetric partition) are 
multiplicity-free up to degree $11$, but this property does not hold in general (as mentioned earlier in Remark~\ref{tabular-rmk}).
If $\mu$ is a symmetric partition with $\shalf(\lambda) = \half(\mu) =: \nu$ 
then Theorem~\ref{stab-thm} implies that there is a bijection between the terms in the key expansions of 
$\pkey_\lambda=\pkey_{1,\nu}$ and $2^{-\diag(\mu)} \qkey_\mu= 2^{-\ell(\nu)} \qkey_{1,\nu}$
whenever these are multiplicity-free. In the small number of non-multiplicity-free examples that we can compute,
there is still a multiplicity-preserving bijection between the terms in these key expansions.
It would be interesting to know if this property holds for all strict partitions $\nu$.
A multiplicity-preserving bijection between the key terms in $\pkey_{w,\nu}$ and $2^{-\ell(\nu)} \qkey_{w,\nu}$ does not exist for general $w \in S_\infty$.
If $w = 35421$ and $\nu = (4,2)$, then we have
\begin{align*}
\pkey_{w,\nu} & = \key_{00024} + \key_{00033} + \key_{00114} + \key_{00222} + \key_{01113} + \key_{01122} + 2\key_{00123} ,
\\
2^{-\ell(\nu)} \qkey_{w,\nu} & = \key_{00024} + \key_{00033}  + \key_{10014} + \key_{20022} + \key_{10113} + \key_{10122}  + \key_{00123}+ \key_{10023}.
\end{align*}
\end{remark}

There is one key term in $\pkey_\alpha$ and $\qkey_\alpha$
that we can easily identify.
 We will say that a polynomial is \defn{key positive} if its expansion into key polynomials involves only nonnegative integer coefficients.
 If $w \in S_\infty$ and $\alpha$ is a weak composition, then define $w\circ \alpha$ to be the weak composition with 
 $\pi_w \kappa_\alpha = \kappa_{w\circ \alpha}$.
 Recall the definitions of $\rowCounts(\alpha)$ and $\strictRowCounts(\alpha)$ from Section~\ref{leading-subsect}.
 
 \begin{proposition}\label{one-term-prop}
Let $\alpha$ be a symmetric weak composition with $\lambda :=\lambda(\alpha)$ and define $\delta :=  u(\alpha) \circ \rowCounts(\lambda)$ and $\epsilon:=  u(\alpha)\circ \strictRowCounts(\lambda)$.
Then $\qkey_\alpha -  2^{\diag(\alpha)}\kappa_{\delta}$ and $\pkey_\alpha -  \kappa_{\epsilon}$ are both key positive.
\end{proposition}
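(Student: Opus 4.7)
The plan is to reduce both statements to the base case $\alpha = \lambda$ and then settle that base case by a direct lex-minimum analysis of the product formulas for $\qkey_\lambda$ and $\pkey_\lambda$.

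First I would record three compatible reductions. By Definition~\ref{spo-key-def} we have $\qkey_\alpha = \pi_{u(\alpha)} \qkey_\lambda$ and $\pkey_\alpha = \pi_{u(\alpha)} \pkey_\lambda$, and by the defining property of $\circ$ we have $\kappa_\delta = \pi_{u(\alpha)} \kappa_{\rowCounts(\lambda)}$ and $\kappa_\epsilon = \pi_{u(\alpha)} \kappa_{\strictRowCounts(\lambda)}$. Diagonal positions of $\DSym_\alpha = \{(u(i), u(j)) : (i,j) \in \D_\lambda\}$ are precisely the images under $u = u(\alpha)$ of diagonal positions of $\D_\lambda$, so $\diag(\alpha) = \diag(\lambda)$. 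Since Remark~\ref{u-rem} gives $\pi_i \kappa_\beta \in \{\kappa_\beta, \kappa_{s_i \beta}\}$ for every weak composition $\beta$, the operator $\pi_{u(\alpha)}$ sends any $\NN$-linear combination of keys to another such combination. Hence it suffices to prove the base case: that $\qkey_\lambda - 2^{\diag(\lambda)} \kappa_{\rowCounts(\lambda)}$ and $\pkey_\lambda - \kappa_{\strictRowCounts(\lambda)}$ are key-positive.

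For the base case I would use the explicit products $\qkey_\lambda = \prod_{(r,c) \in \D_\lambda,\, c \geq r}(x_r + x_c)$ and $\pkey_\lambda = \prod_{(r,c) \in \D_\lambda,\, c > r}(x_r + x_c)$, both of which have non-negative coefficients. Expanding $\qkey_\lambda$, every monomial arises by choosing, for each off-diagonal factor with $r < c$, either $x_r$ or $x_c$; each diagonal factor $(x_r + x_r) = 2 x_r$ is forced and contributes a factor of $2$. To minimize the exponent vector in lex order, one must choose $x_c$ at every off-diagonal position, since any other choice at a position $(r,c)$ with $r < c$ strictly increases the exponent at index $r$, producing a lex-larger monomial. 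The unique lex-minimum monomial thus has exponent $a_j = |\{(r,j) \in \D_\lambda : r \leq j\}| = \min(j, \lambda^\T_j) = \min(j, \lambda_j) = \rowCounts(\lambda)_j$ (using $\lambda = \lambda^\T$), appearing with coefficient $2^{\diag(\lambda)}$. The same argument identifies the unique lex-minimum monomial of $\pkey_\lambda$ as $x^{\strictRowCounts(\lambda)}$, appearing with coefficient $1$.

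To conclude, I would combine this with Theorem~\ref{thm:key_osp_key_positive} and~\eqref{leading-eq}. Writing $\qkey_\lambda = \sum_\beta c_\beta \kappa_\beta$ with $c_\beta \in \NN$, the leading-term property~\eqref{leading-eq} forces $c_\beta = 0$ whenever $\beta <_{\mathrm{lex}} \rowCounts(\lambda)$: otherwise $\kappa_\beta$ would contribute $x^\beta$ to $\qkey_\lambda$, contradicting $x^{\rowCounts(\lambda)}$ being the lex-minimum. Only $\kappa_{\rowCounts(\lambda)}$ can then contribute the monomial $x^{\rowCounts(\lambda)}$, forcing $c_{\rowCounts(\lambda)} = 2^{\diag(\lambda)}$. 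Hence $\qkey_\lambda - 2^{\diag(\lambda)} \kappa_{\rowCounts(\lambda)} = \sum_{\beta \neq \rowCounts(\lambda)} c_\beta \kappa_\beta$ is key-positive. The same argument handles $\pkey_\lambda$. The main technical point is pinning down the lex-minimum monomial together with its coefficient from the product formula; beyond that, nothing deeper than~\eqref{leading-eq} and Theorem~\ref{thm:key_osp_key_positive} is required.
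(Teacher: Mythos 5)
Your proof is correct and follows essentially the same route as the paper's: identify the lexicographically minimal monomial of $\qkey_\lambda$ (respectively $\pkey_\lambda$) from the product formula as $2^{\diag(\lambda)}x^{\rowCounts(\lambda)}$ (respectively $x^{\strictRowCounts(\lambda)}$), invoke Theorem~\ref{thm:key_osp_key_positive} together with~\eqref{leading-eq} to peel off the corresponding key term, and then apply $\pi_{u(\alpha)}$. You simply spell out the details (the choice-of-factors argument for the lex minimum, the preservation of key positivity under $\pi_{u(\alpha)}$, and $\diag(\alpha)=\diag(\lambda)$) that the paper leaves implicit.
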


\begin{proof}
We see from Definition~\ref{spo-key-def} that
the lexicographically minimal terms in $\qkey_\lambda$ and $\pkey_\lambda$
are respectively $2^{\diag(\alpha)} x^{\rowCounts(\lambda)} $ and $x^{\strictRowCounts(\lambda)}$,
so the differences 
$\qkey_\lambda -  2^{\diag(\alpha)}\kappa_{\rowCounts(\lambda)}$ and $\pkey_\lambda -  \kappa_{\strictRowCounts(\lambda)}$
must both be key positive by~\eqref{leading-eq}.
By definition, applying $\pi_{u(\alpha)}$ to these expressions gives 
$\qkey_\alpha -  2^{\diag(\alpha)}\kappa_{\delta}$ and $\pkey_\alpha -  \kappa_{\epsilon}$,
which are then also key positive (as key positivity is preserved by any $\pi_i$ operator).
\end{proof}

When $\lambda$ is a symmetric partition, Proposition~\ref{one-term-prop} identifies the lexicographically minimal key terms in $\pkey_{\lambda}$ and $\qkey_{\lambda}$.
Indeed, these are indexed by $\widetilde{\rho}(\lambda)$ and $\rho(\lambda)$, respectively, which count the heights of the columns in the shifted Young diagrams of $\shalf(\lambda)$ and $\half(\lambda)$ (moved one column to the right in the $P$-case). 
These terms do not always remain lexicographically minimal when we apply isobaric divided difference operators to 
go from $\lambda$ to a general symmetric weak composition; for example, 
$\qkey_{21} =  2(\key_{11} + \key_{2})$
but $\qkey_{12} = \pi_1 \qkey_{21} = 2(\key_{02} + \key_{11})$. 
Thus, Proposition~\ref{one-term-prop} does not help with proving Conjectures~\ref{leading-conj1} or \ref{leading-conj2}.

\begin{remark}
We can adapt the preceding proof to get a general algorithm for computing the key expansion of $\pkey_{\alpha}$ or $\qkey_{\alpha}$.
Take the key expansion of $\pkey_{\lambda(\alpha)} = \sum_{\mu} c^{\psymbol}_{\mu} \key_{\mu}$ or $\qkey_{\lambda(\alpha)} = \sum_{\mu} c^{\qsymbol}_{\mu} \key_{\mu}$, which there exists an algorithm to compute as indicated at the start of this section.
Then $\pkey_{\alpha} = \sum_{\mu} c^{\psymbol}_{\mu} \key_{u(\alpha) \circ \mu}$ and $\qkey_{\alpha} = \sum_{\mu} c^{\qsymbol}_{\mu} \key_{u(\alpha) \circ \mu}$. A given key polynomial may appear multiple times in each sum,
but one never needs to expand the polynomials into the monomial basis of $\ZZ[x_1, x_2, \ldots]$.
\end{remark}

Using Proposition~\ref{one-term-prop}, we can prove a partial version of Conjecture~\ref{conj:alpha1_converse}.

\begin{corollary}
Suppose $\alpha$ is a nonempty symmetric weak composition with $\lambda=\lambda(\alpha)$.
If there is an index $i$ with $1 \leq i \leq \diag(\alpha)$
such that $(\alpha_1,\alpha_2,\dotsc,\alpha_i)$ is a permutation of $(\lambda_1,\lambda_2,\dotsc,\lambda_i)$,
then $\qkey_\alpha$ is not a scalar multiple of any $P$-key polynomial.
\end{corollary}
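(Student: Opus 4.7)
The plan is to derive a contradiction from the assumption $\qkey_\alpha = c\,\pkey_\gamma$ by showing that $\qkey_\alpha$ is divisible by $x_1$ while no nonzero $P$-key polynomial can be.

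First I would verify that the minimum-length permutation $u := u(\alpha)$ preserves $\{1,\dotsc,i\}$ as a set, equivalently that $u$ admits a reduced expression in $S_\infty$ avoiding $s_i$. This is a standard consequence of the greedy description of $u(\alpha)$: among permutations sending $\lambda$ to $\alpha$, the minimum-length one matches equal values in their natural order, and since $(\alpha_1,\dotsc,\alpha_i)$ and $(\lambda_1,\dotsc,\lambda_i)$ have the same multiset, every occurrence of a value among the first $i$ positions of $\alpha$ is paired with one among the first $i$ positions of $\lambda$, giving $u^{-1}(\{1,\dotsc,i\}) = \{1,\dotsc,i\}$.

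Next I would show that $\qkey_\alpha$ is divisible by $x_1\cdots x_i$. Since $i\leq\diag(\alpha)=\diag(\lambda)$ forces $\lambda_j\geq\lambda_i\geq i\geq j$ for all $j\leq i$, every diagonal cell $(j,j)$ with $j\leq i$ belongs to $\D_\lambda$ and contributes a factor $(x_j+x_j)=2x_j$ to $\qkey_\lambda=\prod_{(a,b)\in\D_\lambda,\,a\leq b}(x_a+x_b)$; hence $\qkey_\lambda$ is divisible by $x_1\cdots x_i$. The key auxiliary lemma is that $\pi_j$ preserves divisibility by $x_1\cdots x_i$ whenever $j\neq i$: when $j>i$ this is immediate because $\pi_j$ commutes with multiplication by each $x_k$ with $k\leq i$, and when $j<i$ the twisted Leibniz rule yields the clean identity $\pi_j(x_jx_{j+1}h) = x_jx_{j+1}\pi_j h$, so that the factor $x_jx_{j+1}$ together with the remaining $x_k$'s can all be pulled through $\pi_j$. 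Since $u$ admits a reduced word in the $s_j$ with $j\neq i$, the operator $\pi_u$ preserves divisibility by $x_1\cdots x_i$, and we conclude that $\qkey_\alpha = \pi_u\qkey_\lambda$ is divisible by $x_1\cdots x_i$, in particular by $x_1$.

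Finally I would close the argument as follows. Suppose for contradiction that $\qkey_\alpha = c\,\pkey_\gamma$ for a scalar $c$ and skew-symmetric weak composition $\gamma$; since $\qkey_\alpha\neq 0$ by Theorem~\ref{thm:key_osp_key_positive}, both $c$ and $\pkey_\gamma$ are nonzero. Set $\nu := \strictRowCounts(\lambda(\gamma))$ and $\epsilon := u(\gamma)\circ\nu$. The first entry $\nu_1 = 0$ vacuously since no cell $(1,b)$ with $b<1$ can exist, and Remark~\ref{u-rem} implies that $\pi_1$ fixes every key $\kappa_\rho$ with $\rho_1\leq\rho_2$ while each $\pi_j$ with $j\geq 2$ leaves position $1$ of the indexing composition unchanged; hence position $1$ remains zero throughout the computation of $u(\gamma)\circ\nu$, giving $\epsilon_1 = 0$. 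Proposition~\ref{one-term-prop} says that $\pkey_\gamma - \kappa_\epsilon$ is key positive, so by~\eqref{leading-eq} the polynomial $\pkey_\gamma$ contains the monomial $x^\epsilon$ with positive coefficient; since $\epsilon_1 = 0$ this monomial survives the substitution $x_1 \mapsto 0$, so $\pkey_\gamma|_{x_1=0}\neq 0$. This contradicts the divisibility of $c\,\pkey_\gamma = \qkey_\alpha$ by $x_1$ established above. The main obstacle is the auxiliary divisibility lemma for $\pi_j$; once that is in hand, the other ingredients follow readily from Remark~\ref{u-rem} and Proposition~\ref{one-term-prop}.
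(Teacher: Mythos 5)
Your proof is correct and follows essentially the same route as the paper's: show that the reduced words of $u(\alpha)$ avoid $s_i$ so that $\qkey_\alpha = \pi_{u(\alpha)}\qkey_{\lambda}$ inherits divisibility by $x_1\cdots x_i$ from the diagonal factors of $\qkey_\lambda$, and then rule out $P$-key polynomials because the key term $\kappa_\epsilon$ from Proposition~\ref{one-term-prop} has leading monomial $x^\epsilon$ with $\epsilon_1=0$. You simply spell out in more detail two points the paper asserts tersely (that $u(\alpha)$ lies in the parabolic subgroup avoiding $s_i$, and that $\epsilon_1=0$ survives the $\circ$-action), which is fine.
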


\begin{proof}
In this case $u(\alpha)$ has a reduced expression $i_1i_2\cdots i_{\ell}$ with all $i_j \neq i$, so $\qkey_\alpha= \pi_{i_1}\pi_{i_2}\cdots \pi_{i_{\ell}} \qkey_{\lambda(\alpha)}$ is divisible by $x_1x_2\cdots x_i$ since $\qkey_{\lambda(\alpha)}$
is divisible by this factor and $\pi_j(x_1x_2\cdots x_i f) = x_1x_2\cdots x_i \pi_j(f)$ for any polynomial $f$ when $ i\neq j \in \PP$.
However, no $P$-key polynomial is divisible by $x_1x_2\cdots x_i$ as
the leading term of $\kappa_{\epsilon}$ in Proposition~\ref{one-term-prop} is never divisible by $x_1$.
\end{proof}

Calculations suggest that the key polynomials appearing in $\pkey_\alpha$ all have the following property:

\begin{conjecture}\label{pkey-key-conj}
Suppose $\alpha$ and $\gamma$ are weak compositions  with $\alpha$ skew-symmetric.
If $\pkey_\alpha - \kappa_\gamma$ is key positive and $j$ is the first index with $\gamma_j  =0$ then $\gamma_i > 1$ for all $0<i<j$.
\end{conjecture}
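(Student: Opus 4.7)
The plan is to reduce the conjecture to the case where $\alpha = \lambda$ is a skew-symmetric partition, and then to analyze the key expansion of $\pkey_\lambda$ directly via its product formula and the symmetries it inherits from $\lambda$.

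Write $u := u(\alpha)$ and $\lambda := \lambda(\alpha)$, and let $\pkey_\lambda = \sum_\mu c_\mu \kappa_\mu$ be the key expansion (with $c_\mu \in \NN$ by Theorem~\ref{thm:key_osp_key_positive}). Applying $\pi_u$ and using Remark~\ref{u-rem} gives
\[
\pkey_\alpha = \pi_u \pkey_\lambda = \sum_\mu c_\mu \, \kappa_{u \circ \mu},
\]
where $u \circ \mu$ is the weak composition obtained from $\mu$ by reading a reduced word of $u$ from right to left and, for each letter $i$ encountered, swapping positions $i$ and $i+1$ of the current composition if the left entry strictly exceeds the right entry (otherwise doing nothing). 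Let $\mathcal{P}(\beta)$ denote the property ``every entry of $\beta$ strictly before the first zero is at least $2$''. A short case analysis shows $\mathcal{P}$ is preserved by each such conditional swap: writing $j$ for the first zero of $\beta$ and assuming $\beta_i > \beta_{i+1}$, we have either (i) $i+1 < j$, in which case the swap exchanges two entries both $\geq 2$; (ii) $i+1 = j$, in which case the zero moves one position left and the new prefix (positions $1,\dotsc,j-2$) is a truncation of the original prefix and therefore still consists of entries $\geq 2$; or (iii) $i > j$, in which case the prefix is unchanged. Hence, if $\mathcal{P}$ holds for every $\mu$ with $c_\mu > 0$, it holds for every key appearing in $\pkey_\alpha$, which reduces matters to $\alpha=\lambda$.

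For the partition case, \eqref{schub-dom-eq} gives $\pkey_\lambda = \prod_{(i,j)\in D}(x_i+x_j)$ where $D = \{(i,j) \in \D_\lambda : i<j\}$. The symmetry $\pi_k \pkey_\lambda = \pkey_\lambda$ whenever $\lambda_k = \lambda_{k+1}$ (Lemma~\ref{lam-lam-lem}), combined with $\pi_k \kappa_\mu = \kappa_\mu$ iff $\mu_k \leq \mu_{k+1}$ (Remark~\ref{u-rem}) and linear independence of keys, forces
\[
c_\mu > 0 \;\Longrightarrow\; \mu_k \leq \mu_{k+1} \text{ whenever } \lambda_k = \lambda_{k+1}.
\]
I would combine this constraint with a ``monomial absorption'' argument: each factor $(x_i + x_j)$ with $i<j$ offers a choice between the lex-larger variable $x_i$ and the lex-smaller variable $x_j$, so the set of monomials of $\pkey_\lambda$ sharing a given initial segment has a natural poset structure. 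For a monomial $x^\delta$ with $\delta$ violating $\mathcal{P}$---say $\delta_i = 1$ with $\delta_1,\dotsc,\delta_{i-1}$ all nonzero---one would aim to produce an explicit lex-larger monomial $x^\gamma$ in $\pkey_\lambda$ such that $\kappa_\gamma$ itself appears in the expansion and already contains $x^\delta$ as a non-leading term, showing that $\kappa_\delta$ cannot occur with positive coefficient.

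The main obstacle is making the absorption step precise and uniform, since controlling which $\gamma$ are genuine key leading monomials of $\pkey_\lambda$ is delicate. A cleaner route would proceed through the shifted-tableau / crystal model of $\pkey_\lambda$ promised in the introduction: if each monomial of $\pkey_\lambda$ arises from a shifted semistandard tableau $T$ with a well-defined right key $K_+(T)$, the conjecture should follow from a ``shifted'' analogue of the Lascoux--Schützenberger right-key theorem, provided $K_+(T)$ automatically satisfies $\mathcal{P}$ thanks to the shifted diagram structure. Alternatively, one could combine the expansion of $\fkSS_y$ (with $y$ the dominant fpf involution of shape $\lambda$) into Schubert polynomials from~\cite[\S6]{HMP2} with the Reiner--Shimozono algorithm~\cite[Thm.~4]{ReinerShimozono} for expanding each $\fkS_w$ into keys, and verify directly that the combined output always satisfies $\mathcal{P}$; this is more concrete but likely intricate, requiring tight control over two separate combinatorial algorithms.
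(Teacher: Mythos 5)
The statement you were asked to prove is left \emph{open} in the paper: Conjecture~\ref{pkey-key-conj} is stated as a conjecture, supported only by computation and by the consistency check of Proposition~\ref{1n-prop}, which settles the single instance $\gamma = 1^n$. Your proposal does not close it either, but the first half is correct and worth isolating. Since key polynomials form a $\ZZ$-basis, the keys appearing in $\pkey_\alpha = \pi_{u(\alpha)}\pkey_{\lambda(\alpha)}$ are exactly the $\kappa_{u(\alpha)\circ\mu}$ for $\mu$ in the key support of $\pkey_{\lambda(\alpha)}$ (this is the observation in the remark following Proposition~\ref{one-term-prop}), and your three-case check that each conditional swap preserves the property ``every entry before the first zero is at least $2$'' is complete: the case $i=j$ never triggers a swap because $\beta_j=0$, and the remaining cases either permute two entries of the prefix, truncate the prefix, or leave it untouched. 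So the conjecture does reduce to skew-symmetric partitions $\alpha=\lambda$.

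The genuine gap is that the partition case, which carries all the content, is not proved. The two facts you establish there are far too weak to conclude: the product formula for $\pkey_\lambda$ and the constraint $c_\mu>0\Rightarrow\mu_k\le\mu_{k+1}$ whenever $\lambda_k=\lambda_{k+1}$ (which is correctly derived from $\pi_k$-invariance and linear independence) say nothing about whether an entry equal to $1$ can precede the first zero of a key index. The ``monomial absorption'' step---exhibiting, for each offending $\delta$, a key $\kappa_{\gamma'}$ that genuinely occurs in the expansion and already accounts for $x^\delta$---is precisely the hard problem, and you give no mechanism for deciding which $\gamma'$ are leading monomials of keys in the expansion; you acknowledge this yourself. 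Note that even the special case handled by Proposition~\ref{1n-prop} requires a nontrivial detour through the Schubert expansion of $\fkSS_z$ from \cite{HMP2} together with the compatible-sequence and Coxeter--Knuth description of key supports from \cite{ReinerShimozono}; your proposed alternatives (a shifted right-key theorem, or combining those two algorithms in general) are reasonable research directions but are not executed. As written, the proposal is a correct reduction plus a program, not a proof.
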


This conjecture would imply another partial version of Conjecture~\ref{conj:alpha1_converse}:

\begin{corollary}
If Conjecture~\ref{pkey-key-conj} holds and $\alpha$ is a nonempty symmetric weak composition with $\alpha_1 = \max(\alpha)$,
then $\qkey_\alpha$ is not a $\QQ$-linear combination of $P$-key polynomials.
\end{corollary}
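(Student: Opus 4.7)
The plan is to exhibit a single key polynomial $\kappa_\delta$ that appears with positive coefficient in the key expansion of $\qkey_\alpha$ but which, granting Conjecture~\ref{pkey-key-conj}, cannot appear in the key expansion of any $\pkey_\beta$ with $\beta$ skew-symmetric. Since key polynomials form a $\ZZ$-basis for $\ZZ[x_1,x_2,\dotsc]$, the resulting contradiction rules out $\qkey_\alpha$ being any $\QQ$-linear combination of $P$-key polynomials.

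To identify $\delta$, I would apply Proposition~\ref{one-term-prop} with $\lambda := \lambda(\alpha)$, $u := u(\alpha)$, and $\delta := u \circ \rowCounts(\lambda)$: the proposition implies that $\qkey_\alpha - 2^{\diag(\alpha)}\kappa_\delta$ is key positive, so the coefficient of $\kappa_\delta$ in $\qkey_\alpha$ is at least $2^{\diag(\alpha)} > 0$. Next I would verify that $\delta_1 = 1$. Since $\alpha$ is nonempty and $\alpha_1 = \max(\alpha) = \lambda_1$, the minimum-length permutation $u$ satisfying $u\cdot\lambda = \alpha$ must send $1$ to $1$: if $r$ is the multiplicity of $\lambda_1$ in $\lambda$ and $A := \{i : \alpha_i = \lambda_1\}$, then $u$ maps the block $[r]$ bijectively and in increasing order onto $A$, and since $1 \in A$ is the smallest element, $u(1) = 1$. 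Hence $u$ admits a reduced expression in $\langle s_i : i \geq 2\rangle$; by Remark~\ref{u-rem}, each such $\pi_i$ acts on a key polynomial by either fixing it or swapping entries at positions $i$ and $i+1$, so in either case the first entry of the indexing weak composition is preserved. Therefore $\delta_1 = \rowCounts_1(\lambda) = \min(\lambda_1, 1) = 1$.

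Finally, I would invoke Conjecture~\ref{pkey-key-conj}. Because $\delta$ has finite support and $\delta_1 = 1 \neq 0$, its first zero index $j$ satisfies $j \geq 2$, and then $\delta_1 = 1 \not> 1$ violates the conjecture's conclusion. Combined with Theorem~\ref{thm:key_osp_key_positive} (so that each $\pkey_\beta$ is a nonnegative integer combination of keys), the contrapositive of Conjecture~\ref{pkey-key-conj} forces the coefficient of $\kappa_\delta$ in $\pkey_\beta$ to vanish for every skew-symmetric $\beta$. If $\qkey_\alpha = \sum_k c_k \pkey_{\beta_k}$ with $c_k \in \QQ$, then $\ZZ$-linear independence of key polynomials yields that the coefficient of $\kappa_\delta$ in $\qkey_\alpha$ equals $\sum_k c_k \cdot 0 = 0$, contradicting the positive value obtained from Proposition~\ref{one-term-prop}. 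The main obstacle is the careful bookkeeping that $u(\alpha)(1) = 1$ and that this propagates through the $\circ$-action to give $\delta_1 = 1$; all remaining steps are direct chainings of Proposition~\ref{one-term-prop} with the contrapositive of Conjecture~\ref{pkey-key-conj}.
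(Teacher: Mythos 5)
Your argument is correct and follows essentially the same route as the paper's proof: both identify the term $\kappa_\delta$ with $\delta = u(\alpha)\circ\rowCounts(\lambda)$ from Proposition~\ref{one-term-prop}, observe $\delta_1=1$, and conclude via Conjecture~\ref{pkey-key-conj} that $\kappa_\delta$ cannot occur in any $P$-key polynomial. You simply supply more of the bookkeeping (that $u(\alpha)(1)=1$ when $\alpha_1=\max(\alpha)$ and that this preserves the first entry under the $\circ$-action) than the paper's one-line proof does.
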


\begin{proof}
Define $\delta$ as in Proposition~\ref{one-term-prop}.
Since $\max(\alpha) = \alpha_1$, the action of $\mu(\alpha)$ does not permute the first entry.
Hence $\delta_1 = \rho(\alpha)_1 = 1$ and $\kappa_\delta$ appears in the key expansion of $\qkey_\alpha$, but if Conjecture~\ref{pkey-key-conj} holds then $\kappa_\delta$ does not appear in any $P$-key polynomial.
\end{proof}

The next result is consistent with Conjecture~\ref{pkey-key-conj}:

\begin{proposition}\label{1n-prop}
 Suppose $w \in S_\infty$ and $n \in \PP$. Then $\fkS_w - \kappa_{1^n}$ is key positive if and only if $w= 234\cdots (n+1)1$ and $\fkS_w = \kappa_{1^n}$.
If $\alpha$ is a skew-symmetric weak composition then $\pkey_\alpha- \kappa_{1^n} $ is never key positive.
\end{proposition}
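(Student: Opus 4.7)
For the forward direction of the first statement: the permutation $w = s_1 s_2 \cdots s_n = 234\cdots(n+1)1$ is dominant with Rothe diagram $\{(i,1) : 1 \leq i \leq n\} = \D_{(1^n)}$, so $\fkS_w = x^{(1^n)} = \kappa_{1^n}$ and the difference $\fkS_w - \kappa_{1^n} = 0$ is trivially key positive. For the converse, suppose $\fkS_w = \kappa_{1^n} + \sum_\gamma c_\gamma \kappa_\gamma$ with $c_\gamma \in \NN$ and $\gamma \neq 1^n$. Degree matching forces $\ell(w) = n$, since key polynomials are homogeneous and a positive combination of keys must have all its summands in the same degree. The leading-term property~\eqref{leading-eq} tells us each $\kappa_\alpha$ has lex-minimal monomial $x^\alpha$ with coefficient one; combined with the classical fact that the lex-minimal monomial of any Schubert polynomial $\fkS_w$ is $x^{c(w)}$ with coefficient one, this forces $c(w) = \min_{\mathrm{lex}}(\{1^n\} \cup \{\gamma : c_\gamma > 0\})$, so in particular $c(w) \leq_{\mathrm{lex}} 1^n$.

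If $c(w) = 1^n$, then $w$ is uniquely determined by its code as $234\cdots(n+1)1$, and the hook shape $(1^n)$ makes $w$ vexillary (i.e., $2143$-avoiding), so $\fkS_w = \kappa_{c(w)} = \kappa_{1^n}$ and no other keys appear. The remaining case $c(w) <_{\mathrm{lex}} 1^n$ is the main obstacle; here my plan is to appeal to the explicit description of the key expansion of $\fkS_w$ via the Reiner--Shimozono algorithm~\cite[Thm.~4]{ReinerShimozono} (or an equivalent combinatorial model such as right-key labelings of reduced pipe dreams): each key $\kappa_\alpha$ appearing corresponds to a combinatorial object built from $D(w)$, and the rigid structure of the index $1^n$---a long run of ones followed by zeros---forces the corresponding object to essentially be a single column of height $n$ inside $D(w)$, which in turn forces $D(w) = \D_{(1^n)}$ and $c(w) = 1^n$, contradicting our assumption.

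For the second statement, suppose for contradiction that $\pkey_\alpha - \kappa_{1^n}$ is key positive for some skew-symmetric weak composition $\alpha$. By Theorem~\ref{thm:key_osp_key_positive} we may write $\pkey_\alpha = \kappa_{1^n} + \sum_\gamma c_\gamma \kappa_\gamma$ with $c_\gamma \in \NN$. Proposition~\ref{one-term-prop} produces a distinguished key $\kappa_\epsilon$ in this expansion, indexed by $\epsilon = u(\alpha) \circ \strictRowCounts(\lambda(\alpha))$, whose underlying row-count sequence $\strictRowCounts(\alpha)$ has first entry $\rho_1 = 0$ (row $1$ of any symmetric diagram contains no strictly sub-diagonal cells); so the lex-minimal key in $\pkey_\alpha$'s expansion has first coordinate zero, whereas $1^n$ has first coordinate one. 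My plan is to exploit the product formula $\pkey_\alpha = \pi_{u(\alpha)} \prod_{(i,j) \in \D_{\shalf(\lambda(\alpha))}} (x_i + x_{i+j})$ to enumerate the selections ($x_i$ or $x_{i+j}$ from each factor) that contribute the monomial $x_1 x_2 \cdots x_n$, and to show that every such contribution is already absorbed by keys $\kappa_\gamma$ with $\gamma \neq 1^n$, forcing $c_{1^n} = 0$. As in the first part, the principal technical obstacle is making this enumeration precise without invoking the stronger Conjecture~\ref{pkey-key-conj}.
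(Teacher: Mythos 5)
Both halves of your argument stop short at the decisive step. For the first claim, the easy direction and the degree/leading-term bookkeeping are fine, but they only dispose of the case $c(w)=1^n$; the case $c(w)<_{\mathrm{lex}}1^n$ is where all the content lies, and ``the rigid structure of $1^n$ forces the object to be a single column, which forces $D(w)=\D_{(1^n)}$'' is an assertion, not a proof. (Note also that lex-minimality of $x^{c(w)}$ gives no traction here, since $\kappa_{1^n}$ could a priori occur as a \emph{non}-leading key term.) The way to close this is the route you gesture at: by \cite[Thms.~3 and 5]{ReinerShimozono}, each Coxeter--Knuth class of reduced words for $w$ contributes exactly one key polynomial to the positive expansion, namely the sum of $x^i$ over all compatible sequences $i$ for all words $a$ in that class. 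Since $\kappa_{1^n}=x_1x_2\cdots x_n$ is a single monomial, the contributing class must consist of a single reduced word $a$ admitting the single compatible sequence $i=(1,2,\dotsc,n)$, and the compatibility conditions then force $a=123\cdots n$, which is a reduced word precisely when $w=s_1s_2\cdots s_n=234\cdots(n+1)1$.

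For the second claim there are two problems. First, the inference ``Proposition~\ref{one-term-prop} gives a key $\kappa_\epsilon$ with first coordinate zero, so the lex-minimal key in the expansion has first coordinate zero'' is doubly broken: $\kappa_\epsilon$ need not remain lex-minimal after applying $\pi_{u(\alpha)}$ (the example $\qkey_{12}=\pi_1\qkey_{21}$ discussed right after Proposition~\ref{one-term-prop} makes exactly this point), and even if it were, the presence of one key index with first coordinate $0$ says nothing about whether $1^n$ occurs among the \emph{other} indices. Second, the proposed enumeration of selections contributing $x_1x_2\cdots x_n$ is not carried out, and it is genuinely delicate because that monomial occurs in many degree-$n$ key polynomials, so ``absorbed by other keys'' is not a local statement about monomials. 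The efficient route is structural: write $\pkey_\alpha=\pi_{u(\alpha)}\fkSS_z$ for $z\in\Ifpf_\infty$ dominant of shape $\lambda(\alpha)$, and observe that $u\circ\gamma=1^n$ forces $\gamma=1^n$ (the operation $\gamma\mapsto s_i\gamma$ is only applied when $\gamma_i>\gamma_{i+1}$, so it only moves larger entries rightward, while $1^n$ already has all its ones packed to the left); hence $\kappa_{1^n}$ can appear in $\pkey_\alpha$ only if it appears in the key expansion of $\fkSS_z$ itself. But $\fkSS_z$ is a sum of distinct Schubert polynomials $\fkS_w$ indexed by permutations in which $2i-1$ precedes $2i$ in one-line notation for every $i$, which excludes $w=234\cdots(n+1)1$, and by the first claim no other $\fkS_w$ contains $\kappa_{1^n}$ in its key expansion.
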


\begin{proof}
One can derive the first claim from \cite[Thms.~3 and 5]{ReinerShimozono}. \cite[Thm.~3]{ReinerShimozono}
states that $\fkS_w = \sum_{a} \sum_{i} x_i$ where the outer sum ranges over all reduced words $a=a_1a_2\cdots a_p$
for $w \in S_\infty$ and the inner sum ranges over all \defn{$a$-compatible words} $i=i_1i_2\cdots i_p$ with $x_i := x_{i_1}x_{i_2}\cdots x_{i_p}$.
Part (1) of \cite[Thm.~5]{ReinerShimozono} asserts that the same double sum $\sum_{a} \sum_{i} x_i$ gives a key polynomial 
when $a$ is restricted to a single equivalence class for the \defn{Coxeter--Knuth relation} defined in \cite[Eq.~(14)]{ReinerShimozono}.
Therefore each Coxeter--Knuth equivalence class of reduced words for $w$ contributes one term to the key positive expansion of $\fkS_w$.
However, it is clear from the definition of an $a$-compatible word before \cite[Thm.~3]{ReinerShimozono} that the only class that can
contribute the key polynomial $\kappa_{1^n} = x_1x_2\cdots x_n$ is the one that consists of the single reduced word $a=123\cdots n$,
which is present if and only if $w= 234\cdots (n+1)1$.

For the second claim, suppose $\alpha$ is a skew-symmetric weak composition with $\lambda=\lambda(\alpha)$.
Let $z \in \Ifpf_\infty$ be dominant with $\lambdafpf(z) = \lambda$. Since $\pkey_\alpha = \pi_{u(\alpha)} \fkSS_z$
and $\fkSS_z$ is key positive, 
and since the coefficient of $\kappa_{1^n}$ in the key expansion of $\pkey_\alpha$ is the same as in $ \fkSS_z$
as $1^n$ is invariant under permuting entries,
the difference $\pkey_\alpha- \kappa_{1^n} $ can only be key positive if $ \fkSS_z- \kappa_{1^n} $
is key positive. The latter never occurs since $\fkSS_z$ is a sum of distinct Schubert polynomials~\cite[\S1.5]{HMP1} and 
every $\fkS_w$ that appears as a summand has 
the property that $2i-1$ appears before $2i$ in the word $w(1)w(2)w(3)\cdots$ for all $i \in \PP$ by \cite[Thm.~6.22]{HMP2},
so in particular has $w\neq 234\cdots (n+1)1$ for all $n \in \PP$.
\end{proof}

As a corollary, we can derive a property relevant to Problem~\ref{space-prob}.

\begin{corollary}
\label{cor:span_noncontainment}
One has $\QQ\spanning\{\qkey_{\alpha} \mid \deg(\qkey_{\alpha}) = n\} \not\subseteq \QQ\spanning\{\pkey_{\alpha} \mid \deg(\pkey_{\alpha}) = n\}$
for all $ n \in \PP$.
\end{corollary}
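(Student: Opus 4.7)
The plan is to exhibit, for each $n \in \PP$, a single $Q$-key polynomial of degree $n$ that witnesses the non-containment. A natural candidate is $\qkey_{1,(n)}$. First I would check that this polynomial lies in the degree-$n$ piece of our family: by the product formula in Definition~\ref{spo-key-def}, $\qkey_{1,(n)} = \prod_{j=1}^n (x_1 + x_j)$ is homogeneous of degree $n$; and it equals $\qkey_\alpha$ for the symmetric weak composition $\alpha = \lambda$ corresponding to the unique symmetric hook partition with $\half(\lambda) = (n)$.

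Next I would invoke Proposition~\ref{prop:hook_shapes}, which gives the explicit key expansion
\[
\qkey_{1,(n)} \;=\; 2\sum_{m=1}^n \kappa_{m\hs 0^{m-1}\hs 1^{n-m}}.
\]
The $m=1$ summand is $2\kappa_{1^n}$, so $\kappa_{1^n}$ appears with nonzero (in fact positive) coefficient in the key expansion of $\qkey_{1,(n)}$.

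On the $P$-key side, Proposition~\ref{1n-prop} asserts that $\pkey_\alpha - \kappa_{1^n}$ is never key positive for any skew-symmetric weak composition $\alpha$. Combined with Theorem~\ref{thm:key_osp_key_positive}, which guarantees that each $\pkey_\alpha$ itself expands as an $\NN$-linear combination of key polynomials, this forces the coefficient of $\kappa_{1^n}$ in every $\pkey_\alpha$ to vanish. Since the $\{\kappa_\beta\}$ form a $\ZZ$-basis of $\ZZ[x_1, x_2, \ldots]$ (so expansions in the key basis are unique and additive), any $\QQ$-linear combination of $P$-key polynomials must also have coefficient zero at $\kappa_{1^n}$. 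Because $\qkey_{1,(n)}$ has coefficient $2 \neq 0$ at $\kappa_{1^n}$, it cannot lie in the $\QQ$-span of any collection of $P$-key polynomials, and in particular not in the span of the degree-$n$ ones.

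There is no real obstacle here: the whole argument is a linear-independence witness in the key basis, and both inputs (the key expansion of $\qkey_{1,(n)}$ and the absence of $\kappa_{1^n}$ from every $\pkey_\alpha$) are already in hand as Proposition~\ref{prop:hook_shapes} and Proposition~\ref{1n-prop}. The only point requiring care is to remember that one must use key positivity of $\pkey_\alpha$ (Theorem~\ref{thm:key_osp_key_positive}) to upgrade the ``never key positive'' conclusion of Proposition~\ref{1n-prop} into the sharper statement that the $\kappa_{1^n}$-coefficient is literally zero; without this, the obstruction would only rule out $\NN$-linear combinations of $P$-keys rather than $\QQ$-linear ones.
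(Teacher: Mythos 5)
Your proof is correct and is essentially the paper's own argument: the authors likewise take $\qkey_{1,(n)}$ as the witness, cite Proposition~\ref{prop:hook_shapes} for the appearance of $\kappa_{1^n}$ in its key expansion, and cite Proposition~\ref{1n-prop} for the absence of $\kappa_{1^n}$ from every $\pkey_\alpha$. Your extra remark that Theorem~\ref{thm:key_osp_key_positive} is needed to upgrade ``never key positive'' to ``coefficient zero'' is a valid and worthwhile clarification of a step the paper leaves implicit.
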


\begin{proof}
The first space contains $\qkey_{1,(n)}$, whose positive key expansion involves $\kappa_{1^n}$ by Proposition~\ref{prop:hook_shapes}.
The second space does not contain $\qkey_{1,(n)}$ since $\kappa_{1^n}$ 
is not in any $\pkey_{\alpha}$ by Proposition~\ref{1n-prop}.
\end{proof}

\section{Shifted Lascoux polynomials and atoms}\label{k-sect}

Key polynomials and Schubert polynomials have $K$-theoretic analogues respectively called \defn{Lascoux polynomials}
and \defn{Grothendieck polynomials}. In this section
we review the definitions of these functions
and then study their shifted analogues. 
This leads us to define two new families of shifted Lascoux polynomials generalizing $\pkey_\alpha$ and $\qkey_\alpha$.

\subsection{More divided difference operators}\label{more-sect}

Our notation for the divided difference operators in this section follows \cite[\S2.2 and \S4.2]{MP2019b}.
This reference also discusses the main properties of interest (which we only briefly summarize here) in a little more detail.

Fix a 
formal parameter $\beta$ that commutes with $x_i$ for all $i \in \PP$
and write $\cL := \ZZ[\beta][x_1^{\pm1}, x_2^{\pm1},\dots]$ for the ring of Laurent polynomials 
in $x_1,x_2,\dots$ with coefficients in $\ZZ[\beta]$. 
The group $S_\infty$ acts on $\cL$ by permuting variables.
For each $i \in \PP$ we 
define operators
$\bpartial_i$ and $\bpi_i$  on $\cL$ by 
\begin{subequations}
\begin{align}
 \bpartial_i f &:= \partial_i\bigl( (1+ \beta x_{i+1}) f \bigr) = -\beta f + (1+ \beta x_i) \partial_i f, \\
 \bpi_i f &:=\pi_i\bigl( (1+ \beta x_{i+1}) f \bigr)  = f+ x_{i+1}(1+\beta x_i) \partial_i f= \bpartial_i( x_i f), 
\end{align}
\end{subequations}
and also set $\bopi_i := \bpi_i  - 1$. These operators preserve 
 $\ZZ[\beta][x_1,x_2,\dots]\subset \cL$ and
satisfy
\be\label{pipi-eq}
 \bpartial_i \bpartial_i =-\beta \bpartial_i \quand \bpi_i \bpi_i = \bpi_i\quand \bopi_i\bopi_i= -\bopi_i,
 \ee
as well as the Coxeter braid relations.
Thus, for $w \in S_\infty$ we can define
$ \bpartial_w := \bpartial_{i_1}\bpartial_{i_2}\cdots \bpartial_{i_l}$ where 
$i_1i_2\cdots i_l$ is any reduced word for $w$. We define $\bpi_w$ and $\bopi_w$ similarly for $w \in S_\infty$.
It is useful to note that if $i \in \PP$ and $f,g \in \cL$ then we have 
\be
s_i f = f\quad\Longleftrightarrow\quad  \partial_i f = 0\quad\Longleftrightarrow\quad \bpartial_i f =-\beta f \quad\Longleftrightarrow\quad \bpi_i f = f\quad\Longleftrightarrow\quad \bopi_i f = 0
\ee
and if these equivalent properties hold then 
\be
\bpartial_i(fg) =f \cdot \bpartial_ig
\quand
 \bpi_i(fg) =f \cdot \bpi_i g 
 \quand
  \bopi_i(fg) =f \cdot \bopi_i g .
\ee

Following the convention in \cite{IkedaNaruse}, for any elements $x,y \in \cL$ we let 
\be\label{o-plus-minus-eq}
 x\oplus y  := x+ y + \beta xy
 \quand 
 x\ominus y := \frac{x-y}{1+\beta y}.
 \ee
For $n \in \PP$, let $\delta_n := (n-1,\dotsc,2,1,0) \in \NN^n$ and recall that  $w_n := n\cdots 321\in S_n$.
We have
\be
\bpartial_{w_n} f = \sum_{w \in S_n} w \( \dfrac{f }{\prod_{1 \leq i < j \leq n} x_i \ominus x_j}\)
\quand
\bpi_{w_n} f = \bpartial_{w_n} ( x^{\delta_n} f)
\ee
for all $f \in \cL$ by \cite[Lems. 4.7 and 4.8]{MP2019b}.
In the summation in the first expression, the action of $S_\infty$ on Laurent polynomials is implicitly extended to the field of
rational functions $\QQ(\beta)(x_1,x_2,\dots)$.
We also note that if $f \in \ZZ[\beta][x_1,x_2,\dots]$ 
and $g = \bpi_{w_n} f$ for some $n \in \PP$, then
\be\label{mn-pi-eq}
g(x_1,x_2,\dotsc,x_m) = \bpi_{w_m} \bigl( f(x_1,x_2,\dotsc,x_m) \bigr)
\quad\text{for all $1\leq m \leq n$.}
 \ee

Finally, for $w \in S_\infty$ and $i \in \PP$ define $w\circ s_i$ to be $w$ if $w(i)>w(i+1)$ or $ws_i$ if $w(i)<w(i+1)$.
 This operation extends to an associative product $S_\infty \times S_\infty \to S_\infty$~\cite[Thm.~7.1]{Humphreys},
 which is often called the \defn{Demazure product}.
 Notice that $s_i\circ s_i = s_i$ and so $w\circ s_i \circ s_i = w\circ (s_i\circ s_i)= w\circ s_i$.
 
\begin{lemma}\label{var-bar-combine-lem}
Suppose $u,v \in S_\infty$.
Then 
$\bopi_u \bpi_v \in  \NN\spanning\left\{\bopi_w \mid w \in S_\infty\text{ with }w \leq u\circ v\right\}$.
\end{lemma}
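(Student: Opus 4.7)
The plan is to proceed by induction on $\ell(v)$, using the algebraic identity $\bopi_i^2 = -\bopi_i$ together with the monotonicity of the Demazure product under strong Bruhat order. The base case $v = 1$ is immediate: $\bpi_1 = 1$, $u \circ 1 = u$, and $\bopi_u \bpi_1 = \bopi_u$ lies in $\NN\cdot \bopi_u$.

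For the inductive step, I would choose $i \in \PP$ with $v(i) > v(i+1)$ and factor $v = v' s_i$ with $\ell(v') = \ell(v) - 1$, so that $\bpi_v = \bpi_{v'}\bpi_i$ and $u \circ v = (u \circ v') \circ s_i$. By the inductive hypothesis, write $\bopi_u\bpi_{v'} = \sum_w c_w \bopi_w$ as an $\NN$-combination over $w \leq u \circ v'$. Multiplying on the right by $\bpi_i = 1 + \bopi_i$ then reduces the problem to analyzing $\bopi_w\bpi_i = \bopi_w + \bopi_w\bopi_i$ for each such $w$.

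The analysis splits on whether $w(i) > w(i+1)$ or $w(i) < w(i+1)$. In the first case, I would factor $\bopi_w = \bopi_{ws_i}\bopi_i$ (since $ws_i < w$) and apply $\bopi_i^2 = -\bopi_i$ to conclude $\bopi_w\bopi_i = -\bopi_w$, so that $\bopi_w\bpi_i = 0$. In the second case, $\bopi_w\bopi_i = \bopi_{ws_i}$ is already expressed via a reduced word, and thus $\bopi_w\bpi_i = \bopi_w + \bopi_{ws_i}$, manifestly a nonnegative combination of $\bopi$'s. To close the induction, I need $w \leq u \circ v$ and $ws_i \leq u \circ v$ in this second case. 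Since $w \leq u \circ v'$ and $s_i \leq s_i$, the monotonicity property ``$a \leq b$ and $c \leq d$ imply $a \circ c \leq b \circ d$'' of the Demazure product gives $ws_i = w \circ s_i \leq (u \circ v') \circ s_i = u \circ v$, while $w \leq u \circ v' \leq u \circ v$ follows directly from $u \circ v' \leq (u \circ v') \circ s_i$.

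The main obstacle is really just the Bruhat verification at the end; the algebraic cancellation is automatic from $\bopi_i^2 = -\bopi_i$, and no sign issues arise because the sole ``negative'' case $\bopi_w\bopi_i = -\bopi_w$ cancels the leading $\bopi_w$ rather than producing a negative coefficient elsewhere. The monotonicity of the Demazure product under Bruhat order is standard for general Coxeter groups, provable via the subword characterization of Bruhat order or by iterating the lifting property of Bj\"orner--Brenti, so the task here is to invoke it in a clean form rather than to develop new machinery.
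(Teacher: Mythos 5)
Your proof is correct and follows essentially the same route as the paper: induction on $\ell(v)$, peeling off a descent $s_i$, using $\bopi_i^2=-\bopi_i$ to get $\bopi_w\bpi_i=0$ when $w(i)>w(i+1)$ and $\bopi_w\bpi_i=\bopi_w+\bopi_{ws_i}$ otherwise, then bounding the new indices by $u\circ v$. The only cosmetic difference is that you invoke monotonicity of the Demazure product where the paper cites the lifting property of Bruhat order directly; these are the same fact in different packaging.
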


\begin{proof}
We may assume $v \neq 1$.
Choose $i \in \PP$ with $v(i)> v(i+1)$, and let $s=s_i$.
By induction on $\ell(v)$ we may assume that $\bopi_u \bpi_{vs}$ is a $\NN$-linear combination of $\bopi_w$'s for $w \in S_\infty$ with $w \leq u \circ vs$.
If $w(i)>w(i+1)$ then $w$ has a reduced word ending in $i$ 
so $\bopi_w\bopi_s = -\bopi_w$,
and if $w(i) < w(i+1)$ then $\bopi_w \bopi_s = \bopi_{ws}$.
 Thus for any $w \in S_\infty$ the product 
$ \bopi_w \bpi_s = \bopi_w(1+ \bopi_s) $ is  
$\bopi_w + \bopi_{ws}$ when $w(i) < w(i+1)$ or else zero.
The lifting property of the Bruhat order~\cite[Prop.~2.2.7]{CCG} implies that 
if $w \leq u\circ vs$ and $w(i) < w(i+1)$ then $w<ws  \leq u\circ vs \circ s = u\circ v$,
so we conclude that $\bopi_u \bpi_v = \bopi_u \bpi_{vs} \bpi_s$ is a $\NN$-linear 
combination of $\bopi_w$'s with $w \leq u\circ v$.
\end{proof}
 
\subsection{\texorpdfstring{$K$}{K}-theoretic polynomials}\label{lasc-groth-sect}

Let $\alpha$ be a weak composition and define  $u:= u(\alpha)$ and $\lambda:= \lambda(\alpha)$ as in Section~\ref{coh-subsect1}.
Following \cite{Lascoux01}, 
we define the \defn{Lascoux polynomial} and \defn{Lascoux atom polynomial} of $\alpha$ by 
the respective formulas
 \be L_\alpha :=  \bpi_{u} x^{\lambda} \quand \oL_\alpha :=  \bopi_{u} x^{\lambda}.
 \ee
 It follows as in Remark~\ref{u-rem} that 
 \be
 \bpi_i L_\alpha =\begin{cases} L_{s_i \alpha} & \text{if }\alpha_i > \alpha_{i+1},
 \\
 L_\alpha &\text{if }\alpha_i < \alpha_{i+1},
  \\
 L_\alpha &\text{if }\alpha_i = \alpha_{i+1},
 \end{cases}
 \quand
  \bopi_i \oL_\alpha =\begin{cases} \oL_{s_i \alpha} & \text{if }\alpha_i > \alpha_{i+1},
 \\
 -\oL_\alpha &\text{if }\alpha_i < \alpha_{i+1},
  \\
0 &\text{if }\alpha_i = \alpha_{i+1}.
 \end{cases}
 \ee
Both  $L_\alpha$ and $\oL_\alpha $ belong to $\NN[\beta][x_1,x_2,\dots]$ and are homogeneous of degree $\abs{\alpha}$ if $\deg(\beta) := -1$.
Setting $\beta=0$ recovers the definitions of  $ \kappa_\alpha=L_\alpha\bigr\rvert_{\beta=0} $ and $\overline\kappa_\alpha =\oL_\alpha\bigr\rvert_{\beta=0} $.
Taking $u=1$ in Lemma~\ref{var-bar-combine-lem} shows that $L_\alpha = \sum_{\gamma \preceq \alpha} \oL_\gamma$,
where $\preceq$ is the \defn{composition Bruhat order}
with $\gamma \preceq \alpha$ if and only if $\lambda(\gamma)=\lambda(\alpha)$
and $u(\gamma) \leq u(\alpha)$~\cite[Thm 5.1]{Mon16}.
As shown in~\cite{BSW,PY22,ShimozonoYu,Yu21}, these polynomials have combinatorial formulas that generalize what holds for key polynomials.

\begin{example}
Lascoux polynomials are $\ZZ[\beta]$-linear but not necessarily $\NN[\beta]$-linear combinations of key polynomials~\cite[\S3.2]{RY21}.
If $\alpha = (1,0,2,1)$ then $u(\alpha) = s_2s_1s_3$  and 
\[ L_{1021} = 
 \kappa_{1021} -\beta \kappa_{212}  +\beta \kappa_{122}  + \beta \kappa_{2021}+ 2\beta\kappa_{1121} + \beta^2\kappa_{222}+ 
\beta^2\kappa_{1221} + \beta^2\kappa_{2121} + \beta^3 \kappa_{2221}.
\]
The atom expansion of the corresponding Lascoux atom $\oL_{1021}$ has all positive coefficients:
\[
\oL_{1021}= \overline\kappa_{1021} + \beta\overline\kappa_{2021}  + \beta\overline\kappa_{1211}+ 2\beta\overline\kappa_{1121} + \beta^2\overline\kappa_{1221}  +  \beta^2\overline\kappa_{2211}+2\beta^2 \overline\kappa_{2121} + \beta^3\overline\kappa_{2221}.
\]
However, negative coefficients occur in other atom expansions. For example:
\[
\oL_{012} = \overline\kappa_{012} + \beta \overline\kappa_{022} + \beta \overline\kappa_{112} - \beta \overline\kappa_{121}
+2\beta^2 \overline\kappa_{122} + \beta^2 \overline\kappa_{212} + \beta^3\overline\kappa_{222}.
\]
\end{example}

Lascoux polynomials are closely related to \defn{Grothendieck polynomials} $\fkG_w$, which may be defined as the unique elements of $\ZZ[\beta][x_1,x_2,\dots]$ indexed by $w \in S_\infty$ such that 
$\fkG_w = x^{\lambda(w)}$ if $w$ is dominant~\cite[Thms.~2 and 3]{Matsumura} and $\bpartial_i \fkG_w = \fkG_{ws_i}$ for all $i \in \PP$ with $w(i) > w(i+1)$~\cite[\S3]{Matsumura}.
The second property implies that  $\bpartial_i \fkG_w = -\beta\fkG_{ws_i}$ if $i \in \PP$ and $w(i) < w(i+1)$.
The polynomials $\fkG_w $ are homogeneous of degree $\ell(w)$ if we set $\deg(\beta):=-1$.
They generalize Schubert polynomials via the identity $\fkG_w \bigr\rvert_{\beta=0} = \fkS_w$.
Grothendieck polynomials represent connective $K$-theory classes of Schubert varieties in the complete flag variety~\cite[Thm.~1.2]{Hudson2014}.

By definition, $\fkG_w$ is a Lascoux polynomial whenever $w \in S_\infty$ is dominant. 
The set of all Grothendieck polynomials that are Lascoux polynomials has a nice classification.
Recall that the (Lehmer) code of $w \in S_\infty$ is the sequence $c(w)=(c_1,c_2,\ldots)$, where $c_i  =|\{ j \in \PP \mid (i,j) \in D(w)\}|$.

\begin{theorem}\label{vex-thm}
Suppose $w \in S_\infty$ and $\alpha$ is a weak composition. Then the following are equivalent:
(a) $ \fkG_w =  L_\alpha$,
(b) $\fkS_w = \kappa_\alpha$, and
(c) $w$ is vexillary with $c(w)=\alpha$.
\end{theorem}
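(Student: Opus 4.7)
The strategy is to prove the cycle of implications (a) $\Rightarrow$ (b) $\Rightarrow$ (c) $\Rightarrow$ (a). The first two implications follow quickly from earlier results in the paper; the third is the substantive step.

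For (a) $\Rightarrow$ (b), I would simply specialize $\beta = 0$ in $\fkG_w = L_\alpha$: since $\fkG_w\bigr\rvert_{\beta=0} = \fkS_w$ (from the defining properties of Grothendieck polynomials in Section~\ref{lasc-groth-sect}) and $L_\alpha\bigr\rvert_{\beta=0} = \kappa_\alpha$ (noted just after the definition of $L_\alpha$), this yields $\fkS_w = \kappa_\alpha$. For (b) $\Rightarrow$ (c), I would invoke the Reiner-Shimozono classification recalled in the introduction: $\fkS_w$ coincides with a key polynomial precisely when $w$ is vexillary; and comparing lexicographically minimal monomials via \eqref{leading-eq} on the $\kappa$-side and via the standard reduced-word expansion of $\fkS_w$ on the Schubert side pins down $\alpha = c(w)$.

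For (c) $\Rightarrow$ (a) I plan to induct on $\ell(w)$. The base case is $w$ dominant with shape $\lambda$: then $c(w) = \lambda$, so $u(\lambda) = 1$, and both $\fkG_w$ and $L_\lambda$ equal $x^\lambda$ by their respective defining properties on dominant permutations. For the inductive step, given a non-dominant vexillary $w$, I would pick some index $i$ with $c(w)_i < c(w)_{i+1}$ and let $w'$ denote the (unique) vexillary permutation with $c(w') = s_i c(w)$; then $\ell(w') = \ell(w) - 1$ and $u(c(w)) = s_i \cdot u(c(w'))$. By induction $\fkG_{w'} = L_{c(w')}$, and applying $\bpi_i$ to the right-hand side yields $L_{c(w)}$ by the Lascoux recursion in Section~\ref{lasc-groth-sect}. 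The theorem then reduces to verifying the identity $\bpi_i \fkG_{w'} = \fkG_w$ for this specific vexillary pair.

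The main obstacle is exactly this last identity, since the natural recursion for Grothendieck polynomials is through $\bpartial_i$, not $\bpi_i$. I envision two plausible routes: (i) use the decomposition $\bpi_i \fkG_{w'} = \fkG_{w'}(1 + \beta x_{i+1}) + x_{i+1}\bpartial_i \fkG_{w'}$ derivable from Section~\ref{more-sect}, and match terms against a $K$-theoretic transition formula specialized to the vexillary case; or (ii) proceed combinatorially, writing both $\fkG_w$ and $L_{c(w)}$ as generating functions over set-valued tableaux of shape $\lambda(w)$ with row flags prescribed by $c(w)$, using a flagged Grothendieck formula on one side and the Lascoux tableau formula of \cite{ShimozonoYu} on the other, and checking that the two formulas coincide under vexillarity. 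Either approach exploits the fact that vexillarity rigidly constrains the underlying tableau data, so the matching reduces to a tractable bijection preserving the weights.
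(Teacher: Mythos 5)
Your implications (a)~$\Rightarrow$~(b) (set $\beta=0$) and (b)~$\Leftrightarrow$~(c) (Reiner--Shimozono) are exactly what the paper does. The problem is (c)~$\Rightarrow$~(a), where your argument has a genuine gap: the entire content of that implication is the identity $\bpi_i\fkG_{w'}=\fkG_w$ for the vexillary pair in your inductive step, and you do not prove it --- you only name two ``plausible routes'' without carrying either out. Route (i) is not routine, since the Grothendieck recursion is through $\bpartial_i$ and the term-matching against a $K$-theoretic transition formula is precisely the hard part; route (ii) amounts to re-deriving a flagged set-valued tableau formula for vexillary $\fkG_w$, which is a known but substantial theorem, not a verification. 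There are also two smaller defects in the setup: since $\ell(w)=\abs{c(w)}$ and $c(w')=s_ic(w)$ is a rearrangement of $c(w)$, you have $\ell(w')=\ell(w)$, not $\ell(w)-1$, so the induction must run on $\ell(u(c(w)))$ instead; and you assert without justification that the permutation with code $s_ic(w)$ is again vexillary.

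The paper sidesteps the induction entirely by citing Matsumura's theorems, which state that for vexillary $w$ one has $\fkG_w=\bpi_u(x_1^{a_1}\cdots x_r^{a_r})$ for some $u$ and exponents $a_i$ --- i.e.\ $\fkG_w$ is \emph{already} a Lascoux polynomial $L_\gamma$ by definition. Setting $\beta=0$ then gives $\fkS_w=\kappa_\gamma$, and the equivalence of (b) and (c) forces $\gamma=c(w)=\alpha$. If you want to keep your inductive architecture, the honest fix is to quote that flagged-Grothendieck result (or an equivalent transition/Demazure-type recursion for vexillary Grothendieck polynomials) as the engine of the step; as written, the proof is not complete.
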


\begin{proof}
Properties (b) and (c) are equivalent by~\cite[Thm.~22]{ReinerShimozono}.
If (a) holds then setting $\beta=0$ recovers (b) which implies (c).
Conversely, if (c) holds then~\cite[Thms.~2 and 3]{Matsumura} assert
 that $\fkG_w = \pi_u x_1^{a_1}\cdots x_r^{a_r}$
for some $u \in S_\infty$ and $a_1,\dotsc,a_n \in \NN$, so $\fkG_w = L_\gamma$ for a 
weak composition $\gamma$. As setting $\beta=0$ in this case gives $\fkS_w = \kappa_\gamma$, we must have $\gamma = c(w)=\alpha$.
Thus, (c) implies (a).
\end{proof}

The general relationship between $\fkG_w$ and $L_\alpha$ is as follows:
For each $w\in S_\infty$, we have
\be\label{sy-thm} \fkG_w = \sum_{\alpha} n_{w}^{\alpha}  \beta^{\abs{\alpha}-\ell(w)}  L_\alpha
\quand \fkS_w = \sum_{\abs{\alpha}=\ell(w)} n_{w}^{\alpha}  \kappa_\alpha\ee
for  coefficients $n_{w}^{\alpha}\in \NN$ that are zero for all but finitely many 
 $\alpha$~\cite[Thm.~1.9]{ShimozonoYu}.
Shimozono and Yu~\cite{ShimozonoYu} provide a combinatorial interpretation of the coefficients $n_w^\alpha$.

\begin{example}\label{groth-ex}
The first Grothendieck polynomial whose expansion into Lascoux polynomials has multiple terms is
\[
\fkG_{2143} = x_1 x_3 + x_1 x_2 + x_1^2 +\beta  x_1 x_2 x_3 +\beta x_1^2 x_3 + \beta x_1^2 x_2 + \beta^2 x_1^2 x_2 x_3
= L_2 + L_{101} + \beta L_{201}.
\]
\end{example}

\begin{proposition}\label{basis-prop}
For each $n \in \NN$, 
the sets of Grothendieck polynomials $\{ \fkG_w \mid \DesR(w) \subseteq[n]\}$
 and  Lascoux polynomials $\{ L_\alpha \mid \alpha \in \NN^n \}$
are both $\ZZ[\beta]$-bases for $\ZZ[\beta][x_1,x_2,\dotsc,x_n]$.
\end{proposition}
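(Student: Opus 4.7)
My plan is to prove both parts uniformly by exploiting the bigrading on $\Lambda_n := \ZZ[\beta][x_1,\dotsc,x_n]$ with $\deg(x_i) = 1$ and $\deg(\beta) = -1$. Under this grading, both $L_\alpha$ and $\fkG_w$ are homogeneous (of degrees $|\alpha|$ and $\ell(w)$ respectively, as already noted in the text), and reducing modulo $\beta$ recovers $\kappa_\alpha$ and $\fkS_w$. The approach will feed the classical basis facts that $\{\kappa_\alpha : \alpha \in \NN^n\}$ and $\{\fkS_w : D(w) \subseteq [n]\times\PP\}$ are both $\ZZ$-bases of $\ZZ[x_1,\dotsc,x_n]$ (as recalled in the text, in the latter case via~\cite[Prop.~2.5.4]{Manivel}) into a single graded triangularity argument.

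Decomposing $\Lambda_n = \bigoplus_{d \geq 0} \Lambda_n^{[d]}$, a set of homogeneous elements $\{f_i\}$ will be a $\ZZ[\beta]$-basis of $\Lambda_n$ if and only if, for each $d$, the rescaled set $\{\beta^{\deg(f_i)-d} f_i : \deg(f_i) \geq d\}$ is a $\ZZ$-basis of $\Lambda_n^{[d]}$, since any $\ZZ[\beta]$-combination decomposes uniquely along the $\deg(\beta)=-1$ grading. The space $\Lambda_n^{[d]}$ is free over $\ZZ$ on $\{\beta^k x^\gamma : k \geq 0,\ \gamma \in \NN^n,\ |\gamma| = d+k\}$, and in this basis $\beta^{\deg(f_i)-d} f_i$ expands as $\beta^{\deg(f_i)-d}\overline{f_i}$ plus terms with strictly higher powers of $\beta$, where $\overline{f_i}$ is the mod-$\beta$ reduction. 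Hence the change-of-basis matrix is block upper-triangular when indexed by $\beta$-power; its diagonal blocks are the change-of-basis matrices at each fixed $x$-degree from $\{\kappa_\alpha\}$ or $\{\fkS_w\}$ to the monomial basis, which are invertible over $\ZZ$ by the classical facts. Inverting block by block will yield the claimed $\ZZ$-basis of each $\Lambda_n^{[d]}$, and therefore both $\ZZ[\beta]$-bases of $\Lambda_n$.

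One auxiliary point to handle, for the Grothendieck case, is confirming that $\fkG_w \in \Lambda_n$ exactly when $\DesR(w) \subseteq [n]$. The ``only if'' direction follows by setting $\beta=0$ and invoking the Schubert statement, while the ``if'' direction uses that $w(i) < w(i+1)$ for all $i > n$ forces $\bpartial_i \fkG_w = -\beta \fkG_w$ and hence $s_i$-invariance of $\fkG_w$ for all such $i$; a polynomial symmetric under the infinite group of permutations of $\{x_{n+1}, x_{n+2}, \dotsc\}$ must omit all of those variables because it has only finitely many monomials. I expect the main obstacle to be cleanly organizing the block-triangular structure: the rows and columns must be paired up so that within each $x$-degree the diagonal block is precisely the classical (and already-known-invertible) change-of-basis matrix, after which everything reduces to finite-dimensional linear algebra at each degree.
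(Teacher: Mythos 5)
Your argument correctly establishes $\ZZ[\beta]$-linear independence (a nontrivial finite relation would, in its lowest $\beta$-power within some graded piece, produce a nontrivial $\ZZ$-relation among the $\kappa_\alpha$'s or $\fkS_w$'s), and your treatment of when $\fkG_w$ lies in $\ZZ[\beta][x_1,\dotsc,x_n]$ matches the paper's. The gap is in the spanning step. ``Inverting block by block'' an infinite block-upper-triangular matrix with invertible diagonal blocks does not produce a basis in the sense used here, where every element must be a \emph{finite} $\ZZ[\beta]$-linear combination: the inversion is an iteration that strictly increases the $\beta$-power of the residual at each stage but need not terminate, so it only exhibits the candidates as a topological basis of the $\beta$-adic completion. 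A toy example where every hypothesis you use is satisfied but the conclusion fails: in $\ZZ[\beta][x]$ take $f_k = x^k + \beta x^{k+1}$, which are homogeneous for $\deg(\beta)=-1$ with $\overline{f_k}=x^k$ a $\ZZ$-basis of $\ZZ[x]$; yet $1 = \sum_{k\ge 0}(-\beta)^k f_k$ is the unique formal expansion of $1$, so $\{f_k\}$ is not a $\ZZ[\beta]$-basis. Nothing in your setup rules out this behaviour for $\{\fkG_w\}$ or $\{L_\alpha\}$ --- note that $L_\alpha$ genuinely contains monomials of $x$-degree strictly larger than $\abs{\alpha}$, so the degrees occurring in the iteration are unbounded.

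The paper closes exactly this hole with two external finiteness inputs: Lenart's transition formula shows each monomial is a \emph{finite} $\ZZ[\beta]$-combination of $\fkG_w$'s with $\DesR(w)\subseteq[n]$, and the Shimozono--Yu expansion~\eqref{sy-thm} shows each $\fkG_w$ is a \emph{finite} $\ZZ[\beta]$-combination of $L_\alpha$'s (with the lex-minimality of $x^\alpha$ in $L_\alpha$ confining the indices to $\NN^n$). To repair your proof you would need to import one of these, or some other a priori bound forcing the block-triangular inversion to terminate; the grading and mod-$\beta$ reduction alone cannot supply it. (A separate, cosmetic point: the decomposition should run over all $d\in\ZZ$, not just $d\ge 0$, since $\beta$ itself has degree $-1$; the negative-degree components are handled by $\Lambda_n^{[d]}=\beta^{-d}\Lambda_n^{[0]}$ for $d<0$.)
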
 

\begin{proof}
It is known that the set of all Grothendieck polynomials is linearly independent over $\ZZ[\beta]$,
as their lowest degree terms are Schubert polynomials; see \cite{Lenart2003}.
Next, if $x_i$ is the largest variable appearing in $\fkG_w$ then $\bpartial_i \fkG_w \neq -\beta \fkG_w$
so we must have $i \in \DesR(w)$ by the divided difference recurrence defining $\fkG_w$.
Thus if $\DesR(w) \subseteq[n]$ then $\fkG_w  \in \ZZ[\beta][x_1,x_2,\dotsc,x_n]$.
Finally, 
it follows from Lenart's transition for Grothendieck polynomials (see~\cite[Thm.~3.1]{Lenart2003} or~\cite[Thm.~3.1]{MP2019a}) 
that each monomial $x_{i_1}x_{i_2}\cdots x_{i_k}$ with $1 \leq i_1 \leq i_2 \leq \dots \leq i_k \leq n$
is a finite $\ZZ[\beta]$-linear combination of elements of $\{ \fkG_w : \DesR(w) \subseteq[n]\}$,
so this linearly independent set is a basis for $\ZZ[\beta][x_1,x_2,\dotsc,x_n]$.

In view of~\eqref{sy-thm}, each $\fkG_w$ is a $\ZZ[\beta]$-linear combination of 
Lascoux polynomials. If $\DesR(w) \subseteq [n]$ then each $L_\alpha$ appearing in this expansion
must have $\alpha \in \NN^n$ since the minimal term of $L_\alpha$ is $x^\alpha$.
The linearly independent set  $\{ L_\alpha \mid \alpha \in \NN^n \}$
is therefore another $\ZZ[\beta]$-bases for $\ZZ[\beta][x_1,x_2,\dotsc,x_n]$.
\end{proof}

There is a monomial-positive expression for $\fkG_w$ due to Knutson and Miller~\cite{KM04} generalizing the \defn{Billey--Jockusch--Stanley formula}
for $\fkS_w$~\cite[Thm.~1.1]{BJS}.
A \defn{Hecke word} for $w$ is a finite sequence of integers $i_1i_2\cdots i_l$
with $w = s_{i_1} \circ s_{i_2} \circ \cdots \circ s_{i_l}$. Let $\cH(w)$ be the set of Hecke words for $w$ and let 
$\hf(w)$ denote the set of sequences of strictly decreasing words $a=(a^1,a^2,\ldots)$ with concatenation $a^1a^2\cdots \in \cH(w)$.
We refer to elements of $\hf(w)$ as \defn{Hecke factorizations}.
Define $\bhf(w)$ to be the set of \defn{bounded} Hecke factorizations $a \in \hf(w)$ that have $i \leq \min(a^i)$ when $a^i$ is nonempty.
Then 
$
\fkG_w = \sum_{a \in \bhf(w)} \beta^{|\weight(a)| - \ell(w)} x^{\weight(a)},
$ 
where  $\weight(a) := (\ell(a^1), \ell(a^2),\ldots)$ for $a \in \hf(w)$~\cite[Cor.~5.4]{KM04}.

\begin{example}
The Hecke words of $w=2143 \in S_4$ consists of all finite sequences $i_1i_2\cdots i_l$ with $\{i_1,i_2,\dotsc,i_l\} =\{1,3\}$.
The set $\bhf(2143)$ is finite with elements $(1,\emptyset,3)$, $(1,3,\emptyset)$, $(31,\emptyset,\emptyset)$, $(1,3,3)$, $(31,\emptyset,3)$, $(31,3,\emptyset)$, and $(31,3,3)$, corresponding to the monomials in Examples~\ref{groth-ex}.
\end{example}

Following~\cite{FominKirillov}, define $G_w :=  \sum_{a \in \hf(w)} \beta^{|\weight(a)| - \ell(w)} x^{\weight(a)}$ for $w \in S_\infty$.
 Since incrementing all letters by one defines a bijection $\cH(w) \to \cH(1\times w)$,
we can also write $G_w = \lim_{N\to \infty} \fkG_{1^N\times w}$, where the limit is in the sense 
of formal power series
and $1^N\times w \in S_\infty$ denotes the permutation fixing each $i \in [N]$ and mapping $i+N \mapsto w(i)+N$.
The following provides another proof (see, e.g.,~\cite{BKSTY}) that each of the \defn{stable Grothendieck polynomials} $G_w$ is 
a symmetric function in the $x_i$ variables and specializes to $F_w$ from~\eqref{FPQ-eq} when $\beta=0$.

\begin{lemma}
\label{bpi-lem1}
If $w \in S_\infty$ has $\DesR(w)\subseteq[n]$ then $G_w(x_1,x_2,\dotsc,x_n) = \bpi_{w_n} (\fkG_w)$.
\end{lemma}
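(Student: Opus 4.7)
The plan is to use the Lascoux expansion of $\fkG_w$ from~\eqref{sy-thm} together with the $K$-theoretic Demazure character formula for the longest element of $S_n$.

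By Proposition~\ref{basis-prop}, the hypothesis $\DesR(w) \subseteq [n]$ gives $\fkG_w \in \ZZ[\beta][x_1,\dots,x_n]$. Since $\{L_\alpha : \alpha \in \NN^n\}$ is a $\ZZ[\beta]$-basis for this subring, the expansion~\eqref{sy-thm} specializes to
\[ \fkG_w = \sum_{\alpha \in \NN^n} n_w^\alpha\, \beta^{|\alpha|-\ell(w)}\, L_\alpha. \]
For each such $\alpha$, the minimal permutation $u(\alpha)$ lies in $S_n$, and since $w_n$ is the longest element of $S_n$, every position is a descent of $w_n$, so iterated application of the Demazure product rule gives $w_n \circ u(\alpha) = w_n$. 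Hence $\bpi_{w_n}\bpi_{u(\alpha)} = \bpi_{w_n}$ as operators, which yields $\bpi_{w_n}(L_\alpha) = \bpi_{w_n}(x^{\lambda(\alpha)})$. Combined with the standard identity $\bpi_{w_n}(x^\lambda) = G_\lambda(x_1,\dots,x_n)$ for partitions $\lambda$ with $\ell(\lambda) \leq n$ (verifiable from the symmetrization formula $\bpi_{w_n}(f) = \bpartial_{w_n}(x^{\delta_n} f) = \sum_{v \in S_n} v(x^{\lambda+\delta_n}/\prod_{i<j}(x_i \ominus x_j))$ in Section~\ref{more-sect} and the bialternant formula for $G_\lambda$), this gives
\[ \bpi_{w_n}(\fkG_w) = \sum_{\alpha \in \NN^n} n_w^\alpha\, \beta^{|\alpha|-\ell(w)}\, G_{\lambda(\alpha)}(x_1,\dots,x_n). \]

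To finish, one must identify the right-hand side with $G_w(x_1,\dots,x_n)$, equivalently, verify the symmetric-function identity $G_w = \sum_\alpha n_w^\alpha\, \beta^{|\alpha|-\ell(w)}\, G_{\lambda(\alpha)}$; this is the $K$-theoretic analog of the $\beta=0$ collapse $F_v = \sum_{|\alpha|=\ell(v)} n_v^\alpha s_{\lambda(\alpha)}$ implicit in~\eqref{FPQ-eq}. This matching step is the main obstacle, since it lies outside the formal manipulations above. One route is to invoke the Shimozono--Yu combinatorial formula for $n_w^\alpha$ from~\cite{ShimozonoYu}, which describes both expansions simultaneously. Alternatively, one can apply the same Lascoux machinery to $\fkG_{1^N\times w}$ for growing $N$ and pass to the limit, using $G_w = \lim_N \fkG_{1^N \times w}$. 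A more direct combinatorial argument compares Hecke factorizations: each $a \in \hf(w)$ with $\weight(a) \in \NN^n$ is uniquely reconstructible as a pair consisting of a bounded factorization $b \in \bhf(w)$ and a set-valued tableau of shape $\lambda(\weight(b))$ with entries in $[n]$, realizing a single monomial of $G_{\lambda(\weight(b))}(x_1,\dots,x_n)$ term by term.
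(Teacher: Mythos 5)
Your argument reduces the lemma to the symmetric-function identity $G_w = \sum_\alpha n_w^\alpha\,\beta^{|\alpha|-\ell(w)}\,G_{\lambda(\alpha)}$, and you explicitly flag this as "the main obstacle" without proving it. That is a genuine gap, not a formality: this identity is essentially equivalent to the statement being proved. Indeed, the standard way to see that the Lascoux expansion of $\fkG_w$ "stabilizes" to the $G_\lambda$-expansion of $G_w$ is precisely to apply $\bpi_{w_N}$ to~\eqref{sy-thm} and use the lemma (or its $N$-variable version). None of your three suggested routes is carried out: the Shimozono--Yu paper computes the coefficients $n_w^\alpha$ but you would still need to check that they aggregate to the BKSTY coefficients $c_{w,\lambda}$; the "pass to the limit over $\fkG_{1^N\times w}$" route is exactly the missing argument restated; and the proposed bijection between unbounded Hecke factorizations and pairs (bounded factorization, set-valued tableau) is asserted without construction and is not obviously correct. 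The formal manipulations before this point are fine ($\bpi_{w_n}\bpi_{u(\alpha)}=\bpi_{w_n\circ u(\alpha)}=\bpi_{w_n}$, and $\bpi_{w_n}(x^\lambda)=G_\lambda(x_1,\dotsc,x_n)$ via the bialternant formula, which avoids circularity with Corollary~\ref{partial-lascoux-cor}), but the proof is incomplete where it matters.

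The paper's proof goes the other way around and is essentially two lines: it quotes the known stable-limit formula $G_w=\lim_{N\to\infty}\bpi_{w_N}(\fkG_w)$ from \cite[Cor.~4.6]{MP2019b}, notes that $\DesR(w)\subseteq[n]$ forces $\fkG_w\in\ZZ[\beta][x_1,\dotsc,x_n]$, and then invokes the restriction compatibility~\eqref{mn-pi-eq} to replace $\bpi_{w_N}$ by $\bpi_{w_n}$ after setting $x_{n+1}=\cdots=0$. If you want to salvage your approach, the cleanest fix is to prove the aggregation identity $\sum_{\lambda(\alpha)=\lambda} n_w^\alpha = c_{w,\lambda}$ by an independent combinatorial argument; otherwise you should simply cite the stable-limit result and~\eqref{mn-pi-eq} as the paper does.
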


\begin{proof}
As noted above, we have $\fkG_w \in \ZZ[\beta][x_1,x_2,\dotsc,x_n]$ if $w \in S_\infty$ has $\DesR(w)\subseteq[n]$.
We also have $G_w = \lim_{N\to \infty} \bpi_{w_N} (\fkG_w)$ by
\cite[Corollary~4.6]{MP2019b}.
Thus, the desired identity holds by~\eqref{mn-pi-eq}.
\end{proof}

For each partition $\lambda$
set $G_\lambda := G_w$, where $w \in S_\infty$ is the dominant element of shape $\lambda$.
Clearly $\DesR(w)\subseteq[n]$ since one has $(i,w(i+1))\in D(w)$ if $w(i) >w(i+1)$, so Lemma~\ref{bpi-lem1} recovers the following result from~\cite{Mon16}.

\begin{corollary}[{\cite{Mon16}}] \label{partial-lascoux-cor}
If $\lambda \in \NN^n$ is a partition then $G_\lambda(x_1,x_2,\dotsc,x_n) =  L_{(\lambda_n,\dotsc,\lambda_2,\lambda_1)}$.
\end{corollary}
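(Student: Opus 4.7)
The plan is to deduce the identity almost immediately from Lemma~\ref{bpi-lem1} combined with the fact that the Grothendieck polynomial of a dominant permutation is a monomial.

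First I would set up notation. Let $w \in S_\infty$ be the dominant permutation of shape $\lambda$, so that $D(w) = \D_\lambda$ and $\fkG_w = x^\lambda$ by the dominant formula for Grothendieck polynomials cited just before Theorem~\ref{vex-thm}. Since $\lambda \in \NN^n$, the Young diagram $\D_\lambda$ lies in $[n] \times \PP$; hence if $i \in \DesR(w)$, the entry $(i,w(i+1)) \in D(w) = \D_\lambda$ forces $i \leq n$, so $\DesR(w) \subseteq [n]$. Lemma~\ref{bpi-lem1} then gives
\[
G_\lambda(x_1,\dotsc,x_n) \;=\; G_w(x_1,\dotsc,x_n) \;=\; \bpi_{w_n}(\fkG_w) \;=\; \bpi_{w_n}(x^\lambda).
\]

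Second I would identify $\bpi_{w_n}(x^\lambda)$ with the Lascoux polynomial $L_\alpha$ for $\alpha := (\lambda_n, \dotsc, \lambda_2, \lambda_1)$. By the $S_\infty$-action on weak compositions, $w_n \lambda = \alpha$, and $\lambda(\alpha) = \lambda$. The element $u(\alpha)$ is by definition the shortest permutation with $u(\alpha)\lambda = \alpha$, so $w_n$ and $u(\alpha)$ lie in the same coset of the stabilizer $W_J := \langle s_j : \lambda_j = \lambda_{j+1}\rangle$ of $\lambda$. Writing $w_n = u(\alpha)\, v$ with $v \in W_J$, this factorization is length-additive because $u(\alpha)$ is the minimal-length coset representative; hence $\bpi_{w_n} = \bpi_{u(\alpha)} \bpi_v$. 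Since each generator $s_j$ of $W_J$ satisfies $\lambda_j = \lambda_{j+1}$, the monomial $x^\lambda$ is symmetric in $x_j, x_{j+1}$, so $\bpi_j(x^\lambda) = x^\lambda$ and therefore $\bpi_v(x^\lambda) = x^\lambda$. Combining,
\[
\bpi_{w_n}(x^\lambda) = \bpi_{u(\alpha)}(x^\lambda) = L_\alpha,
\]
which completes the proof.

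There is essentially no hard step here: the argument is a three-line chain once one knows (i) $\fkG_w = x^\lambda$ for dominant $w$, and (ii) that generators of the $\lambda$-stabilizer act trivially on $x^\lambda$ via $\bpi$. The only mildly delicate point is the length-additive factorization $w_n = u(\alpha)\, v$, which is the standard property of shortest coset representatives in a parabolic quotient of a Coxeter group and ensures that the Demazure-style operator $\bpi_{w_n}$ factors as $\bpi_{u(\alpha)}\bpi_v$ without sign or length ambiguity.
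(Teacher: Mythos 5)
Your proof is correct and follows the same route as the paper: the paper also deduces the corollary by noting that the dominant permutation $w$ of shape $\lambda$ has $\DesR(w)\subseteq[n]$ (via $(i,w(i+1))\in D(w)=\D_\lambda$) and then invoking Lemma~\ref{bpi-lem1} together with $\fkG_w=x^\lambda$. The only difference is that you spell out the final identification $\bpi_{w_n}(x^\lambda)=L_{(\lambda_n,\dotsc,\lambda_1)}$ via the length-additive parabolic factorization, a step the paper leaves implicit in the recurrences for $\bpi_i L_\alpha$; your verification of it is sound.
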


One can also express $G_\lambda$ as 
a generating function
for \defn{semistandard set-valued tableaux} of shape $\lambda$~\cite[Thm.~3.1]{Buch2002}.
Each $G_w$ is a $\NN[\beta]$-linear combination of $G_\lambda$'s~\cite[Thm.~1]{BKSTY}. More precisely:

\begin{proposition}\label{Gexp-prop}
If $w \in S_\infty$ has $\DesR(w)\subseteq[n]$ then $G_w \in \NN[\beta]\spanning\{ G_\lambda \mid \text{partitions }\lambda \in \NN^n\}$.
\end{proposition}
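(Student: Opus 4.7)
The plan is to pull back the expansion from the $N$-variable symmetric polynomial ring, for each $N\geq n$, via the operator $\bpi_{w_N}$, and then pass to the stable limit. Since $\DesR(w)\subseteq[n]$, the reasoning in the proof of Proposition~\ref{basis-prop} shows that $\fkG_w \in \ZZ[\beta][x_1,\ldots,x_n]$. Combining~\eqref{sy-thm} with the basis property of Lascoux polynomials in $\ZZ[\beta][x_1,\ldots,x_n]$ from Proposition~\ref{basis-prop} forces $n_w^\alpha=0$ unless $\alpha\in\NN^n$, giving a finite $\NN$-positive expansion $\fkG_w = \sum_{\alpha\in\NN^n} n_w^\alpha\,\beta^{|\alpha|-\ell(w)}\, L_\alpha$.

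For any $N\geq n$ the hypothesis of Lemma~\ref{bpi-lem1} is still satisfied, so applying $\bpi_{w_N}$ to both sides of this expansion gives
\[
G_w(x_1,\ldots,x_N) = \sum_{\alpha\in\NN^n} n_w^\alpha\, \beta^{|\alpha|-\ell(w)}\, \bpi_{w_N}(L_\alpha).
\]
The key auxiliary identity I would establish is $\bpi_{w_N}(L_\alpha) = G_{\lambda(\alpha)}(x_1,\ldots,x_N)$ for every $\alpha\in\NN^n$. To prove it, write $L_\alpha = \bpi_{u(\alpha)}(x^{\lambda(\alpha)})$ and use the braid relation together with the Demazure-product absorption $w_N\circ u = w_N$ for every $u\in S_N$ (valid because $w_N$ is the longest element) to conclude that $\bpi_{w_N}\bpi_{u(\alpha)} = \bpi_{w_N}$. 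This reduces the claim to $\bpi_{w_N}(x^{\lambda(\alpha)}) = G_{\lambda(\alpha)}(x_1,\ldots,x_N)$, which follows from Corollary~\ref{partial-lascoux-cor} after absorbing the parabolic stabilizer of $\lambda(\alpha)$, since $\bpi_j$ fixes $x^{\lambda(\alpha)}$ whenever $\lambda(\alpha)_j = \lambda(\alpha)_{j+1}$.

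Grouping terms by $\lambda=\lambda(\alpha)$ and setting $c_\lambda := \beta^{|\lambda|-\ell(w)}\sum_{\alpha\in\NN^n,\ \lambda(\alpha)=\lambda} n_w^\alpha \in \NN[\beta]$ produces the $N$-variable identity
\[
G_w(x_1,\ldots,x_N) = \sum_{\substack{\lambda\text{ partition}\\ \ell(\lambda)\leq n}} c_\lambda\, G_\lambda(x_1,\ldots,x_N),
\]
with finite support and with coefficients $c_\lambda$ independent of $N$. Since both sides are the $N$-variable specializations of fixed formal power series, the displayed identity passes to the stable limit $N\to\infty$ to yield $G_w = \sum_{\ell(\lambda)\leq n} c_\lambda G_\lambda$ in $\Lambda[\beta]$, as desired. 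I expect the main obstacle to be verifying the auxiliary identity $\bpi_{w_N}(L_\alpha) = G_{\lambda(\alpha)}(x_1,\ldots,x_N)$, which carries essentially all the algebraic content of the argument; once it is in hand, both the finiteness of the sum and the $\NN[\beta]$-positivity of the coefficients are immediate from the construction, and passing to the stable limit is routine bookkeeping.
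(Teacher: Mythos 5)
Your proof is correct, but it takes a genuinely different route from the paper's. The paper's argument is purely combinatorial and fits in one paragraph: it invokes the Buch--Kresch--Shimozono--Tamvakis--Yong rule $G_w = \sum_\lambda c_{w,\lambda}\,\beta^{|\lambda|-\ell(w)} G_\lambda$, where $c_{w,\lambda}$ counts increasing tableaux of shape $\lambda$ whose column reading words are Hecke words for $w^{-1}$, and then observes that the first column of such a tableau is a strictly increasing sequence ending in a right descent of $w$, so $\lambda$ has at most $\max\DesR(w)\leq n$ rows. You instead work algebraically: you restrict the Shimozono--Yu Lascoux expansion of $\fkG_w$ to indices $\alpha\in\NN^n$ via the basis property of Proposition~\ref{basis-prop}, symmetrize with $\bpi_{w_N}$, and pass to the stable limit. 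Your auxiliary identity $\bpi_{w_N}(L_\alpha)=G_{\lambda(\alpha)}(x_1,\dotsc,x_N)$ does hold: the $0$-Hecke relations give $\bpi_{w_N}\bpi_{u(\alpha)}=\bpi_{w_N\circ u(\alpha)}=\bpi_{w_N}$, and $\bpi_{w_N}(x^{\lambda(\alpha)})$ reduces to Corollary~\ref{partial-lascoux-cor} after absorbing the parabolic stabilizer, exactly as you say; the positivity and the bound $\ell(\lambda(\alpha))\leq n$ then fall out of $n_w^\alpha\in\NN$ and $\alpha\in\NN^n$. What your route buys is a derivation of the $G_\lambda$-coefficients of $G_w$ as sums of Lascoux coefficients $n_w^\alpha$ (a ``Lascoux-positivity stabilizes to Schur-type positivity'' statement), at the cost of a longer argument that leans on Lemma~\ref{bpi-lem1}, Corollary~\ref{partial-lascoux-cor}, and a limit; the paper's route is shorter and also identifies the coefficients combinatorially via tableaux, which is what makes the row bound immediate.
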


\begin{proof}
The result~\cite[Thm.~1]{BKSTY} 
asserts that $G_w = \sum_\lambda c_{w,\lambda}  \beta^{|\lambda|-\ell(w)}  G_\lambda$,
where $c_{w,\lambda}$ is the number of \defn{increasing tableaux} of shape $\lambda$
(that is, with strictly increasing rows and columns)  whose column reading words are in $\cH(w^{-1})$.
The first column of such a tableau $T$ is a strictly increasing sequence of positive integers whose 
last element is the first letter of a Hecke word for $w^{-1}$,
and is therefore both a left descent of $w^{-1}$
and a right descent of $w$. The number of rows of $T$ cannot exceed this descent.
Hence if  $\DesR(w) \subseteq [n]$ and $c_{w,\lambda} \neq 0$, then $\lambda$ must have at most $n$ parts. 
\end{proof}


\subsection{Symplectic Grothendieck polynomials}

 Given $z \in \Ifpf_\infty$, let $\DSp(z) := \{(i,j) \in D(z) \mid i>j\}$.
 Define $\oplus$ as in \eqref{o-plus-minus-eq}.
The \defn{symplectic Grothendieck polynomials} $\fkGS_z$
are the unique elements of $\ZZ[\beta][x_1,x_2,\dots]$ indexed by $z \in \Ifpf_\infty$
with
\be \fkGS_z = \prod_{(i,j) \in \DSp(z)} (x_i\oplus x_j)
\quad \text{if $z \in \Ifpf_\infty$ is dominant} 
\ee
and such that for all $i \in \PP$, we have
\be
 \bpartial_i \fkGS_z  = \begin{cases} \fkGS_{s_i z s_i}&\text{if $i+1 \neq z(i) > z(i+1) \neq i$},\\
 -\beta\fkGS_z &\text{otherwise}.\end{cases}
 \ee
The existence of these polynomials follows from~\cite[Thm.~3.8]{MP2019a} and \cite[Thm.~4]{WyserYong}.
These polynomials were first considered in~\cite{WyserYong}, where it is shown that they represent  the connective $K$-theory classes of orbit closures in the complete flag variety for the symplectic group.

If $\deg(\beta) :=-1$ then each $\fkGS_z$ is homogeneous of degree $\abs{\DSp(z)}$, while if $\deg(\beta) := 0$ (or any nonnegative number) then $\fkGS_z$ is inhomogeneous with lowest degree term given by the \defn{symplectic Schubert polynomial} $\fkSS_z := \fkGS_z\bigr\rvert_{\beta=0}$~\cite[\S2.4]{MP2019a}.
It will be clear from the formula~\eqref{pre-GPz-eq} below that the set $\left\{\fkGS_z \mid z \in \Ifpf_\infty\right\}$ is linearly independent over $\ZZ[\beta]$.


For $z \in \Ifpf_\infty$ and $i \in \PP$ define
\[
z\ast s_i := \begin{cases}
z & \text{if } i+1\neq z(i)>z(i+1) \neq i, \\
s_iz s_i & \text{if } z(i)<z(i+1), \\
\emptyset  & \text{if } z(i)= i+1,
\end{cases}
\]
where $\emptyset \notin \Ifpf_\infty $ is a null element.
Also set $\emptyset \ast s_i = \emptyset$ for all $i$.
A \defn{symplectic Hecke word} for $z$ is a finite sequence of integers $i_1i_2\cdots i_l$ with
\[
z = 1_\fpf \ast s_{i_1} \ast s_{i_2} \ast \cdots \ast s_{i_l} := (\cdots ((1_\fpf \ast s_{i_1}) \ast s_{i_2}) \ast \cdots) \ast s_{i_l}.
\]
Let $\cHSp(z)$ be the set of symplectic Hecke words for $z \in \Ifpf_\infty$.
No element of this set may begin with an odd letter since $1_\fpf \ast s_i = \emptyset$ whenever $i$ is odd.
Likewise, if $i_1i_2\cdots i_l$ is a symplectic Hecke word and $i_2$ is odd then we must have $\abs{i_1-i_2} = 1$, 
since if $i_2$ is odd and $\abs{i_1-i_2} \neq 1$
then we must have $\abs{i_1-i_2}>1$ and then $(1_\fpf \ast s_{i_1}) \ast s_{i_2} = \emptyset$. 
 
The following property is useful for generating $\cHSp(z)$.
Define  $\simFKK$ to be the transitive closure of the usual braid relations for the symmetric group (allowing us to commute letters that differ by at least two and transform $\cdots iji\cdots \leftrightarrow \cdots jij\cdots$ if $|i-j|=1$) and the extra relations with 
 \[
 \cdots ii \cdots \simFKK \cdots i \cdots \qquand i_1i_2i_3\cdots i_l \simFKK i_1(i_2+2)i_3\cdots i_l \quad\text{if }i_1 = i_2 +1.
 \]
Then
 for each $z \in \Ifpf_\infty$ the set $\cHSp(z)$ is a single $\simFKK$-equivalence class~\cite[Thm.~2.4]{Marberg2019a}.
It follows that 
there is a finite set $\cBSp(z) \subset S_\infty$ with 
$\cHSp(z) = \bigsqcup_{w \in \cBSp(z)} \cH(w)$; see~\cite[Thm.~2.5]{Marberg2019a}. 

\begin{example}\label{simFKK-ex}
If $z = (1,4)(2,3) \in \Ifpf_4$ then $\cBSp(z) = \{ 1342, 3142, 3124\} \subset S_4$.
\end{example}

Recall from \eqref{visible-eq} that the \defn{fpf-visible descent set} of $z \in \Ifpf_\infty$ is
$ \Des^\fpf_V(z) :=\{ i \in \PP : z(i+1) < \min\{i,z(i)\}\}
.
$
If $i \in \Des^\fpf_V(z)$ then row $i$ of $\DSp(z)$ contains the position $(i,z(i+1))$.

\begin{lemma}\label{desv-lem}
Let  $z \in \Ifpf_\infty$.
Then
$i \in \DesR(w) $ for some $w \in \cBSp(z)$ if and only if $i$ is the last letter of a symplectic Hecke word for $z$.
When this occurs, there is an element $j \in \Des^\fpf_V(z)$ with $i\leq j$.
\end{lemma}

\begin{proof}
If $i \in \DesR(w) $ for some $w \in \cBSp(z)$, then $w$ has a reduced word ending in $i$ and this word is a symplectic Hecke word for $z$.
Conversely, if $i_1i_2\cdots i_l$ is a symplectic Hecke word for $z$ then 
$w=s_{i_1} \circ s_{i_2} \circ \cdots \circ s_{i_l}$ is an element of $\cBSp(z)$ and  $i_l \in \DesR(w)$ since $w\circ s_{i_l} = w$.
This proves our first claim.

If $z$ has a symplectic Hecke word 
 ending in $i$, then $z$ must have $i+1\neq z(i)>z(i+1) \neq i$.
 If $z(i+1) < i$ then $j:=i$ is in $\Des^\fpf_V(z)$. If $z(i+1) > i$ then the pair $(a,b) := (z(i+1), z(i))$
has $z(b) < \min\{ a,z(a)\}$.
Call such pairs
 \defn{visible inversions}.
If $(j,k)$ is the lexicographically maximal visible inversion of $z$ then $j$ must be an fpf-visible descent
(as otherwise $(j+1,k)$ would be a visible inversion),
and we have $i < z(i+1) = a \leq j \in \Des^\fpf_V(z)$.
\end{proof}

Let
$\hfSp(z)$ be the set of sequences of strictly decreasing words $a=(a^1,a^2,\ldots)$ with  $a^1a^2\cdots \in \cHSp(z)$.
Let $\bhfSp(z)$ be the set of  $a \in \hfSp(z)$ with $i \leq \min(a^i)$ if $a^i$ is nonempty.
Then 
\be\label{pre-GPz-eq} \fkGS_z = \sum_{a \in \bhfSp(z)} \beta^{|\weight(a)| - |\DSp(z)|} x^{\weight(a)}=   \sum_{w \in \cBSp(z)} \beta^{\ell(w)- |\DSp(z)|} \fkG_w\ee
by~\cite[Thm.~3.12]{MP2019a}. 
We pause to note a consequence of~\eqref{pre-GPz-eq}.

\begin{proposition}\label{sp-basis-prop}
If  $z \in \Ifpf_\infty$ then $\fkGS_z\in \ZZ[\beta][x_1,x_2,\dotsc,x_n]$ 
if and only if $ \Des^\fpf_V(z)\subseteq[n]$.
\end{proposition}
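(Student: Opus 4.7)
The plan is to prove the two implications separately, exploiting the Hecke decomposition in~\eqref{pre-GPz-eq} together with the non-$K$-theoretic analogue that was already established in Proposition~\ref{lexmin-prop} and Remark~\ref{visible-remark}.

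For the ``only if'' direction, I would simply set $\beta = 0$. By definition $\fkSS_z = \fkGS_z\bigr\rvert_{\beta=0}$, so if $\fkGS_z \in \ZZ[\beta][x_1,x_2,\dotsc,x_n]$ then $\fkSS_z \in \ZZ[x_1,x_2,\dotsc,x_n]$. Proposition~\ref{lexmin-prop} then forces $\DSp(z) \subseteq [n]\times[n]$, and Remark~\ref{visible-remark} translates this to $\Des^\fpf_V(z) \subseteq [n]$. This half is essentially immediate once one notices that specialization at $\beta=0$ reduces the problem to the already-proven Schubert case.

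For the ``if'' direction I would work from the expansion~\eqref{pre-GPz-eq}, namely $\fkGS_z = \sum_{w \in \cBSp(z)} \beta^{\ell(w) - |\DSp(z)|} \fkG_w$. Fix an arbitrary $w \in \cBSp(z)$ and suppose $i \in \DesR(w)$. Lemma~\ref{desv-lem} produces $j \in \Des^\fpf_V(z)$ with $i \leq j$; since we are assuming $\Des^\fpf_V(z) \subseteq [n]$, we get $i \leq j \leq n$, so $\DesR(w) \subseteq [n]$. By Proposition~\ref{basis-prop} this means $\fkG_w \in \ZZ[\beta][x_1,x_2,\dotsc,x_n]$, and summing over $w \in \cBSp(z)$ yields $\fkGS_z \in \ZZ[\beta][x_1,x_2,\dotsc,x_n]$ as desired.

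The argument is short because the real work has already been done: all the bookkeeping of which Grothendieck summands contribute is packaged in~\eqref{pre-GPz-eq}, the descent bound is Lemma~\ref{desv-lem}, and the ``$n$ variables'' characterization for each $\fkG_w$ is Proposition~\ref{basis-prop}. The only step that requires any thought is deciding between a direct combinatorial argument via~\eqref{pre-GPz-eq} and a linear-independence argument using Proposition~\ref{basis-prop}; I expect the former is cleaner because it avoids having to invoke uniqueness of basis expansions across the two rings $\ZZ[\beta][x_1,x_2,\dots]$ and $\ZZ[\beta][x_1,\dotsc,x_n]$, and it treats each $\fkG_w$ summand individually rather than as a system.
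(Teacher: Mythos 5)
Your proof is correct and follows essentially the same route as the paper's: the ``only if'' direction via specialization at $\beta=0$, Proposition~\ref{lexmin-prop}, and Remark~\ref{visible-remark}, and the ``if'' direction by combining~\eqref{pre-GPz-eq}, Lemma~\ref{desv-lem}, and Proposition~\ref{basis-prop}. No gaps.
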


\begin{proof}
We observed in Remark~\ref{visible-remark} that
$ \Des^\fpf_V(z)\subseteq[n]$  if and only if $\DSp(z)\subseteq [n]\times [n] $.
Thus if $\fkGS_z\in \ZZ[\beta][x_1,\dotsc,x_n]$  then $\fkSS_z\in \ZZ[x_1,\dotsc,x_n]$ 
so $ \Des^\fpf_V(z)\subseteq[n]$ by Proposition~\ref{lexmin-prop}.
If $ \Des^\fpf_V(z)\subseteq[n]$ then each $\fkG_w$ in~\eqref{pre-GPz-eq} is in $\ZZ[\beta][x_1,\dotsc,x_n]$
by  Proposition~\ref{basis-prop} and Lemma~\ref{desv-lem}.
\end{proof}

Removing the boundedness condition in~\eqref{pre-GPz-eq} defines the symmetric functions
\be\label{GPz-eq}
\GP_z := \sum_{a \in \hfSp(z)} \beta^{|\weight(a)| - |\DSp(z)|} x^{\weight(a)}=   \sum_{w \in \cBSp(z)} \beta^{\ell(w)- |\DSp(z)|} G_w\ee
called \defn{symplectic stable Grothendieck polynomials} in~\cite{Marberg2019a,MP2019a,MP2019b}.
As incrementing all letters by two defines a bijection $\cHSp(z) \to \cHSp(21\times z)$,
one has
$\GP_z := \lim_{N\to \infty} \fkGS_{21^N \times z}$ and $\GP_z = \GP_{21\times z}$.
Here $21^N\times z$ denotes the element of $\Ifpf_\infty$ that sends $i \mapsto 1_\fpf(i) = i - (-1)^i$ for $i \in [2N]$ and $i+2N \mapsto z(i) + 2N$ for $i \in \PP$, and $21\times z := 21^1 \times z$.

The following shows that $\GP_z\bigr\rvert_{\beta=0}=P_z$ from~\eqref{FPQ-eq}:

\begin{lemma} \label{bpi-lem2}
Suppose $z \in \Ifpf_\infty$ has $\Des^\fpf_V(z)\subseteq [n]$. Then 
$\GP_z(x_1,x_2,\dotsc,x_n) = \bpi_{w_n} (\fkGS_z) . $
\end{lemma}

\begin{proof}
Lemma~\ref{desv-lem} implies that $\DesR(w) \subseteq[n]$ for all $w \in \cBSp(z)$.
Given this observation and the definition of $\GP_z$ in \eqref{GPz-eq}, the desired formula 
follows from Lemma~\ref{bpi-lem1}.
\end{proof}

The symmetric functions $\GP_z$ are closely related to Ikeda and Naruse's \defn{$K$-theoretic Schur $P$-functions}, which are defined in \cite{IkedaNaruse} in the following way.
 For each strict partition $\mu$ with $r$ parts, there is a unique 
power series $\GP_\mu \in \ZZ[\beta]\llbracket x_1,x_2,\dots\rrbracket $ with
\be\label{GP-def1}
\GP_\mu(x_1,\dotsc,x_n) = \frac{1}{(n-r)!}\sum_{w \in S_n} w\( x^\mu \prod_{i=1}^{r}  \prod_{j=i+1}^n \frac{x_i\oplus x_j}{x_i\ominus x_j}\)
\ee
for all $n\geq r$~\cite[\S2]{IkedaNaruse},
with $\oplus$ and $\ominus$ defined as in \eqref{o-plus-minus-eq}.
This symmetric function also satisfies
\be\label{GP-def2}
\GP_\mu(x_1,\dotsc,x_n) = \bpi_{w_n}\( x^\mu \prod_{i=1}^{r} \prod_{j=i+1}^n \frac{x_i\oplus x_j}{x_i}\)
\ee
for all $n\geq r$ by~\cite[Prop.~4.11]{MP2019b}. 
We recover the ordinary Schur $P$-functions as $P_\mu = \GP_\mu\bigr\rvert_{\beta=0}$~\cite[Ex.~1, \S{III.3}]{Macdonald}.
One can express $\GP_\mu$ as 
a generating function
for \defn{semistandard set-valued shifted tableaux} of shape $\mu$~\cite[Thm.~9.1]{IkedaNaruse}.
As $\mu$ ranges over all strict partitions in $\NN^n$,
the polynomials $\GP_\mu(x_1,x_2,\dotsc,x_n)$ form a $\ZZ[\beta]$-basis for the subring of symmetric elements of $\ZZ[\beta][x_1,x_2,\dotsc,x_n]$ satisfying a certain \defn{$K$-theoretic $Q$-cancellation property}~\cite[Thm.~3.1]{IkedaNaruse}.
 
Each $\GP_z$ is a finite $\NN[\beta]$-linear combination of $\GP_\mu$'s~\cite[Thm.~1.9]{Marberg2019a}. 
More precisely:

\begin{proposition}\label{GSp-exp-prop}
If $z \in \Ifpf_\infty$ has $\Des^\fpf_V(z)\subseteq [n]$ then 
\[\GP_z \in \NN[\beta]\spanning\{ \GP_\mu \mid \text{strict partitions $\mu$ with $\max(\mu)\leq n$}\}.\]
\end{proposition}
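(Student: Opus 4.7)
The plan is to combine the expansion formula provided by~\cite[Thm.~1.9]{Marberg2019a}, which asserts that $\GP_z = \sum_\mu e_{z,\mu}\,\beta^{|\mu|-|\DSp(z)|}\,\GP_\mu$ with coefficients $e_{z,\mu}\in\NN$, each counting a family of shifted tableaux of shape $\mu$ attached to $z$, with a descent-control argument parallel to the proof of Proposition~\ref{Gexp-prop}. The existence and positivity of the expansion are already supplied by that theorem, so the only new content is the bound $\max(\mu)\leq n$; equivalently, given $\Des^\fpf_V(z)\subseteq [n]$, I need to show $e_{z,\mu}=0$ whenever $\mu_1>n$.

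First I would fix a shifted tableau $T$ of shape $\mu$ from the combinatorial model underlying $e_{z,\mu}$ and set $m:=\max(\mu)=\mu_1$. In such a model the first row of $T$ consists of $m$ strictly increasing positive integers $a_1<a_2<\cdots<a_m$, so in particular $a_m\geq m$. I will then show $a_m\leq n$ by tracking how this corner entry corresponds to a letter of a symplectic Hecke word for $z$: the entry in the rightmost cell of the top row is the first (or, depending on convention, last) letter of a word in $\cHSp(z)$, and hence---via the decomposition $\cHSp(z)=\bigsqcup_{w\in\cBSp(z)}\cH(w)$ recorded just before Lemma~\ref{desv-lem}---it coincides with a right descent of some $w\in\cBSp(z)$. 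Since $\Des^\fpf_V(z)\subseteq [n]$, Lemma~\ref{desv-lem} forces every such descent of every $w\in\cBSp(z)$ to lie in $[n]$, giving $a_m\leq n$ and therefore $\max(\mu)= m\leq a_m\leq n$.

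The hard part will be verifying the precise reading-word convention used in the combinatorial model of~\cite[Thm.~1.9]{Marberg2019a} and confirming that the entry at position $(1,\mu_1)$ really does correspond, after the insertion and $\simFKK$-equivalence moves implicit in that construction, to a descent-like letter of the relevant $w\in\cBSp(z)$. The extra symplectic relation $i_1i_2\cdots\simFKK i_1(i_2+2)\cdots$ when $i_1=i_2+1$ can shift letters upward, and one must ensure that no such move can push the corner entry above $n$; this is where the strengthened bound in Lemma~\ref{desv-lem} (rather than a naive bound on right descents of a single element of $\cBSp(z)$) becomes essential. Once the reading-word interpretation is pinned down, the remaining combinatorics parallel the proof of Proposition~\ref{Gexp-prop} closely, and the desired containment follows.
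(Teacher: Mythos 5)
Your proposal is correct and follows essentially the same route as the paper: cite the positive $\GP_\mu$-expansion of $\GP_z$ from~\cite[Thm.~1.9]{Marberg2019a} (counting increasing shifted tableaux with row reading word in $\cHSp(z)$), and observe that the last entry of the first row is the last letter of a symplectic Hecke word, hence bounded by $n$ via Lemma~\ref{desv-lem}, forcing $\max(\mu)\leq n$. The convention worry you flag resolves exactly as you anticipate, since the cited model uses the row reading word, whose final letter is the rightmost (largest) entry of the top row.
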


\begin{proof}
\cite[Thm.~1.9]{Marberg2019a}
asserts  that
$\GP_z = \sum_\mu a_{z,\mu}  \beta^{|\mu|-|\DSp(z)|}  \GP_\mu$,
where the sum is over strict partitions $\mu$ and 
$a_{z,\mu}$ is the number of increasing shifted tableaux of shape $\mu$
with 
 row reading word  in $\cHSp(z)$.
If $\Des^\fpf_V(z)\subseteq [n]$ then the first row of such a tableau is an increasing sequence of positive integers 
$\leq n$ by Lemma~\ref{desv-lem}, so its shape is a strict partition $\mu$ with $\max(\mu) \leq n$.
\end{proof}

Each $\GP_\mu$ occurs as $\GP_z$ for some $z \in \Ifpf_\infty$~\cite[Thm.~4.17]{MP2019b}. 
We can classify  when  $\GP_z = \GP_\mu$.
For $z \in \Ifpf_\infty$, let $\cSp(z) = (c_1,c_2,\ldots)$ where $c_i = |\{j : (i,j) \in \DSp(z)\}|$, and define  $\lambdaSp(z)$ to be the transpose of the partition
sorting $\cSp(z)$.
This partition is always strict~\cite[Thm.~1.4]{HMP5}, and 
if $z$ is dominant then  $\lambdaSp(z) = \shalf(\lambdafpf(z))$.
Recall that $z$ is fpf-vexillary if the power series $P_z$ from~\eqref{FPQ-eq} is a Schur $P$-function,
or equivalently if $z$ avoids all of the patterns in~\cite[Cor.~7.9]{HMP5}.

\begin{theorem}\label{gp-vex-thm}
Suppose $z \in\Ifpf_\infty$ and $\mu$ is a strict partition. Then the following are equivalent:
(a) $\GP_z=\GP_\mu$, (b) $P_z =P_\mu$, and (c) $z$ is fpf-vexillary with $\mu=\lambdaSp(z)$.
\end{theorem}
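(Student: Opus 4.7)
The proof proceeds around the cycle $(a) \Rightarrow (b) \Rightarrow (c) \Rightarrow (a)$. The implication $(a) \Rightarrow (b)$ is immediate: setting $\beta = 0$ in $\GP_z = \GP_\mu$ yields $P_z = P_\mu$, using that $\GP_z\bigr\rvert_{\beta=0} = P_z$ and $\GP_\mu\bigr\rvert_{\beta=0} = P_\mu$ (as remarked after~\eqref{GP-def2} and Lemma~\ref{bpi-lem2}).

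For $(b) \Rightarrow (c)$, the equality $P_z = P_\mu$ is the very definition of $z$ being fpf-vexillary, so it remains only to identify $\mu$. By the leading Schur $P$-term formula from~\cite[Thm.~1.4]{HMP5}, the dominance-maximal Schur $P$-function in the positive Schur $P$-expansion of $P_z$ is $P_{\lambdaSp(z)}$. Since we are assuming $P_z$ is a single Schur $P$-function, we must have $\mu = \lambdaSp(z)$.

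The nontrivial direction is $(c) \Rightarrow (a)$. Applying Proposition~\ref{GSp-exp-prop}, I would write
\[
\GP_z = \sum_{\nu} a_{z,\nu}\, \beta^{|\nu| - |\DSp(z)|}\, \GP_\nu,
\]
with nonnegative integer coefficients $a_{z,\nu}$ indexed by strict partitions $\nu$. Setting $\beta = 0$ gives $P_z = \sum_{|\nu|=|\DSp(z)|} a_{z,\nu} P_\nu$, and combining this with $P_z = P_\mu$ and the $\ZZ$-linear independence of Schur $P$-functions forces $a_{z,\mu} = 1$ and $a_{z,\nu} = 0$ for every other strict $\nu$ with $|\nu| = |\DSp(z)|$.

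The remaining---and main---obstacle is to show that $a_{z,\nu} = 0$ for every strict $\nu$ with $|\nu| > |\DSp(z)|$. My plan is to use the combinatorial interpretation from the proof of Proposition~\ref{GSp-exp-prop} (relying on~\cite[Thm.~1.9]{Marberg2019a}): the coefficient $a_{z,\nu}$ counts increasing shifted tableaux $T$ of shape $\nu$ whose row reading word lies in $\cHSp(z)$. Given such a $T$ with $|\nu| > |\DSp(z)|$, an uncrowding-type procedure should extract a standard shifted tableau of some shape $\mu'$ with $|\mu'| = |\DSp(z)|$ together with ``extra'' entries encoding Hecke repetitions in $\cHSp(z)$. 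The key technical step---the likely source of difficulty---is to argue that these extra Hecke repetitions force $z$ to contain one of the forbidden patterns classified in~\cite[Cor.~7.9]{HMP5}, contradicting fpf-vexillarity. This should close the cycle.
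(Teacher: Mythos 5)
Your implications (a) $\Rightarrow$ (b) and (b) $\Leftrightarrow$ (c) are correct and match the paper: setting $\beta=0$ gives the first, and the identification $\mu=\lambdaSp(z)$ comes from~\cite[Thm.~1.4]{HMP5}, which exhibits $P_{\lambdaSp(z)}$ as the leading term of the Schur $P$-expansion of $P_z$. Your reduction of (c) $\Rightarrow$ (a) to showing $a_{z,\nu}=0$ for all strict $\nu$ with $\abs{\nu}>\abs{\DSp(z)}$ is also a correct reformulation, using Proposition~\ref{GSp-exp-prop} and the linear independence of the $\GP_\nu$.

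However, the proof is not complete: the step you flag as ``the likely source of difficulty'' is in fact the entire content of the theorem, and you have not supplied it. Asserting that an uncrowding procedure ``should'' extract extra Hecke repetitions and that these ``should'' force a forbidden pattern from~\cite[Cor.~7.9]{HMP5} is a research plan, not an argument; there is no known uncrowding mechanism for increasing shifted tableaux with reading words in $\cHSp(z)$ that localizes a pattern occurrence in $z$, and establishing one would be at least as hard as the theorem itself. The paper avoids this entirely by a different route: it invokes the transition recurrence $\GP_z=\sum_{\varnothing\neq A\subseteq I(z)}\beta^{\abs{A}-1}\GP_{\varepsilon_A(z)}$ from~\cite[Cor.~4.21]{MP2019b}, proves that fpf-vexillarity forces $\abs{I(z)}=1$ (by showing any counterexample restricts to one in $\Ifpf_{12}$, then checking the $944$ candidates by computer), so that $\GP_z=\GP_{\varepsilon(z)}$ with $\varepsilon(z)$ again fpf-vexillary, and iterates until reaching an fpf-Grassmannian element $y$, where $\GP_y=\GP_{\lambdaSp(y)}$ is known from~\cite[Thm.~4.17]{MP2019b}. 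To repair your proof you would either need to carry out that recurrence-based argument or genuinely prove the tableau-theoretic vanishing claim; as written, (c) $\Rightarrow$ (a) is unproven.
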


\begin{proof}
The result \cite[Thm.~1.4]{HMP5} identifies $P_{\lambdaSp(z)}$ as the leading term of $P_z$ for any $z \in \Ifpf_\infty$.
Hence if $z$ is fpf-vexillary then $P_z = P_{\lambdaSp(z)}$.
Conversely, if $P_z = P_\mu$ then $z$ is fpf-vexillary and we must have $\mu=\lambdaSp(z)$ since Schur $P$-functions
are uniquely indexed by strict partitions. This establishes the equivalence of (b) and (c).

Next, (a) implies (b) since $\GP_z=\GP_\mu$ gives $P_z = P_{\mu}$ by setting $\beta=0$ (which also implies (c): that $\mu=\lambdaSp(z)$).
Thus, it is enough to show that (c) implies (a); that is, if $z$ is fpf-vexillary, then $\GP_z$ is equal to $\GP_\mu$ for some strict partition $\mu$.
We show this below as a consequence of~\cite[Cor.~4.21]{MP2019b} and results in~\cite{HMP5}.

For $y,z \in \Ifpf_\infty$ write $y \lessdot_F z$ if there are positive integers $i < j$
with $z=(i \; j) y (i \; j)$ such that (1) no integer $e$ has both $i<e<j$ and $y(i) <y(e) <y(j)$ and (2) either 
$
y(i)<i<j<y(j)$ or $ y(i) < y(j) < i <j $ or $ i<j<y(i)<y(j).
$
Then $\lessdot_F$ is the covering relation for the \defn{Bruhat order} on $\Ifpf_\infty$ discussed in~\cite[\S3.2]{MP2019b}.

A recurrence relation for $\GP_z$ was given in~\cite[Cor.~4.21]{MP2019b}, which can be described as follows.
Assume $z \in \Ifpf_\infty$ has $z(1) = 2$ and $z \neq 1_\fpf$.
Then there is a maximal $k \in \Des^\fpf_V(z)$. Let $l$ be the largest integer with $z(l) < \min\{k,z(k)\} \leq k <l$, and set $v := (k \; l)z(k \; l)$ and $j := v(k)$.
Let $I(z)$ be the set of $i\in [j-1]$ with $v \lessdot_F (i,j) v(i,j)$.
Then $\GP_z = \sum_{\varnothing \neq A \subseteq I(z)} \beta^{|A|-1} \GP_{\varepsilon_A(z)}$,
where for $A = \{a_1<a_2<\dots<a_q\}$ we set
$
\varepsilon_A(z) := (a_q \; j) \cdots (a_2 \; j)(a_1 \; j) \cdot v \cdot (a_1 \;j)(a_2 \;j) \cdots (a_q \; j).
$

We claim that if $z$ is fpf-vexillary then $\abs{I(z)} = 1$.
We prove this by showing that if any counterexample exists, then it must occur for some $z \in \Ifpf_{12}$.
This reduces our claim to a finite calculation. Our argument is similar to the methods in \cite[\S7]{HMP5} and so we have condensed
some details here.

We need the following notation. Suppose $S$ is a finite set of $n$ positive integers with $z(S) = S$.
Let $\phi_S \colon [n] \to S$ and $\psi_S \colon S \to [n]$ be  order-preserving bijections, and define $\llbracket z\rrbracket _S \in \Ifpf_n$ to be the unique element mapping $i \in [n]$ to $\psi_S\circ z \circ \phi_S(i)$.
Since the set of fpf-vexillary elements is characterized by a pattern avoidance condition~\cite[Cor.~7.9]{HMP5},
if $z$ is fpf-vexillary then  $\llbracket z\rrbracket _S$ remains so. 
It is also clear that if $y \in \Ifpf_\infty$ has $y(S) = S$ and $y \lessdot_F z$ then $\llbracket y\rrbracket _S \lessdot_F \llbracket z\rrbracket _S$.

Now assume that $z \in \Ifpf_\infty$ has $z(1) = 2$ and $z\neq 1_\fpf$. Define $j$, $k$, and $l$ as above.
We cannot have $|I(z)| = 0$ since $\GP_z$ is nonzero.
Suppose that $z$ is fpf-vexillary and $|I(z)|>1$. Let $h$ and $i$ be the largest elements of $I(z)$
and define $S := \{1,2,h,i,j,k,l, z(h), z(i),z(j), z(k),z(l)\}$. Then $\llbracket z\rrbracket _S \neq 1_\fpf$ is also fpf-vexillary with $(1 \; 2)$ as a cycle, 
and it holds that $\{ \psi_S(h),\psi_S(i)\} \subset I(\llbracket z\rrbracket _S)$.
The set $S$ has size at most $12$ so we may assume without loss of generality that $z $ belongs to $ \Ifpf_{12}$.
There are only $9\cdot 7 \cdot 5 \cdot 3 \cdot 1 - 1 = 944$ elements $z\in \Ifpf_{12}$ with $z(1)=2$ and $z \neq 1_\fpf$, and we
have checked using a computer that none of them is fpf-vexillary with $\abs{I(z)} > 1$. 
(The \textsc{Python} code we used to verify this is included as an ancillary file with the {\tt arXiv} version of this paper.)

Finally let $z \in \Ifpf_\infty$ be any fpf-vexillary element.
Then $21\times z$ is also fpf-vexillary, by our claim $I(21 \times z) = \{a\}$ is a singleton set, and $\GP_z = \GP_{21\times z} = \GP_{\varepsilon(z)}$
for the element $\varepsilon(z) := \varepsilon_{\{a\}}(21 \times z)$, which is also fpf-vexillary.
The involution $\varepsilon(z)$ is the unique element of the set $\hat {\fk T}^{\mathtt{FPF}}(21 \times z)$ defined by~\cite[Eq.~(5.1)]{HMP5}.
It follows from~\cite[Thm.~5.21]{HMP5} that some $y \in \{z, \varepsilon(z), \varepsilon^2(z), \varepsilon^3(z),\dots\}$ is \defn{fpf-Grassmannian} in the sense of~\cite[Def.~4.14]{HMP5}.
We have $\GP_z = \GP_y$ for this element, and~\cite[Thm.~4.17]{MP2019b} computes that $\GP_y = \GP_{\mu}$ for the strict partition $\mu = \lambdaSp(y)$ as needed.
\end{proof}

\subsection{\texorpdfstring{$P$}{P}-Lascoux polynomials}\label{p-lasc-sect}

Lascoux polynomials are defined by applying the operators $\bpi_i$ to Grothendieck polynomials of dominant permutations.
This suggests that we can define a symplectic analogue of $L_\alpha$ by applying the same operators to $\fkGS_z$, in the following way:

\begin{definition} \label{LSp-def} Let $\alpha$ be a skew-symmetric weak composition
 with $u := u(\alpha)$ and $\lambda :=\lambda(\alpha)$.
Then the \defn{$P$-Lascoux polynomial} and \defn{$P$-Lascoux atom} of $\alpha$ are respectively
\be
\plascoux_{\alpha} = \bpi_u \(\prod_{\substack{(i,j) \in \D_\lambda \\ i > j} } (x_i \oplus  x_{j})\)
\quand
\plascouxatom_{\alpha} = \bopi_u \(\prod_{\substack{(i,j) \in \D_\lambda \\ i > j} } (x_i \oplus  x_{j})\).
\ee
 \end{definition}
 
 These objects generalize $\pkey_\alpha  $ and $\patom_\alpha$,
 which are recovered from $\plascoux_{\alpha} $ and $\plascouxatom_{\alpha} $ by setting $\beta=0$.
 The polynomial $\plascoux_{\alpha} $ is always nonzero 
 and homogeneous of degree $|\shalf(\lambda(\alpha))|$
 when we set $\deg(\beta) := -1$.
 In this case, the $P$-Lascoux atom $\plascouxatom_{\alpha} $ has the same degree as $\pkey_{\alpha}$ when it is nonzero.  
 As in the key case,
we can have $\plascouxatom_\alpha=0$, and there are coincidences among the $\plascoux_\alpha$'s.
Both~\eqref{sp-red-ex1} and~\eqref{sp-red-ex2} remain true with each ``$\pkey$'' replaced by ``$\plascoux$''.
However, somewhat curiously,
we have not found distinct skew-symmetric weak compositions $\alpha\neq \gamma$ with $\plascouxatom_\alpha = \plascouxatom_\gamma \neq 0$.

 \begin{example}
If $\alpha = (3,1,4,3)$ then $u(\alpha) = s_2 s_1 s_3$ and
\[ 
\ba \plascoux_{3143} &= L_{0022} + L_{0031} + L_{0112} + \beta L_{0032} +\beta L_{0122} +\beta L_{0131} + \beta^2 L_{0132},
\\
\plascouxatom_{3143} &= \oL_{0022} + \oL_{0031} + \beta \oL_{0032} +\beta  \oL_{0122} +\beta \oL_{1022}+ \beta^2 \oL_{0132}  + \beta^2\oL_{1032}.
\ea
\]
Although $\patom_{110115} = \patom_{400313} = x_4 x_5 x_6^2$ we compute that
$0 \neq \plascouxatom_{110115} - \plascouxatom_{400313} \in \beta \NN[\beta][x_1,\dots,x_6]$.
\end{example}

Recall that $\bpi_i f = f $ for $i \in \PP$ and $f \in \cL$ if and only if $s_i f = f$.
 
 \begin{lemma}\label{sp-lasc-lam-lem}
If $\lambda$ is a skew-symmetric partition
then
$
  \Bigl\{ i  \mid \lambda_i=\lambda_{i+1}\Bigr\} = \Bigl\{ i  \mid \bpi_i \plascoux_\lambda= \plascoux_\lambda\Bigr\} .
  $
  \end{lemma}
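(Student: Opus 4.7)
My plan is to establish both inclusions separately by combining the equivalence $\bpi_i f = f \iff s_i f = f$ (recorded in Section~\ref{more-sect}) with the combinatorial symmetry of the product defining $\plascoux_\lambda$. This reduces the lemma to showing
\[
\{i \in \PP : \lambda_i = \lambda_{i+1}\} = \{i \in \PP : s_i \plascoux_\lambda = \plascoux_\lambda\}.
\]

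For the forward inclusion, I would first use the commutativity of $\oplus$ together with the identity $\lambda = \lambda^\T$ (so $(a,b) \in \D_\lambda \iff (b,a) \in \D_\lambda$) to rewrite the defining product as a product $\prod_{\{a,b\}} (x_a \oplus x_b)$ ranging over unordered two-element subsets $\{a,b\} \subset \PP$ with $(a,b) \in \D_\lambda$. In this reindexed form, $s_i$ acts on the set of factors via its induced permutation on unordered pairs. Assuming $\lambda_i = \lambda_{i+1}$, one checks that this permutation preserves the index set: the pair $\{i, i+1\}$ and any pair disjoint from $\{i, i+1\}$ are fixed; and for any $b \notin \{i, i+1\}$, the pair $\{i,b\}$ swaps with $\{i+1,b\}$, and both are in the index set iff $b \leq \lambda_i = \lambda_{i+1}$. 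Hence $s_i$ permutes the factors of $\plascoux_\lambda$, so $s_i \plascoux_\lambda = \plascoux_\lambda$, and therefore $\bpi_i \plascoux_\lambda = \plascoux_\lambda$.

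For the reverse inclusion, suppose $\bpi_i \plascoux_\lambda = \plascoux_\lambda$, equivalently $s_i \plascoux_\lambda = \plascoux_\lambda$. Setting $\beta = 0$ on both sides yields $s_i \pkey_\lambda = \pkey_\lambda$, and hence $\pi_i \pkey_\lambda = \pkey_\lambda$. Since $\lambda$ is skew-symmetric by hypothesis, Lemma~\ref{ss-lam-lam-lem} then forces $\lambda_i = \lambda_{i+1}$.

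The only substantive step is the bookkeeping check in the forward inclusion, which becomes routine once the product is written in the symmetric form above. The reverse inclusion is immediate by the $\beta=0$ specialization, since $\plascoux_\lambda\bigr\rvert_{\beta=0} = \pkey_\lambda$ and Lemma~\ref{ss-lam-lam-lem} has already been established for skew-symmetric $\lambda$. Note that skew-symmetry of $\lambda$ is used only in the reverse direction via Lemma~\ref{ss-lam-lam-lem}; the forward inclusion holds for any symmetric partition, mirroring the corresponding inclusion-only statement in Lemma~\ref{lam-lam-lem}.
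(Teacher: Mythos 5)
Your proof is correct. The forward inclusion is the same idea as the paper's (which simply says to repeat the proofs of Lemmas~\ref{lam-lam-lem} and \ref{ss-lam-lam-lem} with $x_a\oplus x_b$ in place of $x_a+x_b$): when $\lambda_i=\lambda_{i+1}$ the product is symmetric in $x_i,x_{i+1}$, and your reformulation via $s_i$ permuting the unordered pairs $\{a,b\}$ indexing the factors is a clean way to organize the same bookkeeping; the one place where symmetry of $\lambda$ is genuinely needed (so that $\{i,b\}$ and $\{i+1,b\}$ for $b>i+1$ lie in the index set under the same condition $b\le\lambda_i=\lambda_{i+1}$) is correctly handled. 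Where you genuinely diverge is the reverse inclusion: instead of re-running the factorization argument of Lemma~\ref{ss-lam-lam-lem} at the level of $\oplus$-products, you specialize $\beta=0$, which turns $s_i\plascoux_\lambda=\plascoux_\lambda$ into $s_i\pkey_\lambda=\pkey_\lambda$ and lets you quote Lemma~\ref{ss-lam-lam-lem} directly. This is valid (the $\beta=0$ specialization is a ring homomorphism commuting with the $S_\infty$-action, and $\plascoux_\lambda\bigr\rvert_{\beta=0}=\pkey_\lambda$), and it buys a shorter, non-repetitive argument; the paper's route, by contrast, keeps the whole proof at the $K$-theoretic level and does not rely on the specialization. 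Either is fine.
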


\begin{proof}
As $\pkey_\lambda$ is a product of factors of the form $x_i+x_j$ and
$\plascoux_\lambda$ is obtained by replacing each of these by $x_i\oplus x_j$,
we can deduce this result by repeating the proofs of Lemmas~\ref{lam-lam-lem} and \ref{ss-lam-lam-lem}.
 \end{proof}

\begin{proposition}\label{sp-lasc-formulas}
If $i \in \PP$ and $\alpha$ is a skew-symmetric weak composition then
\[
 \bpi_i \plascoux_\alpha =\begin{cases} \plascoux_{s_i \alpha} & \text{if }\alpha_i > \alpha_{i+1},
 \\
 \plascoux_\alpha &\text{if }\alpha_i < \alpha_{i+1},
  \\
 \plascoux_\alpha &\text{if }\alpha_i = \alpha_{i+1},
 \end{cases}
 \quand
  \bopi_i \plascouxatom_\alpha =\begin{cases} \plascouxatom_{s_i \alpha} & \text{if }\alpha_i > \alpha_{i+1},
 \\
 -\plascouxatom_\alpha &\text{if }\alpha_i < \alpha_{i+1},
  \\
0 &\text{if }\alpha_i = \alpha_{i+1}.
 \end{cases}
\]
 \end{proposition}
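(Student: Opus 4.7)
The plan is to argue in exact parallel with the proof of Proposition~\ref{formulas-prop}, replacing the divided difference operators $\pi_i$ and $\overline\pi_i$ by the $\beta$-deformed operators $\bpi_i$ and $\bopi_i$, and using Lemma~\ref{sp-lasc-lam-lem} in place of Lemmas~\ref{lam-lam-lem} and~\ref{ss-lam-lam-lem}. The key inputs are the relations $\bpi_i^2 = \bpi_i$ and $\bopi_i^2 = -\bopi_i$ from \eqref{pipi-eq}, together with the fact that both families satisfy the Coxeter braid relations, so $\bpi_w$ and $\bopi_w$ are well defined for $w \in S_\infty$.

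First I would split into three cases based on the comparison of $\alpha_i$ and $\alpha_{i+1}$. In the case $\alpha_i > \alpha_{i+1}$, Remark~\ref{u-rem} gives $u(s_i\alpha) = s_i u(\alpha)$ with $\ell(u(s_i\alpha)) = \ell(u(\alpha))+1$, so $\bpi_{u(s_i\alpha)} = \bpi_i\bpi_{u(\alpha)}$ and $\bopi_{u(s_i\alpha)} = \bopi_i\bopi_{u(\alpha)}$. Since $\lambda(s_i\alpha) = \lambda(\alpha) =: \lambda$, applying either side to $\plascoux_\lambda = \plascouxatom_\lambda$ immediately gives $\bpi_i\plascoux_\alpha = \plascoux_{s_i\alpha}$ and $\bopi_i\plascouxatom_\alpha = \plascouxatom_{s_i\alpha}$.

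In the case $\alpha_i < \alpha_{i+1}$, the same argument applied to $s_i\alpha$ shows that $u(\alpha)$ has a reduced expression beginning with $s_i$. Thus $\bpi_{u(\alpha)} = \bpi_i\bpi_v$ and $\bopi_{u(\alpha)} = \bopi_i\bopi_v$ for some $v$ of length $\ell(u(\alpha))-1$, and the relations $\bpi_i^2 = \bpi_i$ and $\bopi_i^2 = -\bopi_i$ give $\bpi_i\plascoux_\alpha = \plascoux_\alpha$ and $\bopi_i\plascouxatom_\alpha = -\plascouxatom_\alpha$.

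For the case $\alpha_i = \alpha_{i+1}$, Remark~\ref{u-rem} provides an index $j$ with $\lambda_j = \lambda_{j+1}$ and $s_i u(\alpha) = u(\alpha) s_j$; minimality of $\ell(u(\alpha))$ forces $\ell(u(\alpha)s_j) = \ell(u(\alpha))+1$, so that $\bpi_i\bpi_{u(\alpha)} = \bpi_{u(\alpha)}\bpi_j$ and $\bopi_i\bopi_{u(\alpha)} = \bopi_{u(\alpha)}\bopi_j$. Here is where we use the hypothesis that $\alpha$ is skew-symmetric: Lemma~\ref{sp-lasc-lam-lem} (which requires skew-symmetry of $\lambda$) gives $\bpi_j\plascoux_\lambda = \plascoux_\lambda$, and hence $\bopi_j\plascoux_\lambda = 0$. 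Substituting yields $\bpi_i\plascoux_\alpha = \plascoux_\alpha$ and $\bopi_i\plascouxatom_\alpha = 0$. The only subtle step, and the one I expect to require care, is precisely this appeal to Lemma~\ref{sp-lasc-lam-lem}: without skew-symmetry the inclusion in Lemma~\ref{lam-lam-lem} can be strict, and $\bpi_j\plascoux_\lambda = \plascoux_\lambda$ could fail for the specific $j$ produced by Remark~\ref{u-rem}. Since we are assuming $\alpha$ is skew-symmetric, however, this is exactly covered by Lemma~\ref{sp-lasc-lam-lem}, so the argument goes through cleanly.
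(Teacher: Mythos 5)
Your proof is correct and is exactly the paper's argument: the paper's own proof simply says the result follows by the reasoning of Remark~\ref{u-rem} (i.e., the three-case analysis via $\bpi_i^2=\bpi_i$, $\bopi_i^2=-\bopi_i$, and the relation $s_iu(\alpha)=u(\alpha)s_j$) together with Lemma~\ref{sp-lasc-lam-lem}, which is what you have written out. One tiny quibble with your closing aside: the direction of Lemma~\ref{sp-lasc-lam-lem} actually used in the $\alpha_i=\alpha_{i+1}$ case is the containment that already holds for merely symmetric $\lambda$ (as in Lemma~\ref{lam-lam-lem}); skew-symmetry is what makes the containment an equality, which matters for the indexing rather than for this particular step.
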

 
\begin{proof}
Using  Lemma~\ref{sp-lasc-lam-lem}, this result has the same proof as Proposition~\ref{formulas-prop}.
\end{proof}

It appears that each set $\mathcal{S}$ of skew-symmetric weak compositions indexing the same $P$-Lascoux polynomial has a unique minimum  in the partial order $\prec$, with all other elements $\alpha \in \mathcal{S}$ having $\plascouxatom_\alpha \neq 0$ in view of Proposition~\ref{sp-lasc-formulas}.
However, it can happen that $\plascouxatom_\alpha=0$ for every skew-symmetric weak composition $\alpha$ indexing the same $P$-Lascoux polynomial.
For example, $\plascouxatom_{04313} = \plascouxatom_{04133} = 0$.

 \begin{proposition}
 If $\alpha$ is a skew-symmetric weak composition then
 $\plascoux_\alpha = \sum_{\gamma \preceq \alpha} \plascouxatom_\gamma$.
 \end{proposition}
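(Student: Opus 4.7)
Given a skew-symmetric weak composition $\alpha$, set $u := u(\alpha)$, $\lambda := \lambda(\alpha)$, and $F := \prod_{(i,j)\in\D_\lambda,\ i>j}(x_i \oplus x_j) = \plascoux_\lambda$, so that $\plascoux_\alpha = \bpi_u F$ and $\plascouxatom_\alpha = \bopi_u F$. The plan is to establish the $\beta$-deformed analogue
\[
\bpi_u \ = \ \sum_{v \leq u}\bopi_v
\]
of the identity $\pi_u = \sum_{v\leq u} \overline\pi_v$ from~\cite[\S3]{LS1}, and then to show that $\bopi_v F = 0$ unless $v$ is a minimum-length left coset representative of the stabilizer $W_\lambda := \langle s_j : \lambda_j = \lambda_{j+1}\rangle$ of $\lambda$ in $S_\infty$.

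The identity $\bpi_u = \sum_{v \leq u}\bopi_v$ will be proved by induction on $\ell(u)$, the base case $u = 1$ being clear since $\bpi_1 = \bopi_1 = \mathrm{id}$. For the inductive step, choose $i \in \PP$ with $us_i > u$ and write $\bpi_{us_i} = \bpi_u\bpi_{s_i} = \bpi_u(1 + \bopi_{s_i})$. One then expands $\bpi_u$ via the inductive hypothesis and uses the identities $\bopi_v \bopi_i = \bopi_{vs_i}$ when $vs_i > v$ and $\bopi_v \bopi_i = -\bopi_v$ when $vs_i < v$ (both verified inside the proof of Lemma~\ref{var-bar-combine-lem}). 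The lifting property of the Bruhat order~\cite[Prop.~2.2.7]{CCG} then lets one recombine the surviving contributions into $\sum_{w \leq us_i} \bopi_w$, matching pairs $\{v, vs_i\}$ with $v \leq u$ and $vs_i > v$ to elements of $[1, us_i]$.

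Next, suppose $v \leq u$ is not in $W^\lambda$. Then some $j$ with $\lambda_j = \lambda_{j+1}$ satisfies $vs_j < v$, so we may write $v = v' s_j$ with $v's_j > v'$, giving $\bopi_v = \bopi_{v'}\bopi_j$. By Lemma~\ref{sp-lasc-lam-lem} we have $\bpi_j F = F$, hence $\bopi_j F = (\bpi_j - 1)F = 0$, so $\bopi_v F = 0$. On the other hand, for $v \in W^\lambda$ with $v \leq u$, the weak composition $\gamma := v\lambda$ has $\lambda(\gamma) = \lambda$ and $u(\gamma) = v$ (this being exactly the characterization of minimum-length coset representatives); this identifies $\{v \in W^\lambda : v \leq u\}$ bijectively with $\{\gamma : \gamma \preceq \alpha\}$, and under this bijection $\bopi_v F = \plascouxatom_\gamma$. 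Combining everything, $\plascoux_\alpha = \bpi_u F = \sum_{v \leq u} \bopi_v F = \sum_{\gamma \preceq \alpha} \plascouxatom_\gamma$.

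The only step with real content is the deformed expansion $\bpi_u = \sum_{v \leq u}\bopi_v$, whose induction parallels the classical case but must keep careful track of signs through the second branch $\bopi_v \bopi_i = -\bopi_v$. Once this identity is in hand, the vanishing of $\bopi_v F$ for $v \notin W^\lambda$ and the indexing bijection with $\{\gamma : \gamma \preceq \alpha\}$ are routine.
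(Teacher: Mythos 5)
Your proof is correct and follows essentially the same route as the paper, which deduces the statement from Lemma~\ref{var-bar-combine-lem} (taken at $u=1$) together with Proposition~\ref{sp-lasc-formulas}: the same induction on length using $\bopi_v(1+\bopi_i)$, the lifting property of Bruhat order, and the vanishing $\bopi_j\plascoux_\lambda=0$ when $\lambda_j=\lambda_{j+1}$ coming from Lemma~\ref{sp-lasc-lam-lem}. Your write-up is somewhat more explicit than the paper's one-line citation, since you prove the exact operator identity $\bpi_u=\sum_{v\leq u}\bopi_v$ (the paper's lemma only records membership in the $\NN$-span) and then handle the stabilizer of $\lambda$ via minimal-length coset representatives, whereas the paper absorbs that bookkeeping into the case $\gamma_i=\gamma_{i+1}$ of Proposition~\ref{sp-lasc-formulas}.
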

 
 \begin{proof}
 This follows from Lemma~\ref{var-bar-combine-lem} with $u=1$ in view of Proposition~\ref{sp-lasc-formulas}.
 \end{proof}

 \begin{proposition}\label{sp-lascoux-positive-prop}
If $\alpha$ is a skew-symmetric weak composition then  $\plascoux_\alpha$ 
is a nonzero $\NN[\beta]$-linear combination of Lascoux polynomials 
and  $\plascouxatom_\alpha$ is a $\NN[\beta]$-linear combination of Lascoux atom polynomials.
\end{proposition}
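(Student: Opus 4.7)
The plan is to reduce to the case where $\alpha=\lambda(\alpha)$ is itself a skew-symmetric partition, then to identify $\plascoux_\lambda$ with a dominant symplectic Grothendieck polynomial so as to invoke known positivity results. For the reduction, observe that $\plascoux_\alpha = \bpi_{u(\alpha)}\plascoux_{\lambda(\alpha)}$; the formulas recorded in Section~\ref{lasc-groth-sect} give $\bpi_i L_\gamma \in \{L_\gamma, L_{s_i\gamma}\}$, so applying $\bpi_{u(\alpha)}$ preserves $\NN[\beta]$-linear combinations of Lascoux polynomials. For the atom version, I would write $\plascouxatom_\alpha = \bopi_{u(\alpha)}\sum_\gamma c_\gamma L_\gamma = \sum_\gamma c_\gamma\,\bopi_{u(\alpha)}\bpi_{u(\gamma)}\,x^{\lambda(\gamma)}$ using the (already-established) key expansion, and invoke Lemma~\ref{var-bar-combine-lem} to expand each $\bopi_{u(\alpha)}\bpi_{u(\gamma)}$ as a $\NN$-combination of $\bopi_w$'s. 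The key observation is that for any weak composition $\mu$, the expression $\bopi_w\, x^\mu$ equals $\oL_{w\cdot\mu}$ if $w$ is the minimum-length representative of its coset modulo the stabilizer of $\mu$, and is zero otherwise; indeed, any non-minimal $w$ admits a reduced word ending in some $s_i$ with $\mu_i=\mu_{i+1}$, and $\bopi_i$ annihilates any $f$ satisfying $s_if=f$.

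For the partition case, we have $\plascoux_\lambda=\prod_{(i,j)\in\D_\lambda,\,i>j}(x_i\oplus x_j)=\fkGS_z$, where $z\in\Ifpf_\infty$ is the dominant involution with $\lambdafpf(z)=\lambda$. I would then invoke an $\NN[\beta]$-positive expansion $\fkGS_z=\sum_w c_w(\beta)\fkG_w$ (a $K$-theoretic strengthening of the Schubert-level statement in~\cite[\S1.5]{HMP1}, present in or derivable from~\cite{MP2019a}) and compose it with the Shimozono--Yu expansion~\eqref{sy-thm}, which writes each $\fkG_w$ as an $\NN[\beta]$-combination of Lascoux polynomials. Nonvanishing of $\plascoux_\alpha$ is straightforward: its $\beta=0$ specialization is $\pi_{u(\alpha)}$ applied to the strictly positive product $\prod(x_i+x_j)$, and one may exhibit a single surviving monomial (for instance, by lex-leading-term tracking along the lines of Conjecture~\ref{leading-conj2}).

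The main obstacle is the $\NN[\beta]$-positivity of $\fkGS_z$ in the Grothendieck basis for dominant $z$. If a direct citation is not at hand, one could argue inductively on $|\lambda|$: peel off an outermost hook of the skew-symmetric shape, use the vexillary case of Theorem~\ref{vex-thm} to express the hook's contribution as a single Lascoux polynomial, and combine with Lenart-type transition formulas for $\fkG_w$ together with the involution divided-difference rules of~\cite{MP2019a} to propagate positivity through the induction.
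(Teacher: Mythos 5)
Your proposal is correct and follows essentially the same route as the paper: reduce to the dominant (partition) case where $\plascoux_\lambda = \fkGS_z$, invoke the $\NN[\beta]$-positive expansion of $\fkGS_z$ into Grothendieck polynomials (the paper cites~\cite[Thm.~3.12]{MP2019a}, i.e.\ the formula recorded as~\eqref{pre-GPz-eq}, so your ``main obstacle'' is settled by a direct citation and the fallback induction is unnecessary), compose with the Shimozono--Yu expansion~\eqref{sy-thm}, and handle general $\alpha$ via Lemma~\ref{var-bar-combine-lem} together with Proposition~\ref{sp-lasc-formulas}. Your unpacking of the atom case ($\bopi_w x^\mu$ is $\oL_{w\cdot\mu}$ or zero) is exactly what the paper's terser final sentence intends.
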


 \begin{proof}
  Suppose $\alpha = \lambda(\alpha)$ is a skew-symmetric partition.
 Then $\plascoux_\alpha = \plascouxatom_\alpha=\fkGS_z$ for the unique dominant $z \in \Ifpf_\infty$
 with $\lambdafpf(z) =\alpha$, which exists by \cite[Prop.~4.31]{HMP6}.
  Each $\fkGS_z$ is a $\NN[\beta]$-linear combination of a finite set of $\fkG_w$'s~\cite[Thm.~3.12]{MP2019a}
(see the discussion below), 
so by~\eqref{sy-thm}
 both $\plascoux_\alpha$ and $\plascouxatom_\alpha$ are  $\NN[\beta]$-linear combinations of Lascoux polynomials.
 This property is preserved by applying the relevant divided difference operators $\bpi_i$ and $\bopi_i$.
Hence, the result for arbitrary  $\alpha$ follows from this property given Lemma~\ref{var-bar-combine-lem} and Proposition~\ref{sp-lasc-formulas}.
 \end{proof}

 For a skew-symmetric weak composition $\alpha$, let $\DSp(\alpha) := \{ (i,j) \in \DSym_\alpha : i > j\}$.
Our motivation for considering the polynomials $\plascoux_\alpha$
derives primarily from the following generalization of Conjecture~\ref{fkSS-conj}, which we have checked by computer
for all $z \in \Ifpf_n$ for $n\leq 8$.

\begin{conjecture}\label{fkGS-conj}
Let $z\in \Ifpf_\infty$. Then $\fkGS_z $ is a $\{\beta^n : n \in \NN\}$-linear combination of $\plascoux_\alpha$'s. In fact,  
there is a set of skew-symmetric weak compositions $\cKSp(z)$ with
 $\fkGS_z = \sum_{\alpha \in \cKSp(z)} \beta^{|\DSp(\alpha)| - |\DSp(z)|} \plascoux_\alpha$.
\end{conjecture}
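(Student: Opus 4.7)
The proof plan is to proceed by induction on a measure of non-dominance in $\Ifpf_\infty$, mirroring the strategy that expresses $\fkS_w$ as an $\NN$-linear combination of key polynomials.

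In the base case, when $z \in \Ifpf_\infty$ is dominant of shape $\lambda := \lambdafpf(z)$, the defining product formulas for $\fkGS_z$ and $\plascoux_\lambda$ coincide:
\[
\fkGS_z \;=\; \prod_{(i,j) \in \DSp(z)}(x_i \oplus x_j) \;=\; \plascoux_\lambda,
\]
since $u(\lambda) = 1$ in Definition~\ref{LSp-def} and $\DSp(z) = \{(i,j) \in \D_\lambda : i > j\}$. Thus $\cKSp(z) := \{\lambda\}$ satisfies the conjecture, with $|\DSp(\lambda)| - |\DSp(z)| = 0$.

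For the inductive step with $z$ non-dominant, I would select an index $i$ satisfying $i+1 \neq z(i) > z(i+1) \neq i$ and invoke the recursion $\bpartial_i \fkGS_z = \fkGS_{s_i z s_i}$. Writing $z' := s_i z s_i$, the inductive hypothesis supplies a set $\cKSp(z')$ giving the required expansion of $\fkGS_{z'}$. The task is then to construct $\cKSp(z)$ so that the corresponding $P$-Lascoux expansion is sent by $\bpartial_i$ to the expansion of $\fkGS_{z'}$. Proposition~\ref{sp-lasc-formulas} governs the action of $\bpi_i$ on $P$-Lascoux polynomials, and the relation $\bpi_i(f) = \bpartial_i(x_i f)$ provides the bridge between the two operators. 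The combinatorial heart of the argument would be a transition-style rule prescribing how each $\gamma \in \cKSp(z')$ lifts to one or more skew-symmetric weak compositions in $\cKSp(z)$, with the correct power of $\beta$ tracked by the cardinalities of the diagrams $\DSp(\alpha)$.

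The main obstacle is the mismatch between the $\bpartial_i$-recursion on the $\fkGS_z$ side and the $\bpi_i$-action on the $P$-Lascoux side: inverting $\bpartial_i$, or equivalently identifying the correct preimage of a $P$-Lascoux expansion, is nontrivial because multiplication by $x_i$ does not act transparently on the basis $\{\plascoux_\alpha\}$. In the classical setting the analogous gap is bridged by the Reiner--Shimozono algorithm for decomposing $\fkS_w$ into key polynomials, but no shifted analogue for involutions is currently available. A secondary challenge is proving the implicit multiplicity-freeness of $\cKSp(z)$: the inductive construction does not rule out, on its face, the possibility that two lifted summands coincide, and excluding this will likely require either a finer combinatorial model for $\plascoux_\alpha$ (perhaps a shifted analogue of the set-valued tableau formulas known for Lascoux polynomials) or an explicit closed-form description of $\cKSp(z)$ that can be verified directly against the defining recursions for $\fkGS_z$. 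An alternative route worth exploring would be to first establish a Monk-type rule for multiplying $\plascoux_\alpha$ by a variable and then expand the dominant product defining $\fkGS_z$ directly into the $P$-Lascoux basis, bypassing the $\bpartial_i$ recursion altogether.
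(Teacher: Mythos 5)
This statement is Conjecture~\ref{fkGS-conj}: the paper does not prove it, and offers only computer verification for all $z \in \Ifpf_n$ with $n \le 8$. So there is no proof in the paper to compare against, and your proposal does not close the gap either --- it is a strategy outline whose decisive steps you yourself flag as unresolved. Your base case is correct and is exactly what the paper uses elsewhere (e.g.\ in the proof of Proposition~\ref{sp-lascoux-positive-prop}): for dominant $z$ of shape $\lambda = \lambdafpf(z)$ one has $\fkGS_z = \plascoux_\lambda$ because $u(\lambda)=1$ and $\DSp(z) = \{(i,j)\in\D_\lambda : i>j\}$.

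The genuine gap is in the inductive step, and it is worse than a missing lemma: the induction is oriented the wrong way around. The recursion $\bpartial_i \fkGS_z = \fkGS_{s_i z s_i}$ (valid when $i+1\neq z(i)>z(i+1)\neq i$) computes the polynomial of the \emph{shorter} involution $z' = s_i z s_i$ from that of $z$; your plan assumes the expansion of $\fkGS_{z'}$ and tries to deduce the one for $\fkGS_z$, which requires inverting $\bpartial_i$. But $\bpartial_i$ is far from injective ($\bpartial_i f = -\beta f$ on the whole subspace of $s_i$-invariant $f$), so the expansion of $\fkGS_{z'}$ does not determine $\fkGS_z$. Running the induction in the other direction (downward from a dominant element, which is how the family $\fkGS_z$ is actually generated) confronts the complementary problem: one would need to know how $\bpartial_i$ acts on each $\plascoux_\alpha$, whereas Proposition~\ref{sp-lasc-formulas} only controls the action of $\bpi_i$ and $\bopi_i$, and $\bpartial_i\plascoux_\alpha$ is not in general a nonnegative combination of $P$-Lascoux polynomials. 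Until one of these two missing ingredients --- a transition/insertion rule lifting expansions through $\bpartial_i$, or a formula for $\bpartial_i\plascoux_\alpha$ in the $P$-Lascoux basis --- is actually supplied, the statement remains a conjecture, exactly as the paper presents it.
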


\begin{example}
For $z = (1,4)(2,3)(5,8)(6,7) \in \Ifpf_8$ one has 
\[
 \fkGS_{z} 
= \plascoux_{5110011} + \plascoux_{3340001} + \plascoux_{43103} + \beta \plascoux_{42402} + \beta \plascoux_{5330011} + \beta \plascoux_{5310301} + \beta^2 \plascoux_{5240201}.
\] 
\end{example}

Based on these results and that Lascoux polynomials have monomial-positive expansions, we have the analog of Problem~\ref{prob:monomial_expansion}.

\begin{problem}
\label{prob:K_monomial_expansion}
Determine combinatorial objects and weight functions whose corresponding generating functions are equal to $\plascoux_{\alpha}$,  $\qlascoux_{\alpha}$, $\plascouxatom_{\alpha}$, and $\qlascouxatom_{\alpha}$.
\end{problem}

One can also view $\plascoux_\alpha$ as generalizations of certain $K$-theoretic Schur $P$-functions.
 
\begin{corollary}\label{gp-dom-cor}
If $\lambda \in \NN^{n}$ is a skew-symmetric partition  and $\mu := \shalf(\lambda)$ 
then 
\[
\GP_\mu(x_1,x_2,\dotsc,x_n) =  \plascoux_{(\lambda_n,\dotsc,\lambda_2,\lambda_1)}.
\]
\end{corollary}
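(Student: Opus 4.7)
The plan is to reduce $\plascoux_\alpha$ for $\alpha = (\lambda_n, \ldots, \lambda_2, \lambda_1)$ to $\bpi_{w_n}\fkGS_z$, where $z \in \Ifpf_\infty$ is the unique dominant fpf-involution with $\lambdafpf(z) = \lambda$; then invoke Lemma~\ref{bpi-lem2} to rewrite this as $\GP_z(x_1, \ldots, x_n)$; and finally invoke Theorem~\ref{gp-vex-thm} to identify $\GP_z$ with $\GP_\mu$.

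First I would observe that, by the definition of dominance in $\Ifpf_\infty$ recalled before~\eqref{schub-dom-eq}, we have $\DSp(z) = \{(i,j) \in \D_\lambda : i > j\}$; hence
\[
\fkGS_z \;=\; \prod_{\substack{(i,j)\in\D_\lambda \\ i>j}} (x_i \oplus x_j) \;=\; \plascoux_\lambda,
\]
where the second equality is Definition~\ref{LSp-def} applied to $\alpha' = \lambda$, for which $u(\lambda) = 1$. To go from $\plascoux_\lambda$ to $\plascoux_\alpha$ via $\bpi_{w_n}$, I would factor $w_n = u(\alpha)\, v$ length-additively, where $v$ lies in the parabolic stabilizer of $\lambda$ in $S_n$ (generated by the simple transpositions $s_i$ with $\lambda_i = \lambda_{i+1}$). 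Lemma~\ref{sp-lasc-lam-lem} states precisely that $\bpi_i \plascoux_\lambda = \plascoux_\lambda$ for each such $s_i$, so $\bpi_v \plascoux_\lambda = \plascoux_\lambda$ and therefore $\bpi_{w_n}\plascoux_\lambda = \bpi_{u(\alpha)} \plascoux_\lambda = \plascoux_\alpha$.

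Next, since $\lambda \in \NN^n$ forces $\DSp(z) \subseteq [n] \times [n]$, Remark~\ref{visible-remark} gives $\Des_V^\fpf(z) \subseteq [n]$, so Lemma~\ref{bpi-lem2} applies to yield $\bpi_{w_n}\fkGS_z = \GP_z(x_1, \ldots, x_n)$. To finish, I would apply Theorem~\ref{gp-vex-thm}: since a dominant fpf-involution is in particular fpf-vexillary, we have $\GP_z = \GP_{\lambdaSp(z)}$; and for dominant $z$ the text preceding Theorem~\ref{gp-vex-thm} notes $\lambdaSp(z) = \shalf(\lambdafpf(z)) = \shalf(\lambda) = \mu$. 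Chaining these equalities gives $\plascoux_\alpha = \bpi_{w_n}\fkGS_z = \GP_z(x_1, \ldots, x_n) = \GP_\mu(x_1, \ldots, x_n)$, as desired.

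The main obstacle is the assertion that a dominant element of $\Ifpf_\infty$ is fpf-vexillary. This can be verified directly against the finite pattern-avoidance list in~\cite[Cor.~7.9]{HMP5}: a dominant fpf-involution has Rothe diagram agreeing off the diagonal with the Young diagram of a skew-symmetric partition, and a case analysis on where forbidden patterns could appear rules each one out. Alternatively, one can bypass Theorem~\ref{gp-vex-thm} and instead apply the recursive reduction used in its proof directly to dominant $z$, observing that each step of the recursion either preserves dominance or lands in the fpf-Grassmannian case for which $\GP_y = \GP_{\lambdaSp(y)}$ is already established in~\cite[Thm.~4.17]{MP2019b}.
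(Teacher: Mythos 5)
Your proof is correct and follows essentially the same route as the paper's: the paper likewise takes $z \in \Ifpf_\infty$ dominant of shape $\lambda$, notes $\Des^\fpf_V(z)\subseteq[n]$ so that Lemma~\ref{bpi-lem2} gives $\GP_z(x_1,\dotsc,x_n)=\bpi_{w_n}(\fkGS_z)=\plascoux_{(\lambda_n,\dotsc,\lambda_2,\lambda_1)}$, and then concludes $\GP_\mu=\GP_z$ from Theorem~\ref{gp-vex-thm}. You are simply more explicit about two steps the paper leaves implicit, namely the parabolic factorization of $w_n$ showing $\bpi_{w_n}\plascoux_\lambda=\plascoux_\alpha$, and the fact that dominant elements of $\Ifpf_\infty$ are fpf-vexillary (with $\lambdaSp(z)=\shalf(\lambda)$, which the paper records just before Theorem~\ref{gp-vex-thm}).
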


\begin{proof}
Let $z \in \Ifpf_\infty$ be dominant with $\lambdafpf(z) = \lambda$.
Then $\Des^\fpf_V(z)\subseteq [n]$ so $\GP_z(x_1,x_2,\dotsc,x_n) = \bpi_{w_n} (\fkGS_z) =  \plascoux_{(\lambda_n,\dotsc,\lambda_2,\lambda_1)}$
by Lemma~\ref{bpi-lem2}, and it holds that $\GP_{\mu} = \GP_z$ by Theorem~\ref{gp-vex-thm}.
\end{proof}

Assuming our stronger form of Conjecture~\ref{fkSS-conj}, we get a stronger form of Proposition~\ref{ss-vex-prop}:

 \begin{corollary}\label{sp-vex-conj}
If $z \in \Ifpf_\infty$ is fpf-vexillary and
 Conjecture~\ref{fkGS-conj} holds, then there is a skew-symmetric weak composition $\alpha$
 with $\fkGS_z = \plascoux_\alpha$ and $\fkSS_z = \pkey_\alpha$.
\end{corollary}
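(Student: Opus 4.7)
The plan is to adapt the proof of Proposition~\ref{ss-vex-prop} to the $K$-theoretic setting: apply $\bpi_{w_n}$ to both sides of the expansion predicted by Conjecture~\ref{fkGS-conj}, identify each term using Lemma~\ref{bpi-lem2} and Corollary~\ref{gp-dom-cor}, and then use the fpf-vexillarity of $z$ together with Theorem~\ref{gp-vex-thm} to force the expansion to contain a single summand. Setting $\beta=0$ at the end recovers the second identity from the first.

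In detail: invoking Conjecture~\ref{fkGS-conj}, write
\[
\fkGS_z = \sum_{\alpha \in \cKSp(z)} \beta^{|\DSp(\alpha)| - |\DSp(z)|} \plascoux_\alpha
\]
for some finite set $\cKSp(z)$ of skew-symmetric weak compositions; the goal is to show $|\cKSp(z)|=1$. Choose $n$ large enough that $\Des^\fpf_V(z)\subseteq[n]$ and $u(\alpha)\in S_n$ for every $\alpha\in\cKSp(z)$, and apply $\bpi_{w_n}$ to both sides. By Lemma~\ref{bpi-lem2} the left side becomes $\GP_z(x_1,\dotsc,x_n)$. On the right side, since $w_n$ is the longest element of $S_n$ and $u(\alpha)\leq w_n$, the Demazure product rule gives $\bpi_{w_n}\bpi_{u(\alpha)}=\bpi_{w_n}$, so
\[
\bpi_{w_n}\plascoux_\alpha = \bpi_{w_n}\bpi_{u(\alpha)}\plascoux_{\lambda(\alpha)} = \bpi_{w_n}\plascoux_{\lambda(\alpha)} = \GP_{\shalf(\lambda(\alpha))}(x_1,\dotsc,x_n),
\]
where the last equality is Corollary~\ref{gp-dom-cor}. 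The resulting identity is
\[
\GP_z(x_1,\dotsc,x_n) = \sum_{\alpha \in \cKSp(z)} \beta^{|\DSp(\alpha)| - |\DSp(z)|} \GP_{\shalf(\lambda(\alpha))}(x_1,\dotsc,x_n).
\]

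Because $z$ is fpf-vexillary, Theorem~\ref{gp-vex-thm} gives $\GP_z = \GP_\mu$ with $\mu := \lambdaSp(z)$. As recorded after~\eqref{GP-def2}, the polynomials $\GP_\nu(x_1,\dotsc,x_n)$ indexed by strict partitions $\nu$ with $\max(\nu)\leq n$ form a $\ZZ[\beta]$-basis of a subring of the symmetric polynomials in $\ZZ[\beta][x_1,\dotsc,x_n]$; for $n$ large enough, every $\shalf(\lambda(\alpha))$ has parts at most $n-1$, so this independence applies. Therefore every $\alpha \in \cKSp(z)$ must satisfy $\shalf(\lambda(\alpha)) = \mu$, and the coefficient sum $\sum_{\alpha\in\cKSp(z)} \beta^{|\DSp(\alpha)|-|\DSp(z)|}$ must equal $1$ in $\ZZ[\beta]$. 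Since each exponent is a nonnegative integer (the degree assertion built into Conjecture~\ref{fkGS-conj}), this forces $\cKSp(z)=\{\alpha\}$ with $|\DSp(\alpha)|=|\DSp(z)|$. Thus $\fkGS_z = \plascoux_\alpha$, and specializing $\beta=0$ gives $\fkSS_z = \pkey_\alpha$.

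I do not foresee a real obstacle here: the argument is a direct $\beta$-deformation of Proposition~\ref{ss-vex-prop}, and the only delicate points are the bookkeeping items that $n$ is taken large enough for the Demazure absorption $\bpi_{w_n}\bpi_{u(\alpha)}=\bpi_{w_n}$ to hold for every $\alpha\in\cKSp(z)$ and for Ikeda--Naruse's $\GP$-basis statement to apply, both of which are satisfied automatically once $n$ exceeds the length of each $u(\alpha)$ and the largest part of each $\shalf(\lambda(\alpha))$.
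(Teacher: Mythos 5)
Your proposal is correct and follows essentially the same route as the paper's proof: apply $\bpi_{w_n}$ to the conjectural expansion, convert both sides to $\GP$-functions via Lemma~\ref{bpi-lem2} and Corollary~\ref{gp-dom-cor}, and use Theorem~\ref{gp-vex-thm} together with the linear independence of the $\GP_\mu$'s to force $\cKSp(z)$ to be a singleton, then set $\beta=0$. The only difference is that you spell out the Demazure absorption $\bpi_{w_n}\bpi_{u(\alpha)}=\bpi_{w_n}$ and the coefficient bookkeeping, which the paper leaves implicit.
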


\begin{proof}
In this case $\GP_z= \GP_{\lambdaSp(z)}$ by Theorem~\ref{gp-vex-thm},
and applying $\bpi_{w_n}$ to the formula for $ \fkGS_z$ in  Conjecture~\ref{fkGS-conj}
for   sufficiently large $n$ implies that 
\[\textstyle \GP_{\lambdaSp(z)}=\GP_z = \sum_{\alpha \in \cKSp(z)}  \beta^{|\DSp(\alpha)| - |\DSp(z)|} \GP_{\shalf(\lambda(\alpha))} \]
 by Lemma~\ref{bpi-lem2} and Corollary~\ref{gp-dom-cor}. The set of power series $\{\GP_\mu : \text{strict partitions }\mu\}$ is linearly independent over $\ZZ[\beta]$
 by \cite[Prop.~3.4]{IkedaNaruse}, so  the set $\cKSp(z) = \{\alpha\}$ must
consist of a single skew-symmetric composition $\alpha$ with $\shalf(\lambda(\alpha)) = \lambdaSp(z)$,
and then   $\fkGS_z = \plascoux_\alpha$ and $\fkSS_z = \pkey_\alpha$.
\end{proof}

See Example~\ref{ss-vex-prop-ex} for instances of the preceding result.

Each $\GP_\mu$ is equal to $\GP_z$ for some dominant $z \in \Ifpf_\infty$ by Theorem~\ref{gp-vex-thm}
and is therefore a finite $\NN[\beta]$-linear combination of $G_w$'s by~\eqref{GPz-eq}, and hence also
of $G_\lambda$'s.
This property is stated as~\cite[Cor.~4.18]{MP2019b}. 
We can prove a more effective version of this expansion.

Namely, suppose $\mu$ is a strict partition and   $\lambda$ is the skew-symmetric partition with $\shalf(\lambda)=\mu$.
If $n := \max(\mu)>0$ and
 $z \in \Ifpf_\infty$ is     dominant   with $D(z) = \D_\lambda$,
then $\lambda_1 = n+1$ so  $\Des^\fpf_V(z) \subseteq [n+1]$
as  row $i$ of $\DSp(z)$ is nonempty whenever $ i \in \Des^\fpf_V(z)$.
We deduce that
\[
\GP_\mu = \GP_z \in   \NN[\beta]\spanning\left\{ G_w \mid w \in \cBSp(z)\right\}
 \subseteq   \NN[\beta]\spanning\left\{ G_\lambda \mid \text{partitions }\lambda \in \NN^{n+1}\right\}
\]
by Proposition~\ref{Gexp-prop} and Lemma~\ref{desv-lem}.

This property is nearly optimal, but it turns out that we can improve things slightly.
In the following lemma,
let $\row(T)$ denote the usual row reading word of a tableau $T$ 
(formed by reading entries left to right, but starting with the last row),
and let $\revrow(T)$ be its reverse.

\begin{lemma} \label{vdes-tab-lem}
Let $z \in \Ifpf_\infty$.
Suppose $T$ is an increasing tableau (of some unshifted partition shape)
whose first column is $1,2,\dotsc,n$. 
If $\revrow(T) \in \cHSp(z)$ then $\Des^\fpf_V(z) \not\subseteq [n]$.
\end{lemma}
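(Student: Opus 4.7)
The strategy is to prove the contrapositive: assume $\Des^\fpf_V(z) \subseteq [n]$ and derive $\revrow(T) \notin \cHSp(z)$. By Proposition~\ref{sp-basis-prop}, the hypothesis rewrites as $\fkGS_z \in \ZZ[\beta][x_1, \ldots, x_n]$; and by the monomial-positive expansion $\fkGS_z = \sum_{a \in \bhfSp(z)} \beta^{|\weight(a)| - |\DSp(z)|} x^{\weight(a)}$ from equation~\eqref{pre-GPz-eq}, this in turn is equivalent to the assertion that every bounded symplectic Hecke factorization $a \in \bhfSp(z)$ has $a^i = \varnothing$ for all $i > n$. It therefore suffices to produce a single $a' \in \bhfSp(z)$ whose weight places a nonempty block at a position strictly greater than $n$.

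To build $a'$, I would start from $\revrow(T) \in \cHSp(z)$ and decompose it as $R_1 R_2 \cdots R_n$, where $R_i$ is row $i$ of $T$ read right-to-left. Each $R_i$ is strictly decreasing with minimum entry $i$, so $(R_1, \ldots, R_n, \varnothing, \varnothing, \ldots)$ already lies in $\bhfSp(z)$, but with support in $\{1, \ldots, n\}$. Since $\simFKK$-moves preserve membership in $\cHSp(z)$, it is enough to rewrite $\revrow(T)$ under $\simFKK$ to an equivalent word terminating in a letter $> n$; its natural decreasing-block factorization will then carry a singleton block at a position $> n$, supplying the required $a'$ and contradicting $\fkGS_z \in \ZZ[\beta][x_1, \ldots, x_n]$.

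To find such a rewriting, write row $n$ of $T$ as $n < c_1 < c_2 < \cdots < c_k$. Column-strictness of $T$, combined with first column $1, 2, \ldots, n$, forces $(i, 2) \geq i + 1$ for every row $i$ with a column-$2$ entry, so $c_1 \geq n + 1$ whenever $k \geq 1$. In the favorable case $c_1 \geq n + 2$, the letters $c_1$ and $n$ commute, and a single braid move produces an equivalent word ending in $c_1 > n$. The case $k = 0$ reduces analogously by commuting a larger letter from row $n-1$ past the trailing $(n-1, n)$, relying again on column-strictness to supply a commuting pair. The main technical obstacle is the degenerate case $c_1 = n + 1$ with $k = 1$, where row $n$ is exactly $(n, n + 1)$ and $c_1$ refuses to commute past $n$: here column-strictness rigidly forces column $2$ of $T$ to read $2, 3, \ldots, n + 1$, yielding a staircase in the bottom-left of $T$. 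In this situation I would combine braid moves with the FPF-specific relation $i_1 i_2 \cdots \simFKK i_1 (i_2 + 2) \cdots$ from the definition of $\simFKK$ (valid at the start of a word when $i_1 = i_2 + 1$) to manufacture an equivalent word in $\cHSp(z)$ ending in a letter $> n$; the delicate step will be checking that, whenever $\revrow(T)$ is genuinely a valid symplectic Hecke word, the staircase provides enough room to execute this last rewriting.
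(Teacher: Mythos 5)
Your overall strategy is the right one and matches the paper's: rewrite the reading word under $\simFKK$ into an equivalent symplectic Hecke word ending in a letter greater than $n$. But there are two problems. The first is minor: your reduction through $\fkGS_z \in \ZZ[\beta][x_1,\dotsc,x_n]$ and bounded factorizations has a gap, since a symplectic Hecke word ending in a letter $m>n$ does not automatically yield an element of $\bhfSp(z)$ with a nonempty block at an index $>n$ (the ``natural'' decreasing-block factorization places that letter at whatever block index it happens to land in, which may be $\leq n$ and need not even be bounded). The correct bridge is Lemma~\ref{desv-lem}, which says directly that the last letter of any word in $\cHSp(z)$ is bounded above by some element of $\Des^\fpf_V(z)$; citing it makes your detour unnecessary.

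The serious problem is that the combinatorial rewriting — the entire content of the lemma — is only carried out in the easy case. Your local commutation works when $c_1 \geq n+2$, but the obstructed cases are not edge cases: they are generic. When $c_1=n+1$ you only discuss $k=1$, yet for any $k$ the last row can be a run of consecutive integers $n, n+1, \dotsc, n+k$ (and, recursively, column $2$ is forced to be $2,3,\dotsc,n+1$, column $3$ may be forced similarly, and so on), in which case no letter near the end of $\revrow(T)$ commutes with its neighbors and no single braid or FPF move applies there. The FPF relation you invoke only acts on the first two letters of the word, which is the opposite end from where you need to create a large letter, so using it requires transporting information through the whole tableau. This is exactly what the paper's proof does: it reduces to the rigid ``staircase'' tableaux $U_\alpha$ with entry $i+j-1$ in cell $(i,j)$ via a Hecke-type insertion procedure that preserves the equivalence class of the reading word, proves the base identity $123\cdots q \sim (q+1)q123\cdots(q-1)$ from the FPF relation, and runs an induction on rows — along the way establishing the nontrivial facts that $\alpha_1$ must be even and no $\alpha_i$ can equal $1$ for the reading word to be a symplectic Hecke word at all. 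Your ``delicate step'' is precisely this induction, and without it the proof is incomplete.
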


\begin{proof}
Assume  $\revrow(T) \in \cHSp(z)$.
By Lemma~\ref{desv-lem}, it suffices to show that $\cHSp(z)$ has an element that ends with a letter greater than $n$.
Let $\sim$ be the equivalence relation on words that has $i_1i_2\cdots i_p \sim j_1j_2\cdots j_q$ if and only if $i_p \cdots i_2i_1 \simFKK j_q \cdots j_2j_1$.
Since $\cHSp(z)$ is a single $\simFKK$-equivalence class~\cite[Thm.~2.4]{Marberg2019a},
the relation $\sim$ preserves the set of reverse symplectic Hecke words, and
it is enough to produce a word $i_1i_2\cdots i_p \sim \row(T)$ with $i_1 > n$.

Given a sequence of positive integers $\alpha = (\alpha_1,\alpha_2,\dotsc,\alpha_{n})$,
let $U_\alpha$ 
be the tableau of shape $D_\alpha := \{ (i,j) \in [n]\times \PP : 1 \leq j \leq \alpha_i\}$
with entry $i+j-1$ in position $(i,j)$.
Assume the reverse row reading word of $U_\alpha$ is a symplectic Hecke word for some element of $ \Ifpf_\infty$.
Then $q:=\alpha_1$ must be even since $\row(U_\alpha)$ cannot end in an odd letter.
Moreover, we cannot have $\alpha_i =1$ for any $i$, for if $i \in [n]$ is minimal with this property and $\rho_j := j(j+1)(j+2)\cdots  (\alpha_j+j-1) $
is row $j$ of $U_\alpha$, then 
\[\ba  \row(U_\alpha) &= \rho_n\cdots \rho_{i+1}\cdot  i\cdot  \rho_{i-1}\cdots \rho_1
\\&\sim \rho_n\cdots \rho_{i+1}\rho_{i-1}\cdot (i-1) \cdot \rho_{i-2} \cdots \rho_1 
\\&\sim \rho_n\cdots \rho_{i+1}\rho_{i-1} \rho_{i-2}\cdot (i-2) \cdot \rho_{i-3} \cdots \rho_1 
\\&\ \vdots 
\\& \sim \rho_n \cdots  \rho_{i+1}\rho_{i-1} \cdots \rho_2 \cdot 2 \cdot\rho_1  \sim \rho_n \cdots  \rho_{i+1}\rho_{i-1} \cdots  \rho_1 \cdot 1
\ea
\]
and the last word ends in an odd letter (which cannot occur for any reverse symplectic Hecke word).

 We now show by induction that there is always a word $i_1i_2\cdots i_p \sim \row(U_\alpha)$ with $i_1 > n$. This holds when $n=1$ since 
 then  $\row(U_\alpha) = 123\cdots q \sim 123\cdots (q-2)(q+1)q  \sim (q+1)  123\cdots (q-2)q.$
For the general case, we consider the following insertion process.
Given a positive integer $x$, form $U_\alpha \leftarrow x$ by inserting $x$ into $U_\alpha$ according to the following procedure:
\ben
\item[] Start by inserting $x$ into the first row. 
If the row is empty or $x=y+1$ where $y$ is the row's last entry, then we add $x$ to the end of the row and the insertion process ends.
If $x=y$ then we erase $x$ and the insertion process also ends.
Otherwise, if $x> y+1$ (respectively, $ x <y$) then we insert $x$ (respectively, $x+1$) into the next row by same procedure.
\een
The output tableau $U_\alpha \leftarrow x$ either has the form $U_{\alpha+ \e_i}$ for some $i \in [n]$
or is formed from $U_\alpha$ by adding a box containing an integer greater than $n$ to row $n+1$. 
For example, we have
\[
\ytabsmall{ 
1 & 2 &3 & 4\\
2 & 3 & 4 & 5\\
3 & 4
} \leftarrow 3 = \ytabsmall{ 
1 & 2 &3 & 4 \\
2 & 3 & 4 & 5\\
3 & 4 & 5},
\quad
\ytabsmall{ 
1 & 2 &3 & 4\\
2  \\
3 & 4
} \leftarrow 3 = \ytabsmall{ 
1 & 2 &3 & 4 \\
2  \\
3 & 4 },
\quand
\ytabsmall{ 
1 & 2 &3 & 4\\
2  \\
3 & 4 & 5 
} \leftarrow 3 = \ytabsmall{ 
1 & 2 &3 & 4 \\
2  \\
3 & 4  & 5 \\ 
5 }.
\]

We claim that one always has  $\row(U_\alpha\leftarrow x) \sim \row(U_\alpha)$.
To see this, 
suppose the insertion process that forms $U_\alpha \leftarrow x$ results in a positive integer $x_j$ being inserted into the $j$th row $\rho_j=j(j+1)(j+2)\cdots (\alpha_j+j-1)$ of $U_\alpha$. Consulting the definition of $\simFKK$ before Example~\ref{simFKK-ex},
 we see that if $x_j>\alpha_j+j$, then $\rho_j x \sim  x \rho_j$ and if $x_j = \alpha_j+j-1$ then $\rho_j x_j \sim \rho_j$.
On the other hand, if   $  x_j < \alpha_j+j-1$,
then it follows by induction on $j$ that $j \leq x$  and so
 $\rho_j x_j \sim (x_j+1)\rho_j$.
Combing these observations produces our claim.

Now assume $n > 1$. Since $q := \alpha_1$ is even and 
$123 \cdots q \sim 123\cdots (q-2) (q+1)q \sim (q+1)q 123\cdots (q-1)$,
it follows by induction that $123\cdots q \sim (q+1)\cdots 432$. Let
$U^0$ be the tableau formed from $U_\alpha$ by the deleting its first row,
so that $\row(U_\alpha) \sim \row(U^0) (q+1)\cdots 432$.
Then construct $U^i$ for $i \in[q]$ by inserting $a_i := q+2-i$ into $U^{i-1}$ according to the process defined above.
Because each $a_i \geq 2$, this has the same effect as successively inserting $q,\dotsc,3,2,1$ into $U_{(\alpha_2,\alpha_3,\dotsc,\alpha_n)}$ and then adding one to all entries.

This process is only well-defined up to the minimal index $i \in [q]$ such that $U^{i}$ has more rows than $U^{i-1}$.
Such an index exists because if $U^{q-1}$ still has the same number of rows as $U^0$, then the first two columns of $U^{q-1}$ must be 
$2,3,\dotsc,n$ and $3,4,\dotsc,n+1$ as all parts of $\alpha$ are at least two, and so finally inserting $a_q=2$ will add $n+1$ to a new row. 
For this index, we have 
 $\row(U^i) a_{i+1}a_{i+2}\cdots a_q \sim \row(U_\alpha)$ and the first word starts with a letter greater than $n$ as desired.

Returning to the lemma, let $b_1b_2 \cdots b_r$ be the row reading row of $T$ with its first column removed.
Then $\row(T) \sim n \cdots 321 b_1b_2 \cdots b_r$.
Let $T^0$ be the first column of $T$ and define $T^i = T^{i-1}\leftarrow b_i$ for $i=1,2,\dotsc,r$.
If there is a minimal index $i \in [r]$ such that $T^i$ has more rows than $T^{i-1}$,
then  $\row(T^i) b_{i+1}b_{i+2}\cdots b_r \sim \row(T)$ and the first word starts with a letter greater than $n$.
Otherwise, we have $\row(T) \sim \row(T^r)$ and $T^r=U_\alpha$ for a strict weak composition  $\alpha$,
so by the claim proved above there is a word 
$i_1i_2\cdots i_p \sim \row(U_\alpha)\sim \row(T)$ with $i_1>n$ as needed.
\end{proof}

\begin{theorem}\label{eff-thm}
Let $\mu $ be a strict partition and suppose $n\in \NN$.
Then
$\GP_\mu \in  \NN[\beta]\spanning\{ G_\lambda \mid \text{partitions }\lambda \in \NN^n\}$ 
if and only if 
$n \geq \max(\mu)$.
\end{theorem}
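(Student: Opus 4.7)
Set $m := \max(\mu)$, let $\lambda^{(0)}$ denote the skew-symmetric partition with $\shalf(\lambda^{(0)}) = \mu$ (so that $\lambda^{(0)}_1 = m+1$), and let $z \in \Ifpf_\infty$ be dominant with $\lambdafpf(z) = \lambda^{(0)}$. By Theorem~\ref{gp-vex-thm}, $\GP_\mu = \GP_z$, so both directions of the theorem reduce to understanding the $G_\lambda$-expansion of $\GP_z$.

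For the forward ``if'' direction ($n \geq m$), I would first combine \eqref{GPz-eq} with Proposition~\ref{Gexp-prop} and Lemma~\ref{desv-lem} to obtain the preliminary bound $\ell(\lambda) \leq \max\Des^\fpf_V(z) \leq \lambda^{(0)}_1 = m+1$ for every $G_\lambda$ appearing with nonzero coefficient. The heart of the argument is to exclude the borderline case $\ell(\lambda) = m+1$ via Lemma~\ref{vdes-tab-lem}: assuming $\ell(\lambda) = m+1$, Proposition~\ref{Gexp-prop} supplies an increasing tableau $T$ of shape $\lambda$ whose column-reading word lies in $\cH(w^{-1})$ for some $w \in \cBSp(z)$. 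The bottom entry of $T$'s first column is then the initial letter of $\col(T)$ and hence a right descent of $w$, which is at most $m+1$; strict column increase forces this first column to equal exactly $1,2,\dots,m+1$. After converting $\col(T)$ to $\revrow(T)$ (ensuring that the resulting word still lies in $\cHSp(z)$ via the appropriate Hecke plactic equivalence), Lemma~\ref{vdes-tab-lem} applied with $n = m+1$ produces the contradiction $\Des^\fpf_V(z) \not\subseteq [m+1]$, and we conclude $\ell(\lambda) \leq m$.

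For the converse ``only if'' direction, the uniqueness of the expansion $\GP_\mu = \sum_\lambda h^\lambda_\mu \beta^{|\lambda|-|\mu|} G_\lambda$ reduces matters to exhibiting a single $\lambda$ with $\ell(\lambda) = m$ and $h^\lambda_\mu > 0$. Specializing at $\beta = 0$ converts this into the analogous question for the Schur expansion $P_\mu = \sum_{|\lambda|=|\mu|} h^\lambda_\mu s_\lambda$, and a classical theorem of Stembridge guarantees that this expansion contains at least one $s_\lambda$ with $\ell(\lambda) = m$ (the indexing partitions being sandwiched between $\mu$ and a ``doubled'' partition of $\mu$ that has exactly $m$ rows, with positive multiplicities coming from shifted tableaux of shape $\mu$). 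The step I expect to be most delicate is the plactic conversion in the forward direction --- turning the column-reading condition of Proposition~\ref{Gexp-prop} into the reverse-row-reading condition of Lemma~\ref{vdes-tab-lem} --- which requires either a reformulation of Proposition~\ref{Gexp-prop} in terms of reverse row-reading words, or a direct verification that the Hecke--Knuth equivalence between the two reading words of an increasing tableau lies inside the relation $\simFKK$ used to define $\cHSp(z)$.
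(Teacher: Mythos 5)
Your proposal is correct and follows essentially the same route as the paper: the forward direction bounds $\ell(\lambda)\leq m+1$ via Proposition~\ref{Gexp-prop} and Lemma~\ref{desv-lem} and then kills the borderline case with Lemma~\ref{vdes-tab-lem} exactly as the paper does, and the converse specializes at $\beta=0$ and uses that the Schur expansion of $P_\mu$ contains a term with exactly $\max(\mu)$ rows (the paper takes $s_{\mu^\T}$ explicitly). The ``delicate'' reading-word conversion you flag is handled in the paper by the cited equivalence between the column reading word lying in $\cH(w^{-1})$ and the reverse row reading word lying in $\cH(w)$, so it is a one-line citation rather than a gap.
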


\begin{proof} 
If $\mu$ is empty then $\GP_\mu = 1 = G_\mu$, so the result is clear. Assume $\mu$ is nonempty. 
Recall that  $G_\lambda$ appears in  $G_w$ if and only if there is an increasing tableau of shape $\lambda$ 
whose column reading word is in $\cH(w^{-1})$~\cite[Thm.~1]{BKSTY},
or equivalently whose reverse row reading word is in $\cH(w)$~\cite[Lem.~2.7]{Marberg2019a}.
It follows from~\eqref{GPz-eq} that $G_\lambda $ appears in $\GP_\mu = \GP_z$
if and only if there is an increasing tableau of shape $\lambda$ whose reverse row reading word is in $\cHSp(z)$.

Let $T$ be such a tableau and suppose $n = \max(\mu)$. By the remarks preceding Lemma~\ref{vdes-tab-lem}, $T$ cannot have more than $n+1$ rows and $\Des^\fpf_V(z)\subseteq [n+1]$.
We claim that if $T$ has exactly $n+1$ rows then its first column must be $1,2,3,\dotsc,n+1$.
To see this, notice that the last entry of the first column is also the last entry of the reverse row reading word of $T$, which is an element of $\cHSp(z)$ by assumption, and so this 
 number is contained in $\Des^\fpf_V(z)\subseteq[n+1]$ by Lemma~\ref{desv-lem}.
 As each column of $T$   is a strictly increasing sequence of positive integers, the first column therefore would have to be  $1,2,3,\dotsc,n+1$.
 
In view of the claim just shown, if $T$ had exactly $n+1$ rows that Lemma~\ref{vdes-tab-lem} would imply the contradictory property $\Des^\fpf_V(z)\not\subseteq [n+1]$, so in fact $T$ must have at most $n$ rows. 
We conclude that if $n \geq \max(\mu)$ then 
$\GP_\mu \in  \NN[\beta]\spanning\{ G_\lambda \mid \text{partitions }\lambda \in \NN^n\}$.

For the converse implication, 
recall that the lowest degree term of $\GP_\mu$ when $\deg(\beta):=1$ is 
the Schur $P$-function $P_\mu$, whose Schur expansion  includes
both $s_\mu$ and $s_{\mu^\T}$~\cite[Ex.~3, {\S}III.8]{Macdonald}.
Since $s_\lambda$ is the lowest degree term of $G_\lambda$,
the unique expansion of $\GP_\mu$ into $G_\lambda$'s 
must include a term indexed by $\lambda = \mu^\T \in \NN^{\max(\mu)}$.
Hence if $\GP_\mu \in  \NN[\beta]\spanning\{ G_\lambda \mid \text{partitions }\lambda \in \NN^n\}$ then $n \geq \max(\mu)$.
\end{proof}

\subsection{\texorpdfstring{$Q$}{Q}-Lascoux polynomials}\label{q-lasc-sect}

There is also a family of \defn{orthogonal Grothendieck polynomials} $\fkGO_z \in \ZZ[\beta][x_1,x_2,\dots]$ indexed by involutions $z \in I_\infty$,
constructed geometrically in~\cite[Def. 2.18]{MP2019a}. Unlike the two previous families,
these polynomials do not have a general algebraic definition in terms of divided difference operators. However, it
is known that if $\DO(z) := \{ (i,j) \in D(z) \mid i \geq j\}$ then 
\be \fkGO_z = \prod_{(i,j) \in \DO(z)} (x_i\oplus x_j)
\quad \text{when $z \in I_\infty$ is dominant~\cite[Thm.~3.8]{MP2019a},} 
\ee
and if $i \in \PP$ is such that $z$ and $ s_i z s_i$ are {distinct} and both {vexillary} then~\cite[Prop.~3.23]{MP2019a} implies
\be
 \bpartial_i \fkGO_z  = \begin{cases} \fkGO_{s_i z s_i}&\text{if } z(i) > z(i+1),\\
 -\beta\fkGO_z &\text{if } z(i) < z(i + 1). \end{cases}
 \ee  
It can shown that these formulas determine the values of $\fkGO_z$ for all vexillary $z \in I_\infty$; see \cite[\S2.3]{MarWen}.
 
 For any $z \in I_\infty$, it holds that
if $\deg(\beta) :=-1$ then $\fkGO_z$ is homogeneous of degree $|\DO(z)|$, and
if $\deg(\beta) := 0$ then $\fkGO_z$ is inhomogeneous
with lowest degree term  
$\fkSO_z := \fkGO_z\bigr\rvert_{\beta=0}$ by~\cite[\S2.4]{MP2019a}.
The latter polynomials are linearly independent over $\ZZ$~\cite[\S2.3]{HMP4}.
Thus $\{\fkGO_z : z \in I_\infty\}$ is linearly independent over $\ZZ[\beta]$,
since if we had a nontrivial linear dependence using coefficients in $\ZZ[\beta]$,
then taking lowest degree terms (with the convention that $\deg(\beta)=0$) and dividing out powers of $\beta$
would given a nontrivial linear dependence among the $\fkSO$-polynomials using coefficients in $\ZZ$.
\begin{example}
 %
 %
There are six dominant elements $z \in I_4$ 
whose orthogonal Grothendieck polynomials have simple product formulas:
$1$, $(1 \; 2)$, $(1 \; 3)$, $(1 \; 4)$, $(1 \; 3)(2 \; 4)$, and $(1 \; 4)(2 \; 3)$.
The 3 remaining transpositions in $I_4$ are vexillary, so their orthogonal Grothendieck polynomials can be computed
from the dominant formulas using divided difference operators:
\[
\ba
\fkGO_{(2 \; 3)} &= 2x_2 + 2x_1 + \beta x_2^2 + 4 \beta x_1 x_2 + \beta x_1^2 + 2 \beta^2 x_1 x_2^2 + 2\beta^2 x_1^2 x_2 + \beta^3 x_1^2 x_2^2, \\
\\[-10pt]
\fkGO_{(3 \; 4)} &= 2x_3 + 2x_2 + 2x_1 + \beta x_3^2 + 4\beta x_2 x_3 + \beta x_2^2 + 4\beta x_1 x_3 + 4\beta x_1 x_2 + \beta x_1^2 
\\& \qquad + 2\beta^2 x_2 x_3^2 + 2\beta^2 x_2^2 x_3 + 2\beta^2 x_1 x_3^2 + 8\beta^2 x_1 x_2 x_3 + 2\beta^2 x_1 x_2^2 + 2\beta^2 x_1^2 x_3 
\\&\qquad + 2\beta^2 x_1^2 x_2 + \beta^3 x_2^2 x_3^2 + 4\beta^3 x_1 x_2 x_3^2 + 4\beta^3 x_1 x_2^2 x_3 +\beta^3  x_1^2 x_3^2 + 4\beta^3 x_1^2 x_2 x_3
\\&\qquad  + \beta^3 x_1^2 x_2^2 + 2\beta^4 x_1 x_2^2 x_3^2 + 2\beta^4 x_1^2 x_2 x_3^2 + 2\beta^4 x_1^2 x_2^2 x_3 + \beta^5 x_1^2 x_2^2 x_3^2, \\
\\[-10pt]
\fkGO_{(2 \; 4)} &= 2x_2 x_3 + 2x_2^2 + 2x_1 x_3 + 4x_1 x_2 + 2x_1^2 + 3\beta x_2^2 x_3 + \beta x_2^3 + 8\beta x_1 x_2 x_3
\\&\qquad + 6\beta x_1 x_2^2 + 3\beta x_1^2 x_3 + 6\beta x_1^2 x_2 + \beta x_1^3 + \beta^2 x_2^3 x_3 + 8\beta^2 x_1 x_2^2 x_3 + 2\beta^2 x_1 x_2^3 
\\&\qquad + 8\beta^2 x_1^2 x_2 x_3 + 5\beta^2 x_1^2 x_2^2 + \beta^2 x_1^3 x_3 + 2\beta^2 x_1^3 x_2 + 2\beta^3 x_1 x_2^3 x_3 + 6\beta^3 x_1^2 x_2^2 x_3 
\\&\qquad + \beta^3 x_1^2 x_2^3 + 2\beta^3 x_1^3 x_2 x_3 +\beta^3 x_1^3 x_2^2 + \beta^4 x_1^2 x_2^3 x_3 + \beta^4 x_1^3 x_2^2 x_3.
\ea
\]
The final element $(1,2)(3,4) \in I_4$ is not vexillary, but we 
can compute $\fkGO_{(1, 2)(3, 4)}$  directly from the geometric definition in~\cite{MP2019a} using \textsc{Macaulay2}:
\[
\ba
\fkGO_{(1, 2)(3, 4)} &= 4x_1 x_3 + 4x_1 x_2 + 4x_1^2 + 2\beta x_1 x_3^2 + 8\beta x_1 x_2 x_3 + 2\beta x_1 x_2^2 
\\&\qquad  + 10\beta x_1^2 x_3 + 10\beta x_1^2 x_2 + 4\beta x_1^3 + 4\beta^2 x_1 x_2 x_3^2 + 4\beta^2 x_1 x_2^2 x_3 + 5\beta^2 x_1^2 x_3^2 
\\&\qquad  + 20\beta^2 x_1^2 x_2 x_3 + 5\beta^2 x_1^2 x_2^2 + 8\beta^2 x_1^3 x_3 + 8\beta^2 x_1^3 x_2 + \beta^2 x_1^4 + 2\beta^3 x_1 x_2^2 x_3^2 
\\&\qquad  + 10\beta^3 x_1^2 x_2 x_3^2 + 10\beta^3 x_1^2 x_2^2 x_3 + 4\beta^3 x_1^3 x_3^2 + 16\beta^3 x_1^3 x_2 x_3 + 4\beta^3 x_1^3 x_2^2 
\\&\qquad  + 2\beta^3 x_1^4 x_3 + 2\beta^3 x_1^4 x_2 + 5\beta^4 x_1^2 x_2^2 x_3^2 + 8\beta^4 x_1^3 x_2 x_3^2 + 8\beta^4 x_1^3 x_2^2 x_3 
\\&\qquad  + \beta^4 x_1^4 x_3^2 + 4\beta^4 x_1^4 x_2 x_3 + \beta^4 x_1^4 x_2^2 + 4\beta^5 x_1^3 x_2^2 x_3^2 + 2\beta^5 x_1^4 x_2 x_3^2 
\\&\qquad  + 2\beta^5 x_1^4 x_2^2 x_3 + \beta^6 x_1^4 x_2^2 x_3^2.
\ea
\]

\end{example}

Our next result requires some background on the geometric meaning of $\fkGO_z$. 
Let $\GL_n = \GL_n(\CC)$ be the complex general linear group and $\O_n = \{ g \in \GL_n \mid g = (g^{-1})^\T \}$ be the subgroup of orthogonal matrices.
Identify $w \in S_n$ with the permutation matrix in $\GL_n$ with $1$ in each position $(i,w(i))$ for $i \in [n]$.
Write $B_n$ for the subgroup of upper triangular matrices in $\GL_n$, and let $B_n^-$ be its transpose.
Define the \defn{complete flag variety} to be $\Fl_n  := B_n^- \backslash \GL_n$.

The group $B_n$ acts on $\Fl_n$ on the right with finitely many orbits, given by the double cosets $B_n^- w B_n$ for $w \in S_n$.
Let $X_w^{B_n}$ denote the closure of this orbit. 
As explained in~\cite[\S10]{RichSpring}, the group $\O_n$ also acts on $\Fl_n$ with finitely many orbits indexed by the elements of $I_n := S_n \cap I_\infty$. 
We write $X_z^{\O_n}$ to denote the closure of the $\O_n$-orbit in $\Fl_n$ corresponding to $z \in I_n$.
 
 The \defn{connective $K$-theory ring} of $\Fl_n$ can be realized as 
 $\CK(\Fl_n) \cong \ZZ[\beta][x_1,x_2,\dotsc,x_n] / \ILambda_n[\beta]$, where $\ILambda_n$ is the ideal generated by
 the symmetric polynomials in $\ZZ[x_1,x_2,\dotsc,x_n] $ without constant term.
Results in~\cite{FominKirillov, FultonLascoux, HorKir, Hudson2014} show that if $w \in S_\infty$ then $\fkG_w$ is the unique element of  
 $\ZZ[\beta][x_1,x_2,\dotsc,x_n]$ such that 
 the connective $K$-theory class $[X_w^{B_n}] \in \CK(\Fl_n)$
 is equal to $\fkG_w + \ILambda_n[\beta]$ whenever $n$ is large enough that $w \in S_n$; see~\cite[Thm.~1.1]{MP2019a}.
 Similarly,  the polynomial $ \fkGO_z$ for $z \in I_\infty$
 is the unique 
  element of  
 $\ZZ[\beta][x_1,x_2,\dotsc,x_n]$
 such that $[X_z^{\O_n}] = \fkGO_z + \ILambda_n[\beta] \in \CK(\Fl_n)$ whenever $z \in I_n$~\cite[Thm.~1.4]{MP2019a}.
 For an analogous  definition of $\fkGS_z$, see~\cite[Thm.~1.3]{MP2019a}.

Recall from \eqref{visible-eq} that the visible descent set of $z \in I_\infty$ is  $\Des_V(z) := \{ i \in \PP : z(i+1) \leq \min\{i,z(i)\}\}$.
If $i \in \Des_V(z)$ then row $i$ of $\DO(z)$ contains the position $(i,z(i+1))$.


\begin{proposition}\label{ratsing-prop}
 If $z \in I_\infty$ is vexillary with $\Des_V(z) \subseteq [n]$ then
 \[
 \fkGO_z \in \NN[\beta]\spanning\{ \fkG_w \mid w \in S_\infty \text{ with }\DesR(w) \subseteq [n] \} \subseteq \ZZ[\beta][x_1,x_2,\dotsc,x_n].
 \]
 \end{proposition}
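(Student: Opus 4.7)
The plan is to reduce Proposition~\ref{ratsing-prop} to two parallel ingredients that mimic the symplectic case treated in Proposition~\ref{sp-basis-prop} and formula~\eqref{pre-GPz-eq}. The first ingredient is an explicit formula
\[
\fkGO_z = \sum_{w \in \cB^\O(z)} \beta^{\ell(w) - |\DO(z)|} \fkG_w
\]
expressing $\fkGO_z$ for vexillary $z \in I_\infty$ as an $\NN$-linear combination of Grothendieck polynomials indexed by a finite set $\cB^\O(z) \subset S_\infty$. The second is an orthogonal analogue of Lemma~\ref{desv-lem}, bounding the right descents of each $w \in \cB^\O(z)$ by $\max \Des_V(z)$.

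To establish the first ingredient, I would introduce orthogonal Hecke words $\cH^\O(z)$ via an operation $y \ast s_i$ on $I_\infty$ that sends $y$ to $s_i y s_i$ when $s_i y s_i \neq y$ (and to $y$ otherwise), with $\cH^\O(z)$ the set of sequences $i_1 \cdots i_l$ such that $z = 1 \ast s_{i_1} \ast \cdots \ast s_{i_l}$. A standard braid relation analysis yields a decomposition $\cH^\O(z) = \bigsqcup_{w \in \cB^\O(z)} \cH(w)$ for a finite $\cB^\O(z) \subset S_\infty$, and the positive expansion above can then be proved by induction on the involution length of $z$. The base case of dominant vexillary $z$ expands $\fkGO_z = \prod_{(i,j) \in \DO(z)}(x_i \oplus x_j)$ as an $\NN[\beta]$-sum of $\fkG_w$'s via Lenart's transition formula, and the inductive step propagates the expansion along $\bpartial_i \fkGO_z = \fkGO_{s_i z s_i}$ whenever both $z$ and $s_i z s_i$ are vexillary, paralleling the symplectic argument of~\cite[Thm.~3.12]{MP2019a}.

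For the second ingredient, the argument adapts Lemma~\ref{desv-lem} almost verbatim: if $i$ is the last letter of a word in $\cH^\O(z)$, then $z(i) > z(i+1)$ by the definition of $\ast$; if $z(i+1) \leq i$ then $i \in \Des_V(z)$ directly, and if $z(i+1) > i$ then $(z(i+1), z(i))$ is a visible inversion of $z$ (a pair $(a,b)$ with $z(b) < \min\{a,z(a)\}$), and the lex-maximal visible inversion $(j,k)$ yields some $j \in \Des_V(z)$ with $i \leq j$. Combining the two ingredients, each $w \in \cB^\O(z)$ satisfies $\DesR(w) \subseteq [n]$ whenever $\Des_V(z) \subseteq [n]$, placing $\fkGO_z$ in the claimed $\NN[\beta]$-span, and the containment in $\ZZ[\beta][x_1,\dotsc,x_n]$ is then immediate from Proposition~\ref{basis-prop}. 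The main obstacle will be the first ingredient: the operators $\bpartial_i$ do not preserve $\NN[\beta]$-positive $\fkG$-expansions in general, so the inductive step requires a careful bijective match between the combinatorics of orthogonal Hecke factorizations and the cancellations that arise in Lenart's transition rule, which in turn depends on remaining within the vexillary locus via the pattern avoidance characterization from~\cite[Thm.~4.67]{HMP4}.
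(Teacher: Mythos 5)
Your plan founders on the first ingredient, and in a way that is visible from the paper's own data. The proposed expansion $\fkGO_z = \sum_{w \in \cB^{\O}(z)} \beta^{\ell(w) - |\DO(z)|}\, \fkG_w$ — multiplicity-free with coefficients that are pure powers of $\beta$ — is simply false in the orthogonal case: already $\fkGO_{(1\,2)} = 2\fkG_{21} + \beta\fkG_{312}$, and $\fkGO_{(1\,3)}$ contains the term $3\beta\,\fkG_{321}$, with coefficients $4$ and $6$ appearing for $(1\,4)(2\,3)$. The clean symplectic formula~\eqref{pre-GPz-eq} has no known orthogonal analogue; indeed the paper states explicitly that identifying the summands $\fkG_w$ in $\fkGO_z$ \emph{and their coefficients} is an open problem, which is precisely why $\fkGO_z$ is defined geometrically and has no general divided-difference recursion. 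Even if you weaken your first ingredient to bare $\NN[\beta]$-positivity, your proposed proof of it — induction through $\bpartial_i\fkGO_z = \fkGO_{s_izs_i}$ plus a "careful bijective match" with Lenart's transition — is exactly the unresolved hard step; you have named the obstacle rather than overcome it, and there is no known Hecke-word model whose classes partition into ordinary Hecke classes $\cH(w)$ so as to reproduce these non-unit multiplicities.

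The paper's proof avoids combinatorics entirely and gets positivity from geometry: for vexillary $z$ the orbit closure $X_z^{\O_m}$ has rational singularities, so Brion's theorem gives a sign-alternating expansion of its $K$-theory class into classes $[X_w^{B_m}]$, i.e.\ nonnegative coefficients after accounting for $(-1)^{\codim}$, which matches the sign of $\beta^{|D(w)|-|\DO(z)|}$ at $\beta=-1$. Separately, the algebraic formula of~\cite[Thm.~3.26]{MP2019a} shows $\fkGO_z \in \ZZ[\beta][x_1,\dotsc,x_n]$ when $\Des_V(z)\subseteq[n]$, so by Proposition~\ref{basis-prop} there is a \emph{unique} expansion over the basis $\{\fkG_w \mid \DesR(w)\subseteq[n]\}$; comparing it with Brion's expansion modulo $\ILambda_m[\beta]$ for $m$ large forces the coefficients to agree and hence to be nonnegative. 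Note that this also makes your second ingredient unnecessary: the descent bound $\DesR(w)\subseteq[n]$ is not proved by a word-combinatorial analogue of Lemma~\ref{desv-lem} but falls out of the uniqueness of the basis expansion once polynomiality in $x_1,\dotsc,x_n$ is known.
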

 
 \begin{proof}
 A result of Brion~\cite[Thm.~1]{Brion2002} states that if $Y$ is a closed subvariety of $\Fl_m$ with rational singularities,
 then the $K$-theory class $[Y]_K$ of $Y$  expands as a linear combination of the $K$-theory classes 
 $[X^{B_m}_w]$ for $w \in S_m$ with coefficients $c_Y^w \in \ZZ$ satisfying $(-1)^{\codim(X^{B_m}_w) - \codim(Y)} c_Y^w \geq 0$.
The Schubert variety $X^{B_m}_w$ has codimension $\ell(w) = \abs{D(w)}$,
and its $K$-theory class is represented by $\fkG_w\bigr\rvert_{\beta=-1}$ in the quotient ring $K(\Fl_m) \cong \ZZ[x_1,x_2,\dotsc,x_m]/\ILambda_m$~\cite[Thm.~2.8]{MP2019a}.
Similarly, if  $z \in I_m$ then the closed variety $Y = X^{\O_m}_z$ has codimension $\abs{\DO(z)}$~\cite[Thm.~4.6]{RichSpring},
 and $K$-theory class $[Y]_K = \fkGO_z\bigr\rvert_{\beta=-1} + \ILambda_m$~\cite[Cor.~2.9 and Thm.~2.19]{MP2019a}.
 As explained in the proof of~\cite[Prop.~3.23]{MP2019a}, if $z \in I_m$ is vexillary then $X^{\O_m}_z$ has rational singularities,
 so we can express
 \be\label{Go-eq}
 \textstyle\fkGO_z\bigr\rvert_{\beta=-1} + \ILambda_m  = \sum_{w \in S_m} (-1)^{|D(w)|- |\DO(z)|} \cdot  c_z^w\cdot   \fkG_w\bigr\rvert_{\beta=-1} + \ILambda_m
 \ee
 for nonnegative integer coefficients $c_z^w\in \NN$. 
 
 On the other hand, if $z\in I_\infty$ is vexillary with $\Des_V(z) \subseteq [n]$ then~\cite[Thm.~3.26]{MP2019a} gives a formula for $\fkGO_z$ that belongs to $\ZZ[\beta][x_1,x_2,\dotsc,x_n]$. By Proposition~\ref{basis-prop},
 since $\fkG_w$ and $\fkGO_z$ are homogeneous if $\deg(\beta) := -1$, there are unique integers $b_z^w \in \ZZ$
 with $\fkGO_z = \sum_{\DesR(w)\subseteq [n]} \beta^{|D(w)| - |\DO(z)|} \cdot b_z^w\cdot \fkG_w$.
 Setting $\beta=-1$ in this formula turns $\beta^{|D(w)| - |\DO(z)|}$ into the sign $(-1)^{|D(w)| - |\DO(z)|}$ appearing in \eqref{Go-eq}.
 Since $\{ \fkG_w\bigr\rvert_{\beta=-1} + \ILambda_m  \mid w \in S_m\}$ is a $\ZZ$-basis for the $K$-theory ring of $\Fl_m$ \cite[\S1]{Brion2002},
 it follows that if $m$ is sufficiently large then $b^w_z = c^w_z \geq 0$.
 \end{proof}
 
Since $\fkGO_z\in \ZZ[\beta][x_1,\dotsc,x_n]$  implies $\fkSO_z\in \ZZ[x_1,\dotsc,x_n]$, by Proposition~\ref{lexmin-prop} we get:
 
\begin{corollary}
 If $z \in I_\infty$ is vexillary then $\fkGO_z\in \ZZ[\beta][x_1,\dotsc,x_n]$ if and only if $ \Des_V(z) \subseteq [n]$.
\end{corollary}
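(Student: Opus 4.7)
The plan is to prove the two implications separately, drawing on results already established in the excerpt.

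For the forward implication, suppose $z \in I_\infty$ is vexillary with $\Des_V(z) \subseteq [n]$. I would apply Proposition~\ref{ratsing-prop} directly: it asserts that under precisely these hypotheses, $\fkGO_z$ lies in the $\NN[\beta]$-span of $\{\fkG_w \mid \DesR(w)\subseteq [n]\}$, which by Proposition~\ref{basis-prop} is contained in $\ZZ[\beta][x_1,\dotsc,x_n]$. No new work is required here.

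For the reverse implication, assume $\fkGO_z \in \ZZ[\beta][x_1,\dotsc,x_n]$. Specializing $\beta = 0$ gives $\fkSO_z = \fkGO_z\bigr\rvert_{\beta=0} \in \ZZ[x_1,\dotsc,x_n]$, since setting $\beta=0$ preserves the subring generated by $x_1,\dotsc,x_n$. Then Proposition~\ref{lexmin-prop} applies to conclude that $\DO(z) \subseteq [n]\times [n]$. By the characterization in Remark~\ref{visible-remark} (specifically, that $\DO(z)\subseteq[n]\times[n]$ is equivalent to $\Des_V(z) \subseteq [n]$, which follows from~\cite[Lem.~4.8]{HMP5}), we obtain $\Des_V(z) \subseteq [n]$.

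Since both directions chain together existing lemmas cleanly, there is no substantial obstacle to overcome; the proof is essentially an assembly argument, and the only subtle point is confirming that the $\beta=0$ specialization behaves as expected on the subring $\ZZ[\beta][x_1,\dotsc,x_n]\subseteq \ZZ[\beta][x_1,x_2,\dots]$, which is immediate. Note also that the vexillary hypothesis enters only through the forward direction (where Proposition~\ref{ratsing-prop} requires it); the reverse direction holds without assuming vexillarity, so in fact one could state a slightly stronger ``only if'' observation as a side remark if desired.
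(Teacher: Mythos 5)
Your proof is correct and follows essentially the same route as the paper: the forward direction is Proposition~\ref{ratsing-prop}, and the reverse direction sets $\beta=0$ and combines Proposition~\ref{lexmin-prop} with Remark~\ref{visible-remark}, exactly as the paper does. Your side observation that the ``only if'' direction does not need the vexillary hypothesis is also accurate.
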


 
 In contrast to the $\fkGS_z$ case, it is an open problem to identify the summands $\fkG_w$
that appear in $\fkGO_z$, as well as their coefficients (which are no longer always equal).

\begin{example}
If $z \in I_4$ is vexillary then $\fkGO_z$ expands positively into $\fkG_w$'s as follows:
\begingroup  
\allowdisplaybreaks
\begin{align*}
\fkGO_{1} &= \fkG_{1}, \\
\fkGO_{(1 \; 2)} &= 2 \fkG_{21} + \beta \fkG_{312}, \\
\fkGO_{(2 \; 3)} &= 2 \fkG_{132} + \beta \fkG_{1423} + \beta \fkG_{231} + \beta^2 \fkG_{2413}, \\
\fkGO_{(3 \; 4)} &= 2 \fkG_{1243} + \beta \fkG_{12534} + \beta \fkG_{1342} + \beta^2 \fkG_{13524}, \\
\fkGO_{(1 \; 3)} &= 2 \fkG_{231} + 2 \fkG_{312} + 3\beta \fkG_{321} + \beta \fkG_{4123} + \beta^2 \fkG_{4213}, \\
\fkGO_{(2 \; 4)} &= 2 \fkG_{1342} + 2 \fkG_{1423} + 3\beta \fkG_{1432} + \beta \fkG_{15234} + \beta^2 \fkG_{15324} + \beta \fkG_{2341} + \beta^2 \fkG_{2431}, \\
\fkGO_{(1 \; 4)} &= 2 \fkG_{2341} + 2 \fkG_{3142} + 3\beta \fkG_{3241} + 2 \fkG_{4123} + 3\beta \fkG_{4132} + \beta^2 \fkG_{4231} + \beta \fkG_{51234} + \beta^2 \fkG_{51324}, \\
\fkGO_{(1 \; 3)(2 \; 4)} &= 4 \fkG_{2413} + 2\beta \fkG_{25134} + 2\beta \fkG_{3412} + \beta^2 \fkG_{35124}, \\
\fkGO_{(1 \; 4)(2 \; 3)} &= 4 \fkG_{2431} + 2\beta \fkG_{25314} + 4 \fkG_{3412} + 6\beta \fkG_{3421} + 2\beta \fkG_{35124} + 3\beta^2  \fkG_{35214} + 4 \fkG_{4213} 
\\&\qquad + 4\beta \fkG_{4231} + 6\beta \fkG_{4312} + 6\beta^2  \fkG_{4321} + \beta^2 \fkG_{45123} + \beta^3 \fkG_{45213} + 2\beta \fkG_{52134} 
\\&\qquad + 2\beta^2  \fkG_{52314} + 3\beta^2  \fkG_{53124} + 3\beta^3 \fkG_{53214} + \beta^3 \fkG_{54123} + \beta^4 \fkG_{54213}.
\end{align*}
\endgroup
There is also a positive expansion of $\fkGO_z$ into $\fkG_w$'s
for the non-vexillary element $z = 2143 = (1 \; 2)(3 \; 4) \in I_4$, although this is not guaranteed by Proposition~\ref{ratsing-prop}:
{\[
\ba
\fkGO_{(1 \; 2)(3 \; 4)} &= 4 \fkG_{2143} + 2\beta \fkG_{21534} + 2\beta \fkG_{2341} + 2\beta^2 \fkG_{23514} + 4\beta \fkG_{3142} + 3\beta^2 \fkG_{31524}
\\&\qquad + 3\beta^2 \fkG_{3241} + 3\beta^3 \fkG_{32514} + 2\beta \fkG_{4123} + 3\beta^2 \fkG_{4132}  + \beta^3 \fkG_{41523} + \beta^3 \fkG_{4231} 
\\&\qquad+ \beta^4 \fkG_{42513} + \beta^2 \fkG_{51234} + 2\beta^3 \fkG_{51324}+ \beta^4 \fkG_{51423} + \beta^4 \fkG_{52314} + \beta^5 \fkG_{52413}.
\ea
\]}
\end{example}

 Let $\alpha$ be a symmetric weak composition  with $u := u(\alpha)$ and $\lambda := \lambda(\alpha)$.
Comparison with the previous section suggests that it would be natural to define
\be\label{tql-eq}
\tqlascoux_{\alpha} = \bpi_u \(\prod_{\substack{(i,j) \in \D_\lambda \\ i \geq j} } (x_i \oplus  x_{j})\)
\quand
\tqlascouxatom_{\alpha} = \bopi_u \(\prod_{\substack{(i,j) \in \D_\lambda \\ i \geq j} } (x_i \oplus  x_{j})\)
\ee
and consider these as $Q$-versions of Lascoux polynomials and Lascoux atoms.
Indeed, the polynomials $\tqlascoux_{\alpha}$ and $\tqlascouxatom_{\alpha}$ have some nice properties.
They specialize to $\qkey_\alpha$ and $\qatom_\alpha$ when $\beta=0$,
and are homogeneous of degree $|\half(\lambda(\alpha))|$
 when $\deg(\beta) := -1$. The obvious analogues of
 Lemma~\ref{sp-lasc-lam-lem} and
 Proposition~\ref{sp-lasc-formulas}
 hold for these polynomials with essentially the same proofs.
These results imply that  $\tqlascoux_\alpha = \sum_{\gamma \preceq \alpha} \tqlascouxatom_\gamma$ by Lemma~\ref{var-bar-combine-lem}
as well as the following proposition:

 \begin{proposition}\label{o-lascoux-positive-prop}
If $\alpha$ is a symmetric weak composition then  $\tqlascoux_\alpha$ 
is a nonzero $\NN[\beta]$-linear combination of Lascoux polynomials 
and  $\tqlascouxatom_\alpha$ is a $\NN[\beta]$-linear combination of Lascoux atom polynomials.
\end{proposition}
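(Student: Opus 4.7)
The plan is to mirror the proof of Proposition~\ref{sp-lascoux-positive-prop}, substituting Proposition~\ref{ratsing-prop} for the symplectic input from~\cite{MP2019a} about $\fkGS_z$. As indicated in the paragraph preceding the proposition, the $Q$-analogues of Lemma~\ref{sp-lasc-lam-lem} and Proposition~\ref{sp-lasc-formulas} hold by the same arguments, so that $\tqlascoux_\alpha = \bpi_{u(\alpha)} \tqlascoux_\lambda$ and $\tqlascouxatom_\alpha = \bopi_{u(\alpha)} \tqlascoux_\lambda$, where $\lambda := \lambda(\alpha)$. This reduces matters to (i) establishing positive Lascoux expansions in the dominant case $\alpha = \lambda$, and (ii) checking that positivity persists under the two divided difference operators.

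For the dominant case, $\tqlascoux_\lambda = \tqlascouxatom_\lambda$ coincides with $\fkGO_z$, where $z \in I_\infty$ is the dominant involution with $D(z) = \D_\lambda$. Dominant permutations are $132$-avoiding, and any $132$-avoiding permutation is $2143$-avoiding (a $2143$-pattern on four positions always restricts to a $132$-pattern on its last three), so $z$ is vexillary. Proposition~\ref{ratsing-prop} then expresses $\fkGO_z$ as a finite $\NN[\beta]$-linear combination of Grothendieck polynomials $\fkG_w$, and each $\fkG_w$ is in turn a $\NN[\beta]$-linear combination of Lascoux polynomials by~\eqref{sy-thm}. Composing these yields a $\NN[\beta]$-linear expansion of $\tqlascoux_\lambda$ into $L_\mu$'s.

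To handle general symmetric $\alpha$, I will apply $\bpi_{u(\alpha)}$ and $\bopi_{u(\alpha)}$ to this positive expansion. For $\tqlascoux_\alpha = \bpi_{u(\alpha)} \tqlascoux_\lambda$, the formulas recalled at the start of Section~\ref{lasc-groth-sect} show $\bpi_i L_\mu$ is always a single $L_\nu$, so iterating gives $\tqlascoux_\alpha$ as a $\NN[\beta]$-linear combination of Lascoux polynomials. For $\tqlascouxatom_\alpha = \bopi_{u(\alpha)} \tqlascoux_\lambda$, I rewrite each $L_\mu = \bpi_{u(\mu)} x^{\lambda(\mu)}$ and apply Lemma~\ref{var-bar-combine-lem} to conclude that $\bopi_{u(\alpha)} \bpi_{u(\mu)}$ is a $\NN$-linear combination of operators $\bopi_w$; then I observe that $\bopi_w x^{\lambda(\mu)}$ equals $\oL_\gamma$ with $\gamma = w \lambda(\mu)$ when $w$ is the minimal coset representative in $w W_{\lambda(\mu)}$ and vanishes otherwise, because $\bopi_j$ annihilates $x^{\lambda(\mu)}$ whenever $s_j$ stabilizes $\lambda(\mu)$. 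This yields the required $\NN[\beta]$-expansion of $\tqlascouxatom_\alpha$ into Lascoux atoms.

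Nonzeroness of $\tqlascoux_\alpha$ is immediate by specializing $\beta = 0$, which recovers $\qkey_\alpha$; the latter is visibly nonzero from its defining product formula followed by the order-preserving action of $\pi_{u(\alpha)}$ on $\NN[x_1, x_2, \ldots]$. The genuine obstacle here is the dominant-case positivity via Proposition~\ref{ratsing-prop}, which ultimately relies on Brion's rational singularities theorem for $K$-theory classes of $\O_n$-orbit closures; all remaining steps are formal manipulations with the $\bpi$ and $\bopi$ operators that parallel the symplectic argument.
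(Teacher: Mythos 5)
Your proof is correct and follows essentially the same route as the paper's: reduce to the dominant case via the $Q$-analogues of Lemma~\ref{sp-lasc-lam-lem} and Proposition~\ref{sp-lasc-formulas}, expand the dominant $\fkGO_z$ positively first into Grothendieck polynomials and then into Lascoux polynomials, and propagate positivity through $\bpi_{u(\alpha)}$ and $\bopi_{u(\alpha)}$ using Lemma~\ref{var-bar-combine-lem}. The paper's own proof is terse and leaves implicit that the dominant-case input is Proposition~\ref{ratsing-prop} combined with the fact that dominant involutions are vexillary; you correctly supply exactly that detail, so there is no gap.
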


 \begin{proof}
 As in the proof of Proposition~\ref{sp-lascoux-positive-prop}, the result holds
 if $\alpha = \lambda(\alpha)$
as then
 $\tqlascoux_\alpha = \tqlascouxatom_\alpha=\fkGO_z$ for some dominant $z \in I_\infty$ by the remarks
 before Example~\ref{321-ex}.
The general statement follows from the partition case given Lemma~\ref{var-bar-combine-lem}.
 \end{proof}

 At this point the properties of $\tqlascoux_\alpha$ and $\tqlascouxatom_\alpha$ diverge from what we expect 
 of good notions of $Q$-Lascoux polynomials and atoms.
Most notably, 
$\fkGO_z $ does not always expand as a $\NN[\beta]$-linear combination of $\tqlascoux_\alpha$'s,
and not every $\fkGO_z $  with $z$ vexillary occurs as an instance of $\tqlascoux_\alpha$; see Example~\ref{LO-ex} below.
Moreover, when $\lambda \in \NN^n$ is a symmetric partition 
 $\tqlascoux_{(\lambda_n,\dotsc,\lambda_2,\lambda_1)}$ 
does not usually coincide with any of the \defn{$K$-theoretic Schur $Q$-polynomials} $\GQ_\lambda$ defined in~\cite{IkedaNaruse} and discussed below.

Conjecture~\ref{vex-conj} suggests an alternate definition of $Q$-Lascoux polynomials that avoids these issues, but which has the drawback of being incomplete. Recall that each permutation in $S_\infty$ is uniquely determined by its code.

\begin{definition}
If $\alpha$ is the code of a vexillary involution $z \in I_\infty$,
then define the \defn{$Q$-Lascoux polynomial} of $\alpha$ to be $\qlascoux_\alpha := \fkGO_z$.
\end{definition}

\begin{example}\label{LO-ex}
We already have $\qlascoux_\alpha \neq \tqlascoux_\alpha$ for $\alpha =(0,1)$, as 
\[
\ba
\qlascoux_{01} &=
 2x_2 + 2x_1 + \beta x_2^2 + 4\beta x_1 x_2 + \beta x_1^2 + 2\beta^2 x_1 x_2^2 + 2\beta^2 x_1^2 x_2 + \beta^3 x_1^2 x_2^2,
 \\  \tqlascoux_{01}&=  2x_2 + 2x_1 + \beta x_2^2 + 3\beta x_1 x_2 + \beta x_1^2 + \beta^2 x_1 x_2^2 + \beta^2 x_1^2 x_2.
\ea
\]
Note that $\qlascoux_{01}  = \fkGO_z$ for $z = s_2 \in I_\infty$.
This orthogonal Grothendieck polynomial is not a $\NN[\beta]$-linear combination of 
$\tqlascoux_\alpha$'s since 
any such expression equal to $\fkGO_z$ must be in
$\tqlascoux_{01}+ \sum_\alpha \beta\NN[\beta] \tqlascoux_\alpha$ to account for the lowest order term $2x_2+2x_1$, but then there is no way to contribute the first term in
\[
\fkGO_z - \tqlascoux_{01} =  \beta x_1 x_2 +\beta^2  x_1 x_2^2 + \beta^2 x_1^2 x_2 + \beta^3 x_1^2 x_2^2 \]
as it is clear from \eqref{tql-eq} that the lowest order term of $\tqlascoux_\alpha$ is divisible by $2$ when $\alpha$ is nonempty.
We also remark that while $\qkey_{01} = 2 \pkey_{12}$ by Theorem~\ref{alpha1-thm},  
we have $\qlascoux_{01} \neq 2 \plascoux_{12}$
since 
$\plascoux_{12} = x_2 + x_1 + \beta x_1 x_2$.
\end{example}

The polynomial $\qlascoux_\alpha $ shares the following property with $\tqlascoux_\alpha $:

 \begin{corollary}
If $\alpha$ is the code of a vexillary involution then $\qlascoux_\alpha $
is a nonzero $\NN[\beta]$-linear combination of Lascoux polynomials.
\end{corollary}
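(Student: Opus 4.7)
The plan is to chain together two positivity results already established earlier in Section~\ref{k-sect}. By definition $\qlascoux_\alpha = \fkGO_z$, where $z \in I_\infty$ is vexillary with Lehmer code $\alpha = c(z)$. Since $\Des_V(z)$ is a finite subset of $\PP$, one may fix an integer $n$ with $\Des_V(z) \subseteq [n]$. Proposition~\ref{ratsing-prop} then supplies a finite expansion
\[
\fkGO_z = \sum_{\DesR(w)\subseteq[n]} a_w \,\fkG_w
\]
with coefficients $a_w \in \NN[\beta]$. Each Grothendieck polynomial appearing on the right-hand side is in turn a finite $\NN[\beta]$-linear combination of Lascoux polynomials by the first identity in~\eqref{sy-thm}. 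Substituting these expansions and collecting terms yields a presentation of $\qlascoux_\alpha$ as a $\NN[\beta]$-linear combination of Lascoux polynomials, as desired.

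It remains to verify that the combination produced is nonzero, and this is likewise immediate: the paragraph introducing $\fkGO_z$ records that $\{\fkGO_z : z \in I_\infty\}$ is linearly independent over $\ZZ[\beta]$ (via~\cite[\S2.3]{HMP4}), so $\fkGO_z \neq 0$ for every $z$. Alternatively, one can argue that the $\beta=0$ specialization $\fkSO_z$ is a nonzero involution Schubert polynomial, using the linear independence of $\{\fkSO_z : z \in I_\infty\}$ noted in Section~\ref{ischub-sect}.

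Since both invoked ingredients---the $\fkG_w$-positive expansion of $\fkGO_z$ from Proposition~\ref{ratsing-prop} and the $L_\alpha$-positive expansion of $\fkG_w$ from~\eqref{sy-thm}---are already in hand, there is no genuine obstacle to the proof; the corollary is essentially a bookkeeping consequence of the definition $\qlascoux_\alpha = \fkGO_z$ together with these two $\NN[\beta]$-positivity statements. The only point worth being careful about is ensuring that one works at a uniform $n$ so that the Grothendieck polynomials appearing in the intermediate sum all lie in $\ZZ[\beta][x_1,\dotsc,x_n]$, and this is exactly what Proposition~\ref{ratsing-prop} provides.
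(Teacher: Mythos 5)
Your proposal is correct and follows exactly the paper's argument: expand $\fkGO_z = \qlascoux_\alpha$ into Grothendieck polynomials via Proposition~\ref{ratsing-prop} and then into Lascoux polynomials via~\eqref{sy-thm}. Your explicit verification of nonzeroness via the linear independence of $\{\fkGO_z\}$ is a welcome extra detail that the paper leaves implicit.
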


\begin{proof}
Expand $\qlascoux_\alpha $ into $\fkG_w$'s using Proposition~\ref{ratsing-prop} and then into $L_\gamma$'s using Theorem~\ref{sy-thm}.
\end{proof}

If $w \in S_\infty$ is vexillary then the rows and columns of $D(w)$ can be rearranged to form the diagram of a partition $\lambda$~\cite[\S2.2.1]{Manivel}.
When $w=w^{-1}$ is vexillary, this partition $\lambda$ is symmetric as the set of $D(w)$ is invariant under transpose.
It follows that the code $ c(z)$ is a symmetric weak composition whenever $z \in I_\infty$ is vexillary.
We pose the following open problem.

\begin{problem}
\label{prob:nonvex}
Extend the definition of $\qlascoux_\alpha$ to all symmetric weak compositions $\alpha$.
\end{problem}

Such an extension should satisfy $\qlascoux_\alpha\bigr\rvert_{\beta=0} = \qkey_\alpha$,
and each $\fkGO_z$ should be a $\NN[\beta]$-linear combination of $\qlascoux_\alpha$'s. 
Since $\fkGO_z$ for general $z \in I_\infty$ does not have a simple inductive definition in terms of divided difference operators,
the ``correct'' general definition of $\qlascoux_\alpha$ is likely to be geometric in nature.
It is not yet clear to us how to define a meaningful $Q$-analogue of Lascoux atoms.

\subsection{Orthogonal stable Grothendieck polynomials}

We conclude with a few miscellaneous results about the stable limits of $\fkGO_z$.
We do not have an analogue of the generating function formula~\eqref{pre-GPz-eq} for these polynomials.
However, a natural way to define \defn{orthogonal stable Grothendieck polynomials},
following~\cite[\S4.2]{MP2019a}, is as the limit
\be\label{GQz-eq}
 \GQ_z := \lim_{N\to \infty} \fkGO_{1^N \times z}\quad\text{for }z \in I_\infty.
 \ee
It is an open problem to show this limit converges for arbitrary $z \in I_\infty$,
but when $z$ is vexillary, the formula~\cite[Thm.~3.26]{MP2019a} makes it clear that~\eqref{GQz-eq} is well-defined.
In this case $\GQ_z$ is an instance of Ikeda and Naruse's \defn{$K$-theoretic Schur $Q$-functions},
as we explain below.

 Ikeda and Naruse show in~\cite{IkedaNaruse} that for each strict partition $\mu$ with $r$ parts, there is a unique 
power series $\GQ_\mu \in \ZZ[\beta]\llbracket x_1,x_2,\dots\rrbracket $ with
\be
\GQ_\mu(x_1,\dotsc,x_n) = \frac{1}{(n-r)!}\sum_{w \in S_n} w\( x^\mu \prod_{i=1}^{r}  (2+ \beta x_i) \prod_{j=i+1}^n \frac{x_i\oplus x_j}{x_i\ominus x_j}\)
\ee
for all $n\geq r$~\cite[\S2]{IkedaNaruse}.
By repeating the proof of 
\cite[Prop.~4.11]{MP2019b}, just changing ``$\GP$'' to ``$\GQ$'' and ``$\prod_{i=1}^r \prod_{j=i+1}^n$''
to ``$\prod_{i=1}^r (2+\beta x_i)\prod_{j=i+1}^n$'',
one can also show that if  $\mu$ is strict with $r$ parts then
\be\label{GQ-def2-eq}
\GQ_\mu(x_1,\dotsc,x_n) = \bpi_{w_n}\( x^\mu \prod_{i=1}^{r}  \prod_{j=i}^n \frac{x_i\oplus x_j}{x_i}\).
\ee
The power series $\GQ_\mu$ is symmetric in the $x_i$ variables,
and one has $Q_\mu = \GQ_\mu\bigr\rvert_{\beta=0}$ by~\cite[Ex. 1, \S{III.3}]{Macdonald}.
Like $\GP_\mu$, one can express $\GQ_\mu$ as 
a generating function
for semistandard set-valued shifted tableaux of shape $\mu$~\cite[Thm.~9.1]{IkedaNaruse}.
As $\mu$ ranges over all strict partitions in $\NN^n$,
the polynomials $\GQ_\mu(x_1,x_2,\dotsc,x_n)$ form a $\ZZ[\beta]$-basis for a subring of symmetric elements of $\ZZ[\beta][x_1,x_2,\dotsc,x_n]$ characterized by a  cancellation property plus an extra divisibility condition~\cite[\S3.3]{IkedaNaruse}.
 
Returning to the stable limits of $\fkGO_z$.
If $z \in I_\infty$ is vexillary then $\GQ_z = \GQ_{\lambdaO(z)}$ by~\cite[Thm.~4.11]{MP2019a},
and therefore $\GQ_z\bigr\rvert_{\beta=0} = Q_{\lambdaO(z)} = Q_z$ by~\cite[Thm.~4.67]{HMP4}.

Something that makes it difficult to evaluate~\eqref{GQz-eq} for non-vexillary $z \in I_\infty$
is the lack of a general result relating $ \lim_{N\to \infty} \fkGO_{1^N \times z}$ to the symmetrization operator 
$\bpi_{w_n}$.
This is in contrast to the classical and symplectic cases, where one can invoke Lemmas~\ref{bpi-lem1} and \ref{bpi-lem2}.
We can fill in this gap for a subset of vexillary involutions.

An element $z \in I_\infty$ is \defn{I-Grassmannian} if its visible descent set $\Des_V(z)$
has at most one element. If $\Des_V(z) = \{n\}$ then $z$ must have the form 
$z = (\phi_1 \; n+1)(\phi_2 \; n+2) \cdots (\phi_r \; n+r)$ for some integers $r \in \PP$ and $1\leq \phi_1 < \phi_2 < \dots < \phi_r \leq n$,
and in this case $\lambdaO(z) = (n+1-\phi_1,n+1-\phi_2,\dotsc,n+1-\phi_r)$~\cite[Prop.-Def. 4.16]{HMP4}. It follows that if $z$ is I-Grassmannian then $z$ is vexillary; therefore $\GQ_z = \GQ_{\lambdaO(z)}$.

\begin{proposition}\label{GO-bpi-prop}
Suppose $z \in I _\infty$ has $\Des_V(z) = \{n\}$ and $\mu = \lambdaO(z)$. Then 
\[\GQ_z(x_1,x_2,\dotsc,x_n) = \GQ_\mu(x_1,x_2,\dotsc,x_n) = \bpi_{w_n} (\fkGO_z) . \]
\end{proposition}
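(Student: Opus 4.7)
The first equality is immediate from what has already been established. Since $\Des_V(z) = \{n\}$ has a single element, $z$ is I-Grassmannian and therefore vexillary, so $\GQ_z = \GQ_{\lambdaO(z)} = \GQ_\mu$ by~\cite[Thm.~4.11]{MP2019a}; restricting both sides to the first $n$ variables yields the first equality.

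For the second equality $\bpi_{w_n}(\fkGO_z) = \GQ_\mu(x_1,\dotsc,x_n)$, the plan is to argue by induction on the statistic $\ell^*(z) := \sum_{i=1}^r (\phi_i - i) \geq 0$, where $z = (\phi_1,n+1)(\phi_2,n+2)\cdots(\phi_r,n+r)$ and $\mu_i = n+1-\phi_i$. In the base case $\ell^*(z) = 0$ we have $\phi_i = i$ for all $i$, and a direct calculation of the Rothe diagram shows that $z_0 := z$ is dominant with symmetric shape $\lambda = (n^r, r^{n-r})$, so
\[
\fkGO_{z_0} = \prod_{(i,j)\in\DO(z_0)} (x_i\oplus x_j) = \prod_{i=1}^r \prod_{j=i}^n (x_i \oplus x_j)
\]
after using symmetry $x_i\oplus x_j = x_j\oplus x_i$. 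In this case $\mu = (n, n-1, \dotsc, n-r+1)$, so $x^\mu = \prod_{i=1}^r x_i^{n-i+1} = \prod_{i=1}^r \prod_{j=i}^n x_i$, and formula~\eqref{GQ-def2-eq} collapses to $\GQ_\mu(x_1,\dotsc,x_n) = \bpi_{w_n}(\fkGO_{z_0})$ exactly as wanted.

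For the inductive step, assume $\ell^*(z) > 0$ and choose the smallest $i$ with $\phi_i > i$, setting $k := \phi_i - 1 \in [n-1]$. Then $z^- := s_k z s_k$ is I-Grassmannian with $\phi$-parameters $(\phi_1,\dotsc,\phi_{i-1},k,\phi_{i+1},\dotsc,\phi_r)$, shape $\mu^- = \mu + \e_i$, and $\ell^*(z^-) = \ell^*(z) - 1$. Because $z^-(k) = n+i > \phi_i = z^-(k+1)$ with neither value in $\{k, k+1\}$, the vexillary divided-difference recursion~\cite[Prop.~3.23]{MP2019a} applies and yields $\bpartial_k \fkGO_{z^-} = \fkGO_z$. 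Using the Laurent-ring identity $\bpartial_k f = \bpi_k(f/x_k)$ (equivalent to $\bpi_k f = \bpartial_k(x_k f)$) together with $\bpi_{w_n}\bpi_k = \bpi_{w_n}$ for $k \in [n-1]$, this gives $\bpi_{w_n}(\fkGO_z) = \bpi_{w_n}(\fkGO_{z^-}/x_k)$.

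The hard part will be to match this last expression with $\GQ_\mu(x_1,\dotsc,x_n)$: the inductive hypothesis only controls $\bpi_{w_n}(\fkGO_{z^-}) = \GQ_{\mu^-}(x_1,\dotsc,x_n)$, and the map $f \mapsto \bpi_{w_n}(f/x_k)$ is not a term-by-term operation on the $\GQ$-expansion. The plan is to verify the required identity using the symmetrization formula $\bpi_{w_n} f = \sum_{w \in S_n} w\bigl(x^{\delta_n} f/\prod_{i<j}(x_i\ominus x_j)\bigr)$ from Section~\ref{more-sect} and comparing the resulting sum of Laurent terms directly to the Ikeda--Naruse expression~\eqref{GP-def1} (with the $Q$-analogue discussed after~\eqref{GQ-def2-eq}); dividing by $x_k$ corresponds to reducing $\mu^-_i$ to $\mu_i$ in exactly the term-rearrangement that the symmetrization performs. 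An alternative route that may be cleaner is to bypass the induction entirely by applying the explicit vexillary formula~\cite[Thm.~3.26]{MP2019a}, which presents $\fkGO_z$ as an isobaric divided-difference operator $\bpi_u$ applied to a product of $(x_i\oplus x_j)$-factors determined by $\mu$ and the flag $(\phi_1,\dotsc,\phi_r)$; then applying $\bpi_{w_n}$ absorbs $\bpi_u$ via $\bpi_{w_n}\bpi_u = \bpi_{w_n}$ for $u \in S_n$ and should reduce directly to the right-hand side of~\eqref{GQ-def2-eq}.
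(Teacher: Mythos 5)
Your first equality and your base case are both correct, and the base case is essentially the specialization of the paper's identity \eqref{GQ-GO-eq1} to the dominant situation $\phi_i=i$. The setup of your inductive step is also sound: $z^-=s_kzs_k$ is indeed I-Grassmannian with shape $\mu+\e_i$, and $\bpartial_k\fkGO_{z^-}=\fkGO_z$ does follow from~\cite[Prop.~3.23]{MP2019a}, giving $\bpi_{w_n}(\fkGO_z)=\bpi_{w_n}(\fkGO_{z^-}/x_k)$. But the induction does not close, and you say so yourself: the inductive hypothesis controls $\bpi_{w_n}(\fkGO_{z^-})$, whereas you need $\bpi_{w_n}(\fkGO_{z^-}/x_k)=\bpartial_{w_n}(x^{\delta_n-\e_k}\fkGO_{z^-})$, and symmetrization does not interact with division by $x_k$ in any way that the hypothesis can reach. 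The two "plans" you offer to bridge this (a direct Laurent-term comparison with the Ikeda--Naruse formula, or an appeal to the vexillary formula of~\cite[Thm.~3.26]{MP2019a} followed by absorbing $\bpi_u$ into $\bpi_{w_n}$) are sketches, not arguments; in particular the second one silently assumes that the product of $(x_i\oplus x_j)$-factors in that formula, after multiplication by the appropriate monomial, coincides with the integrand of \eqref{GQ-def2-eq}, which is exactly the content that needs proof.

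The paper's proof avoids this trap by never symmetrizing until the last line. It establishes the \emph{pre-symmetrized} polynomial identity $\bpi_{\phi_1\searrow 1}\cdots\bpi_{\phi_r\searrow r}\bigl(x^\mu\prod_{i=1}^r\prod_{j=i}^n\tfrac{x_i\oplus x_j}{x_i}\bigr)=\fkGO_z$ in three steps: \eqref{GQ-GO-eq1} rewrites the argument as a Laurent monomial times the dominant polynomial $\fkGO_w$; \eqref{GQ-GO-eq2} converts the word of $\bpi$'s into a word of $\bpartial$'s that absorbs that Laurent monomial, using~\cite[Lem.~4.16]{MP2019b} together with a verification that each intermediate expression is fixed by the relevant $\bpi_i$'s (this is precisely the mechanism that handles the "division by $x$" you are missing); and \eqref{GQ-GO-eq3} identifies the result with $\fkGO_z$ by iterating the vexillary recursion through a chain of I-Grassmannian involutions. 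Only then is $\bpi_{w_n}$ applied, using $\bpi_{w_n}\bpi_i=\bpi_{w_n}$ and \eqref{GQ-def2-eq}. If you want to keep an inductive flavor, the statement you should induct on is the unsymmetrized identity $\fkGO_z=\bpartial_{\phi_1\searrow 1}\cdots\bpartial_{\phi_r\searrow r}\fkGO_w$ (your step proves exactly its inductive increment), but you still need the $\bpi\leftrightarrow\bpartial$ conversion lemma to connect that to $\GQ_\mu$.
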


\begin{proof}
The first equality holds since $z$ is I-Grassmannian and therefore vexillary. 
For the second equality, we follow the proof of~\cite[Prop.~4.14]{MP2019a}, which derives a 
similar identity for $\fkGS_z$.
Suppose $\mu$ has $r$ nonzero parts, so that 
$z = (\phi_1 \; n+1)(\phi_2 \; n+2) \cdots (\phi_r \; n+r)$ for $\phi_i = n+1-\mu_i$.
Then
\be\label{GQ-GO-eq1}
x^{\mu} \prod_{i=1}^{r}  \prod_{j=i}^n \dfrac{x_i\oplus x_j}{x_i} = x_1^{1-\phi_1}x_2^{2-\phi_2}\cdots x_r^{r-\phi_r} \fkGO_w
\ee
for the dominant involution $w = (1 \; n+1)(2 \; n+2)\cdots (r \; n+r)$, which has $\fkGO_w = \prod_{i=1}^r \prod_{j=i}^n x_i\oplus x_j$.

The expression $x_1^{1-\phi_1}x_2^{2-\phi_2}\cdots x_r^{r-\phi_r} \fkGO_w$ is symmetric in $x_{r+1}, x_{r+2}, \dots, x_n$
and therefore fixed by $\bpi_i$ for all $r+1 \leq i < n$.
For integers $0 <a \leq b$ let $\bpi_{b \searrow a} := \bpi_{b-1}\bpi_{b-2}\cdots \bpi_a$
so that $\bpi_{a\searrow a} = 1$, and define $\bpartial_{b\searrow a}$ analogously.
Since $\bpi_i\bpi_{b \searrow a} = \bpi_{b \searrow a} \bpi_{i+1}$
for $a \leq i < b-1$ as $\bpi_i$ satisfy the braid relations for $S_\infty$, one can check that 
$ \bpi_{\phi_j \searrow j} \bpi_{\phi_{j+1}\searrow j+1} \cdots \bpi_{\phi_r \searrow r} \( x_1^{1-\phi_1}x_2^{2-\phi_2}\cdots x_r^{r-\phi_r} \fkGO_w\)$ is fixed by $\bpi_i$ for all $j -1 < i < \phi_{j-1} < \phi_j$.
This calculation is very similar to the argument in the proof of \cite[Prop.~4.14]{MP2019a} and so we have omitted a detailed
explanation here.

Now we appeal to~\cite[Lem.~4.16]{MP2019b}, which asserts that if $f$ is any Laurent polynomial fixed by $\bpi_i$ for all $a < i <b$ then $\bpi_{b\searrow a} f = \bpartial_{b\searrow a}(x^{b-a}_a f)$.
It follows by successively applying this identity along with $x_i (\bpartial_j f) = \bpartial_j (x_i f)$ for $i<j$ that 
\be\label{GQ-GO-eq2}
\bpi_{\phi_1 \searrow 1} \bpi_{\phi_{2}\searrow 2} \cdots \bpi_{\phi_r \searrow r} \( x_1^{1-\phi_1}x_2^{2-\phi_2}\cdots x_r^{r-\phi_r} \fkGO_w\)
=
\bpartial_{\phi_1 \searrow 1} \bpartial_{\phi_{2}\searrow 2} \cdots \bpartial_{\phi_r \searrow r} \fkGO_w.
\ee

Finally, we claim that 
\be\label{GQ-GO-eq3}
\bpartial_{\phi_1 \searrow 1} \bpartial_{\phi_{2}\searrow 2} \cdots \bpartial_{\phi_r \searrow r} \fkGO_w = \fkGO_z.
\ee
One can check this by induction from the fact  that if $y $ and $ s_i y s_i$ are vexillary involutions with $\ell(y) > \ell(s_i y s_i)$
then $\bpartial_i \fkGO_y = \fkGO_{s_i y s_i}$~\cite[Prop.~3.23]{MP2019a}. 
If we write out  $\bpartial_{\phi_1 \searrow 1} \bpartial_{\phi_{2}\searrow 2} \cdots \bpartial_{\phi_r \searrow r}$ in
full as the product $\bpartial_{i_k}\cdots \bpartial_{i_2} \bpartial_{i_1}$ for $k := (\phi_1 + \phi_2  + \dots + \phi_r) - (1 + 2 + \dots + r)$, 
and set $y_0 := w$ and $y_{j} := s_{i_j} y_{j-1} s_{i_j}$ for $j \in [k]$,
then each $y_j$ is I-Grassmannian with $\ell(y_{j-1}) > \ell(y_j)$ and $y_k=z$.
As all I-Grassmannian involutions are vexillary, the identity~\eqref{GQ-GO-eq3} follows.

Combining~\eqref{GQ-GO-eq1},~\eqref{GQ-GO-eq2}, and~\eqref{GQ-GO-eq3}
gives 
$\bpi_{\phi_1 \searrow 1} \bpi_{\phi_{2}\searrow 2} \cdots \bpi_{\phi_r \searrow r} \(x^{\mu} \prod_{i=1}^{r}  \prod_{j=i}^n \frac{x_i\oplus x_j}{x_i} \) =\fkGO_z.$
Since $\bpi_{w_n} \bpi_i = \bpi_{w_n}$ for all $i \in [n-1]$, applying $\bpi_{w_n}$ to both sides 
transforms this to the desired identity
$\GQ_z(x_1,x_2,\dotsc,x_n) = \bpi_{w_n} (\fkGO_z)  $
in view of~\eqref{GQ-def2-eq}. 
\end{proof}

This result only asserts that $\GQ_z(x_1,x_2,\dotsc,x_n) = \bpi_{w_n} (\fkGO_z)$
when $n = \max (\Des_V(z))$ rather than for all $n \geq \max (\Des_V(z))$ as in Lemma~\ref{bpi-lem2}.
 The stronger claim does not always hold.
 
 \begin{example}
 \label{ex:GQ_long_element}
 Let $z = (1,2) \in I_\infty$.
 Then $z$ is I-Grassmannian with $\Des_V(z) = \{1\}$ and $\lambda(z)=\lambdaO(z) = (1)$, 
 and we have  
 $ \GQ_z = \GQ_{(1)} $ and $ \GQ_z(x_1) = \bpi_{w_1} (\fkGO_z) = \fkGO_z = 2x_1 + \beta x_1^2,$
 but $\GQ_z(x_1,x_2) = \qlascoux_{01} \neq \tqlascoux_{01} =  \bpi_{w_2} (\fkGO_z)$ by Example~\ref{LO-ex}.
\end{example}

Computations suggest that the discrepancy in Example~\ref{ex:GQ_long_element} only occurs when $z(1) \neq 1$.
 
\begin{conjecture} 
If $z \in I_\infty$ has $z(1)=1$  then the limit~\eqref{GQz-eq} converges as a power series to a symmetric function with
$\GQ_z(x_1,x_2,\dotsc,x_{n}) = \bpi_{w_{n}} (\fkGO_{ z}) $ for all $n \in \PP$ with $\Des_V(z)\subseteq [n]$.
\end{conjecture}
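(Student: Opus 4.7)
My plan is to reduce the conjecture to the following key symmetry lemma: \emph{if $z \in I_\infty$ satisfies $z(i)=i$, then $s_i \hs \fkGO_z = \fkGO_z$}, meaning $\fkGO_z$ is symmetric in $x_i$ and $x_{i+1}$. I would establish this geometrically: when $z(i)=i$, the orbit closure $X_z^{\O_m}\subseteq \Fl_m$ is stable under right multiplication by the minimal parabolic $P_i=\langle B_m, s_i\rangle$, since the $\O_m$-orbit type is unchanged upon swapping positions $i$ and $i+1$ in the standard flag. The resulting $s_i$-invariance of $[X_z^{\O_m}]\in \CK(\Fl_m)$ combined with the uniqueness of the polynomial representative in $\ZZ[\beta][x_1,\dotsc,x_m]$ (via the basis property in Proposition~\ref{basis-prop}) would then yield $s_i\fkGO_z = \fkGO_z$. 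For vexillary $z$, this can alternatively be verified algebraically by induction using the divided difference recurrence for $\fkGO_z$, mimicking the proof of Proposition~\ref{GO-bpi-prop}. A small sanity check of the lemma is that $\fkGO_{(2\,3)}$ and $\fkGO_{(3\,4)}$ tabulated in the examples are indeed symmetric in $x_1,x_2$ and $x_1,x_2,x_3$ respectively.

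Granting the lemma, when $z(1)=1$ the involution $1^N\times z$ has fixed points at $1,2,\dotsc,N+1$, so iterating the lemma shows that $\fkGO_{1^N\times z}$ is symmetric in $x_1,x_2,\dotsc,x_{N+2}$; in particular $\bpi_j \fkGO_{1^N\times z} = \fkGO_{1^N\times z}$ for each $j\in[N+1]$. Combining this with Lemma~\ref{bpi-lem1} applied termwise to any $\NN[\beta]$-expansion of $\fkGO_{1^N\times z}$ into Grothendieck polynomials $\fkG_w$ (which is available from Proposition~\ref{ratsing-prop} when $1^N\times z$ is vexillary, and more generally from Brion's $K$-theoretic lifting argument), one obtains that the polynomials $\fkGO_{1^N\times z}$ stabilize in each total degree to a symmetric power series as $N\to\infty$. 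The extensive $S_{N+2}$-symmetry of $\fkGO_{1^N\times z}$ matches the $S_n$-symmetrization performed by $\bpi_{w_n}$, and tracking how the shift $z\mapsto 1^N\times z$ interacts with the $\bpartial_i$-recursion relating $\fkGO_{1^N\times z}$ to $\fkGO_z$ should simultaneously yield the convergence of the limit $\GQ_z$ as a symmetric formal power series and the identification $\GQ_z(x_1,\dotsc,x_n) = \bpi_{w_n}(\fkGO_z)$ for each $n \geq \max\Des_V(z)$.

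The hard part is the key symmetry lemma in the non-vexillary case, where no algebraic recurrence for $\fkGO_z$ is available. There one must verify the parabolic stability $P_i\cdot X_z^{\O_m} = X_z^{\O_m}$ directly from the pointwise condition $z(i)=i$; this reduces to a combinatorial check on the $\O_m$-Bruhat order on $I_m$ using the clan description of orbits from~\cite{RichSpring}, but requires careful justification since the $B_m$-orbits within $X_z^{\O_m}$ need not each be individually $s_i$-stable, only their union. A secondary obstacle is that no positive $\fkG_w$-expansion of $\fkGO_z$ is known for general $z$, so the identification of the stable limit with $\bpi_{w_n}(\fkGO_z)$ must be argued via the geometric interpretation of $\fkGO_z$ or via a careful analysis of the divided differences applied to $\fkGO_{1^N\times z}$, rather than by term-by-term computation.
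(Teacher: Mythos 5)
This statement is an open conjecture in the paper (supported only by computations for small $z$); the paper gives no proof, so your proposal has to stand entirely on its own, and it does not: the key lemma it rests on is false as stated.

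Take $z=(1\;3)\in I_\infty$, which is dominant with $D(z)=\D_{(2,1)}$ and $\DO(z)=\{(1,1),(2,1)\}$, so that $\fkGO_z=(x_1\oplus x_1)(x_2\oplus x_1)=(2x_1+\beta x_1^2)(x_1+x_2+\beta x_1 x_2)$. Here $z(2)=2$, yet $\fkGO_z$ is plainly not symmetric in $x_2$ and $x_3$ (already its $\beta=0$ part $2x_1(x_1+x_2)$ fails). So ``$z(i)=i$ implies $s_i\fkGO_z=\fkGO_z$'' is wrong; the hypothesis must at least include $i\notin\DesR(z)$, and indeed $2\in\DesR((1\;3))$ with $\bpartial_2\fkGO_{(1\;3)}=\fkGO_{(1\;2)}\neq-\beta\fkGO_{(1\;3)}$. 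Your sanity checks $(2\;3)$ and $(3\;4)$ only succeed because there the fixed points happen also to be non-descents. Correspondingly, the geometric claim that $z(i)=i$ forces $P_i$-stability of $X_z^{\O_m}$ is false: stability under a minimal parabolic is governed by the Richardson--Springer weak order (a descent-type condition on $z$), not by its fixed points.

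In the intended application to $w=1^N\times z$ with $z(1)=1$, the positions $i\in[N+1]$ do satisfy the corrected hypothesis $w(i)<w(i+1)$. But the corrected lemma --- that $w(i)<w(i+1)$ implies $s_i\fkGO_w=\fkGO_w$ for arbitrary, possibly non-vexillary, involutions $w$ --- is precisely the divided-difference compatibility that the paper flags as unavailable outside the vexillary case, so you have reduced the conjecture to an open statement of essentially the same difficulty. Two further steps are also not yet arguments: (i) Brion's positivity requires rational singularities of $X_z^{\O_m}$, which is only known for vexillary $z$ (a $\ZZ[\beta]$-expansion into $\fkG_w$'s always exists by Proposition~\ref{basis-prop}, but that alone does not control the stabilization); and (ii) the final identification $\GQ_z(x_1,\dotsc,x_n)=\bpi_{w_n}(\fkGO_z)$ implicitly uses a recursion of the shape $\fkGO_{1\times z}=\bpi_1\bpi_2\cdots\bpi_n\,\fkGO_z$, the unproved orthogonal analogue of the Schubert identity cited in the proof of Theorem~\ref{alpha1-thm}. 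Example~\ref{ex:GQ_long_element} shows that such an identity genuinely fails when $z(1)\neq 1$ (there $\bpi_{w_2}(\fkGO_{(1\;2)})=\tqlascoux_{01}\neq\qlascoux_{01}=\GQ_{(1\;2)}(x_1,x_2)$), so any correct proof must exploit the hypothesis $z(1)=1$ in a way your outline does not supply.
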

 
Finally, we can use these results to prove 
an orthogonal analogue of Theorem~\ref{eff-thm}. This resolves an open question~\cite[Problem~5.6]{MP2019a}
about whether $\GQ_\mu$ has a finite, positive expansion into $G_\lambda$'s.

 \begin{theorem}\label{GQ-thm}
 Let $\mu $ be a strict partition and suppose $n\in \NN$.
Then
$\GQ_\mu \in  \NN[\beta]\spanning\{ G_\lambda \mid \text{partitions }\lambda \in \NN^{n}\}$
if and only if $n> \max(\mu)$ or $\mu$ is the empty partition.
\end{theorem}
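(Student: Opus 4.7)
The empty case $\mu = \varnothing$ is immediate since $\GQ_\varnothing = 1 = G_\varnothing \in \NN[\beta]\spanning\{G_\lambda : \lambda \in \NN^n\}$ for every $n \geq 0$; assume $\mu$ is nonempty throughout.

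For the if direction, assume $n > \max(\mu)$ and set $r = \ell(\mu)$. Consider the I-Grassmannian involution
\[
z = (n+1-\mu_1, \, n+1)(n+1-\mu_2, \, n+2)\cdots(n+1-\mu_r, \, n+r) \in I_\infty,
\]
which has $\Des_V(z) = \{n\}$ and $\lambdaO(z) = \mu$; the inequality $n > \mu_1$ forces $z(1) = 1$. Since $z$ is I-Grassmannian it is vexillary, so $\GQ_z = \GQ_\mu$ by~\cite[Thm.~4.11]{MP2019a}. Proposition~\ref{ratsing-prop} then yields
\[
\fkGO_z = \sum_w c_w \, \beta^{|D(w)| - |\DO(z)|} \, \fkG_w
\]
with $c_w \in \NN$ and each $w$ satisfying $\DesR(w) \subseteq [n]$. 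The next step is to upgrade this to a symmetric-function identity $\GQ_\mu = \sum_w c_w \, \beta^{|D(w)| - |\DO(z)|} \, G_w$: for each $N \geq 0$ the involution $1^N \times z$ is again I-Grassmannian with $\lambdaO = \mu$, and the vexillary divided-difference recipe in~\cite[Thm.~3.26]{MP2019a} is manifestly equivariant under the index shift, yielding $\fkGO_{1^N \times z} = \sum_w c_w \, \beta^{|D(w)| - |\DO(z)|} \, \fkG_{1^N \times w}$ with the same coefficients $c_w$. Passing to the stable limit $N \to \infty$ via $\GQ_z = \lim_N \fkGO_{1^N \times z}$ and $G_w = \lim_N \fkG_{1^N \times w}$ produces the desired identity. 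Proposition~\ref{Gexp-prop} then expresses each $G_w$ as a finite $\NN[\beta]$-linear combination of $G_\lambda$ with $\lambda \in \NN^n$, completing the if direction.

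For the only-if direction, assume $\GQ_\mu \in \NN[\beta]\spanning\{G_\lambda : \lambda \in \NN^n\}$ with $\mu \neq \varnothing$. Setting $\beta = 0$ gives $Q_\mu = 2^{\ell(\mu)} P_\mu \in \NN\spanning\{s_\lambda : \lambda \in \NN^n\}$; since $P_\mu$'s Schur expansion contains $s_{\mu^\T}$ with $\ell(\mu^\T) = \mu_1 = \max(\mu)$, we obtain $n \geq \max(\mu)$. To rule out equality $n = \max(\mu)$, one must exhibit some $G_\lambda$ with $\ell(\lambda) = \max(\mu) + 1$ in the expansion. The plan is to examine the $\beta^1$-coefficient of $\GQ_\mu$ (a symmetric function of total $x$-degree $|\mu|+1$), compute its Schur expansion from Ikeda--Naruse's shifted set-valued tableau formula~\cite[Thm.~9.1]{IkedaNaruse}, and identify a Schur function $s_\nu$ with $\ell(\nu) = \max(\mu) + 1$ that cannot be produced at the $\beta^1$-level by any $G_\lambda$ with $\ell(\lambda) \leq \max(\mu)$.

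The main obstacle is the strict inequality in the only-if direction. The $\beta^0$-Schur argument, which parallels the $\GP$-proof of Theorem~\ref{eff-thm}, is insensitive to the crucial extra row since $\ell(\mu^\T) = \max(\mu)$ exactly; sharpening to $n > \max(\mu)$ requires locating a Schur term at positive $\beta$-degree whose index has length one greater than $\max(\mu)$ and showing it cannot be absorbed by any $\NN[\beta]$-combination of lower-length $G_\lambda$'s. This will likely exploit either the Ikeda--Naruse divisibility of $\GQ_\mu$ by the factors $2 + \beta x_i$, a feature not shared by arbitrary $\NN[\beta]$-combinations of $G_\lambda$'s, or a direct combinatorial analysis of the shifted set-valued tableaux pinpointing where the length bump must arise.
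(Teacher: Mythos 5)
There is a genuine gap: your ``only if'' direction is not a proof. The specialization $\beta=0$ only yields $n \geq \max(\mu)$, and you explicitly leave the strict inequality as a ``plan'' to be carried out by analyzing the $\beta^1$-coefficient via shifted set-valued tableaux. That analysis is never executed, and it is exactly the hard part of the theorem. The paper's missing ingredient here is the $\GP$-expansion of $\GQ_\mu$ from \cite[Thm.~1.1]{ChiuMarberg}: one has $\GQ_\mu \in 2^{\ell(\mu)}\GP_\mu + 2^{\ell(\mu)-1}\beta\sum_i \GP_{\mu+\e_i} + \beta^2\ZZ[\beta]\spanning\{\GP_\nu\}$, where the middle sum always contains $\GP_{\mu+\e_1}$ with $\max(\mu+\e_1)=\max(\mu)+1$; combining this with Theorem~\ref{eff-thm} forces a $G_\lambda$ with $\ell(\lambda)=\max(\mu)+1$ to appear, ruling out $n=\max(\mu)$. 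Without some input of this kind your argument cannot distinguish $n=\max(\mu)$ from $n>\max(\mu)$.

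Your ``if'' direction also has an unjustified step. You assert that the expansion $\fkGO_z=\sum_w c_w\beta^{|D(w)|-|\DO(z)|}\fkG_w$ from Proposition~\ref{ratsing-prop} is ``manifestly equivariant'' under $z\mapsto 1^N\times z$, so that the same coefficients $c_w$ reappear on $\fkG_{1^N\times w}$ and survive the stable limit. The coefficients in Proposition~\ref{ratsing-prop} come from Brion's theorem applied in a fixed flag variety (not from an explicit combinatorial formula), and their stability under the shift is precisely the kind of statement that needs proof. The paper sidesteps this: it first obtains an \emph{integral} stable expansion $\GQ_\mu=\sum_{\lambda\in\NN^n} a_\mu^\lambda\beta^{|\lambda|-|\mu|}G_\lambda$ (again via \cite{ChiuMarberg} and Theorem~\ref{eff-thm}), then shows positivity by computing $\GQ_\mu(x_1,\dotsc,x_n)=\bpi_{w_n}(\fkGO_z)$ with Proposition~\ref{GO-bpi-prop} and Lemma~\ref{bpi-lem1}, and concludes $a_\mu^\lambda\in\NN$ from the linear independence of the truncations $G_\lambda(x_1,\dotsc,x_n)=L_{(\lambda_n,\dotsc,\lambda_1)}$. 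If you either prove the shift-equivariance or replace your limit argument with the $n$-variable symmetrization route, the ``if'' half goes through; the ``only if'' half still needs the $\GP$-expansion input.
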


\begin{proof} 
Let $\Lambda^{n}$ be the set of partitions with at most $n$ parts.
We may assume that $\mu$ is nonempty as there is nothing to prove for $\GQ_\emptyset = 1$.
Fix an integer $n > \max(\mu)$.
It is known that $\GQ_\mu$ is a finite $\ZZ[\beta]$-linear combination of 
$\GP_\nu$'s with $\max(\nu) \leq \max(\mu) + 1 \leq n$~\cite[Thm.~1.1]{ChiuMarberg}.
Therefore, by Theorem~\ref{eff-thm} we have 
$\GQ_\mu  = \sum_{\lambda \in \Lambda^{n}} a_\mu^\lambda  \beta^{|\lambda|-|\mu|}  G_\lambda$
for integers $a_\mu^\lambda \in \ZZ$ that are nonzero for only finitely many partitions $\lambda$,
  each with at most $n$ parts and $|\lambda| \geq |\mu|$.
On the other hand, if 
 $z \in I_\infty$ is the unique I-Grassmannian element with shape $\mu$ and maximal visible descent $n$, then
\[\ba
\GQ_\mu(x_1,x_2,\dotsc,x_{n})   =  \bpi_{w_{n}} (\fkGO_z) & \in \NN[\beta]\spanning\{G_w(x_1,x_2,\dotsc,x_{n}) \mid \Des(w) \subseteq [n]\} \\& \qquad \subseteq \NN[\beta]\spanning\{G_\lambda(x_1,x_2,\dotsc,x_{n}) : \lambda \in \Lambda^{n}\}
\ea
\]
by Proposition~\ref{GO-bpi-prop} (for the first equality), Lemma~\ref{bpi-lem1} and Proposition~\ref{ratsing-prop}
(for the next containment), and Proposition~\ref{Gexp-prop} (for the last inclusion).
Since the polynomials $G_\lambda(x_1,x_2,\dotsc,x_{n}) = L_{(\lambda_{n},\dotsc,\lambda_2,\lambda_1)}$ for $\lambda \in \Lambda^{n}$ are linearly independent over $\ZZ[\beta]$, it follows that   $a_\mu^\lambda \in \NN$ for all $\lambda$.

This shows the ``if'' direction of the theorem. For the converse implication,
we appeal again to~\cite[Thm.~1.1]{ChiuMarberg}, which shows that
\[
\GQ_\mu \in 2^{\ell(\mu)} \GP_\mu + 2^{\ell(\mu) - 1} \beta \sum_\lambda \GP_\lambda + \beta^2 \ZZ[\beta] \spanning\{ \GP_\nu \mid \text{strict }\nu \},
\]
where the sum is over all strict partitions of the form $\lambda = \mu + \e_i$ with $ 1\leq i \leq \ell(\mu)$.
These summands always include $ \mu+\e_1$, whose maximum part is $\max(\mu)+1$,
so it follows from Theorem~\ref{eff-thm} that the stable Grothendieck expansion 
of $\GQ_\mu$ must involve $G_\lambda$'s with $\ell(\lambda) = \max(\mu)+1$.
\end{proof}

\section*{Declarations}

\subsection*{Funding}

The first author was partially supported by Hong Kong RGC grants 16306120 and 16304122.
The second author was partially supported by Grant-in-Aid for JSPS Fellows 21F51028.
The authors have no other relevant financial or non-financial interests to disclose.
 
 \subsection*{Data availability}
 
 Computations and source code related to the examples and conjectures in this article are available from the corresponding author on reasonable request.
 
 \subsection*{Acknowledgments}
 
 The authors thank Jonathan Brundan, Takeshi Ikeda, Brendan Pawlowski, Anne Schilling, and Alexander Yong for useful discussions.
 The authors also thank the anonymous referees for many helpful comments and corrections.
 This work benefited from computations using \textsc{SageMath}~\cite{sage}.

\printbibliography

\end{document}